\newcommand\N{{\mathbb N}}
\newcommand\R{{\mathbb R}}
\newcommand\Sp{{\mathbb S}}
\newcommand\Ee{{\mathbb E}}
\def\BB{{\mathcal B}}
\def\CC{{\mathcal C}}
\def\DD{{\mathcal D}}
\def\EE{{\mathcal E}}
\def\FF{{\mathcal F}}
\def\GG{{\mathcal G}}
\def\HH{{\mathcal H}}
\def\II{{\mathcal I}}
\def\KK{{\mathcal K}}
\def\LL{{\mathcal L}}
\def\MM{{\mathcal M}}
\def\OO{{\mathcal O}}
\def\PP{{\mathcal P}}
\def\RR{{\mathcal R}}
\def\SS{{\mathcal S}}
\def\TT{{\mathcal T}}
\def\UU{{\mathcal U}}
\def\VV{{\mathcal V}}
\def\WW{{\mathcal W}}
\def\VV{{\mathcal V}}
\def\ZZ{{\mathcal Z}}
\def\eps{{\varepsilon}}
\newcommand{\wto}{\rightharpoonup}
\def\SN{\mathfrak{S}_N}
\def\SSS{\mathfrak{S}}
\newtheorem{theo}{Theorem}
\newtheorem{lem}[theo]{Lemma}
\theoremstyle{definition}
\newtheorem{defin}[theo]{Definition}
\theoremstyle{remark}
\newtheorem{rem}[theo]{Remark}
\newtheorem{rems}[theo]{Remarks}
\newtheorem{ex}[theo]{Example}
\newcommand{\beqn}{\begin{equation}}
\newcommand{\eeqn}{\end{equation}}
\newcommand{\bear}{\begin{eqnarray}}
\newcommand{\eear}{\end{eqnarray}}
\newcommand{\bean}{\begin{eqnarray*}}
\newcommand{\eean}{\end{eqnarray*}}
\newcommand{\Black}{\color{black}}
\def\signsm{\bigskip \begin{center} {\sc St\'ephane Mischler\par\vspace{3mm}
Universit\'e Paris-Dauphine\par
CEREMADE, UMR CNRS 7534\par
Place du Mar\'echal de Lattre de Tassigny
75775 Paris Cedex 16\par
FRANCE\par\vspace{3mm}
e-mail:} \tt{mischler@ceremade.dauphine.fr} \end{center}}
\def\signcm{\bigskip \begin{center} {\sc
Cl\'ement Mouhot\par\vspace{3mm}
University of Cambridge\par
DPMMS, Centre for Mathematical Sciences\par
Wilberforce Road,
Cambridge CB3 0WA,
UK\par\vspace{3mm}
e-mail:} \tt{C.Mouhot@dpmms.cam.ac.uk} \end{center}}
\begin{document}

\title[Kac's Program in  Kinetic  Theory]
{Kac's Program in Kinetic Theory \\[1mm]
  (Version of \today)}


\author{S. Mischler}
\author{C. Mouhot}



\begin{abstract}
  This paper is devoted to the study of propagation of chaos and 
  mean-field limits for systems of indistinguable particles, undergoing
  collision processes. The prime examples we will consider are the
  many-particle jump processes of Kac and McKean
  \cite{Kac1956,McKean1967} giving rise to the Boltzmann equation. We
  solve the conjecture raised by Kac \cite{Kac1956}, motivating his
  program, on the rigorous connection between the long-time behavior
  of a collisional many-particle system and the one of its mean-field
  limit, for bounded as well as unbounded collision rates.
 
  Motivated by the inspirative paper by Gr\"unbaum \cite{Grunbaum}, we
  develop an abstract method that reduces the question of propagation
  of chaos to that of proving a purely functional estimate on
  generator operators ({\em consistency estimates}), along with
  differentiability estimates on the flow of the nonlinear limit
  equation ({\em stability estimates}). This allows us to exploit
  dissipativity at the level of the mean-field limit equation rather
  than the level of the particle system (as proposed by Kac).

  Using this method we show: (1) Quantitative estimates, that are
  uniform in time, on the
  chaoticity of a family of states. 
  (2) Propagation of {\it entropic chaoticity}, as defined in
  \cite{CCLLV}. 
  (3) Estimates on the time of relaxation to equilibrium, that
  are 
  \emph{independent of the number of particles in the system}.
  Our results cover the two main Boltzmann physical collision
  processes with unbounded collision rates: hard spheres and
  \emph{true} Maxwell molecules interactions. The proof of the
  \emph{stability estimates} for these models requires significant
  analytic efforts and new estimates.
\end{abstract}

\maketitle


\textbf{Keywords}: Kac's program; kinetic theory; master equation;
mean-field limit; quantitative; uniform in time; jump process;
collision process; Boltzmann equation; Maxwell molecules; non cutoff;
hard spheres.  \smallskip

\textbf{AMS Subject Classification}: 82C40 Kinetic theory of gases,
76P05 Rarefied gas flows, Boltzmann equation, 54C70 Entropy, 60J75
Jump processes.



\tableofcontents


Boltzmann is best known for the equation bearing his name in kinetic
theory \cite{Boltzmann1872,Boltzmann1896}. Inspired by Maxwell's
discovery \cite{Maxwell1867} of (what is now called) the Boltzmann
equation and its ``Maxwellian'' (i.e. Gaussian) equilibrium, Boltzmann
\cite{Boltzmann1872} discovered the ``$H$-theorem'' (the entropy must
increase under the time evolution of the equation), which explained
why the solutions should be driven towards the equilibrium of
Maxwell. In the same work he also proposed the deep idea of
``stosszahlansatz'' (molecular chaos) to explain how the irreversible
Boltzmann equation can emerge from Newton's laws of the dynamics of
the particle system. Giving a precise mathematical meaning to this
notion and proving this limit remains a tremendous open problem to
this date; the best and astonishing result so far \cite{Lanford} is
only valid for very short times.

In 1956, Kac \cite{Kac1956} proposed the simpler, and seemingly more
tractable, question of deriving the \emph{spatially homogeneous}
Boltzmann equation from a many-particle jump process. To do so, he
introduced a rigorous notion of ``molecular chaos''\footnote{Kac in
  fact called this notion ``Boltzmann's property'' in \cite{Kac1956}
  as a clear tribute to the fundamental intuition of Boltzmann.} in
this context. The ``chaoticity'' of the many-particle equilibrium with
respect to the Maxwellian distribution, i.e. the fact that the first
marginals of the uniform measure on the sphere $\Sp^{N-1}(\sqrt N)$
converges to a Gaussian function as $N$ goes to infinity, has been
known for a long time (at least since Maxwell).\footnote{We refer to
  \cite{DiaconisFreedman1987} for a bibliographic discussion, see also
  \cite{CCLLV} where \cite{mehler} is quoted as the first paper
  proving this result.}  However in \cite{Kac1956} Kac proposed the
first proof of the \emph{propagation of chaos} for a simplified
collision evolution process for which series expansion of the solution
exists, and he showed how the many-particle limit rigorously follows
from this property of propagation of chaos. This proof was later
extended to a more realistic collision model, the so-called cutoff
Maxwell molecules, by McKean \cite{McKean1967}.

Since in this setting both the many-particle system and the limit
equation are dissipative, Kac raised the natural question of relating
their asymptotic behaviors.  In his mind this program was to be
achieved by understanding dissipativity at the level of the linear
many-particle jump process and he insisted on the importance of
estimating how its relaxation rate depends on the number of
particles. This has motivated beautiful works on the ``Kac spectral
gap problem''
\cite{janvresse,maslen,CarlenCL2003,CarlenGeronimoLoss2008,CCL-preprint},
i.e. the study of this relaxation rate in a $L^2$ setting. However, so
far this linear strategy has proved to be unsuccessful in relating the
asymptotic behavior of the many-particle process and that of the limit
equation (cf. the discussion in \cite{CCLLV}).

In the time of Kac the study of nonlinear partial differential
equations was rather young and it was plausible that the study of a
linear many-dimension Markov process would be easier. However the
mathematical developpement somehow followed the reverse direction and
the theory of existence, uniqueness and relaxation to equilibrium for
the spatially homogeneous Boltzmann is now well-developed (see the
many references along this paper). This paper (together with its
companion paper \cite{MMW}) is thus an attempt to develop a
quantitative theory of mean-field limit which \emph{strongly relies on
  detailed knowledge of the nonlinear limit equation, rather than on
  detailed properties of the many-particle Markov process}.

The main outcome of our theory will be to find quantitative estimates
to the propagation of chaos that are uniform in time, as well as
propagation of entropic chaos. We also prove estimates on the
relaxation rates, measured in the Wasserstein distance and relative
entropy, that are \emph{independent of the number of particles}. All
this is done for the two important, realistic and achetypal models of
collision with unbounded collision rates, namely hard spheres and the
true (i.e. without cutoff) Maxwell molecules. This provides a first
answer to the question raised by Kac. However, our answer is an
``inverse'' answer in the sense that our methodology is ``top-down''
from the limit equation to the many-particle system rather than
``bottom-up'' as was expected by Kac.

 
\bigskip



\noindent
\textbf{Acknowledgments.} We thank the mathematics departement of
Chalmers University for the invitation in November 2008, where the
abstract method was devised and the related joint work~\cite{MMW} with
Bernt Wennberg was initiated. We thank Isma\"el Bailleul, Thierry
Bodineau, Fran\c cois Bolley, Anne Boutet de Monvel, Jos\'e Alfredo
Ca\~nizo, Eric Carlen, Nicolas Fournier, Fran\c cois Golse, Arnaud
Guillin, Maxime Hauray, Joel Lebowitz, Pierre-Louis Lions, Richard
Nickl, James Norris, Mario Pulvirenti, Judith Rousseau, Laure
Saint-Raymond and C\'edric Villani for fruitful comments and
discussions, and Amit Einav for his careful proofreading of parts of
the manuscript. We would also like to mention the inspiring courses by
Pierre-Louis Lions at Coll\`ege de France on ``Mean-Field Games'' in
2007-2008 and 2008-2009, which triggered our interest in the
functional analysis aspects of this topic. Finally we thank the
anonymous referees for helpful suggestions on the presentation.

\bigskip


\section{Introduction and main results}
\label{sec:intro}
\setcounter{equation}{0}
\setcounter{theo}{0}


\subsection{The Boltzmann equation}
\label{sec:introEB}

The Boltzmann equation (Cf. \cite{Ce88} and \cite{CIP}) describes the
behavior of a dilute gas when the only interactions taken into account
are binary collisions. It is given by
\begin{equation*}
  \label{eq:Boltzmann-complete}
  \frac{\partial f}{\partial t} + v \cdot \nabla_x f = Q(f,f), \qquad
  x \in \Omega, \quad v \in \R^d, \quad t \ge 0,
\end{equation*}
where $Q=Q(f,f)$ is the bilinear {\em collision operator} acting only
on the velocity variable, $\Omega$ is the spatial domain and $d \ge 2$
is the dimension. Some appropriate boundary conditions need to be
imposed.

In the case where the distribution function is assumed to be
independent of the position $x$, we obtain the so-called {\it
  spatially homogeneous Boltzmann equation}:
 \begin{equation}\label{el}
   \frac{\partial f}{\partial t}(t,v) 
   = Q(f,f)(t,v), \qquad  v \in \R^d, \quad t \geq 0,
 \end{equation}
which will be studied in this paper. 

Let us now focus on the collision operator $Q$. It is defined by the
bilinear symmetrized form
 \begin{equation}\label{eq:collop}
 Q(g,f)(v) = \frac12\,\int _{\R^d \times \mathbb{S}^{d-1}} B(|v-v_*|, \cos \theta)
         \left(g'_* f' + g' f_* '- g_* f - g f_* \right) \, {\rm d}v_* \, {\rm d}\sigma,
 \end{equation}
where we use the shorthands $f=f(v)$, $f'=f(v')$, $g_*=g(v_*)$ and
$g'_*=g(v'_*)$. Moreover, $v'$ and $v'_*$ are parametrized by
 \begin{equation}\label{eq:rel:vit}
   v' = \frac{v+v_*}2 + \frac{|v-v_*|}2 \, \sigma, \qquad 
   v'_* = \frac{v+v_*}2 - \frac{|v-v_*|}2 \, \sigma, \qquad 
   \sigma \in \mathbb{S}^{d-1}. 
 \end{equation}
 Finally, $\theta\in [0,\pi]$ is the \emph{deviation angle} between $v'-v'_*$
 and $v-v_*$ defined by 
\[
\cos \theta = \sigma \cdot \hat u, \quad u = v-v_*, \quad \hat u =
\frac{u}{|u|},
\] 
and $B$ is the \emph{collision kernel} determined by the physical
context of the problem. 

The Boltzmann equation has the following fundamental
formal properties: first, it conserves mass, momentum and energy, i.e. 
  \begin{equation*}
  \frac{{\rm d}}{{\rm d}t} \int_{\R^d} f \, \phi(v) \, {\rm d}v =  \int_{\R^d}Q(f,f) \, \phi(v)\,{\rm d}v = 0, \qquad
  \phi(v)=1,v,|v|^2. \label{CON}
\end{equation*}
Second, it satisfies Boltzmann's celebrated $H$-theorem:
\begin{equation*} 
  - \frac{{\rm d}}{{\rm d}t} \int_{\R^d} f \, \ln f \, {\rm d}v = 
  - \int_{\R^d} Q(f,f)\, \ln(f) \, {\rm d}v \geq 0.
\end{equation*}

We shall consider collision kernels of the form
\[
B=\Gamma(|v-v_{*}|) \, b(\cos \theta) 
\]
where $\Gamma,b$ are nonnegative functions.
In dimension $d=3$, we give a short overview of the main collision
kernels appearing in physics, highlighting the key models we
consider in this paper.
 \begin{itemize}
 \item[(1)] Short (finite) range interaction are usually modeled by
   the {\em hard spheres collision kernel}
   \begin{equation}\label{model:hs}
     {\bf (HS)}  \qquad B(|v-v_*|, \cos \theta)= C \,
     |v-v_*|, \quad C>0. 
 \end{equation}
\item[(2)] Long-range interactions are usually modeled by collision
   kernels derived from interaction potentials 
   \[
   V(r) = C \, r^{-s}, \quad s >2, \ C>0. 
   \]
   They satisfy the formula 
   \[
   \Gamma(z)=|z|^\gamma \ \mbox{with } \ \gamma = (s-4)/s 
   \]
   and
   \[
   b(\cos \theta) \sim_{\theta \sim 0} C_b \,
   \theta^{-2-\nu} \mbox{ with } \nu = 2/s, \ C_b >0
   \]
   (b is in $L^1$ away from $\theta = 0$). More informations about
   this type of interactions can be found in~\cite{Ce88}.  This
   general class of collision kernels includes the {\em true Maxwell
     molecules collision kernel} when $\gamma=0$ and $\nu = 1/2$, i.e.
   \begin{equation}\label{model:tmm}
   {\bf (tMM)} \qquad B(|v-v_*|, \cos \theta)= b(\cos \theta)
   \sim_{\theta \sim 0} C_b \, \theta^{-5/2}.
 \end{equation}
 
   It also includes the so-called {\em Grad's cutoff Maxwell
     molecules} when the singularity in the $\theta$ variable is
   removed. For simplicity we will consider the model where
   \begin{equation}\label{model:gmm}
   {\bf (GMM)} \qquad B(|v-v_*|, \cos \theta)=1
   \end{equation}
which is an archetype of such collision kernels. 
 \end{itemize}
 

\subsection{Deriving the Boltzmann equation from many-particle
   systems}
\label{sec:questionderivation}

The question of deriving the Boltzmann equation from particle systems
(interacting via Newton's laws) is a famous problem. It is
related to the so-called $6$-th Hilbert problem proposed by Hilbert at
the International Congress of Mathematics at Paris in 1900: axiomatize
mechanics by ``\textit{developing mathematically the limiting processes
[\dots] which lead from the atomistic view to the laws of motion of
continua}''.

At least at the formal level, the correct limiting procedure has been
identified by Grad~\cite{Grad1949} in the late fourties and a clear
mathematical formulation of the open problem was proposed in
~\cite{Cerc1972} in the early seventies. It is now called the {\em
  Boltzmann-Grad} or {\em low density} limit. However the original
question of Hilbert remains largely open, in spite of a striking
breakthrough due to Lanford~\cite{Lanford}, who proved the limit for
short times (see also Illner and Pulvirenti \cite{MR849204} for a
close-to-vacuum result). The tremendous difficulty underlying this
limit is the {\em irreversibility} of the Boltzmann equation, whereas
the particle system interacting via Newton's laws is a \emph{reversible}
Hamiltonian system.

In 1954-1955, Kac~\cite{Kac1956} proposed a simpler and more tractable
problem: start from the Markov process corresponding to collisions
only, and try to prove the limit towards the {\em spatially
  homogeneous} Boltzmann equation. {\em Kac's jump process} runs as
follows: consider $N$ particles with velocities $v_1,\dots,v_N \in
\R^d$. Assign a random time for each pair of particles $(v_i,v_j)$
following an exponential law with parameter $\Gamma(|v_i-v_j|)$, and
take the smallest. Next, perform a collision $(v_i,v_j) \to (v_i ^*,
v_j ^*)$ according to a random choice of direction parameter, whose
law is related to $b(\cos \theta)$, and start again with the post
collision velocities. This process can be considered on $\R^{dN}$;
however it has some invariant submanifolds of $\R^{dN}$ (depending on
the number of quantities conserved under the collision), and can be
restricted to them. For instance, in the original simplified model of
Kac (scalar velocities, i.e. $d=1$) the process can be restricted to
$\Sp^{N-1}(\sqrt{ \EE N})$, the sphere with radius $\sqrt{\EE N}$,
where $\EE$ is the energy. In the more realistic models of the hard
spheres or Maxwell molecules (when $d=3$) the process can be
restricted to the following submanifold of $\R^{dN}$ associated to
elastic collisions invariants for $\MM \in \R^d$, $\EE \ge 0$:
\begin{equation}\label{BolSphere}
\mathcal S^N(\MM,\EE) := \left\{ \frac{\left|v_1-\MM\right|^2 + \dots +
  \left|v_N-\MM\right|^2}{N} =\EE \right\}
\cap \left\{ \frac{v_1 + \dots + v_N}{N} =\MM \right\}.
\end{equation}
Without loss of generality, we will consider the case $\MM=0$, using
Galilean invariance. We will denote by $\SS^N(\EE):=\SS^N(0,\EE)$ and
refer to these submanifolds as {\em Boltzmann spheres}.

Kac then formulated the following notions of {\em chaos} and {\em
  propagation of chaos}: Consider a sequence $(f^N)_{N \ge 1}$ of
probabilities on $E^N$, where $E$ is some given Polish space (e.g. $E
= \R^d$): the sequence is said to be $f$-\textit{chaotic} if
\[ 
f^N \sim f^{\otimes N}\quad\hbox{when}\quad N \to \infty
\]
for some given one-particle probability $f$ on $E$. The convergence is
to be understood as the convergence of the $\ell$-th marginal of $f^N$
to $f^{\otimes \ell}$, for any $\ell \in \N^*$, in the weak measure
topology. This is a \emph{low correlation} assumption.

It is an elementary calculation to see that if the probability
densities $f^N _t$ of the $N$-particle system were perfectly
tensorized \emph{during some time interval $t \in [0,T]$} (i.e. had
the form of an $N$-fold tensor product of a one particle probability
density $f_t$), then the latter $f_t$ would satisfy the nonlinear
limit Boltzmann equation during that time interval. But generally
interactions between particles instantaneously destroy this
``tensorization'' property and leave no hope to show its propagation
in time. Nevertheless, following Boltzmann's idea of molecular chaos,
Kac suggested that the weaker property of chaoticity can be expected
to propagate in time, in the correct scaling limit.

This framework is our starting point. Let us emphasize that the limit
performed in this setting is different from the Boltzmann-Grad
limit. It is in fact a {\em mean-field limit}. This limiting process
is most well-known for deriving Vlasov-like equations. In a companion
paper~\cite{MMW} we develop systematically our new functional approach
to the study of mean-field limits for Vlasov equations, McKean-Vlasov
equations, and granular gases equations.

\subsection{The notion of chaos and how to measure it}
\label{sec:notion-chaos-how}

Our goal in this paper is to set up a general and robust method for
proving the propagation of chaos with {\em quantitative rates} in terms
of the number of particles $N$ and of the final time of observation
$T$.  

Let us briefly discuss the notion of \emph{chaoticity}. The original
formulation in \cite{Kac1956} is: 
  A sequence $f^N \in P_{\mbox{{\tiny sym}}}(E^N)$ of symmetric\footnote{i.e. invariant
  according to permutations of the particles.}
  probabilities on $E^N$ is $f$-chaotic, for a given probability $f
  \in P(E)$, if for any $\ell \in \N^*$ and any $\varphi \in
  C_b(E)^{\otimes \ell}$ there holds
  \[
  \lim_{N \to \infty, N \ge \ell} \left\langle f^N, \varphi \otimes {\bf
      1}^{N-\ell} \right\rangle = \left\langle f^{\otimes \ell},
    \varphi \right\rangle.
  \]
Together with additional assumptions on the moments, this weak
convergence can be expressed for instance in Wasserstein distance as:
\[
\forall \, \ell \ge 1, \quad \lim_{N \to \infty, N \ge \ell} W_1 \left( \Pi_\ell \left( f^N\right)
    , f^{\otimes \ell} \right) =0
\]
where $\Pi_\ell$ denotes the marginal on the $\ell$ first
variables. We call this notion {\em finite-dimensional chaos}. 

In contrast with most previous works on this topic, we are interested
here in {\em quantitative chaos}: 
  Namely, we say that $f^N$ is $f$-chaotic with rate $\eps(N)$, where
  $\eps(N) \to 0$ as $N \to\infty$ (typically $\eps(N) \sim N^{-r}$, $r
  >0$ or $\eps(N) \sim (\ln N)^{-r}$, $r>0$), if for any $\ell \in \N^*$
  there exists $K_\ell \in (0,\infty)$ such that
  \begin{equation}\label{eq:finite-dim-chaos}
  W_1 \left( \Pi_\ell \left( f^N\right) , f^{\otimes \ell} \right) \le
  K_\ell \, \eps(N).
\end{equation}

Similar statements can be formulated for other metrics. For
instance, a convenient way to measure chaoticity is through duality:
for some normed space of {\em smooth functions} $\FF \subset C_b(E)$
(to be specified) and for any $\ell \in \N^*$ there exists $K_\ell \in
(0,\infty)$ such that for any $\varphi \in \FF^{\otimes\ell}$, $\|
\varphi \|_\FF \le 1$, there holds
\begin{equation*}
  \label{eq:chaos}
  \big| \left\langle \Pi_\ell \left[   f^N  \right] - f ^{\otimes \ell}, 
  \varphi  \right\rangle \big| 
  \le K_\ell \, \eps(N).
\end{equation*}
The Wasserstein distance $W_1$ is recovered when $\FF$ is the
space of Lipschitz functions.

Observe that in the latter statements the number of variables $\ell$
considered in the marginal is kept fixed as $N$ goes to infinity. It
is a natural question to know how the rate depends on $\ell$. As we
will see, the answer to this question is essential to the estimation
of a relaxation time that will be uniform in the number of
particles. We therefore introduce a stronger notion of {\em
  infinite-dimensional chaos}, based on {\em extensive}\footnote{By
  extensivity we mean here that the functional measuring the distance
  between two distributions should behave additively with respect to
  the tensor product.}  functionals. We
consider 
the following definition:
\[
\lim_{N \to \infty} \frac{W_1 \left( f^N, f^{\otimes N} \right)}{N} =0
\]
(with corresponding quantitative formulations\dots).  This amounts to
say that one can prove a sublinear control on $K_\ell$ in terms of
$\ell$ in \eqref{eq:finite-dim-chaos}. Variants for other extensive
metrics could easily be considered as well.

Finally one can formulate an even stronger notion of
{\em (infinite-dimensional) entropic chaos} (see \cite{CCLLV} and
definition~\eqref{def:RelativH} of the relative entropy below):
\[
\frac1N \, H\left( f^N \right | \gamma^N) \xrightarrow[]{N \to \infty} H \left(
  f  | \gamma\right), 
\]
where $\gamma^N$ is an invariant measure of the $N$-particle system,
which is $\gamma$-chaotic with $\gamma$ an invariant measure of the
limit equation. This notion of chaos obviously admits quantitative
versions as well. Moreover, it is particularly interesting as it
corresponds to the derivation of Boltzmann's entropy and $H$-theorem
from the entropies of the many-particle system. We shall come back to this
point.

Now, considering a sequence of symmetric $N$-particle densities
\[
f^N \in C([0,\infty);P_{\mbox{{\tiny sym}}}(E^N))
\]
and a $1$-particle density of the expected mean field limit 
\[
f \in C([0,\infty);P(E)),
\]
we say that there is {\em propagation of chaos} on some time interval
$[0,T)$, $T \in (0,+\infty]$, if the $f_0$-chaoticity of the initial
family $f^N_0$ implies the $f_t$-chaoticity of the family $f_t ^N$ for
any time $t \in [0,T)$, \emph{according to one of the definitions of
  chaoticity above.}

Note that the \emph{support} of the $N$-particle distributions
matters. Indeed the energy conservation implies that the evolution is
entirely decoupled on the different subspaces associated with this
invariant, e.g. each $\SS^N(\EE)$ for the different values of $\EE$
(we consider here centered distributions). On each such subspace the
$N$-particle process is ergodic and admits a unique invariant measure
$\gamma^N$, which is constant. However when considered on $\R^{dN}$
the $N$-particle process admits infinitely many invariant
measures. Therefore, in the study of the long-time behavior we will
often consider $N$-particle distributions that are supported on
$\SS^N(\EE)$ for appropriate energy $\EE$. We shall discuss the
construction of such chaotic initial data in Section~\ref{sec:chaos}.


\subsection{Kac's program}
\label{sec:kacs-program}

As was mentioned before, Kac proposed to derive the spatially
homogeneous Boltzmann equation from a many-particle Markov jump
process with binary collisions, via its master equation (the equation
on the law of the process). Intuitively, this amounts to considering
the spatial variable as a hidden variable inducing ergodicity and
markovian properties on the velocity variable. Although the latter
point has not been proved so far to our knowledge, it is worth noting
that it is a very natural guess and an extremely interesting open
problem (and possibly a very difficult one). Hence Kac introduced
artificial stochasticity as compared to the initial Hamiltonian
evolution, and raised a fascinating question: if we have to introduce
stochasticity, at least \emph{can we keep it under control through the
  process of deriving the Boltzmann equation and relate it to the
  dissipativity of the limit equation?}  \smallskip

Let us briefly summarize the main questions raised in or motivated by \cite{Kac1956}:
\begin{enumerate}
\item Kac's combinatorical proof (later to be extended to collision
  processes that preserve momentum and energy \cite{McKean1967}) had
  the unfortunate non-physical restriction that the collision kernel
  is bounded. These proofs were based on an infinite series ``tree''
  representation of the solution according to the collision history of
  the particles, as well as some sort of Leibniz formula for the
  $N$-particle operator acting on tensor product. The first open
  problem raised was: \emph{can one prove propagation of chaos for the
    hard spheres collision process?}


\item Following closely the spirit of the previous question, it is
  natural to ask whether {\em one can prove propagation of chaos for
    the true Maxwell molecules collision process}? (this is the other
  main physical model showing an unbounded collision kernel).  The
  difficulty here lies in the fact that the particle system can
  undergo infinite number of collisions in a finite time interval, and
  no ``tree'' representation of solutions is available. This is
  related to the interesting physical situation of \emph{long-range
  interaction}, as well as the interesting mathematical framework of
  \emph{fractional derivative operators} and \emph{L\'evy walk}.

\item Kac conjectures \cite[Eq.~(6.39)]{Kac1956} that (in our
  notations) the convergence
\[
\frac{H(f^N _t|\gamma^N)}{N} \to H(f_t|\gamma)
\]
is propagated in time, which would imply Boltzmann's $H$-theorem
from the monotonicity of $H(f^N|\gamma^N)/N$ for the Markov
process. He concludes with: ``{\it If the above steps could be made
  rigorous we would have a thoroughly satisfactory justification of
  Boltzmann's $H$-theorem.}''  In our notation the question is: {\em
  can one prove propagation of entropic chaos in time}?

\item Eventually, Kac discusses the relaxation times, with the goal of
  deriving the relaxation times of the limit equation from those of
  the many-particle system. This requires the estimations to be
  \emph{independent of the number of particles} on this relaxation
  times. As a first natural step he raises the question of obtaining
  such uniform estimates on the $L^2$ spectral gap of the Markov
  process. 
  This question has
  triggered many beautiful works (see the next subsection), however it
  is easy to convince oneself (see the discussion in \cite{CCLLV} for
  instance) that there is no hope of passing to the limit $N \to
  \infty$ in this spectral gap estimate, even if the spectral gap is
  independent of $N$. The $L^2$ norm is catastrophic in infinite
  dimension. Therefore 
  we reframe this question in a setting which ``tensorizes correctly in
  the limit $N \to \infty$'', that is in our notation: {\em can one
    prove relaxation times {\it independent of the number of
      particles} on the normalized Wasserstein distance
    $W_1(f^N,\gamma^N)/N$ or on the normalized relative entropy
    $H(f^N|\gamma^N)/N$?} 
\end{enumerate}

This paper is concerned with solving these four questions.


\subsection{Review of known results}
\label{sec:review}

Kac \cite{Kac1956}-\cite{Kac1957} has proved point (1) in the case of
his one-dimensional toy model. The key point in his analysis is a
clever combinatorial use of an exponential formula for the solution,
expressing it in terms of an abstract derivation operator (reminiscent
of Wild sums~\cite{Wild}). It was generalized by McKean
\cite{McKean1967} to the Boltzmann collision operator but only for
``Maxwell molecules with cutoff'', i.e. roughly when the collision
kernel $B$ is constant. In this case the convergence of the
exponential formula is easily proved and this combinatorial argument
can be extended. Kac raised in \cite{Kac1956} the question of proving
propagation of chaos in the case of hard spheres and more generally
unbounded collision kernels, although his method seemed impossible to
extend (the problem is the convergence of this exponential formula, as
discussed in \cite{McKean1967} for instance).

During the seventies, in a very abstract and compact paper
\cite{Grunbaum}, Gr\"unbaum proposed another method for dealing with
the hard spheres model, based on the Trotter-Kato formula for
semigroups and a clever functional framework (partially reminiscent of
the tools used for mean-field limits for McKean-Vlasov equations).
Unfortunately this paper was incomplete for two reasons: (1) It was
based on two ``unproved assumptions on the Boltzmann flow'' (page
328): (a) existence and uniqueness for measure solutions and (b) a
smoothness assumption. Assumption (a) was recently proved in
\cite{Fo-Mo} using Wasserstein metrics techniques and in \cite{EM},
adapting the classical DiBlasio trick \cite{DiB74}. Assumption (b),
while inspired by the cutoff Maxwell molecules (for which it is true),
fails for the hard spheres model (cf. the counterexample constructed
by Lu and Wennberg \cite{LuW02}) and is somehow too "rough" in this
case.  (2) A key part in the proof in this paper is the expansion of
the ``$H_f$'' function, which is a clever idea by Gr\"unbaum ---
however, it is again too coarse for the hard spheres case (though
adaptable to the cutoff Maxwell molecules). Nevertheless it is the
starting point for our idea of developing a differential calculus in
spaces of probability measures in order to control fluctuations.

A completely different approach was undertaken by Sznitman in the
eighties \cite{S1} (see also Tanaka \cite{T2} for partial results
concerning non-cutoff Maxwell molecules). Starting from the
observation that Gr\"unbaum's proof was incomplete, he took on to give
a full proof of propagation of chaos for hard spheres by another
approach. His work was based on: (1) a new uniqueness result for
measures for the hard spheres Boltzmann equation (based on a
probabilistic reasoning on an enlarged space of ``trajectories''); (2)
an idea already present in Gr\"unbaum's approach: reduce the question
of the propagation of chaos to a law of large numbers on measures by
using a combinatorical argument on symmetric probabilities; (3) a new
compactness result at the level of the empirical measures; (4) the
identification of the limit by an ``abstract test function''
construction showing that the (infinite particle) system has
trajectories included in the chaotic ones. While Sznitman's method
proves propagation of chaos for the hard spheres, it doesn't provide
any rate for chaoticity as defined previously.

Let us also emphasize that in \cite{McK4} McKean studied fluctuations
around deterministic limit for 2-speed Maxwellian gas and for the
usual hard spheres gas. In \cite{GM} Graham and M\'el\'eard obtained a
rate of convergence (of order $1/ N$ for the $\ell$-th marginal) on
any bounded finite interval of the $N$-particle system to the
deterministic Boltzmann dynamics in the case of Maxwell molecules
under Grad's cut-off hypothesis. Lastly, in \cite{FM7,FM10} Fournier
and M\'el\'eard obtained the convergence of the Monte-Carlo
approximation (with numerical cutoff) of the Boltzmann equation for
true Maxwell molecules with a rate of convergence (depending on the
numerical cutoff and on the number $N$ of particles).

Kac was raising the question of how to control the asymptotic behavior
of the particle system in the many-particle limit. As a first step, he
suggested to study the behavior of the spectral gap in $L^2(\mathbb
S^{N-1}(\sqrt N))$ of the Markov process as $N$ goes to infinity and
conjectured it to be bounded away from zero uniformly in terms of
$N$. This question has been answered only recently in
\cite{janvresse,CarlenCL2003} (see also
\cite{maslen,CarlenGeronimoLoss2008}). However the $L^2$ norm behaves
geometrically in terms of $N$ for tensorized data; this leave no hope
to use it for estimating the long-time behavior as $N$ goes to
infinity, as the time-decay estimates degenerate beyond times of order
$O(1/N)$. In the paper \cite{CCLLV}, the authors suggested to make use
of the relative entropy for estimating the relaxation to equilibrium,
and replace the $L^2$ spectral gap by a linear inequality between the
entropy and the entropy production. They constructed entropically
chaotic initial data, following the definition mentioned above, but
did not succeed in proving the propagation in time of this chaoticity
property. Moreover the linear inequality between the entropy and the
entropy production is conjectured to be wrong for any physical
collision kernels in \cite{Villani2003}, see for instance
\cite{MR2786394,Einav-preprint} for partial confirmations to this
conjecture.

After we had finished writing our paper, we were told about the recent
book \cite{Kolokoltsov} by Kolokoltsov. This book focuses on
fluctuation estimates of central limit theorem type. It does not prove
quantitative propagation of chaos but weaker estimates (and on finite
time intervals), and moreover we were not able to extract from it a
full proof for the cases with unbounded collision kernels (e.g. hard
spheres). However the comparison of generators for the many-particle
and the limit semigroup is reminiscent of our work.

\subsection{The abstract method}

Our initial inspiration was Gr\"unbaum's paper \cite{Grunbaum}. Our
original goal was to construct a general and robust method able to
deal with mixture of jump and diffusion processes, as it occurs in
granular gases (see our companion paper \cite{MMW} for results in this
direction). This lead us to develop a new theory, inspired from more
recent tools such as the course of Lions on ``Mean-field games'' at
Coll\`ege de France, and the master courses of M\'el\'eard
\cite{Meleard1996} and Villani \cite{VillaniMF} on mean-field
limits. One of the byproduct of our paper is that we make Gr\"unbaum's
original intuition fully rigorous in order to prove propagation of
chaos for the Boltzmann velocities jump process for hard spheres.

Like Gr\"unbaum \cite{Grunbaum}, we use a duality argument.  We
introduce the semigroup $S^N_t$ associated to the flow of the
$N$-particle system in $P(E^N)$, and the semigroup $T^N_t$ in
$C_b(E^N)$ in duality with it. We also introduce the (nonlinear)
semigroup $S^{N \! L}_t$ in $P(E)$ associated to the mean-field
dynamics (the exponent ``NL'' signifies the nonlinearity of the limit
semigroup, due to the interaction) as well as the associated (linear)
{\em ``pullback'' semigroup} $T^\infty_t$ in $C_b(P(E))$ (see
Subsection~\ref{sec:mean-field-limiting} for the definition). Then we
prove stability and convergence estimates between the linear
semigroups $T^N_t$ and $T^\infty_t$ as $N$ goes to infinity.

The preliminary step consists of defining a common functional
framework in which the $N$-particle dynamics and the limit dynamics
make sense so that we can compare them. Hence we work at the level of
the ``full'' limit space $P(P(E))$ and, at the dual level,
$C_b(P(E))$. Then we identify the regularity required in order to
prove the ``\emph{consistency estimate}'' between the generators $G^N$
and $G^\infty$ of the dual semigroups $T^N _t$ and $T^\infty _t$, and
finally prove a corresponding ``\emph{stability estimate}'' on
$T^\infty _t$, based on stability estimates at the level of the limit
semigroup $S^{N \!  L}_t$.

The latter crucial step leads us to introduce an abstract differential
calculus for functions acting on measures endowed with various metrics
associated with the weak or strong topologies. More precisely, we shall define
functions of class $C^{1,\eta}$ on the space of probability measures
by working on subspaces of the space of probability measures, for
which the tangent space has a Banach space structure. This
``stratification'' of subspaces is related to the conservation
properties of the flows $S^N_t$ and $S^{N \!  L}_t$. This notion is
related but different from the notions of differentiability developed
in the theory of gradient flow by Ambrosio, Otto, Villani and
co-authors in \cite{AmbrosioBook,OttoJK1998,OttoV2000}, and from the
one introduced by Lions in \cite{PLL-cours}.

Another viewpoint on our method is to consider it as some kind of
accurate version (in the sense that it establishes a rate of
convergence) of the BBGKY hierarchy method for proving propagation of
chaos and mean-field limit on statistical solutions. This viewpoint is
extensively explored and made rigorous in Section~\ref{sec:bbgky}
where we revisit the BBGKY hierarchy method in the case of a
collisional many-particle system, as was considered for instance in
\cite{ArkerydCI99}. The proof of uniqueness for statistical solutions
to the hierarchy becomes straightforward within our framework by using
differentiability of the limit semigroup as a function acting on
probabilities.

This general method is first exposed at an abstract level in
Section~\ref{sec:abstract-theo} (see in particular
Theorem~\ref{theo:abstract}). This method is, we hope, interesting per
se for several reasons: (1) it is fully quantitative, (2) it is highly
flexible in terms of the functional spaces used in the proof, (3) it
requires a minimal amount of informations on the $N$-particle systems
but more stability informations on the limit PDE (we intentionally
presented the assumptions in a way resembling the proof of the
convergence of a numerical scheme, which was our ``methodological
model''), (4) the ``differential stability'' conditions that are
required on the limit PDE seem (to our knowledge) new, at least for
the Boltzmann equation.

\subsection{Main results}
\label{sec:intromainresults}

Let us give some simplified versions of the main results in this
paper. All the abstract objects will be fully introduced in the next
sections with more details.

\begin{theo}[Summary of the main results]\label{theo:main}
  Consider some centered initial distribution $f_0 \in P(\R^d) \cap
  L^\infty(\R^d)$ with finite energy $\EE$, and with compact support
  or moment bounds. Consider the corresponding solution
  $f_t$ to the spatially homogeneous Boltzmann equation for hard
  spheres or Maxwell molecules, and the solution $f^N_t$ of the
  corresponding Kac's jump process starting either from the $N$-fold
  tensorization of $f_0$ or the latter conditioned to $\mathcal S^N(\EE)$.

  One can classify the results established into three main statements:
  \begin{enumerate}
  \item {\bf Quantitative uniform in time propagation of chaos, finite
      or infinite dimensional, in weak measure distance}
    (cf. Theorems~\ref{theo:tMM}-\ref{theo:max-wasserstein}-\ref{theo:HS}-\ref{theo:hs-wasserstein}):
\begin{equation*}
  \forall \, N \ge 1, \ \forall \, 1 \le \ell  \le N, \quad 
  \sup_{t \ge 0} {W_1 \left( \Pi_{\ell} f^N
      _t, \left( f_t ^{\otimes \ell} \right) \right) \over \ell} \le \alpha(N)
\end{equation*}
for some $\alpha(N) \to 0$ as $N \to \infty$. 
In the hard spheres case, the uniformity in time of this estimate is
only proved when the distribution is conditioned to $\mathcal
S^N(\EE)$. Moreover, the proof provides explicit estimates on the rate
$\alpha$, which are controlled by a power law for Maxwell molecules, and
by a power of a logarithm for hard spheres.

\item {\bf Propagation of entropic chaos}
  (cf. Theorem~\ref{theo:entropy}-(i)): Consider the case where the
  initial datum of the many-particle system has support included in
  $\mathcal S^N(\EE)$. If the initial datum is entropically chaotic in
  the sense
$$
\frac1N \, H\left( f^N _0 | \gamma^N \right) \xrightarrow[]{N \to
  +\infty} H\left(f_0 | \gamma \right) 
$$
with 
\begin{equation}\label{def:RelativH} 
H\left( f^N _0 | \gamma^N \right) :=
\int_{\SS^N(\EE)} \ln \frac{{\rm d}f^N _0}{{\rm d}\gamma^N} \,
{\rm d}f^N _0 (v) \ \mbox{ and } \ H\left(f_0 | \gamma \right) :=
\int_{\R^d} f_0 \, \ln \frac{f_0}{\gamma} \, {\rm d}v 
\end{equation}
where $\gamma$ is the Gaussian equilibrium with energy $\EE$ and
$\gamma^N$ is the uniform probability measure on $\mathcal S^N(\EE)$, then
the solution is also entropically chaotic for any later time:
$$
\forall \, t \ge 0, \quad \frac1N \, H\left( f^N _t | \gamma^N \right)
\xrightarrow[]{N \to +\infty} H\left(f_t | \gamma \right).
$$
This proves the derivation of the $H$-theorem this context, i.e. the
monotonic decay in time of $H(f_t|\gamma)$, since for any $N
\ge 2$, the functional $H(f^N_t | \gamma^N)$ is monotone decreasing
in time for the Markov process. 

\item {\bf Quantitative estimates on relaxation times, independent of
    the number of particles}
  (cf. Theorems~\ref{theo:max-wasserstein}-\ref{theo:hs-wasserstein}
  and Theorem~\ref{theo:entropy}-(ii)): Consider the case where the
  initial datum of the many-particle system has support included in
  $\mathcal S^N(\EE)$. Then we have
\begin{equation*}
  \forall \, N \ge 1, \ \forall \, 1 \le \ell  \le N, \  \forall \, t
  \ge 0, \quad  
    {W_1 \left( \Pi_{\ell} f^N
      _t, \Pi_\ell \left( \gamma^N \right) \right) \over \ell} \le
  \beta(t) 
\end{equation*}
for some $\beta(t) \to 0$ as $t \to \infty$. Moreover, the proof
provides explicit estimates on the rate $\beta$, which are controlled
by a power law for Maxwell molecules, and by a power of a logarithm
for hard spheres.

Finally in the case of Maxwell molecules, if we assume furthermore that
the Fisher information of the initial datum $f_0$ is finite: 
\begin{equation}
\label{def:infoFisher}
I(f_0) := 
  \int_{\R^d} \frac{\left| \nabla_v f_0 \right|^2}{f_0} \, {\rm d}v <
  +\infty,
\end{equation}
then the following estimate also holds:
$$ 
\forall \, N \ge 1, \quad \frac1N \, H\left( f^N _t | \gamma^N \right)
\le \beta(t) 
$$ 
for some function $\beta(t) \to 0$ as $t \to \infty$, with the same
kind of estimates.
  \end{enumerate}
\end{theo}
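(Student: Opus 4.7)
The plan is to deploy the abstract semigroup-comparison machinery sketched after Gr\"unbaum's idea: lift the $N$-particle evolution $S^N_t$ to the pullback semigroup $T^N_t$ acting on $C_b(P(\R^d))$, lift the nonlinear limit flow $S^{N\!L}_t$ to its pullback $T^\infty_t$, and estimate $\|T^N_t\varphi - T^\infty_t\varphi\|$ on suitable smooth observables $\varphi$. The identity $T^N_t-T^\infty_t=\int_0^t T^N_{t-s}(G^N-G^\infty)T^\infty_s\,ds$ will reduce the proof to two ingredients: a \emph{consistency estimate} $\|(G^N-G^\infty)\varphi\|\le C(\varphi)/N$ between the generators at the level of ``polynomials in the empirical measure,'' and a \emph{stability estimate} expressing that $\varphi\circ S^{N\!L}_s$ remains a $C^{1,\eta}$ function of the measure argument uniformly on a set that is invariant under the dynamics. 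Testing the resulting bound against tensor observables $\varphi=\phi_1\otimes\cdots\otimes\phi_\ell$ and optimizing produces the quantitative chaos estimate $W_1(\Pi_\ell f^N_t,f_t^{\otimes\ell})/\ell\le\alpha(N,t)$ of statement (1).

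\textbf{From finite-time to uniform-in-time.} Once the finite-time statement is in hand, I would gain uniformity in time by coupling it with the relaxation of the nonlinear limit equation towards the Gaussian $\gamma$. The idea is a bootstrap: split $W_1(\Pi_\ell f^N_t,\Pi_\ell\gamma^N)\le W_1(\Pi_\ell f^N_t, f_t^{\otimes\ell})+W_1(f_t^{\otimes\ell},\gamma^{\otimes\ell})+W_1(\gamma^{\otimes\ell},\Pi_\ell\gamma^N)$. The middle term is controlled by the known $W_1$ (or weighted) decay of the spatially homogeneous Boltzmann equation, the third by the classical chaoticity of the microcanonical measure on $\SS^N(\EE)$, while for the first one uses the finite-time chaos estimate on windows $[kT_0,(k+1)T_0]$ restarting from chaotic-enough data. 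Conditioning the initial data to $\SS^N(\EE)$ is what keeps the energy invariant and thus preserves the relevant equilibrium; otherwise a small diffusion of energies across the $\SS^N(\EE')$ levels would prevent uniformity. This delivers statement (3) in $W_1$, with an explicit $\beta(t)$ inherited from the known rate of relaxation for the limit equation (polynomial for true Maxwell molecules, logarithmic for hard spheres due to the worse tails control).

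\textbf{Entropic chaos and the $H$-theorem derivation.} For statement (2) and the entropic part of (3), the route I would take combines the $W_1$ propagation of chaos with two one-sided inequalities. First, the superadditivity of the relative entropy, together with the fact that $H(f^N_t|\gamma^N)/N$ is monotone non-increasing along the Markov flow, yields at each time $\limsup_N H(f^N_t|\gamma^N)/N\le H(f_t|\gamma)$. The reverse direction $\liminf_N H(f^N_t|\gamma^N)/N\ge H(f_t|\gamma)$ would be obtained from the $W_1$-chaos plus a compactness / HWI-type lower semicontinuity argument on the Boltzmann sphere, using the uniform Fisher-information bound propagated by the limit equation under \eqref{def:infoFisher}. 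The conclusion $H(f^N_t|\gamma^N)/N\to H(f_t|\gamma)$, together with the monotonicity of the $N$-particle entropy, then transfers monotonicity to $H(f_t|\gamma)$, rigorously producing the $H$-theorem from the many-particle dissipativity as Kac had envisaged.

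\textbf{Main obstacle.} The delicate part, by a wide margin, will be the \emph{stability estimate}: proving that the nonlinear Boltzmann semigroup $S^{N\!L}_t$ depends differentiably on its initial datum, in a norm compatible with the weak topology used on $P(\R^d)$, and with constants that remain controlled on the invariant sets where the dynamics live. For true Maxwell molecules this requires handling the non-integrable angular singularity and the associated fractional-derivative structure, presumably via Fourier-based contraction arguments \`a la Toscani--Villani refined up to $C^{1,\eta}$ regularity of the flow map. For hard spheres the difficulty is the unbounded relative velocity in the kernel together with the Lu--Wennberg type pathologies of measure solutions, which forces one to work in weighted spaces that propagate the right moments and to perform the differentiation of $f\mapsto S^{N\!L}_t f$ only on a stratified sub-space of measures carrying enough moments. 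Building this differential calculus on probability measures and matching it with the available regularity of the Boltzmann flow is where the bulk of the analytic work will go, and it is what the final sections of the paper are designed to carry out.
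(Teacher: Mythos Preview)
Your core abstract strategy --- the Trotter-type identity $T^N_t\pi_N-\pi_N T^\infty_t=\int_0^t T^N_{t-s}(G^N\pi_N-\pi_N G^\infty)T^\infty_s\,ds$, reduced to a consistency estimate on the generators plus a $C^{1,\eta}$ stability estimate on the nonlinear flow --- is exactly what the paper does, and your identification of the stability estimate as the main analytic obstacle is accurate. Two of your later steps, however, contain genuine gaps.

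\textbf{Uniformity in time.} The ``restart on windows $[kT_0,(k+1)T_0]$'' bootstrap does not work: after one window the $N$-particle law $f^N_{T_0}$ is neither tensorized nor conditioned to the sphere in the form the abstract theorem needs, and iterating a finite-time chaos bound from approximately-chaotic data would accumulate errors you cannot close. The paper instead obtains uniformity \emph{directly} by proving assumption {\bf (A4)} with a time-integrable constant $\int_0^\infty[\cdot]\,dt<\infty$: for Maxwell molecules via Fourier-based $|\cdot|_2$/$|\cdot|_4$ decay of the difference and of the linearized flow (Lemmas~\ref{lem:a4maxwellfourier}--\ref{lem:a4maxwellfourier2}), for hard spheres via the $L^1(e^{z|v|})$ semigroup decay of \cite{MouhotCMP} combined with moment-production (Lemma~\ref{lem:hsunif}). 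The relaxation estimate $W_1(f^N_t,\gamma^N)/N\le\beta(t)$ then uses your three-term splitting, but with the first term already controlled by the \emph{uniform-in-time} chaos, and --- crucially --- an additional interpolation against the $N$-particle $L^2$ spectral gap (\cite{CarlenGeronimoLoss2008}, \cite{CCL-preprint}) which handles the regime where $N$ is small relative to $t$; without this second ingredient $\beta(t)$ would not go to zero uniformly in $N$.

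\textbf{Entropic chaos.} Your limsup argument is wrong: monotonicity of $H(f^N_t|\gamma^N)/N$ only gives $\limsup_N H(f^N_t|\gamma^N)/N\le\lim_N H(f^N_0|\gamma^N)/N=H(f_0|\gamma)$, not $\le H(f_t|\gamma)$, and superadditivity of relative entropy goes the other way. The paper's mechanism (Section~\ref{sec:prop-entr-chaos}) is different and more delicate: it uses the entropy \emph{balance} identity $H(f^N_t|\gamma^N)/N+\int_0^t D^N(f^N_s)\,ds=H(f^N_0|\gamma^N)/N$ at both the $N$-particle and limit levels, together with lower semicontinuity of \emph{both} the relative entropy and the entropy production $D^N$ (Lemma~\ref{lem:cvxentropy}, the latter via a Jensen inequality passing from $f^N$ to its $2$-marginal). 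Since the sum converges by the assumed initial entropic chaos, and each nonnegative piece has liminf at least the corresponding limit piece, both pieces must converge. The HWI inequality enters only later, in Section~\ref{sec:relaxation-h-theorem}, to convert the Wasserstein relaxation into the entropic relaxation of statement~(3), using a uniform Fisher-information bound on the sphere (Lemma~\ref{lem:fisherN}).
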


\subsection{Some open questions}
\label{sec:open-question}

Here are a few questions among those raised by this work: 
\begin{enumerate}
\item What about the optimal rate in the chaoticity estimates along
  time? Our method reduces this question to the chaoticity estimates
  at initial time, and therefore to the optimal rate in the
  quantitative law of large numbers for measures according to various
  weak measure distances.
\item What about the optimal rate in the relaxation times (uniformly
  in the number of particles)? Spectral gap studies predict
  exponential rates, both for the many-particle system and for the
  limit system, however our rates are far from it!
\item Can uniform in time propagation of chaos be proved for non-reversible
  jump processes (such as inelastic collision processes) for which the
  invariants measures $\gamma^N$ and $\gamma$ are not explicitely
  known (e.g. granular gases)?
\end{enumerate}

\subsection{Plan of the paper}
\label{sec:plan}

In Section~\ref{sec:abstract-setting} we set the abstract functional
framework together with the general assumption and in
Section~\ref{sec:abstract-theo} we state and prove our main abstract
theorem (Theorem~\ref{theo:abstract}). In Section~\ref{sec:chaos} we
present some tools and results on weak measure distances, on the
construction of initial data with support on the Boltzmann sphere
$\SS^N(\EE)$
for the $N$-particle system, and on the sampling process of the limit
distribution by empirical measures. In Section~\ref{sec:BddBoltzmann}
we apply the method to (true) Maxwell molecules
(Theorems~\ref{theo:tMM} and \ref{theo:max-wasserstein}). In
Section~\ref{sec:hardspheres} we apply the method to hard spheres
(Theorems~\ref{theo:HS} and
\ref{theo:hs-wasserstein}). Section~\ref{sec:h-theorem-entropic} is
devoted to the study of entropic chaos. Lastly, in
Section~\ref{sec:bbgky} we revisit the BBGKY hierarchy method for the
spatially homogeneous Boltzmann equation in the light of our
framework.


\section{The abstract setting}
\label{sec:abstract-setting}
\setcounter{equation}{0}
\setcounter{theo}{0}


In this section we shall state and prove the key abstract result. This will
motivate the introduction of a general functional framework.

\subsection{The general functional framework of the duality approach}
\label{framework}

Let us set up the framework. Here is a diagram which sums up the
duality approach (norms and duality brackets shall be specified in
Subsections~\ref{sec:funct-set}):

\begin{displaymath}
  \xymatrix{
    E^N / \SSS^N \ar[dd]_{\pi^N_E = \mu^N _{\, \cdot}} 
    \ar[rrrrr]^{\mbox{
    Kolmogorov }} 
    \ar@/^/@<+3ex>[rrrrrrrr]^{\mbox{observables}}
    & & & & & P_{\mbox{{\tiny sym}}}(E^N) \ar[dd]_{\pi^N_P}
    \ar[rrr]^{\mbox{duality}} 
    & & &  C_b(E^N) \ar[lll] \ar@/^/[dd]^{R^N} \\
    & & & & & & & \\
    P_N(E) \subset P(E)  
    \ar[rrrrr]^{\mbox{ 
    Kolmogorov }} & & & & & 
    P(P(E)) \ar[rrr]^{\mbox{duality}} & & &  
    C_b\left(P(E)\right) \ar[lll] \ar@/^/[uu]^{\pi^N_C} 
  }
\end{displaymath}
\medskip

In this diagram:
\smallskip

\noindent
- $E$ denotes a \emph{Polish space}:
\begin{quote} 
  This is a separable completely metrizable topological space. We
  shall denote by $d_E$ the distance on this space in the sequel. 
\end{quote}
\smallskip

\noindent
- $\SSS^N$ denotes the \emph{$N$-permutation group}.
\smallskip

\noindent
- $P_{\mbox{{\tiny sym}}}(E^N)$ denotes the set of \emph{symmetric
  probabilities} on $E^N$: 

\begin{quote} Given a permutation $\sigma \in \SSS^N$, a vector 
\[
V = (v_1, \dots, v_N) \in E^N,
\] a function $\varphi \in C_b(E^N)$ and a probability $f^N \in
P(E^N)$, we successively define 
\[
V_\sigma = (v_{\sigma(1)}, \dots, v_{\sigma(N)}) \in E^N,
\]
and 
\[
\varphi_\sigma \in C_b(E^N) \ \mbox{ by setting } \ \varphi_\sigma(V) =
\varphi(V_\sigma)
\] 
and finally 
\[
f^N_\sigma \in P(E^N) \ \mbox{ by setting } \ \left\langle
f^N_\sigma, \varphi \right\rangle = \left\langle f^N, \varphi_\sigma \right\rangle.
\]
We then say that a probability $f^N$ on $E^N$ is symmetric if it is
invariant under permutations: 
\[
\forall \, \sigma \in \SSS^N, \quad f^N_\sigma = f^N.
\]
\end{quote}
\smallskip

\noindent
- The probability measure $\mu^N _V$ denotes the {\em empirical
  measure}:
\begin{quote}
  \[
  \mu^N _V := \frac1N \, \sum_{i=1} ^N \delta_{v_i},
  \quad V=(v_1,\dots, v_N) \in E^N
  \]
  where $\delta_{v_i}$ denotes the Dirac mass on $E$ at point $v_i \in
  E$.
\end{quote}
\smallskip

\noindent
- $P_N(E)$ denotes the subset $\{\mu^N_V, \, V \in E^N \}$ of
empirical measures of $P(E)$.
\smallskip

\noindent
- $P(P(E))$ denotes the space of probabilities on the Polish space
$P(E)$ (endowed for instance with the Prokhorov distance), and this is
again a Polish space.
\smallskip

\noindent
- $C_b\left(P(E)\right)$ denotes the space of continuous and bounded
  functions on $P(E)$:
\begin{quote}
This space shall be endowed with either the weak or
  strong topologies (see Subsection~\ref{sec:funct-set}), and later
  with some metric differential structure. 
\end{quote}
\smallskip

\noindent
- The map $\pi^N_E$ from $E^N / \SSS^N$ to $P_N(E)$ is defined by
  \[
  \forall \, V \in E^N / \SSS^N, \quad \pi^N_E(V)  := \mu^N _V.
  \]
\smallskip

\noindent
- The map $\pi^N_C$ from $C_b(P(E))$ to $C_b(E^N)$ is defined by
  \[ 
  \forall \, \Phi \in C_b\left(P(E)\right), \ \forall \, V \in E^N, \
  \left(\pi^N_C\Phi\right)(V) := \Phi\left( \mu^N _V \right).
  \]
\smallskip

\noindent
- The map $R^N$ from $C_b(E^N)$ to $C_b(P(E))$ is defined by:
\begin{quote}
 \[ 
  \forall \, \varphi \in C_b(E^N), \ \forall \, \rho \in P(E), \quad
  R^N_\varphi (\rho) := \left\langle \rho^{\otimes N}, \varphi
  \right\rangle.
  \]
\end{quote}
\smallskip

\noindent 
- The map $\pi^N_P$ from $P_{\mbox{{\tiny sym}}}(E^N)$ to $P(P(E))$
is defined by: 
\begin{quote}
  \[
  \left\langle \pi^N_P f^N,\Phi
  \right\rangle = \left\langle f^N,\pi^N_C \Phi \right\rangle =
    \int_{E^N} \Phi\left( \mu^N _V \right) \, {\rm d}f^N(v)
  \]
  for any $f^N \in P_{\mbox{{\tiny sym}}}(E^N)$ and any $\Phi \in
  C_b(P(E))$, where the first bracket means $\langle \cdot , \cdot
  \rangle_{P(P(E)),C_b(P(E))}$ and the second bracket means $\langle
  \cdot , \cdot \rangle_{P(E^N),C_b(E^N)}$.
\end{quote}
\medskip

Let us now discuss the ``horizontal'' arrows: 
\medskip

\noindent
- The arrows pointing from the first column to the second one consists
in writing the {\em Kolmogorov} equation associated with the
many-particle stochastic Markov process.
\medskip

\noindent
- The arrows pointing from the second column to the third column
consists in writing the dual evolution semigroup (note that the
$N$-particle dynamics is linear). As we shall discuss later the dual
spaces of the spaces of probabilities on the phase space can be
interpreted as the spaces of observables on the original systems.
\medskip


Our functional framework shall be applied to weighted spaces of
probability measures rather than directly in $P(E)$. More precisely,
for a given weight function $m : E \to \R_+$ we shall use affine
subsets of the weighted space of probability measures
  \begin{equation*}\label{def:Mmrho}
    \left\{ f \in P(E); \ 
      M_m(f) := \langle f, m \rangle < \infty \right\}
  \end{equation*}
  as our basis functional spaces. Typical examples are 
  $m(v) := \tilde m (d_E (v,v_0))$ for some fixed $v_0 \in E$ with
  $\tilde m(z) = z^k$ or $\tilde m(z) = e^{a \, z^k}$, $a,k>0$. More
  specifically when $E= \R^d$, we shall use $m(v) = \langle v
  \rangle^k := (1 + |v|^2)^{k/2}$ or $m(v) = e^{a \, |z|^k}$, $a, k >
  0$.

  We shall sometimes abuse notation by writing $M_k$ for $M_m$ when
  $\tilde m(z) = z^k$ or $m(v)=\langle v \rangle^k$ in the examples
  above. We shall denote by $M^1$ the space of finite signed measures
  endowed with the total variation norm, and $M^1_m$ the space of
  finite signed measures $h$ whose variation $|h|$ satisfies $\langle
  |h|, m \rangle <+\infty$, and endowed with the total variation
  norm. Again we contract the notation as $M^1_k$ when $\tilde m(z) =
  z^k$ or $m(v) = \langle v \rangle^k$.

\subsection{The $N$-particle semigroups}
\label{sec:semigroups}

Let us introduce the mathematical semigroups describing the evolution
of objects living in these spaces, for any $N \ge 1$.  \medskip

\noindent {\em Step~1.} Consider the trajectories $\VV^N_t \in E^N$,
$t \ge 0$, of the particles (Markov process viewpoint).
We make the further assumption that this flow commutes
with permutations:
\begin{quote}
  For any $\sigma \in \SSS^N$, the solution at time $t$ starting from
  $\left(\VV^N_0\right)_\sigma$ is $\left(\VV^N_t\right)_\sigma$.
\end{quote}
This mathematically reflects the fact that particles are
indistinguishable.  \medskip

\noindent {\em Step~2.} This flow on $E^N$ yields a corresponding
semigroup $S^N _t$ acting on $P_{\mbox{{\tiny sym}}}(E^N)$ for the
probability density of particles in the phase space $E^N$ (statistical
viewpoint), defined through the formula 
\begin{equation*}
  \forall \, f^N_0 \in P_{\mbox{{\tiny sym}}}(E^N), \ \varphi \in C_b(E^N),
  \quad  \left \langle S^N _t (f^N_0), \varphi \right \rangle 
  = {\bf E} \left(\varphi\left(\VV^N _t\right)\right)
\end{equation*}
where the bracket obviously denotes the duality bracket between
$P(E^N)$ and $C_b(E^N)$ and ${\bf E}$ denotes the expectation
associated to the space of probability measures in which the process
$\VV^N_t$ is built.  In other words, $f^N_t := S^N _t (f^N_0)$ is nothing but the law
of $\VV_t ^N$. Since the flow $(\VV^N_t)$ commutes with
permutation, the semigroup $S^N _t$ acts on $P_{\mbox{{\tiny
      sym}}}(E^N)$:  if the law $f^N_0$ of $\VV^N_0$ belongs to $P_{\mbox{{\tiny sym}}}(E^N)$, 
then for later times the law $f^N_t$ of $\VV^N_t$ also belongs to $P_{\mbox{{\tiny sym}}}(E^N)$. 
To the $C_0$-semigroup $S^N_t$ on $P_{\mbox{{\tiny sym}}}(E^N)$ one can associate a
linear evolution equation with a generator denoted by $A_N$:
\begin{equation*}
  \partial_t f^N = A^N f^N, \qquad f^N \in P_{\mbox{{\tiny sym}}}(E^N), 
\end{equation*}
which is the \emph{forward Kolmogorov (or Master) equation} on the law of
$\VV^N_t$.  \medskip

\noindent {\em Step~3.} We also consider the Markov semigroup $T^N_t$ acting on the
functions space $ C_b(E^N)$ of {\em observables} on the evolution
system $(\VV_t ^N)$ on $E^N$ (see the discussion in the next remark), 
which is {\em in duality} with the semigroup $S^N _t$, in the sense that:
\[ 
\forall \, f^N \in P(E^N), \ \varphi \in C_b(E^N), \quad \left\langle
  f^N, T^N _t (\varphi) \right\rangle = \left\langle S^N_t(f^N), \varphi
\right\rangle.
\] 
To the  $C_0$-semigroup $T^N_t$ on $C_b(E^N)$ we can
associate  the following {\em linear} evolution equation with
a generator denoted by $G^N$:
\begin{equation*}
  \partial_t \varphi = G^N(\varphi), \qquad \varphi \in C_b(E^N),
\end{equation*}
which is the \emph{backward Kolmogorov equation}.

\bigskip

\subsection{The mean-field limit semigroup}
\label{sec:mean-field-limiting}
We now define the evolution of the limit mean-field equation.
\medskip

\noindent {\em Step~1.} Consider a semigroup $S^{N \!  L}_t$ acting on
$P(E)$ associated with an evolution equation and some operator $Q$:
\begin{quote}
  For any $f_0 \in P(E)$ (assuming possibly some additional moment
  bounds), then $S^{N \!  L}_t(f_0) := f_t$ where $f_t \in
  C(\R_+, P(E))$ is the solution to
  \begin{equation}\label{eq:limit}
    \partial_t f_t  = Q(f_t), \quad f_{|t=0} = f_0.
  \end{equation}
  This semigroup and the operator $Q$ are typically \emph{nonlinear}
  for mean-field models, namely bilinear in case of Boltzmann's
  collisions interactions.
\end{quote}

\medskip

\noindent {\em Step~2.}
Consider then the associated {\em pullback semigroup} $T^\infty _t$ acting on
$C_b(P(E))$:
\[ 
\forall \, f \in P(E), \ \Phi \in C_b(P(E)), \quad T^\infty _t
[\Phi](f) := \Phi\left( S^{N\! L}_t(f)\right).
\]
(Again additional moment bounds can be required on $f$ in order to
make this definition rigorous.)  Note carefully that $T_t ^\infty$ is
always {\em linear} as a function of $\Phi$, although of course $T_t
^\infty[\Phi](f)$ is not linear in general as a function of $f$. We
shall associate (when possible) the following {\em linear} evolution
equation on $C_b(P(E))$ with some generator denoted by $G^\infty$:
\begin{equation*}
  \partial_t \Phi = G^\infty(\Phi).
\end{equation*}

\begin{rem}\label{edo-edp}
  The semigroup $T_t ^\infty$ can be interpreted physically as the
  semigroup of the evolution of {\em observables} of the nonlinear
  equation \eqref{eq:limit}. Let us give a short heuristic
  explanation. Consider a nonlinear ordinary differential equation
  \[
  \frac{{\rm d}v}{dt} =F(V) \ \mbox{ on } \ \R^d \ \mbox{ with } \ \nabla_v
  \cdot F \equiv 0 \ \mbox{ and } \ V_{| t=0} = v
  \] 
  with divergence-free vector field for simplicity. One can then define
  formally the {\em linear} Liouville transport partial differential
  equation
  \[ 
  \partial_t f + \nabla_v \cdot (F \, f) =0,
  \]
  where $f=f_t(v)$ is a time-dependent probability density over the
  phase space $\R^d$, whose solution is given (at least formally) by
  following the characteristics backward $f_t(v) =
  f_0(V_{-t}(v))$. 
   Now, instead of the Liouville viewpoint, one can adopt the viewpoint
  of {\em observables}, that is functions depending on the position of
  the system in the phase space (e.g. energy, momentum, etc ...). For
  some observable function $\varphi_0$ defined on $\R^d$, the
  evolution of the value of this observable along the trajectory is
  given by $\varphi_t(v)=\varphi_0(V_t(v))$ and $\varphi_t$ is
  solution to the following {\em dual} linear PDE
  \[
  \partial_t \varphi - F \cdot \nabla_v \varphi = 0.
  \]

  Now let us consider a nonlinear evolution system 
\[
\frac{d f}{dt} =Q(f) \  \mbox{ in an abstract space } \ f \in \HH.
\] 
By  analogy 
we define two linear evolution systems on the larger functional spaces
$P(\HH)$ and $C_b(\HH)$: first the abstract Liouville equation
\[ 
\partial_t \pi + {\delta \over \delta f} \cdot (Q(f) \, \pi) = 0,
\qquad \pi \in P(\HH)
\]
and second the abstract equation for the evolution of observables
\[
\partial_t \Phi - Q(f) \cdot {\delta \Phi \over \delta f}(f)
= 0, \qquad \Phi \in C_b(\HH).
\]

However in order to make sense of this heuristic, the scalar product
have to be defined correctly as duality brackets, and, most
importantly, a differential calculus on $\HH$ has to be defined
rigorously. Taking $\HH=P(E)$, this provides an intuition for our
functional construction, as well as for the formula of the generator
$G^\infty$ below (compare the previous equation with formula
\eqref{eq:formulaGinfty}). Be careful that when $\HH=P(E)$, the
abstract Liouville and observable equations refers to trajectories
{\em in the space of probabilities} $P(E)$ (i.e. solutions to the
nonlinear equation \eqref{eq:limit}), and not trajectories of a
particle in $E$.  Note also that for a {\em dissipative equation} at
the level of $\HH$ (such as the Boltzmann equation), it seems more
convenient to use the observable equation rather than the Liouville
equation since ``forward characteristics'' can be readily used in
order to construct the solutions to this observable
equation.  
\end{rem}

Summing up we obtain the following picture for the semigroups:
\smallskip
\begin{displaymath}
  \xymatrix{
    P^N_t \ \mbox{on} \ E^N / \SSS^N \ar[dd]_{\pi^N_E} 
    \ar[rrr]^{\mbox{observables}} 
    & & & \boxed{T^N _t \ \mbox{on} \ C_b(E^N) \ar@/^/[dd]^{R^N}} \\
    & & &  \\
    P_N(E) \subset P(E)  
    \ar[rrr]^{\mbox{observables}} 
    & & & \boxed{T^\infty _t \ \mbox{on} \ C_b\left(P(E)\right)} 
    \ar@/^/[uu]^{\pi^N _C}
    \\
    & & & \\
    & & & S^{N\! L}_t\ \mbox{on} \ P(E) \ar[uu]_{\mbox{observables}} 
  }
\end{displaymath}
\smallskip

Hence a key point of our construction is that, through the evolution
of {\em observables}, we shall ``interface'' the two evolution systems
(the nonlinear limit equation and the $N$-particle system) via the
applications $\pi^N _C$ and $R^N$. From now on we shall write $\pi^N
= \pi^N _C$.


\subsection{The metric issue}
\label{sec:funct-set}

$C_b(P(E))$ is our fundamental space in which we  shall compare
(through their observables) the semigroups of the $N$-particle system
and the limit mean-field equation. Let us make the topological and
metric structures used on $P(E)$ more precise. At the topological
level there are two canonical choices (which determine two different
sets $C_b(P(E))$):
\begin{enumerate}
\item[(1)] The strong topology which is associated to the total
  variation norm, denoted by $\|\cdot \|_{M^1}$; the corresponding set
  shall be denoted by $C_b(P(E),TV)$. 
\item[(2)] The weak topology, i.e. the trace on $P(E)$ of the weak
  topology on $M^1(E)$ (the space of Radon measures on $E$ with finite
  mass) induced by $C_b(E)$; the corresponding set shall be denoted by
  $C_b(P(E),w)$.
\end{enumerate}

It is clear that 
\[
C_b(P(E),w) \subset C_b(P(E),TV).
\]
The supremum norm $\| \Phi \|_{L^\infty(P(E))}$ does {\em not} depend on
the choice of topology on $P(E)$, and induces a Banach topology on the
space $C_b(P(E))$. The transformations $\pi^N$ and $R^N$ satisfy:
\begin{equation}\label{eq:compat:infty}
  \left\| \pi^N \Phi \right\|_{L^\infty(E^N)} \le \| \Phi
  \|_{L^\infty(P(E))} \ \mbox{ and } \ \left\| R^N _\phi \right\|_{L^\infty(P(E))} \le
  \| \phi \|_{L^\infty(E^N)}.
\end{equation}

The transformation $\pi^N$ is well defined from $C_b(P(E),w)$ to
$C_b(E^N)$, but in general, it does not map $C_b(P(E),TV)$ into
$C_b(E^N)$ since the map 
\[
V \in E^N \mapsto \mu^N_V \in (P(E),TV)
\]
is not continuous.

In the other way round, the transformation $R^N$ is well defined from
$C_b(E^N)$ to $C_b(P(E),w)$, and therefore also from $C_b(E^N)$ to
$C_b(P(E),TV)$: for any $\phi \in C_b(E^N)$ and for any sequence
$f_k \wto f$ weakly, we have $f_k^{\otimes N} \wto
f^{\otimes N}$ weakly, and then $R^N[\phi](f_k) \to
R^N[\phi](f)$.

\smallskip There are many different possible metric structures
inducing the weak topology on $C_b(P(E),w)$. The mere notion of
continuity does not require discussing these metrics, but any subspace
of $C_b(P(E),w)$ with differential regularity shall strongly depend on
this choice, which motivates the following definitions.


\begin{defin}\label{defGG1} 
For a given \emph{weight function} $m : E \to \R_+$, 
we define the subspaces of probabilities:
 \begin{equation*}
  \mathcal P_{m} := \left\{ f \in P(E); \,\, \langle f, m \rangle < \infty \right\}.
  \end{equation*}
  As usual we contract the notation as $\PP_k $ when $E = \R^d$ and
  $m(v) = \langle v \rangle^k := (1+|v|^2)^{k/2}$, $k \in \R$, $v \in
  \R^d$.

  We also define the corresponding bounded subsets for $a > 0$
  \[
  \mathcal{B P}_{m,a} := \{ f \in P_m; \,\, \langle f, m
  \rangle \le a \}.
  \]
 
  For a given \emph{constraint function} ${\bf m} : E \to \R^D$ such
  that $\langle f, {\bf m} \rangle$ is well defined for any $f \in
  \mathcal P_m$ and a given 
  \emph{space of constraints} $\RR_{m,{\bf m}} \subset \R^D$, 
  for any ${\bf r} \in \mathcal R_{m,{\bf m}}$, we define the
  corresponding (possibly empty) \emph{constrained subsets}
  $$
  \mathcal P_{m,{\bf m},\hbox{\small\bf r}} := \left\{ f \in \mathcal P_m; \,\, \langle
  f, {\bf m} \rangle = {\bf r}\right\},
  $$ 
  and the corresponding (possibly empty) \emph{bounded constrained
    subsets}
  $$
  \mathcal{B P}_{m,{\bf m},a,\hbox{\small\bf r}} := \left\{ f \in
  \mathcal{B P}_{m,a}; \,\, \langle  f, {\bf m} \rangle = {\bf r}  \right\}.
  $$

We also define the corresponding \emph{space of increments}
  $$
  \mathcal{I P}_{m,{\bf m},\RR_{m,{\bf m}}} := \left\{ f_2 - f_1; \,\, \exists \, {\bf
      r} \in \RR_{m,{\bf m}} \,\,\hbox{s.t.}\,\, f_1,f_2 \in \mathcal P_{m,{\bf m},{\bf r}}
  \right\}.
  $$
\end{defin}
\smallskip

Be careful that the space of increments is not a vector space in
general.  Let us now define the notion of distances over probabilities
that we shall consider.

\begin{defin}\label{defGG2}
  Consider a weight function $m_\GG$, a constraint function ${\bf
    m}_\GG$ and a set of constraints $\RR_\GG$.  We shall use for the
  associated spaces of the previous definition the following
  simplified contracted notation: $\mathcal P_\GG$ for $\mathcal P_m$,
  $\mathcal{BP}_{\GG,a}$ for $\mathcal {BP}_{m,a}$,
  $\mathcal{R}_{\GG}$ for $\mathcal{R}_{m,{\bf m}}$, $\mathcal
  P_{\GG,{\bf r}}$ for $\mathcal P_{m,{\bf m},{\bf r}}$, $\mathcal{B
    P}_{\GG,a,{\bf r}}$ for $\mathcal{B P}_{m,{\bf m},a,{\bf r}}$ and
  $\mathcal{I P}_{\GG}$ for $\mathcal{I P}_{m,{\bf m},\RR_{m,{\bf m}}}$.

\smallskip
  We shall consider a distance $d_\GG$ which
  \begin{quote}
    (1) either is defined on the whole space $\mathcal P_\GG$
    (i.e. whatever the values
    of the constraints), \\
    (2) or such that there is a Banach space $\GG \supset \mathcal{I}
    \mathcal P_\GG$ endowed with a norm $\| \cdot \|_\GG$ such that
    $d_\GG$ is defined for any ${\bf r} \in \RR_\GG$ on $\mathcal
    P_{\GG,\hbox{\small\bf r}}$, by setting
  $$
  \forall \, f, \, g \in \mathcal P_{\GG,\hbox{\small\bf r}}, \quad d_\GG(f,g)
  := \| g- f \|_\GG.
  $$
\end{quote}
\end{defin}
\smallskip

Let us finally define a quantitative H\"older notion of equivalence for the
distances over probabilities.

\begin{defin}\label{holderequiv}
  Consider some weight and constraint functions $m_\GG$, ${\bf
    m}_\GG$. We say that two metrics $d_0$ and $d_1$ defined on $\mathcal P_\GG$ are
  {\it H\"older equivalent on bounded sets} if there
  exists $\kappa \in (0,\infty)$ and, for any $a \in (0,\infty)$, there
  exists $C_a \in (0,\infty)$ such that
  $$
  \forall \, f, \, g \in \BB \mathcal P_{\GG,a}, \quad
  d_0(f,g) \le C_a \, \left[d_1(f,g)\right]^\kappa, \quad 
  d_1(f,g) \le C_a \, \left[d_0(f,g)\right]^\kappa
  $$
  for some constant $C_a$ depending on $a >0$. 

  If $d_0$ and $d_1$ are, as in the previous definition, only defined
  on $\mathcal P_{\GG,{\bf r}}$ for given values of the constraints
  ${\bf r} \in \bf \RR_\GG$, we modify this definition as follows: 
$$
\forall \, {\bf r} \in \RR_\GG, \ \forall \, f, \, g \in \BB \mathcal P_{\GG,{\bf
    r}, a}, \quad d_0(f,g) \le C_a \, \left[d_1(f,g)\right]^\kappa,
\quad d_1(f,g) \le C_a \, \left[d_0(f,g)\right]^\kappa
  $$
  for some $\kappa \in (0,\infty)$ and some constant $C_a$ depending
  on $a >0$.  
  \end{defin}
\smallskip

\begin{ex}
  The choice 
\[
m_\GG := 1, \quad {\bf m}_\GG := 0,\quad \RR_\GG := \{ 0 \}, \quad \| \cdot \|_\GG
  := \|\cdot \|_{M^1}
\]
recovers $\mathcal P_\GG(E) = P(E)$. More generally on can choose 
\[
m_{\GG_k}(v) := d_E(v,v_0)^{k}, \quad {\bf m}_{\GG_k} := 0, \quad \RR_{\GG_k} := \{ 0 \}, \quad \|
\cdot \|_{\GG_k} := \left\|\cdot \, d_E(v,v_0)^{k} \right\|_{M^1}.
\]
For $k_1 > k_2, k_3 \ge 0 $, the distances $d_{\GG_{k_2}}$ and
$d_{\GG_{k_3}}$ are H\"older equivalent on bounded sets of $\mathcal
P_{\GG_{k_1}}$.
\end{ex}

\begin{ex}
  There are many distances on $P(E)$ which induce the weak topology,
  see for instance \cite{BookRachev}. In the next section, we present
  some of them which have a practical interest for us, and which are
  all topologically uniformly equivalent on bounded sets of $P(E)$ in
  the sense of the previous definition, with the choice of a
  convenient (strong enough) weight function.
\end{ex}

 \subsection{Distances on probabilities}
 \label{subsec:ExpleMetrics} 
 Let us discuss some well-known distances on $P(\R^d)$ (or defined on
 subsets of $P(\R^d)$), which shall be useful in the sequel. These
 distances are all topologically equivalent to the weak topology
 $\sigma (P(E),C_b(E))$ on the sets $\BB \PP_{k,a}(E)$ for $k$ large
 enough and for any $a \in (0,\infty)$, and they are all uniformly
 topologically equivalent (see \cite{TV,coursCT} and
 section~\ref{subsect:ComparisonDistance}).  We refer to
 \cite{BookRachev,VillaniTOT,coursCT} and the references therein for
   more details on these distances.

\subsubsection{Dual-H\"older --or Zolotarev's-- Distances}\label{expleZolotarev} 
  Denote by $d_E$ a distance on $E$ and let us fix $v_0 \in
  E$ (e.g. $v_0=0$ when $E = \R^d$ in the sequel). Denote by
  $C^{0,s}_0(E)$, $s \in (0,1)$ (resp. $\mbox{Lip}_0(E)$) the set of
  $s$-H\"older functions (resp. Lipschitz functions) on $E$ vanishing
  at one arbitrary point $v_0 \in E$ endowed with the norm
  $$
  [\varphi ]_{s} := \sup_{x,y \in E} {|\varphi(y) - \varphi(x)| \over
    d_E(x,y)^s}, s \in (0,1], 
  \qquad  [\varphi ]_{\mbox{{\scriptsize Lip}}} := [\varphi ]_1.
  $$
  We then define the dual norm: take $m_\GG := 1$, ${\bf m}_\GG := 0$,
  $\RR_\GG := \{ 0 \}$, and $\mathcal P_\GG(E)$ endowed with
 \begin{equation*}
   \forall \, f,g \in \mathcal P_\GG,
  \quad [g-f]^*_s := \sup_{\varphi \in C^{0,s}_0(E)} {\langle
    g - f, \varphi \rangle \over [\varphi]_s }. 
  \end{equation*}

\subsubsection{Wasserstein distances}\label{expleWp} 
  Given $q \in [1,\infty)$, define $W_q$ on
  $$
  \mathcal P_\GG (E) = \PP_q(E):= \left\{ f \in P(E); \,\, \left \langle f,
    d_E(\cdot,v_0)^q \right \rangle < \infty \right\}
  $$
  by
  \begin{eqnarray*}
    \forall \, f,g \in \PP_q(E), \quad  W_q(f,g) 
    :=  \inf_{\pi \in \Pi(f,g)} \Bigl( \int_{E\times E} d_E(x,y)^q \,
    {\rm d} \pi(x,y)\Bigr)^{1/q},
\end{eqnarray*}
where $\Pi(f,g)$ denotes the set of probability measures $\pi \in
P(E \times E)$ with marginals $f$ and $g$: 
\[
\pi(A,E) = f (A) \ \mbox{ and } \ \pi(E,A) = g (A) \ \mbox{ for any
  Borel set } A \subset E.
\]
Note that for $V_1,V_2 \in E^N$ and any $q \in [1,\infty)$, one has
\begin{equation}\label{Wqellq} 
  W_q\left(\mu^N_{V_1},\mu^N_{V_2}\right) =
  d_{\ell^q(E^N/\SN)}
  \left(V_1, V_2\right) := \min_{\sigma \in \SN} \left( {1 \over N} \sum_{i=1}^N
    d_E\left((V_1)_i,(V_2)_{\sigma(i)}\right)^q \right)^{1/q}, 
\end{equation} 
and that
\begin{equation*}
  \forall \, f, \, g \in
  \PP_1(E) , \quad W_1 (f,g) 
  = [f-g]^*_1 = \sup_{\varphi \in \mbox{\tiny Lip}_0(E)}\, \left \langle f-g,
  \varphi \right \rangle.  
\end{equation*}

\subsubsection{Fourier-based norms} \label{expleFourier} Given
$E=\R^d$, $m_{\GG_1} := |v|$, ${\bf m}_{\GG_1} :=0$, $\RR_{\GG_1} :=
\{ 0 \}$, let us define
  \[ 
  \forall \, f \in \mathcal{I} \mathcal P_{\GG_1}, \quad \| f
  \|_{\GG_1} = |f|_s := \sup_{\xi \in \R^d} \frac{|\hat
    f(\xi)|}{|\xi|^s}, \quad s \in (0,1],
  \]
  where $\hat f$ denotes the Fourier transform of $f$ defined through
  the expression 
  $$
  \hat f (\xi) = (\FF f) (\xi) := \int_{\R^d}  e^{- i \, x \cdot \xi} \, {\rm d}f(x).
  $$
  Similarly, given $E=\R^d$, $m_{\GG_2} := |v|^2$, ${\bf m}_{\GG_2}
  :=v$, $\RR_{\GG_2} := \R^d$, we define
  \begin{equation}\label{eq:defToscani}
  \forall \, f \in \mathcal{I}  \mathcal \PP_{\GG_2}, \quad \| f \|_{\GG_2} = |f|_s
  := \sup_{\xi \in \R^d} \frac{|\hat f(\xi)|}{|\xi|^s}, \quad s \in (1,2].
\end{equation}
Obviously higher-order versions of this norm could be defined
similarly by increasing the number of constraints. However we shall
see in the next subsubsection how to extend this notion of distance
without constraints.

  For any given $a > 0$ and any constraint
  $$
  {\bf r} \in \RR_{\GG_1,a} := \left\{ {\bf r} \in \R^d; \,\, \exists \, f \in \PP_2(\R^d), \,\, \langle f,|v|^2 \rangle \le a,
  \,\, \langle f, v \rangle = {\bf r} \right\} = \left\{ {\bf r} \in \R^d, \,\, | {\bf r } |^2 \le a \right\},
  $$
  we observe that on the set 
  $$
  \BB \mathcal P_{\GG_1,a,{\bf r}} := \left\{ f \in \PP_2(\R^d), \,\, \langle f,|v|^2 \rangle \le a,
  \,\, \langle f, v \rangle = {\bf r} \right\},
  $$
  the distance $d_{\GG_2}$ is bounded, so that the diameter of $\BB
  \mathcal P_{\GG_1,a,{\bf r}}$ is bounded: for any $f_1, f_2 \in \BB
  \mathcal P_{\GG_1,a,{\bf r}}$ there holds
  \begin{multline*}
  \left\|f_1-f_2 \right\|_{\GG_2} = \sup_{\xi \in \R^d} {1 \over
    |\xi|^2} \left| \int_{\R^d} (e^{-i \, \xi \cdot v} - 1 + i \, \xi
    \cdot v ) \, ({\rm d}f_1-{\rm d}f_2)(v) \right| \\ \le {1 \over 2}
  \int_{\R^d} |v|^2 \, \left|({\rm d}f_1-{\rm d}f_2)(v)\right| \le a.
\end{multline*}

\subsubsection{More Fourier-based norms} \label{expleFourierGal} 
More generally, given $E=\R^d$ and $k \in \N^*$, we set 
\[
m_{\GG} := |v|^k,\quad {\bf m}_{\GG} := \left(v^\alpha\right)_{\alpha
  \in \N^d, \, |\alpha| \le k-1}, \quad \RR_\GG := \R^D, \quad D
:= d + \dots + (k-1) \, d,
\]
with $|\alpha| = \alpha_1 + \dots + \alpha_d$ and
\[
v^\alpha = \left(v_1^{\alpha_1},\dots, v_d^{j_d}\right), \quad \alpha
= \left(\alpha_1, \dots, \alpha_d\right) \in \N^d,
\]
and we define
  \[ 
  \forall \, f \in \mathcal{I}  \mathcal P_{\GG}, \quad \| f \|_{\GG} = |f|_s
  := \sup_{\xi \in \R^d} \frac{|\hat f(\xi)|}{|\xi|^s}, \quad s \in (0,k].
  \]
  In fact, we may extend the above norm to $M^1_k(\R^d)$ in the
  following way.  We first define for
\[
f \in M^1_{k-1}(\R^d) \ \mbox{ and } \ \alpha \in \N^d, \,
|\alpha|=\alpha_1 + \dots +\alpha_d \le k-1
\]
the following moment: 
$$
M_\alpha[f] := \int_{\R^d} v^\alpha\, {\rm d} f(v).
$$
Consider a fixed (once for all) function $\chi =\chi(\xi) \in C^\infty _c(\R^d)$
(smooth with compact support), such that $\chi \equiv 1$ on the set
$\{ \xi \in \R^d, |\xi| \le 1 \}$. This implies in particular
\[
\int_{\R^d} \FF^{-1} (\chi) (v) \, {\rm d}v = \chi(0) = 1.
\]
Then we define the following function $\MM_k[f]$ through its Fourier
transform
\[ 
\hat{\mathcal{M}_k}[f](\xi) := \chi(\xi) \, \left( \sum_{|\alpha| \le k-1}
  M_\alpha[f] \, \frac{\xi^\alpha}{\alpha!} 
\right), \quad \alpha ! : = \alpha_1 ! \dots \alpha_d !
\]
Note that this is a mollified version of the $(k-1)$-Taylor expansion of $\hat f$ at
$\xi=0$. Then we may define the norm
\beqn\label{eq:defToscanimodif}
||| f |||_k := |f - \MM_k[f]|_k + \sum_{\alpha \in \N^d, \, |\alpha| \le k-1} \left|M_\alpha[f] \right|
\eeqn
where 
\begin{equation*}
  |g|_k := \sup_{\xi \in \R^d} \frac{\left| \hat g(\xi)  \right|}{|\xi|^{k}}
\end{equation*}
is defined for a signed measure whose Fourier transform Taylor
expansion at zero cancels up to the order $k-1$. 

\subsubsection{Negative Sobolev norms}\label{expleH-s} 
  Given $s \in (d/2,d/2+1/2)$ and 
\[
E=\R^d, \quad m_{\GG_1} := |v|, \quad {\bf m}_{\GG_1} :=0, \quad \RR_{\GG_1} := \{ 0 \}, 
\]
we define the following negative homogeneous Sobolev norm
  \[ 
  \forall \, f \in \mathcal{I}  \mathcal P_{\GG_1}, \quad \| f \|_{\GG_1}
  = \| f \|_{\dot H^{-s} (\R^d)} := \left\|
    \frac{\hat f(\xi)}{|\xi|^s} \right\|_{L^2}.
  \]

Similarly, given $s \in [d/2+1/2,d/2+1)$ and 
\[
E=\R^d, \quad m_{\GG_2} := |v|^2, \quad {\bf m}_{\GG_2} :=v, \quad
\RR_{\GG_2} := \R^d, 
\]
we define
\[
  \forall \, f \in \mathcal{I}  \mathcal P_{\GG_2}, \quad \| f \|_{\GG_2}
  = \| f \|_{\dot H^{-s} (\R^d)} := \left\|
    \frac{\hat f(\xi)}{|\xi|^s} \right\|_{L^2(\R^d)}.
  \]
  
  It is also possible to use the non-homogeneous Sobolev space
  $H^{-s}(\R^d)$. Observe that probabilities are included in $H^{-s}(\R^d)$
  as soon as $s>d/2$.

 \subsubsection{Comparison of distances when $E=\R^d$}
\label{subsect:ComparisonDistance}

All the previous distances are \emph{H\"older equivalent on bounded
  sets} in the sense of Definition~\ref{holderequiv}. Precise
quantitative statements of these equivalences are given in
Lemma~\ref{lem:ComparDistances} in Section~\ref{app:distances}.


\subsection{Differential calculus in the space of probability measures}
\label{sec:diff-measures}

We start with a purely metric definition in the case of usual H\"older
regularity.

 \begin{defin}\label{def:Holdercalculus}
   Given some metric spaces $\tilde\GG_1$ and $\tilde\GG_2$, some
   weight function
\[
\Lambda : \tilde\GG_1 \mapsto [1,+\infty),
\]
 we denote by 
\[
UC_\Lambda(\tilde \GG_1,\tilde\GG_2)
\]
the weighted space of uniformly continuous functions from $\tilde
\GG_1$ to $\tilde\GG_2$, that is the set of functions $\SS : \tilde
\GG_1 \to \tilde\GG_2$ such that there exists a modulus of continuity
$\omega$ so that
  \begin{equation}
    \label{eq:devphi}
    \forall \, f_1, \, f_2 \in \tilde \GG_1, \quad 
    d_{\GG_2} \left(\SS(f_1) , \SS(f_2) \right) 
    \le  \Lambda(f_1,f_2) \, \omega 
    \left(  d_{\GG_1} \left(f_1, f_2\right) \right),
  \end{equation}
  with 
  \[
  \Lambda\left(f_1,f_2\right) := \max \left\{ \Lambda\left(f_1\right),
    \Lambda\left(f_2\right) \right\}
  \]
  and where $d_{\GG_k}$ denotes the metric of $\tilde \GG_k$. Note
  that the tilde sign in the notation of the distance has been removed
  in order to present unified notation with the next definition.
  
  For any $\eta \in (0,1]$, we denote by 
\[
C^{0,\eta}_\Lambda(\tilde \GG_1,\tilde\GG_2)
\]
the weighted space of functions from $\tilde \GG_1$ to $\tilde\GG_2$
with $\eta$-H\"older regularity, that are the uniformly continuous
functions for which the modulus of continuity satisfies $\omega(s) \le
C \, s^\eta$ for some constant $C >0$.  We then define the
\emph{semi-norm}
\[
[ S ]_{C_\Lambda^{0,\eta}(\tilde \GG_1,\tilde\GG_2)} \ \mbox{ for } \ S \in
  C_\Lambda^{0,\eta}(\tilde \GG_1,\tilde\GG_2)
\]
as the infimum of the constants $C > 0$ such that \eqref{eq:devphi}
holds with $\omega(s) =  C \, s^\eta$. 
 \end{defin}

\smallskip

We now define a first order differential calculus, for which we
require a norm structure on the functional spaces.

\begin{defin}\label{def:C1kcalculus} 
  Given some \emph{Banach} spaces $\GG_1, \GG_2$ and some
  \emph{metric} sets $\tilde \GG_1, \tilde \GG_2$ such that 
  \[
  \II \GG_i := \tilde \GG_i - \tilde \GG_i \subset \GG_i, \quad i =1, 2,
  \]
  and where all vectorial lines of $\GG_i$ intersect $\II \GG_i$, some
  weight function
  \[ 
  \Lambda : \tilde\GG_1 \mapsto [1,\infty),
  \]
we define
  \[
  UC_\Lambda^{1}\left(\tilde \GG_1, \GG_1; \tilde \GG_2, \GG_2\right)
  \]
  (later simply denoted by $UC_\Lambda^{1}(\tilde \GG_1; \tilde
  \GG_2)$), the space of continuously differentiable functions from
  $\tilde \GG_1$ to $\tilde \GG_2$, whose derivative satisfies some
  weighted uniform continuity.

  In a more explicit way, this is the set of uniformly continuous
  functions
  \[
  \SS : \tilde \GG_1 \to \tilde \GG_2
  \]
  such that there exists a map 
  \[
  D \SS : \tilde \GG_1 \to \LL(\GG_1,\GG_2)
  \]
  (where $\LL(\GG_1,\GG_2)$ denotes the space of linear
  applications from $\GG_1$ to $\GG_2$), 
  some modulus of continuity 
\[
\Omega_c : \R_+ \to \R_+, \quad \Omega_c(s) \to 0 \quad \mbox{ as } \quad
s \to 0
\] 
and some modulus of differentiability 
\[
\Omega_d : \R_+ \to \R_+, \quad \frac{\Omega_d(s)}{s} \to 0 \quad
\mbox{ as } \quad s \to 0
\]
so that for any $f_1, \, f_2 \in \tilde \GG_1$:
  \begin{eqnarray}
    \label{eq:devdist1}\qquad
    \left\| \SS(f_2) - \SS(f_1)\right\|_{\GG_2} \!\!&\le&\!\!   
    \Lambda(f_1,f_2)\, \Omega_c\left( \|f_2 - f_1 \|_{\GG_1} \right)
    \\
    \label{eq:devdist3}\qquad
    \left\| \SS(f_2) - \SS(f_1) - \left \langle D \SS[f_1] ,
        f_2 - f_1 \right \rangle \right\|_{\GG_2} \!\!&\le&\!\!  
    \, \Lambda(f_1,f_2) \, \Omega_d \left( \|f_2 - f_1 \|_{\GG_1} \right).
    \end{eqnarray} 


   
     For any $\eta \in (0,1]$, we also denote by 
      \[
      C_\Lambda^{1,\eta}\left(\tilde \GG_1, \GG_1; \tilde \GG_2, \GG_2\right)
  \]
  (later simply denoted by $C_\Lambda^{1,\eta}(\tilde \GG_1; \tilde
  \GG_2)$), the space of continuously differentiable functions from
  $\tilde \GG_1$ to $\tilde \GG_2$, so that 
\[
\Omega_c (s) = C_c \, s^{\eta'} \quad \mbox{ and} \quad  
\Omega_d(s) = C_d \, s^{1+\eta}
\]
for some constants $C_c, C_d >0$ and $\eta' \in [\eta,1]$. 
%

     We define respectively $C_c ^\SS$, 
     $C_d ^\SS$, as
    the infimum of the constants $C_c, 
    C_d > 0$ such that
    respectively \eqref{eq:devdist1}, 
    \eqref{eq:devdist3} holds with the above choice of modulus $\Omega_i$. 
    We then define the \emph{semi-norms}
    \[ 
    [\SS ]_{C^{0,\eta'}_\Lambda} := C_c^\SS, 
    \quad [\SS ]_{C^{1,\eta}_\Lambda}
    := C_d ^\SS
\]
and the norm
\[
\| \SS \|_{C^{1,\eta}_\Lambda} := C_c^\SS 
+ C_d^\SS.
  \]

  In the sequel we omit the subscript $\Lambda$ or we replace it by
  the subscript $b$ in the case when $\Lambda \equiv 1$. We also omit
  the second space when it is $\R$: $C_\Lambda^{1,\eta}(\tilde \GG):=
  C_\Lambda^{1,\eta}(\tilde \GG; \R)$.
 \end{defin}
 
 \smallskip

 \begin{rems} 
\begin{enumerate} 
\item Due to the different notions of distances used for $\GG_1$ and
  $\GG_2$ on the one hand, and the lack of a vector space structure on
  $\II \GG_1 = \tilde \GG_1 - \tilde \GG_1$ on the other hand, our
  definition differs from the usual one, in the sense that it does not
  imply the Lipschitz property or the boundedness of $D \SS[f_1]$ for
  instance.
\item In the sequel, we shall apply this abstract differential
   calculus with some suitable subspaces $\tilde \GG_i \subset P(E)$,
   i.e. in our applications sets of probabilities some
   \emph{prescribed} moments and some \emph{moment bounds}. This
   choice of subspaces is crucial in order to make rigorous the
   intuition of Gr\"unbaum \cite{Grunbaum} (see the --- unjustified
   --- expansion of $H_f$ in \cite{Grunbaum}). 
\item It is worth emphasizing
   that our differential calculus is based on the idea of considering
   $P(E)$ (or subsets of $P(E)$) as ``plunged sub-manifolds'' of some
   larger normed spaces $\GG_i$. We hence develop a differential
   calculus in the space of probability measures into a simple and
   robust framework, well suited to deal with the different objects we
   have to manipulate ($1$-particle semigroup, polynomial,
   generators\dots). Our approach thus differs from the approach of
   P.-L.  Lions recently developed in his course at Coll\`ege de
   France \cite{PLL-cours} or the one developed by L. Ambrosio
   \textit{et al.} \cite{AmbrosioBook} in order to deal with gradient
   flows PDEs in spaces of probability measures associated with the
   Wasserstein metric, as introduced by Otto \textit{et al.}
   \cite{OttoJK1998,MR1842429}. 
\item One novelty of our work is the use of
   this differential calculus in order to state some ``differential''
   stability conditions on the limit semigroup. Roughly speaking the
   latter estimates measure how this limit semigroup handles
   fluctuations departing from chaoticity, they are the corner stone
   of our analysis.
 \end{enumerate}
\end{rems}

This differential calculus behaves well for composition in the sense that 
for any given \[
\UU \in C^{1,\eta}_{\Lambda_\UU} (\tilde \GG_1; \tilde \GG_2) \  
\mbox{ and } \ \VV \in  C^{1,\eta}_{\Lambda_\VV} (\tilde \GG_2; \tilde
\GG_3)
\]
there holds 
\[
\SS:= \VV \circ \UU \in C^{1,\eta} _{\Lambda_\SS} (\tilde \GG_1;
\tilde \GG_3)
\]
for some appropriate weight function $\Lambda_\SS$.  We conclude the
section by stating a precise result adapted to our applications.

\begin{lem}\label{lem:DL} For any given
\[
\UU \in C^{1,\eta} _{\Lambda} (\tilde \GG_1; \tilde \GG_2) \cap C^{0,(1+\eta)/2} _{\Lambda} (\tilde \GG_1; \tilde \GG_2), \quad \eta \in (0,1], \ \mbox{
  and } \ \VV \in C^{1,1} (\tilde \GG_2; \tilde \GG_3), 
\]
there holds
  \[
  \SS:= \VV \circ \UU \in C^{1,\eta} _{\Lambda^2} (\tilde \GG_1;
  \tilde \GG_3) \quad \mbox{ and } \quad 
  D \SS [f] = D \VV [\UU(f)] \circ D \UU[f].
  \]
More precisely, there holds
 \begin{eqnarray*}
    [ \SS ]_{C^{0,1}_{\Lambda}} \le [\VV
    ]_{C^{0,1}} \, [\UU]_{C^{0,1}_\Lambda}, 
  \end{eqnarray*}
  and 
  \[
   [\SS ]_{C^{1,\eta}_{\Lambda^2}} \le \| \VV
    \| _{C^{1,1}} \, [\UU ]_{C^{1,\eta}_\Lambda} + [\VV ]_{C^{1,1}} \,
    [\UU ]_{C^{0,(1+\eta)/2}_\Lambda}^{2}.
  \]
  \end{lem}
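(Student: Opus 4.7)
The statement is essentially a chain rule, so the plan is to write the standard Taylor expansion of $\VV\circ\UU$ at $f_1$ using the $C^{1,1}$ expansion of $\VV$ at $\UU(f_1)$ and then insert the $C^{1,\eta}$ expansion of $\UU$ at $f_1$, and finally bound the two remainder terms using respectively the $C^{0,1}$ modulus of $\VV$ and the $C^{0,(1+\eta)/2}_\Lambda$ modulus of $\UU$ (used to absorb the \emph{square} that will appear via $\VV$'s second-order remainder).

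First, I would dispose of the continuity/$C^{0,1}_\Lambda$ estimate on $\SS$: since $\VV\in C^{1,1}(\tilde\GG_2;\tilde\GG_3)$ is in particular Lipschitz with seminorm $[\VV]_{C^{0,1}}$, composing with the Lipschitz map $\UU$ gives
\[
\|\SS(f_2)-\SS(f_1)\|_{\GG_3}\le [\VV]_{C^{0,1}}\,\|\UU(f_2)-\UU(f_1)\|_{\GG_2}\le [\VV]_{C^{0,1}}[\UU]_{C^{0,1}_\Lambda}\,\Lambda(f_1,f_2)\,\|f_2-f_1\|_{\GG_1},
\]
which is exactly the first claimed inequality. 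This also shows $\SS$ is uniformly continuous on bounded sets in the sense of Definition~\ref{def:C1kcalculus}.

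Next, for the differentiability part, I set $D\SS[f]:=D\VV[\UU(f)]\circ D\UU[f]\in\LL(\GG_1,\GG_3)$ and decompose
\[
\SS(f_2)-\SS(f_1)-\langle D\SS[f_1],f_2-f_1\rangle=\underbrace{\bigl(\VV(\UU(f_2))-\VV(\UU(f_1))-\langle D\VV[\UU(f_1)],\UU(f_2)-\UU(f_1)\rangle\bigr)}_{=:R_\VV}
\]
\[
+\bigl\langle D\VV[\UU(f_1)],\,\UU(f_2)-\UU(f_1)-\langle D\UU[f_1],f_2-f_1\rangle\bigr\rangle=:R_\VV+\langle D\VV[\UU(f_1)],R_\UU\rangle.
\]
The remainder $R_\UU$ is directly controlled by the $C^{1,\eta}_\Lambda$ seminorm of $\UU$, and the linear map $D\VV[\UU(f_1)]$ has operator norm at most $[\VV]_{C^{0,1}}$ (Lipschitzianity of $\VV$ forces the bound on its derivative in this abstract framework), so that
\[
\bigl\|\langle D\VV[\UU(f_1)],R_\UU\rangle\bigr\|_{\GG_3}\le [\VV]_{C^{0,1}}\,[\UU]_{C^{1,\eta}_\Lambda}\,\Lambda(f_1,f_2)\,\|f_2-f_1\|_{\GG_1}^{1+\eta}.
\]

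For $R_\VV$, I use the $C^{1,1}$ second-order estimate on $\VV$, which gives $\|R_\VV\|_{\GG_3}\le [\VV]_{C^{1,1}}\,\|\UU(f_2)-\UU(f_1)\|_{\GG_2}^{2}$; here is the one nontrivial point of the proof, namely that the quadratic factor in $\|\UU(f_2)-\UU(f_1)\|_{\GG_2}$ must be converted into the desired order $\|f_2-f_1\|_{\GG_1}^{1+\eta}$. This is precisely what the auxiliary assumption $\UU\in C^{0,(1+\eta)/2}_\Lambda$ provides:
\[
\|\UU(f_2)-\UU(f_1)\|_{\GG_2}^{2}\le [\UU]_{C^{0,(1+\eta)/2}_\Lambda}^{2}\,\Lambda(f_1,f_2)^{2}\,\|f_2-f_1\|_{\GG_1}^{1+\eta}.
\]
Adding the two remainder bounds and noting that $\Lambda\le\Lambda^{2}$ yields the claimed inequality
\[
[\SS]_{C^{1,\eta}_{\Lambda^{2}}}\le \|\VV\|_{C^{1,1}}\,[\UU]_{C^{1,\eta}_\Lambda}+[\VV]_{C^{1,1}}\,[\UU]_{C^{0,(1+\eta)/2}_\Lambda}^{2},
\]
and finishes the proof. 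The only step requiring real care is the interplay between the quadratic $\VV$-remainder and the weaker-than-Lipschitz H\"older regularity of $\UU$, which is exactly why the extra $C^{0,(1+\eta)/2}_\Lambda$ hypothesis on $\UU$ is imposed.
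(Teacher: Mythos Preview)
Your approach is the same as the paper's: the same decomposition $R_\VV+\langle D\VV[\UU(f_1)],R_\UU\rangle$, the same use of $[\VV]_{C^{1,1}}$ on $R_\VV$, and the same key observation that the $C^{0,(1+\eta)/2}_\Lambda$ hypothesis on $\UU$ converts the square into order $1+\eta$.

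There is one point where your argument is incomplete. You assert that the operator norm of $D\VV[\UU(f_1)]$ is at most $[\VV]_{C^{0,1}}$ because ``Lipschitzianity of $\VV$ forces the bound on its derivative.'' In the standard Banach setting this is true (take $g_2\to g_1$ along a line), but in the abstract framework of Definitions~\ref{def:Holdercalculus}--\ref{def:C1kcalculus} the Lipschitz estimate \eqref{eq:devdist1} is only available for pairs $g_1,g_2\in\tilde\GG_2$, and $\tilde\GG_2$ need not contain $g_1+th$ for small $t$ and arbitrary $h\in\GG_2$. The paper handles this more carefully: for $h\in\II\GG_2$ one combines \eqref{eq:devdist1} and \eqref{eq:devdist3} to get
\[
\|\langle D\VV[\UU(f_1)],h\rangle\|_{\GG_3}\le [\VV]_{C^{0,1}}\|h\|_{\GG_2}+[\VV]_{C^{1,1}}\|h\|_{\GG_2}^{2},
\]
and then uses the structural assumption that every line of $\GG_2$ meets $\II\GG_2$, together with linearity, to extend a bound of the form $\|\VV\|_{C^{1,1}}\|h\|_{\GG_2}$ to all $h\in\GG_2$. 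This is why the constant in the statement is $\|\VV\|_{C^{1,1}}$ rather than $[\VV]_{C^{0,1}}$. Your final displayed inequality is still correct (since $[\VV]_{C^{0,1}}\le\|\VV\|_{C^{1,1}}$ your claim would be stronger), but the justification should go through this route.
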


\begin{proof}[Proof of Lemma~\ref{lem:DL}] 
For $f_1, f_2 \in \tilde\GG_1$,    we have
\[
\UU(f_2) = \UU(f_1) + 
\left\langle D \UU [f_1],
 f_2-f_1  \right\rangle + \TT_\UU\left(f_1,f_2\right)
\]
with 
\[
\left\| \TT_\UU\left(f_1,f_2\right) \right\|_{\GG_2} \le
[\UU]_{C^{1,\eta}_\Lambda} \, \Lambda\left( f_1, f_2 \right) \, \left\|
f_2 - f_1 \right\|_{\GG_1}^{1+\eta},
\]
as well as 
\[
\left\| \UU(f_2) - \UU(f_1) \right\|_{\GG_2} \le 
[\UU]_{C^{0,(1+\eta)/2}_\Lambda} \, \Lambda(f_1,f_2) \, \left\| f_2 -
  f_1 \right\|_{\GG_1}^{(1+\eta)/2}.
\]

A similar Taylor expansion holds for $\VV$: for $g_1, g_2 \in
\tilde\GG_2$
\[
\VV(g_2) = \VV(g_1) + 
\left\langle D \VV [g_1],
  g_2-g_1 \right\rangle + \TT_\VV\left(g_1,g_2\right)
\]
with
\[
\left\| \TT_\VV\left(g_1,g_2\right) \right\|_{\GG_3} \le
[\VV]_{C^{1,1}} \, \left\| g_2 - g_1 \right\|_{\GG_2}^2.
\]

We then write 
\[
\left\| \SS (f_2) - \SS(f_1) \right\|_{\GG_3} \le
[\VV]_{C^{0,1}} \, \left\| \UU (f_2) - \UU(f_1) \right\|_{\GG_2} \le 
[\VV]_{C^{0,1}} \, [\UU]_{C^{0,1}_\Lambda} \, \Lambda\left( f_1, f_2
\right) \, \| f_2 - f_2 \|_{\GG_1}
\]
which implies 
\[
[\SS]_{C^{0,1}_\Lambda} \le [\VV]_{C^{0,1}} \, [\UU]_{C^{0,1}_\Lambda},
\]
and
\begin{eqnarray*}
\SS(f_2) = (\VV \circ \UU) (f_2)
  &=& \VV \Big( \UU(f_1) + 
  \left\langle D \UU [f_1],
 f_2 - f_1 \right\rangle +  \TT_\UU(f_1,f_2) \Big) \\
  &=& \VV \left( \UU\left(f_1\right) \right) + \TT_\VV
  \left(\UU\left(f_2\right), \UU\left(f_1\right) \right) \\
  &+&  \Big\langle D \VV[\UU(f_1)], 
  \left\langle D \UU [f_1],  f_2-f_1 \right\rangle 
  +  \TT_\UU\left(f_1,f_2\right)  \Big\rangle
\end{eqnarray*}
which implies 
\[
\left\langle D \SS [f_1], f_2-f_1 \right
\rangle 
= \Big\langle D \VV[\UU(f_1)], 
    \big(  \left\langle D \UU [f_1],
  f_2-f_1 \right\rangle 
   \big) \Big\rangle.
\]
Observe that since $\VV \in C^{1,1} (\tilde \GG_2; \tilde \GG_3)$, one
has 
\[
\forall \, h \in \II \GG_2, \quad \left\| \Big\langle D \VV[\UU(f_1)],
  h \Big\rangle \right\|_{\GG_3} \le \left( [\VV]_{C^{0,1}} +
  [\VV]_{C^{1,1}} \right) \, \| h \|_{\GG_2}
\]
and therefore by scaling (and using that all vectorial lines of $\II
\GG_2$ intersects $\II \GG_2$ at a non-zero point) it extends to 
\[
\forall \, h \in \GG_2, \quad \left\| \Big\langle D \VV[\UU(f_1)], h
  \Big\rangle \right\|_{\GG_3} \le \left( [\VV]_{C^{0,1}} +
  [\VV]_{C^{1,1}} \right) \, \| h \|_{\GG_2}.
\]

Finally we estimate the remaining term:
\begin{eqnarray*}
  && \left\|  \SS(f_2) - \SS(f_1) - \left\langle D \SS[f_1],
      f_2-f_1 \right\rangle \right\|_{\GG_3}  \\
  &&\qquad = 
  \left\| \TT_\VV(\UU(f_2),\UU(f_1)) + \langle D\VV[\UU(f_1)],
    \TT_\UU(f_1,f_2)\rangle \right\|_{\GG_3}
\\
  &&\qquad
  \le [\VV]_{C^{1,1}}\, \left\|  \UU(f_1) - \UU(f_2)\right\|_{\GG_2}
  ^2 +\| \VV \|_{C^{1,1}} \, \left\| \TT_\UU (f_1,f_2) \right\|_{\GG_2},
  \\
  &&\qquad
  \le [\VV]_{C^{1,1}} \left(   [\UU]_{C^{0,(1+\eta)/2}_\Lambda} \,
    \Lambda \left(f_1,f_2\right) \, 
    \| f_2 - f_1 \|_{\GG_1} ^{(1+\eta)/2} \right)^2  \,
  \\
  &&\qquad \qquad \qquad +\| \VV \|_{C^{1,1}} \,  \Lambda\left( f_1, f_2 \right)
  \, 
  [\UU]_{C^{1,\eta}_\Lambda}  \, \| f_2 - f_1 \|_{\GG_1} ^{1+\eta},
  \end{eqnarray*}
and we conclude by recalling that $\Lambda \ge 1$. 
\end{proof}

\subsection{The pullback generator}
\label{sec:calculus-gen}

As a first application of this differential calculus, let us compute
the generator of the pullback limit semigroup.  

\begin{lem}\label{lem:H0} 
  Given some Banach space $\GG$ and some space of probability measures
  $\mathcal P_{\GG}(E)$ (see Definitions~\ref{defGG1}-\ref{defGG2})
  associated to a weight function $m$ and constraint function ${\bf
    m}$, and endowed with the metric induced from $\GG$, then for some
  $\zeta \in (0,1]$ and some $\bar a\in (0,\infty)$ we assume that for
  any $a\in (\bar a,\infty)$ and any choice of constraints ${\bf r}
  \in \RR_\GG$,
there holds:
  \begin{itemize}
  \item[(i)] The equation \eqref{eq:limit} generates a semigroup 
    \[
    S^{N\! L}_t : \BB \mathcal P_{\GG,a,{\bf r}} \to \BB \mathcal P_{\GG,a,{\bf r}}
    \]
    which is $\zeta$-H\"oder continuous locally uniformly in time, in
    the sense that for any $\tau \in (0,\infty)$ there exists $ C_\tau
    \in (0,\infty)$ such that
    \begin{equation*}
      \forall \, f,g
      \in \BB \mathcal P_{\GG,a,{\bf r}}, \quad \sup_{t \in [0,\tau]}
      \left\| S^{N\! L}_t f -
      S^{N\! L}_t g \right\|_{\GG_1} \le C_\tau \, \, \| f - g \|^\zeta_{\GG_1}.
     \end{equation*}
   \item[(ii)] The application $Q$ is bounded and $\zeta$-H\"older continuous from
    $\BB \mathcal P_{\GG,a,{\bf r}}$ into $\GG$.
  \end{itemize}
  
  Then for any $a \in (\bar a,\infty)$, ${\bf r} \in \RR_{\GG}$ the
  pullback semigroup $T^\infty_t$ defined by 
  \[ 
  \forall \, f \in \BB \mathcal P_{\GG,a,{\bf r}}(E), \ \Phi \in  U C_b( \BB
  \mathcal P_{\GG,a,{\bf r}}(E)), \quad T^\infty _t [\Phi](f) := \Phi\left( S^{N\!
      L}_t(f)\right)
  \]
  is a $C_0$-semigroup of contractions on the Banach space $U C_b( \BB
  \mathcal P_{\GG,{\bf r},a}(E))$. 

  Its generator $G^\infty$ is an unbounded linear operator on $U C_b(
  \BB \mathcal P_{\GG,a,{\bf r}}(E))$ with domain denoted by
  $\hbox{{\em Dom}}(G^\infty)$ and containing 
  $U C^1_b( \BB \mathcal P_{\GG,a,{\bf r}}(E))$. On the latter space,
  it is defined by the formula
  \begin{equation}
    \label{eq:formulaGinfty}
    \forall \, \Phi \in  U C^1_b( \BB \mathcal P_{\GG,a,{\bf r}}(E)), \ 
    \forall \, f \in \BB \mathcal P_{\GG,a,{\bf r}}(E), \quad 
    \left( G^\infty \Phi \right) (f) :=
    \left \langle D\Phi[f], Q(f)\right\rangle.
  \end{equation}
\end{lem}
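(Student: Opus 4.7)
The strategy is to verify in turn that (a) $T^\infty_t$ preserves $UC_b(\BB \mathcal P_{\GG,a,{\bf r}})$ and forms a contraction semigroup, (b) it is strongly continuous at $t=0$, and (c) on $UC^1_b$ its generator is given by the claimed formula. Throughout we fix $a>\bar a$ and ${\bf r}\in\RR_\GG$, so that by hypothesis (i) $S^{N\!L}_t$ leaves $\BB \mathcal P_{\GG,a,{\bf r}}$ invariant.

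First, for $\Phi\in UC_b(\BB \mathcal P_{\GG,a,{\bf r}})$, one has $\|T^\infty_t\Phi\|_\infty\le\|\Phi\|_\infty$, which immediately yields the contraction property. Its uniform continuity comes from composing the modulus of continuity $\omega_\Phi$ of $\Phi$ with the $\zeta$-H\"older estimate of $S^{N\!L}_t$ provided by (i): for $f,g\in\BB \mathcal P_{\GG,a,{\bf r}}$,
\[
\bigl|T^\infty_t\Phi(f)-T^\infty_t\Phi(g)\bigr|\le \omega_\Phi\bigl(\|S^{N\!L}_tf-S^{N\!L}_tg\|_\GG\bigr)\le \omega_\Phi\bigl(C_t\,\|f-g\|_\GG^\zeta\bigr).
\]
The semigroup identity $T^\infty_{t+s}=T^\infty_t\circ T^\infty_s$ is inherited from the semigroup property of $S^{N\!L}_t$.

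Second, for the $C_0$ property, I would use the mild formulation $S^{N\!L}_tf - f=\int_0^tQ(S^{N\!L}_sf)\,{\rm d}s$ combined with the uniform boundedness of $Q$ on $\BB \mathcal P_{\GG,a,{\bf r}}$ coming from (ii): this gives
\[
\sup_{f\in\BB \mathcal P_{\GG,a,{\bf r}}}\|S^{N\!L}_tf-f\|_\GG\le Ct,
\]
and consequently, by uniform continuity of $\Phi$,
\[
\|T^\infty_t\Phi-\Phi\|_\infty \le \omega_\Phi(Ct)\xrightarrow[t\to 0]{}0.
\]

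Third, take $\Phi\in UC^1_b(\BB \mathcal P_{\GG,a,{\bf r}})$. From Definition~\ref{def:C1kcalculus} one has the Taylor-type expansion
\[
\Phi(S^{N\!L}_tf)-\Phi(f)=\bigl\langle D\Phi[f],\,S^{N\!L}_tf-f\bigr\rangle+R(f,t),
\]
with $|R(f,t)|\le \Omega_d(\|S^{N\!L}_tf-f\|_\GG)=o(t)$ uniformly in $f$ on the bounded set, using the previous $O(t)$ control and $\Omega_d(s)/s\to 0$. On the linear part, the mild formulation and the joint Hölder continuity of $Q$ (from (ii)) and of $s\mapsto S^{N\!L}_sf$ (from (i)) yield
\[
\left\|\frac{S^{N\!L}_tf-f}{t}-Q(f)\right\|_\GG=\left\|\frac{1}{t}\int_0^t\bigl(Q(S^{N\!L}_sf)-Q(f)\bigr)\,{\rm d}s\right\|_\GG\xrightarrow[t\to 0]{}0
\]
uniformly in $f\in\BB \mathcal P_{\GG,a,{\bf r}}$. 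Using the uniform bound on $D\Phi[f]$ as a linear form on $\GG$ (a consequence of the $C^{1,\eta}$-regularity together with the fact that all vectorial lines of $\II\GG$ meet $\II\GG$, cf.\ the rescaling argument in the proof of Lemma~\ref{lem:DL}), we conclude that
\[
\left\|\frac{T^\infty_t\Phi-\Phi}{t}-\bigl\langle D\Phi[\cdot],Q(\cdot)\bigr\rangle\right\|_\infty\xrightarrow[t\to 0]{}0,
\]
which places $\Phi$ in $\hbox{Dom}(G^\infty)$ with the formula~\eqref{eq:formulaGinfty}. The fact that $G^\infty\Phi$ itself lies in $UC_b$ then follows from the uniform continuity of $D\Phi$ and the H\"older continuity of $Q$.

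\emph{Main obstacle.} The only delicate point is to upgrade the pointwise limits above to \emph{uniform} limits in $f$, since the generator of a $C_0$-semigroup on $UC_b$ requires convergence in sup norm. This is precisely what the uniform-in-$f$ estimates built into assumptions (i)--(ii) and the uniform modulus of differentiability $\Omega_d$ of Definition~\ref{def:C1kcalculus} are designed to deliver; checking that the remainder $R(f,t)/t$ goes to zero uniformly in $f$ is the key quantitative step.
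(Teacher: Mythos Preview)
Your proposal is correct and follows essentially the same route as the paper's proof: the paper also proceeds by (Step~2) verifying that $T^\infty_t$ is a $C_0$-semigroup of contractions on $UC_b$ via the $\zeta$-H\"older stability (i) and the Lipschitz-in-time bound $\|S^{N\!L}_t f - f\|_\GG \le K t$ coming from the mild formulation and boundedness of $Q$, then (Steps~1 and~4) identifying the generator on $UC^1_b$ by the chain rule, after establishing (Step~1) the sharper estimate $\|S^{N\!L}_t f - f - tQ(f)\|_\GG \le C t^{1+\zeta}$ uniformly in $f$. Your treatment is in fact slightly more explicit than the paper's about the uniformity in $f$ needed for $(T^\infty_t\Phi-\Phi)/t$ to converge in sup norm (the paper's Step~4 is written pointwise in $f$, the uniformity being implicit in the constants of Step~1). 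One small imprecision: you invoke ``$C^{1,\eta}$-regularity'' for the uniform bound on $D\Phi[f]$, but the lemma only assumes $\Phi\in UC^1_b$; the rescaling argument you cite from Lemma~\ref{lem:DL} still goes through with the general moduli $\Omega_c,\Omega_d$ in place of the $C^{1,1}$ seminorms, so this does not affect the argument.
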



\begin{rem}
  Note that the restriction to uniformly continuous functions $\Phi$
  on space of probability measures will be harmless in the sequel for two
  reasons: first in most cases our choice of weight, constraints and
  distance yields a compact space $\BB \mathcal P_{\GG,a}(E)$, and second
  and most importantly we shall only manipulate this pullback
  semigroup for functions $\Phi$ having at least uniform H\"older regularity.
\end{rem}


\begin{proof}[Proof of Lemma~\ref{lem:H0}.]
The proof is split  in several steps. 

\bigskip \noindent {\sl Step 1. H\"older regularity in time for
  $S^{N\! L}_t(f^0)$.}  We claim that for any ${\bf r} \in
\RR_{\GG}$ and $f_0 \in \BB \mathcal P_{\GG,a,{\bf r}}(E)$ and $\tau
> 0$ the application
\[
\SS\left(f_0\right) : [0,\tau) \to \mathcal P_{\GG}, \quad t \mapsto \SS^{N \!
  L}_t \left(f_0\right)
\]
is right differentiable in $t=0$ with 
\[
\SS\left(f_0\right)'(0^+) = Q\left(f_0\right).
\]
Let us write $f_t := S^{N \! L}_t f_0$.  First, since $f_t \in \BB \mathcal
P_{\GG,a,{\bf r}}$ for any $t \in [0,\tau]$ (assumption (i)) and $Q$
is bounded on $ \BB \mathcal P_{\GG,a,{\bf r}}(E)$ (assumption (ii)),
we deduce that 
\begin{equation}\label{eq:ft-f0A2} 
  \left\| f_t - f_0
  \right\|_{\GG} = \left\| \int_0^t Q(f_s) \, ds \right\|_{\GG} \le K \,
  t
 \end{equation}
 for a constant $K$ which is uniform according to $f_0 \in \BB
 \mathcal P_{\GG,a,{\bf r}}(E)$.

We then use the previous inequality together with the fact that $Q$ is
$\zeta$-H\"older continuous (assumption (ii) again), to get
\begin{eqnarray*} 
  \left\| f_t - f_0 - t \,
  Q\left(f_0\right)\right\|_{\GG} &=& 
  \left\| \int_0^t \left(Q\left(f_s\right) - Q\left(f_0\right)\right) \, ds
  \right\|_{\GG}
  \\
  &=& L \, \int_0^t \left\| f_s - f_0 \right\|_{\GG}^\zeta \, ds
  \\
  &\le& L \, \int_0^t (K \, s)^\zeta \, ds = L \, K^\zeta \,
  {t^{1+\zeta} \over 1+\zeta}, 
\end{eqnarray*}
which implies the claim. 

Then the semigroup property of $(S^{N\!  L}_t)$ implies that $t
\mapsto f_t$ is continuous from $\R_+$ into $\mathcal P_{\GG}(E)$ and right
differentiable at any point.

\bigskip \noindent {\sl Step 2. Contraction property for $T^\infty_t$.} We claim that $(T^\infty_t)$ is a $C_0$-semigroup of
contractions on $UC_b(\BB \mathcal P_{\GG,a,{\bf r}}(E))$. 
First for any $\Phi \in U C_b( \BB \mathcal P_{\GG,a,{\bf r}}(E))$, we denote by
$\omega_\Phi$ the modulus of continuity of $\Phi$. We have thanks to
the assumption (i):
\begin{eqnarray*}
  \forall \, t \in [0,\tau], \quad  
  \left|\left(T^\infty_t \Phi\right)(g) - \left(T^\infty_t \Phi\right)(f)\right| 
  &=& \left| \Phi\left(S^{N\!L}_t(g)\right) - \Phi\left(S^{N\!L}_t(f)\right) \right| \\
  &\le& \omega_\Phi \left( \left\| S^{N\!L}_t(g) - S^{N\!L}_t(f)  \right\|_{\GG_1} \right)
  \\
  &\le& \omega_\Phi \left(C_\tau \, \| g-f \|_{\GG_1}^\zeta\right),
\end{eqnarray*}
so that $T^\infty_t \Phi \in UC_b( \BB \mathcal P_{\GG_1,a,{\bf r}}(E))$ for any $t \in
[0,\tau]$, and then, by iteration, for any $t \ge 0$.  Next, we have
$$
\left\| T^\infty_t \right\| = \sup_{\| \Phi \| \le 1} \left\| T^\infty_t \Phi \right\|  =
\sup_{\| \Phi \| \le 1} \sup_{f \in \BB \mathcal P_{\GG,a,{\bf r}}(E) } \left|
\Phi\left(S^{N\!L}_t(f)\right)\right| \le 1
$$
since 
$$
\| \Phi \| = \sup_{f \in \BB \mathcal P_{\GG,a,{\bf r}}(E)} |\Phi(f)|
$$
and $S^{N\! L}_t$ maps $\BB \mathcal P_{\GG,a,{\bf r}}(E)$ to itself.

Finally, from \eqref{eq:ft-f0A2}, for any $\Phi \in UC_b(\BB
\mathcal P_{\GG,a,{\bf r}}(E))$, we have
$$
\left\| T^\infty_t \Phi - \Phi \right\| = \sup_{f \in \BB
  \mathcal P_{\GG,a,{\bf r}}(E)} \left| \Phi(S^{N\!L}_t(f)) - \Phi(f)\right|
\le \omega_\Phi (K \, t) \to 0. 
$$
As a consequence $(T^\infty_t)$ has a closed generator $G^\infty$ with
dense domain 
\[
\mbox{Dom}(G^\infty) \subset UC_b( \BB \mathcal
P_{\GG,a,{\bf r}}(E)), \quad \overline{\mbox{Dom}(G^\infty)} = UC_b( \BB \mathcal
P_{\GG,a,{\bf r}}(E))
\] 
(see for instance \cite[Chapter~1, Corollary~2.5]{pazy}).

\bigskip \noindent {\sl Step 3. } We shall now identify this generator
on a subset of its domain. 
Let us construct a natural candidate provided by the heuristic of
Remark~\ref{edo-edp}. Let us define $\tilde G^\infty \Phi $ by
$$ 
\forall \, \Phi \in  U C^1_b( \BB  \mathcal P_{\GG,a}(E)), \ \forall \,
f \in \BB \mathcal P_{\GG,a}(E), \quad ( \tilde G^\infty \Phi ) (f) :=
\left \langle D\Phi[f], Q(f)\right\rangle.
$$
The right-hand side is well defined since 
\[
D\Phi[f] \in \LL(\GG,\R) = \GG' \ \mbox{ and } \ Q(f) \in
\GG.
\]
Moreover, since both applications 
\[
f \mapsto D\Phi[f] \ \mbox{ and } \ f \mapsto Q(f)
\]
are uniformly continuous on $\BB \mathcal P_{\GG,a,{\bf r}}(E)$, so is the
application 
\[
f \mapsto (\tilde G^\infty \Phi) (f).
\]
Hence $\tilde G^\infty \Phi \in UC_b(\BB \mathcal P_{\GG,a,{\bf r}}(E))$.

\medskip \noindent {\sl Step 4. } Finally, by composition, for any
fixed $\Phi \in U C^1_b( \BB  \mathcal P_{\GG,a,{\bf r}}(E))$ and $f \in \BB
\mathcal P_{\GG,a,{\bf r}}(E)$, the map 
\[
t \mapsto T^\infty _t \Phi (f) = \Phi \circ S^{N\! L}_t (f)
\]
is right differentiable in $t=0$ and
\begin{eqnarray*} 
  \left( \frac{{\rm d}}{{\rm d}t} (T^\infty_t \Phi) (f) \right)_{\big|_{t=0}}
  &:=&  \left( \frac{{\rm d}}{{\rm d}t} (\Phi \circ S^{N\! L}_t (f)(t)) \right)_{\big|_{t=0}} \\
  &=& \left\langle D\Phi [S^{N\! L}_0 (f)], \left( \frac{{\rm d}}{{\rm d}t} S^{N\!
        L}_t(f) \right)_{\big|_{t=0}} \right\rangle \\
  &=& \left\langle D\Phi[f], Q(f) \right\rangle = \left( \tilde G^\infty \Phi
  \right) (f),
\end{eqnarray*}
which precisely means that $\Phi \in \hbox{Dom}(G^\infty)$ and that
\eqref{eq:formulaGinfty} holds.  
\end{proof}

\subsection{Duality inequalities}
\label{sec:compatibility}

Our transformations $\pi^N$ and $R^N$ behave nicely for the supremum
norm on $C_b(P(E),TV)$, see~\eqref{eq:compat:infty}.  More generally
we shall consider ``\emph{duality pairs}'' of metric spaces as follows:
\begin{defin}
  We say that a pair $(\FF,\mathcal P_\GG)$ of a normed vectorial
  space $\FF \subset C_b(E)$ endowed with the norm $\| \cdot \|_\FF$
  and a space of probability measures $\mathcal P_\GG \subset P(E)$
  endowed with a metric $d_\GG$ \emph{satisfy a duality inequality} if
\begin{equation} \label{eq:dualiteFGbis} \forall \, f,g  \in \mathcal P_\GG,
  \,\, \forall \, \varphi \in \FF, \quad |\langle g-f, \varphi \rangle
  | \le C \, d_\GG(f,g)\, \| \varphi \|_\FF,
\end{equation} 
where here $\langle \cdot, \cdot \rangle$ stands fot the usual duality
brackets between probabilities and continuous functions.   
In the case where the distance $d_\GG$ is associated with a normed
vector space $\GG$, this amounts to the usual duality
inequality 
  $|\langle h, \varphi \rangle |\le \|h
  \|_\GG \, \| \varphi \|_\FF$.
\end{defin}
\smallskip

The ``compatibility'' of the transformation $R^N$ for any such pair
follows from the multilinearity: if $\FF$ and $\GG$ are in duality,
$\FF \subset C_b(E)$ and $\mathcal P_\GG$ is endowed with the metric associated
to $\|\cdot \|_\GG$, then for any 
\[
\varphi = \varphi_1 \times \dots \times \varphi_N \in \FF^{\otimes N},
\]
the polynomial function $R^N_\varphi$ in $C_b(P(E))$ is
$C^{1,1}(\mathcal P_\GG,\R)$. Indeed, given
$f_1, f_2 \in \mathcal P_{\GG_1}$, we define
\[ 
\GG \to \R, \quad h \mapsto DR^\ell_\varphi \left[f_1\right] (h) :=
\sum_{i=1}^N \left( \prod_{j \not= i } \left\langle f_1, \varphi_j
  \right\rangle \right) \left\langle h, \varphi_i \right\rangle,
\]
and we have
\begin{eqnarray*}
  R^N_\varphi(f_2) - R^N_\varphi(f_1) &=& \sum_{i=1}^N
  \left( \prod_{1 \le k < i } \left\langle f_2, \varphi_k
    \right\rangle \right) \, 
  \left\langle f_2 - f_1, \varphi_i \right\rangle \, \left( \prod_{i < k \le \ell }
  \langle f_1,  \varphi_k \rangle \right), 
\end{eqnarray*}
and
\begin{multline}\nonumber
  R^N _\varphi(f_2) - R^N _\varphi\left(f_1\right) -
  DR^N _\varphi \left[f_1\right] \left(f_2 - f_1\right) =
  \\
  = \sum_{1\le j < i \le N} \left( \prod_{1 \le k < j}
    \left\langle f_2, \varphi_k \right\rangle\right) \, \left\langle
    f_2 - f_1, \varphi_j \right\rangle \, \left( \prod_{j < k <i}
    \left\langle f_1,\varphi_k \right\rangle \right) \, \left\langle
   f_2 - f_1, \varphi_i \right\rangle \left( \prod_{i < k \le \ell } \left\langle
  f_1, \varphi_k \right\rangle \right).
\end{multline} \Black
Hence for instance $R^N _\varphi \in C^{1,1}(\mathcal P_\GG;\R)$ with
\begin{eqnarray*} 
  \left|R^N_\varphi(f_2) - R^N_\varphi(f_1) \right| &\le& N \, \| \varphi
  \|_{\FF \otimes (L^\infty) ^{N-1}} \, \left\| f_2 - f_1 \right\|_\GG , \\
  \left| DR^N _\varphi [f_1] (h) \right| &\le& N \, \left\| \varphi \right\|_{\FF \otimes
    (L^\infty) ^{N-1}} \, \| h \|_\GG, 
\end{eqnarray*}
and 
\begin{equation}
\label{eq:Ck1polyk}
  \left|R^N _\varphi(f_2) - R^N _\varphi(f_1) -
  DR^N _\varphi [f_1] (f_2 - f_1) \right| \le \frac{N (N -1)}2
  \| \varphi \|_{\FF^2 \otimes (L^\infty) ^{N-2}} \, \left\| f_2 - f_1 \right\|^2_\GG ,  
\end{equation}
where we have defined 
\begin{eqnarray*}
  \| \varphi \|_{\FF^k \otimes (L^\infty) ^{N-k}} 
  &:=& \max_{i_1, \dots,  i_k \mbox{ {\tiny distincts in }}
    [|1,N|]}  \left(
  \| \varphi_{i_1} \|_{\FF} \, \dots \| \varphi_{i_k} \|_{\FF} \! 
  \prod_{j \neq (i_1,\dots,i_k)} \| \varphi_j \|_{L^\infty(E)} \right).
 \end{eqnarray*}
 
\begin{rems}\label{rem:RphiC11} 
   \begin{enumerate}
  \item It is easily seen in this computation that the assumption that
    $\varphi$ is tensor product is not necessary. In fact it is likely
     that this assumption could be relaxed all along our proof. 
   \item The assumption $\FF \subset C_b(E)$ could also be
     relaxed. For instance, when 
     \[
     \FF := \hbox{\rm Lip}_0(E)
     \]
     is the space of Lipschitz function which vanishes in some fixed
     point $x_0 \in E$, $\GG$ is its dual space, and
     \[
     \mathcal P_\GG := \{ f \in \PP_1(E); \,\, \langle f, \hbox{\rm dist}_E(\cdot
     ,x_0) \rangle \le a \}
     \]
     for some fixed $a > 0$, we have $R^N _\varphi \in
     C^{1,1}(\PP_1(E);\R)$ with 
     $$
     \left[R^N _\varphi\right]_{C^{0,1}} \le N \, a^{N-2} \, \| \varphi
     \|_{\FF^{\otimes N}}, \quad \left[R^N _\varphi\right]_{C^{1,1}} \le
     \frac{N (N -1)}2 \, a^{N-1} \, \| \varphi \|_{\FF^{\otimes N}},
     $$
     or equivalently $R^N _\varphi \in C^{1,1}_\Lambda(\PP_1(E);\R)$ with
     $\Lambda(f) := \| f \|_{M^1_1}^{N-2}$. 
\end{enumerate}
\end{rems}

%





\smallskip In the other way round, for the projection $\pi^N$ it is
clear that if the empirical measure map 
\[
X \in E^N \mapsto \mu^N_X \in P(E)
\] 
belongs to $C^{k,\eta}(E^N,\mathcal P_\GG)$ for some norm structure $\GG$,
then by composition one has
\begin{equation}\label{eq:compat-pi-F}
  \left\| \pi^N (\Phi) \right\|_{C^{k,\eta}(E ^N;\R)} 
  \le C_\pi \, \left\| \Phi \right\|_{C^{k,\eta}(\mathcal P_\GG)}.
\end{equation}
However the regularity of the empirical measure of course heavily
depends on the choice of the metric $\GG$.

\begin{ex}\label{expleTVbis} 
  In the case $\FF = (C_b(E),L^\infty)$ and $\GG = (M^1(E),TV)$,
  \eqref{eq:compat-pi-F} trivially holds with $C^{k,\eta}$ replaced by
  $C_b$. 
\end{ex}

\begin{ex}\label{expleWpbis}  When $\FF = \mbox{Lip}_0(E)$ 
endowed with the norm
  $\|\phi \|_{\mbox{{\scriptsize Lip}}}$ and $\mathcal P_\GG(E)$ (constructed in
  Subsubsection~\ref{expleWp}) is endowed with the Wasserstein distance
  $W_1$ with linear cost, one has \eqref{eq:compat-pi-F} with $k=0$,
  $\eta=1$:
  \[
  \left| \Phi\left(\mu^N _X \right) - \Phi\left(\mu^N _Y \right)
  \right| \le \| \Phi \|_{C^{0,1}(\mathcal P_\GG)} \, W_1\left(\mu^N _X ,\hat
    \mu^N _Y \right) \le \| \Phi \|_{C^{0,1}(\mathcal P_\GG)} \, \|X-Y
  \|_{\ell^1},
  \]
  where we use (\ref{Wqellq}), which proves that
  \[
  \| \pi^N (\Phi) \|_{C^{0,1} (E^N)} \le \| \Phi \|_{C^{0,1}(\mathcal P_\GG)},
  \]
  when $E^N$ is endowed with the $\ell^1$ distance defined in
  (\ref{Wqellq}).
\end{ex}

\section{The abstract theorem}
\label{sec:abstract-theo}
\setcounter{equation}{0}
\setcounter{theo}{0}

\subsection{Assumptions of the abstract theorem}
\label{sec:hyp-evol}

Let us list the assumptions that we need for our main abstract
theorem. 
\bigskip

\begin{quote}
\begin{center}{\bf (A1) Assumptions on the $N$-particle system.} 
\end{center}
\smallskip

$G^N$ and $T^N_t$ are well defined on $C_b(E^N)$ and are
invariant under permutation so that the evolution $f^N_t$ is well
defined. We moreover assume the following moment conditions:
  \begin{itemize}
  \item[(i)] {\it Conservation constraint}: There exists a constraint
    function ${\bf m}_{\GG_1} : E \to \R^D$ and a subset $\RR_{\GG_1}
    \subset \R^D$ such that defining the sequence of constraint sets
  $$
  \Ee_N := \left\{ V \in E^N; \,\, \langle \mu^N_V, {\bf
      m}_{\GG_1} \rangle \in \RR_{\GG_1} \right\}
  $$
  there holds 
  \begin{equation*}
    \forall \, t \in [0,T), \quad  \hbox{Supp}
      \, f^N_t \subset \Ee_N . 
    \end{equation*}
%
\Black
  \item[(ii)] {\it Propagation of an integral moment bound}: There exists
    a weight function $m_{\GG_1}$, a time $T \in (0,\infty]$ and a
    constant $C_{1,T} \in (0,\infty)$, possibly depending on $T$ and
    $m_{\GG_1}$, but not on the number of particles $N$, such that
    \begin{equation*}
     \forall \, N \ge 1, \quad  \sup_{0 \le t < T} \left\langle f^N
      _t, M^N _{m_{\GG_1}} \right \rangle \le C_{m_{\GG_1},T},
  \end{equation*}
  where for any weight function $m$ on $E$ we define the
    \emph{mean weight function} $M^N_m$ on $E^N$ by
\[
\forall \, V \in E^N, \quad M^N _{m} (V) := \frac1N \, \sum_{i=1} ^N m (v_i) = M_m(\mu^N_V) = \langle \mu^N_V,m \rangle.
\]
  
\item[(iii)] {\it Support moment bound at initial time}: There exists a
  weight function $m_{\GG_3}$ and a constant $C^N_{3} \in
  (0,+\infty)$, possibly depending on the number of particles $N$,
  such that
    \begin{equation*}
      \hbox{Supp}
      \, f^N_0 \subset \left\{V \in E^N; M^N_{m_{\GG_3}} (V) \le C^N_{m_{\GG_3},0} \right\}.
    \end{equation*}
  \end{itemize}
\end{quote}
\bigskip

Note that the name of the weights functions $m_{\GG_1}$ and
$m_{\GG_3}$ and of the constraint function ${\bf m}_{\GG_1}$ in {\bf
  (A1)} above are chosen bearing in mind the coherence with the
functional framework introduced in Definition~\ref{defGG1} (in
particular $\langle f, {\bf m}_{\GG_1} \rangle$ is well defined as a
vector of $\R^D$ for any $f \in \PP_{\GG_1}$) and with the
functional spaces in the other assumptions. Observe moreover that
the support condition $\Ee^N$ will be useful for controlling the
moments of the empirical measures sampled out of the distribution
$f^N$: $V \in \Ee_N$ implies $\mu^N_V \in \RR_{\GG_1}$,
  where this set of constraints is defined in Definitions~\ref{defGG1}
  and ~\ref{defGG2}.


\bigskip

\begin{quote}
  \begin{center} {\bf (A2) Assumptions for the existence of the
      pullback semigroup.}
\end{center}
\smallskip

We consider a weight function $m_{\GG_1}$, a constraint function ${\bf
  m_{\GG_1}} : E \to \R^D$ and a set of constraints $\RR_{\GG_1}
\subset \R^D$ as in {\bf (A1)-(i)} and {\bf (A1)-(ii)}.  We then
consider the corresponding space of probability measures $\mathcal
P_{\GG_1}(E)$
and the corresponding vectorial \emph{space of increments}
$\mathcal {IP}_{\GG_1}$ according to Definition~\ref{defGG1}.  We
finally consider a Banach space $\GG_1 \supset \mathcal {IP}_{\GG_1}$
so that $\mathcal P_{\GG_1,{\bf r}}(E)$ is endowed with the distance
$d_{\GG_1}$ induced by the norm $\|Ê\cdot \|_{\GG_1}$ for any
constraint vector ${\bf r} \in {\RR}_{\GG_1}$.


%
Then we assume the following: there are $\zeta \in (0,1]$ and $\bar a
\in (0,\infty)$ such that for any $a\in (\bar a,\infty)$ and $ {\bf r}
\in \RR_{\GG_1}$ one has
  \begin{itemize}
  \item[(i)] The equation \eqref{eq:limit} generates a semigroup 
    \[
    S^{N\! L}_t : \BB \mathcal P_{\GG_1,a,{\bf r}} \to \BB \mathcal P_{\GG_1,a,{\bf r}}  
    \]
    which is $\zeta$-H\"oder continuous locally uniformly in time, in
    the sense that for any $\tau \in (0,\infty)$ there exists $ C_\tau
    \in (0,\infty)$ such that
    \begin{equation*}
      \forall \, f,g
      \in \BB \mathcal P_{\GG_1,a, {\bf r}}, \quad \sup_{t \in
        [0,\tau]} \left\| S^{N\! L}_t f -
      S^{N\! L}_t g \right\|_{\GG_1} \le C_\tau \, \, \| f - g \|^\zeta_{\GG_1}.
     \end{equation*}
   \item[(ii)] The application $Q$ is bounded and $\zeta$-H\"older continuous from
    $\BB \mathcal P_{\GG_1,a,{\bf r}}$ into $\GG_1$.
%
\end{itemize}
\end{quote}
\bigskip

The important consequence of this assumption is that the semigroups
$S^{N\!L}_t$  and $T^\infty_t$ are well defined as
well as the generators $G^N$ and $G^\infty$ thanks to Lemma~\ref{lem:H0}.
\smallskip

We then need the key following \emph{consistency} assumption. It
intuitively states that the $N$-particle \emph{approximation} of the limit
mean-field equation is consistent. More rigorously this means a
convergence of the generators of the $N$-particle approximation
towards the generator of the limit pullback semigroup within the
abstract functional framework we have introduced. 
\bigskip

\begin{quote}
\begin{center}
{\bf (A3) Convergence of the generators.} 
\end{center}
\smallskip

We consider $\mathcal P_{\GG_1}$, $m_{\GG_1}$ and ${\bf m}_{\GG_1}$
introduced in {\bf (A2)}, and we also define a weight function
\[
1 \le m' _{\GG_1} \le C \, m_{\GG_1}
\]
possibly weaker than $m_{\GG_1}$ and we define the associated weight
on the distribution:
\[
\Lambda_1 (f) := \left\langle f,m' _{\GG_1} \right\rangle.
\]

Then we assume that for some function 
\[
\eps(N) \to 0 \ \mbox{ as } \ N \to \infty \quad \hbox{and}\quad \eta \in (0,1]
\]
the generators $G^N$ and $G^\infty$ satisfy
  \begin{multline*}
     \forall \,  \Phi \in \bigcap_{{\bf r} \in   \RR_{\GG_1} } 
    C_{\Lambda_1}^{1,\eta}(\mathcal P_{\GG_1,{\bf r}}),
   \qquad 
   \left\|  \left( M^N _{m_{\GG_1}} \right)^{-1} \!\!
     \left( G^N \, \pi_N - \pi_N \, G^\infty \right)  \,  
     \Phi \right\|_{L^\infty(\Ee_N)} 
 \\ \le \eps(N) \, 
   \sup_{{\bf r} \in   \RR_{\GG_1}} [ \Phi ]_{C^{1,\eta} _{\Lambda_1}(\mathcal P_{\GG_1,{\bf r}})}.
\end{multline*}
\end{quote}
\bigskip

Note the following aspect, which shall be a source of difficulties in
the applications of the abstract theorem: {\em the loss of
  weight in the consistency estimate {\bf (A3)} has to be matched by
  the support constraints on the $N$-particle system in the assumption
  {\bf (A1)-(i)} and the moment bounds propagated on the $N$-particle
  system in the assumption {\bf (A1)-(ii)}. } (In fact the loss of
weight $m_{\GG_1}$ in the consistency estimate can be even slighlty
higher than the weight $m_{\GG_1}'$ for which the stability estimates
are performed, due to the energy conservation, see later.)

Moreover, the best we are able to prove uniformly in $N$ on the
$N$-particle system are \emph{polynomial} moment bounds. This thus
constraints the kind of loss of weight we can afford in the following
stability
estimate.  
\medskip

We now state the second key \emph{stability} assumption. Intuitively
this corresponds to the abstract regularity that needs to be
transported along the flow of the limit mean-field equation so that
the fluctuations around chaoticity can be controlled. More rigorously
this means some differential regularity on the pullback limit
semigroup, which corresponds to some differential regularity on the
limit nonlinear semigroup \emph{according to the initial data} and
\emph{in the space of probability measures}. 
\bigskip

\begin{quote}
\begin{center}
{\bf (A4) Differential stability of the limit semigroup.} 
\end{center}  
\smallskip

We consider some Banach space $\GG_2 \supset \GG_1$ (where $\GG_1$ was
defined in {\bf (A2)}) and the corresponding space of probability measures $\mathcal P_{\GG_2}(E)$ (see
Definitions~\ref{defGG1}-\ref{defGG2}) with the weight function
$m_{\GG_2}$ and the constraint function ${\bf m_{\GG_2}}$, and endowed
with the metric induced from $\GG_2$.

We assume that the flow $S^{N \! L}_t$ is
$C^{1,\eta}_{\Lambda_2}(\mathcal P_{\GG_1, {\bf r}},\mathcal
P_{\GG_2})$ for any ${\bf r} \in \RR_{\GG_1}$, in the sense that there exists $C_T ^\infty >0$ such that
  \begin{equation*}
    \sup_{{\bf r} \in   \RR_{\GG_1}}  \int_0^T  
    \left( \left[ S^{N \! L}_t \right]_{C^{1,\eta}_{\Lambda_2 }(\mathcal P_{\GG_1, {\bf r} },\mathcal P_{\GG_2})} 
      + \left[ S^{N \!
        L}_t\right]^2_{C^{0,(1+\eta)/2}_{\Lambda_2}(\mathcal P_{\GG_1,  {\bf r} },\mathcal P_{\GG_2})} \right) 
    \, {\rm d}t  \le C_T ^\infty,
  \end{equation*}
  with  $\Lambda_2 :=  \Lambda_1^{1/2}$, where $\eta \in (0,1)$ and $\Lambda_1$ are the
  same as in {\bf (A3)}.
\end{quote}
\bigskip

We finally state a weaker stability assumption on the limit
semigroup. It shall be used intuitively for proving that the initial
error made in the law of large numbers when approximating a
probability by empirical measures is propagated by the limit
semigroup. The reason for dissociating this assumption from the
previous one is because we need flexibility with different choices of
distances for them.  \bigskip

\begin{quote}
\begin{center}
{\bf (A5) Weak stability of the limit semigroup.} 
\end{center}
\smallskip

We assume that, for some probabilistic space $\mathcal P_{\GG_3}(E)$
associated to the weight function $m_{\GG_3}$ (as in {\bf
  (A1)-(iii)}), a constraint function ${\bf m}_{\GG_3}$, a set of
contraints $\RR_{\GG_3}$ and some metric structure $d_{\GG_3}$, for
any $a,T > 0$ there exists a concave modulus of continuity
$\Theta_{a,T}$ (i.e. $\Theta_{a,T} : \R_+ \to \R_+$ continuous concave
with $\Theta_{a,T}(0) =0$) such that we have
 \begin{equation*}
   \forall \, {\bf r} \in \RR_{\GG_3}, \ \forall  \, f_1, f_2 \in
   \BB \mathcal P_{\GG_3,a,{\bf r}}(E), 
   \quad   \sup_{[0,T)} d_{\GG_3} 
   \left( S^{N \! L}_t(f_1), S^{N \! L}_t(f_2) \right) \le \Theta_{a,T} \left( 
     d_{\GG_3} \left(f_1,f_2\right) \right).
  \end{equation*}
\end{quote}
\bigskip

Observe that in the latter assumption, we require that $f_1, f_2 \in
\BB \mathcal P_{\GG_3,a}(E)$ which requires in particular the bounds
\[
M_{m_{\GG_3}} \left(f_i\right) = \int_{\R^d} f_i \, m_{\GG_3} \, {\rm d}v \le a, 
\quad i=1,2.
\]
When applying the assumption to some empirical measure for one of the
argument $f_1$ or $f_2$, this requires the pointwise control of terms
like $M^N _{m_{\GG_3}}$. This is the reason for the assumption {\bf
  (A1)-(iii)}.

\subsection{Statement of the result}
\label{sec:abstract-result}

We are now in position to state the main abstract result. This result
can be considered intuitively as a \emph{convergence} in approximation
theory, in the sense of proving that approximation errors between the
$N$-particle system and the limit mean-field system are propagated
along time without instability amplification mechanism. More
specifically the approximation error means in the present context some
kind of distance between the discrete $N$-particle system and the
limit mean-field system, within our abstract functional
framework. This result implies in particular the propagation of
chaos. In terms of method, we aim at treating the $N$-particle system
as a perturbation (in a very degenerated sense) of the limit problem,
and minimizing assumptions on the many-particle systems in order to
avoid complications of many dimensions dynamics.

\begin{theo}\label{theo:abstract}
  Consider a family of $N$-particle initial conditions 
  \[
  f^N_0 \in  P_{\mbox{{\tiny {\em sym}}}}(E^N), \quad N \ge 1, 
  \]
  and the associated solutions 
  \[
  f_t^N = S^N_t \left( f_0^N \right).
  \]

  Consider a $1$-particle initial condition $f_0 \in P(E)$ and the associated solution 
\[
f_t = S^{N \! L}_t \left(f_0\right)
\]
of the limit mean-field equation.

  Assume that the assumptions {\bf (A1)-(A2)-(A3)-(A4)-(A5)} hold for some spaces
  $\mathcal P_{\GG_k}$, $\GG_k$ and $\FF_k$, $k=1,2,3$ with $\FF_k \subset
  C_b(E)$, and where $\FF_k$ and $\GG_k$ are in
  duality (that is \eqref{eq:dualiteFGbis} holds). 

Assume also that the $1$-particle distribution satisfies the moment
bound 
\[
M_{m_{\GG_3}}\left( f_0 \right) = \left\langle f_0, m_{\GG_3} \right\rangle <
+\infty.
\]

  Then there is an explicit absolute constant $C \in (0,\infty)$ such
  that for any $N, \ell \in \N^*$, with $N \ge 2 \ell$, and for any
  \[ \varphi = \varphi_1 \otimes \varphi_2 \otimes \dots \otimes \,
  \varphi_\ell \in (\FF_1 \cap \FF_2 \cap \FF_3)^{\otimes \ell} \]
  we have
  \begin{eqnarray}
  \label{eq:cvgabstract1}
  &&\quad \sup_{[0,T)}\left| \left \langle \left( S^N_t(f_0 ^N) - \left(
          S^{N \! L}_t(f_0) \right)^{\otimes N} \right), \varphi 
    \right\rangle \right| 
  \\ \nonumber 
  &&\quad 
  \le C \, \Bigg[ \ell^2 \, \frac{\|\varphi\|_\infty}{N} 
  + C_{m_{\GG_1},T} \, C_T^\infty \, \varepsilon(N) \, \ell^2 \, 
  \|\varphi\|_{\FF_2 ^2 \otimes (L^\infty)^{\ell-2}} 
  \\ \nonumber
  &&\qquad\qquad\qquad 
  + \ell \, \, \|\varphi\|_{\FF_3 \otimes (L^\infty)^{\ell-1}} \, 
  \Theta_{a^N,T}  \left(  \WW_{d_{\GG_3}} \!\!  \left(\pi^N_P
      f^N_0,\delta_{f_0}\right) 
  \right) \Bigg],
  \end{eqnarray} 
  where $a^N >0$ depends on $C^N _{\GG_3,0}$ and $M_{\GG_3}(f_0)$, and
  where $\WW_{d_{\GG_3}}$ stands for an abstract Monge-Kantorovich
  distance in $P(\mathcal P_{\GG_3}(E))$ (see the third point in the next
  remark)
  \begin{equation}\label{eq:defWWd3} 
    \WW_{d_{\GG_3}} \!\!  \left(\pi^N_P f^N_0,\delta_{f_0}\right) = 
    \int_{E^N} d_{\GG_3}(\mu^N_V,f_0) \, {\rm d}f^N_0 (v).
   \end{equation}
\end{theo}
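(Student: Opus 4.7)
My plan is to transfer the comparison from $C_b(E^N)$ to the nonlinear space $C_b(P(E))$ via the pullback semigroup $T^\infty_t$, using the duality transformations $\pi^N, R^N$ from Section~\ref{framework}. Introduce the polynomial observable
\[
\Phi_\varphi(f) := \langle f^{\otimes \ell}, \varphi\rangle,
\]
which by the computations of Section~\ref{sec:compatibility} (in particular \eqref{eq:Ck1polyk}) lies in $C^{1,1}(\mathcal P_{\GG_2};\R)$ with $C^{0,1}$ and $C^{1,1}$ semi-norms controlled respectively by $\ell\|\varphi\|_{\FF_2 \otimes (L^\infty)^{\ell-1}}$ and $\ell^2\|\varphi\|_{\FF_2^2 \otimes (L^\infty)^{\ell-2}}$. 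By symmetry of $f^N_t$ and $f_t^{\otimes N}$,
\[
\langle f_t^{\otimes N}, \varphi \otimes \mathbf 1^{N-\ell}\rangle = T^\infty_t \Phi_\varphi(f_0), \qquad \langle S^N_t f^N_0, \varphi \otimes \mathbf 1^{N-\ell}\rangle = \langle f^N_0, T^N_t[\varphi \otimes \mathbf 1^{N-\ell}]\rangle.
\]

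I then decompose the total error as $\mathcal E_1 + \mathcal E_2 + \mathcal E_3$, where
\[
\mathcal E_1 := \langle f^N_t, \varphi\otimes \mathbf 1^{N-\ell} - \pi^N \Phi_\varphi\rangle, \quad \mathcal E_2 := \langle f^N_0, (T^N_t \pi^N - \pi^N T^\infty_t)\Phi_\varphi\rangle, \quad \mathcal E_3 := \langle f^N_0, \pi^N T^\infty_t \Phi_\varphi\rangle - T^\infty_t \Phi_\varphi(f_0).
\]
Each piece is controlled by one block of hypotheses: $\mathcal E_1$ is purely combinatorial, $\mathcal E_2$ uses the consistency estimate \textbf{(A3)} together with the $N$-particle moment bounds \textbf{(A1)}, and $\mathcal E_3$ is a law-of-large-numbers term handled by the weak stability \textbf{(A5)}.

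For $\mathcal E_1$, expanding $\pi^N\Phi_\varphi(V) = N^{-\ell}\sum_{i_1,\ldots,i_\ell}\varphi(v_{i_1},\ldots,v_{i_\ell})$ and exploiting symmetry, the multi-indices with at least two coinciding entries (of cardinality $O(\ell^2 N^{\ell-1})$) contribute $|\mathcal E_1| \leq C(\ell^2/N)\|\varphi\|_\infty$. For $\mathcal E_2$, I use the Duhamel identity
\[
T^N_t \pi^N - \pi^N T^\infty_t = \int_0^t T^N_{t-s}\bigl(G^N \pi^N - \pi^N G^\infty\bigr) T^\infty_s\, ds,
\]
obtained by differentiating $s \mapsto T^N_{t-s}\pi^N T^\infty_s \Phi_\varphi$. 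Applying \textbf{(A3)} with $\Phi = T^\infty_s\Phi_\varphi$, and absorbing the loss $M^N_{m_{\GG_1}}$ via \textbf{(A1)-(i)} (support in $\Ee_N$) and \textbf{(A1)-(ii)} (uniform bound $C_{m_{\GG_1},T}$), yields
\[
|\mathcal E_2| \leq \varepsilon(N)\,C_{m_{\GG_1},T}\int_0^t \sup_{\mathbf r}\bigl[T^\infty_s\Phi_\varphi\bigr]_{C^{1,\eta}_{\Lambda_1}(\mathcal P_{\GG_1,\mathbf r})}\, ds.
\]
To bound the integrand, I apply the composition Lemma~\ref{lem:DL} to $T^\infty_s\Phi_\varphi = \Phi_\varphi \circ S^{N\!L}_s$; since $\Lambda_2^2 = \Lambda_1$,
\[
[T^\infty_s\Phi_\varphi]_{C^{1,\eta}_{\Lambda_1}} \leq \|\Phi_\varphi\|_{C^{1,1}}\,[S^{N\!L}_s]_{C^{1,\eta}_{\Lambda_2}} + [\Phi_\varphi]_{C^{1,1}}\,[S^{N\!L}_s]^2_{C^{0,(1+\eta)/2}_{\Lambda_2}},
\]
and integrating in $s$ against \textbf{(A4)} produces the factor $C_T^\infty$, giving $|\mathcal E_2| \leq C\,\varepsilon(N)\,C_{m_{\GG_1},T}\,C_T^\infty\,\ell^2\|\varphi\|_{\FF_2^2\otimes(L^\infty)^{\ell-2}}$. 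For $\mathcal E_3$, I write
\[
\mathcal E_3 = \int_{E^N}\bigl[\Phi_\varphi(S^{N\!L}_t\mu^N_V) - \Phi_\varphi(S^{N\!L}_t f_0)\bigr]\, f^N_0(dV);
\]
the $\FF_3$–$\GG_3$ duality makes $\Phi_\varphi$ Lipschitz on $\mathcal P_{\GG_3}$ with constant $\ell\|\varphi\|_{\FF_3\otimes(L^\infty)^{\ell-1}}$; hypothesis \textbf{(A1)-(iii)} combined with $M_{m_{\GG_3}}(f_0)<\infty$ confines $\mu^N_V$ and $f_0$ to a common bounded set $\BB\mathcal P_{\GG_3,a^N,\mathbf r}$, so \textbf{(A5)} controls $d_{\GG_3}(S^{N\!L}_t\mu^N_V, S^{N\!L}_t f_0)$ by $\Theta_{a^N,T}(d_{\GG_3}(\mu^N_V, f_0))$. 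Jensen's inequality and concavity of $\Theta_{a^N,T}$ then exchange the integral with the modulus, producing $\Theta_{a^N,T}(\WW_{d_{\GG_3}}(\pi^N_P f^N_0, \delta_{f_0}))$.

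The principal subtlety, in my view, is the weight bookkeeping between \textbf{(A3)} and \textbf{(A4)}. The consistency estimate trades a loss $M^N_{m_{\GG_1}}$ (absorbed by \textbf{(A1)-(ii)}) against the H\"older differential semi-norm of the test function measured in the weight $\Lambda_1 = \Lambda_2^2$ arising from the squared loss in the composition Lemma~\ref{lem:DL}. Closing the scheme therefore \emph{requires} the differential stability of $S^{N\!L}_s$ at the square-root weight $\Lambda_2$: this is what \textbf{(A4)} asks, and it is the genuinely delicate hypothesis. The abstract theorem itself is mostly the algebra of the Duhamel expansion and the composition calculus of Section~\ref{sec:diff-measures} once the two regularity bounds are granted; the real analytic difficulty is deferred to verifying \textbf{(A4)} for the concrete nonlinear Boltzmann flows (true Maxwell molecules and hard spheres), which is carried out in the later sections.
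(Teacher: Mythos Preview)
Your proof is correct and follows essentially the same approach as the paper: the three terms $\mathcal E_1,\mathcal E_2,\mathcal E_3$ coincide with the paper's $\TT_1,\TT_2,\TT_3$, and each is handled by the same tools (the combinatorial Lemma~\ref{lem:symmetrization}, the Duhamel identity combined with \textbf{(A3)}--\textbf{(A4)} via Lemma~\ref{lem:DL}, and the weak stability \textbf{(A5)} with Jensen for the concave modulus). Your commentary on the $\Lambda_1 = \Lambda_2^2$ weight matching is accurate and captures the point the paper makes after stating \textbf{(A3)}.
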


\begin{rem}
  In the applications the worst decay rate in the right-hand side of
  \eqref{eq:cvgabstract1} is always the last one. This last term
  controls two kind of errors: (1) the chaoticity of the initial data,
  that is how well $f^N _0 \sim f_0 ^{\otimes N}$, (2) the rate of
  convergence in the law of large numbers for measures in the distance
  $d_{\GG_3}$. 

  Let us discuss more the meaning of this last term and the related
  issue of sampling by empirical measures in statistics (see also
  Section~\ref{sec:chaos}).  Following the abstract definition of the
  optimal transport Wasserstein distance we define
$$
\forall \, \mu_1,\, \mu_2 \in P\left(\mathcal P_{\GG_3}\right), \quad
\WW_{d_{\GG_3}} \left(\mu_1,\mu_2\right) = \inf_{\pi \in
  \Pi \left(\mu_1, \mu_2\right)} \int_{\mathcal P_{\GG_3} \times \mathcal P_{\GG_3}}
d_{\GG_3}\left(\rho_1,\rho_2\right) \, {\rm d}\pi(\rho_1,\rho_2),
$$
where $\Pi(\mu_1,\mu_2)$ denotes the set of probability measures on
the product space $\mathcal P_{\GG_3} \times \mathcal P_{\GG_3}$ with
first marginal $\mu_1$ and second marginal $\mu_2$. In the case when
$\mu_2 = \delta_{f_0}$ then
\[
\Pi (\mu_1 , \delta_{f_0}) = \left\{ \mu_1 \otimes \delta_{f_0} \right\} 
\]
has only one element, and therefore
\begin{eqnarray*}
  \WW_{d_{\GG_3}}  \left(\pi^N_P f_0^N , \delta_{f_0}\right) 
  &=& \inf_{\pi \in \Pi \left(\pi^N_P f_0^N , \delta_{f_0}\right)} 
  \int_{\mathcal P_{\GG_3} \times \mathcal P_{\GG_3}} 
  d_{\GG_3} \left(\rho_1,\rho_2\right) \, {\rm d}\pi\left(\rho_1,\rho_2\right) \\
  &=&  \int_{E^{N}}  d_{\GG_3} \left(\mu^N_V,f_0\right)   \, {\rm d} f_0^N
  (v),
\end{eqnarray*}
which explains the notation \eqref{eq:defWWd3}.  We simply write in
the tensorized case:
\begin{equation}\label{eq:defOmegaN}
\WW_{d_{\GG_3}}  ^N \left(f_0 \right) := 
\WW_{d_{\GG_3}}  \left(\pi^N_P f_0^{\otimes N} , \delta_{f_0}\right)
\end{equation}
Comparisons of the $\WW^N_d$ functionals and estimates on the rate
\[
\WW_{d} ^N (f)  \to 0 \ \mbox{ as } \ N \to \infty
\]
depending on the choice of the distance $d$ are discussed in 
Subsection~\ref{app:W}. 
\end{rem}

\subsection{Proof of the abstract theorem} 
For a given function 
\[
\varphi \in (\FF_1 \cap \FF_2 \cap \FF_3)^{\otimes \ell},
\]
we break up the term to be estimated into three parts:
\begin{eqnarray*}
  &&\left| \left \langle \left( S^N_t(f_0^{N}) - \left( S _t ^{N\! L}
          (f_0)
        \right)^{\otimes N} \right),  \varphi  
      \otimes 1^{\otimes N-\ell} \right\rangle \right| \le
  \\
  &&\le \left| \left\langle S^N_t(f_0^N), 
      \varphi  \otimes
      1^{\otimes N-\ell} \right\rangle -
    \left \langle S^N_t(f_0^N), 
      R^\ell_\varphi \circ \mu^N_V \right\rangle  \right| \quad \qquad\qquad (=:  \TT_ 1 )
  \\
  &&+ \left| \left\langle f_0^N, T^N_t ( R^\ell_\varphi \circ
      \mu^N_V) \right\rangle
    - \left\langle f_0^N, 
      (T_t ^\infty R^\ell_\varphi ) \circ \mu^N_V) \right\rangle
  \right| \quad \qquad\qquad (=:  \TT_ 2 )
  \\  
  &&+ \left| \left\langle f_0^N, 
      (T_t ^\infty R^\ell_\varphi ) \circ \mu^N_V) \right\rangle
    -  \left\langle (S _t ^{N\! L} (f_0))^{\otimes \ell} ,
      \varphi \right\rangle \right| \qquad\qquad\qquad (=:  \TT_ 3 ).
\end{eqnarray*} 

We deal separately with each part step by step: 
\begin{itemize} 

\item $\TT_1$ is controlled by a purely combinatorial arguments
  introduced in \cite{Grunbaum}. Roughly speaking it is the
  combinatorial price we have to pay when we use the injection
  $\pi^N_E$ based on empirical measures.

\item $\TT_2$ is controlled thanks to the consistency estimate {\bf
    (A3)} on the generators which are well defined thanks to assumption {\bf (A2)} and Lemma~\ref{lem:H0}, the differential stability assumption
  {\bf (A4)} on the limit semigroup, the support constraint {\bf (A1)-(i)} and the propagation of integral
  moment bounds {\bf (A1)-(ii)}.

\item $\TT_3$ is controlled in terms of the chaoticity of the initial
  data thanks to the weak stability assumption {\bf (A5)} on the
  limit semigroup and the support moment bounds at initial time
  {\bf (A1)-(iii)}.
\end{itemize}

\bigskip

\noindent {\bf Step~1: Estimate of the first term $\TT_1$.} 
Let us prove that for any $t \ge 0$ and any $N \ge 2 \ell$ there holds
\begin{equation}\label{estim:T1}
  \TT_1 := \left| \left\langle S^N_t(f_0^N), 
      \varphi \otimes
      1^{\otimes N-\ell} \right\rangle - \left \langle S^N_t(f_0^N),
      R^\ell_\varphi \circ \mu^N_V \right\rangle \right| \le \frac{2 \,
    \ell^2 \, \| \varphi \|_{L^\infty(E^\ell)}}{N}.
\end{equation} 
Since $S^N_t(f_0^N)$ is a symmetric probability measure, estimate
(\ref{estim:T1}) is a direct consequence of the following lemma.

\begin{lem} \label{lem:symmetrization}
For any $\varphi \in C_b(E^\ell)$ we have
\begin{equation}\label{estim:symmetrization1}
  \forall \, N \ge 2 \ell, \quad  
  \left| \left( \varphi 
      \otimes {\bf 1}^{\otimes N-\ell} \right)_{\mbox{{\tiny {\em sym}}}} -
    \pi_N R^\ell_\varphi \right| 
  \le \frac{2 \, \ell^2 \, \| \varphi \|_{L^\infty(E^\ell)}}{N}
\end{equation}
where for a function $\phi \in C_b(E^N)$, we define its symmetrized
version $\phi_{\mbox{{\tiny {\em sym}}}}$ as:
\begin{equation*}
  \phi_{\mbox{{\tiny {\em sym}}}} = 
  \frac{1}{|\SSS^N|} \, \sum_{\sigma \in \SSS^N} \phi_\sigma
\end{equation*}
where we recall that $\SSS^N$ is the set of $N$-permutations. 

As a consequence for any symmetric measure we have 
\begin{equation} \label{eq:mNRphi} 
\forall \, f^N \in P_{\mbox{{\tiny
      {\em sym}}}}(E^N), \quad \left| \langle f^N,
    R^\ell_\varphi(\mu^N _V) \rangle - \langle f^N, \varphi \rangle
  \right| \le {2 \, \ell^2 \, \| \varphi \|_{L^\infty(E^\ell)} \over
    N}.
\end{equation}
\end{lem}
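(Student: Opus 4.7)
\smallskip

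\noindent\textbf{Proof plan.} The plan is to rewrite both quantities appearing in \eqref{estim:symmetrization1} as averages of $\varphi$ over $\ell$-tuples extracted from $V=(v_1,\dots,v_N)$, and then estimate the discrepancy by a purely combinatorial counting argument: the first quantity is an average over \emph{injective} $\ell$-tuples (permutations of $\ell$ indices among $N$) while the second is an average over \emph{all} $\ell$-tuples (with possible repetitions). The difference is driven by the ``diagonal'' $\ell$-tuples, whose proportion is of order $\ell^2/N$.

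More precisely, first I would write, for any $V \in E^N$,
\[
\pi_N R^\ell_\varphi(V) \;=\; R^\ell_\varphi(\mu^N_V) \;=\; \bigl\langle (\mu^N_V)^{\otimes \ell}, \varphi \bigr\rangle \;=\; \frac{1}{N^\ell} \sum_{(i_1,\dots,i_\ell) \in \{1,\dots,N\}^\ell} \varphi\bigl(v_{i_1},\dots,v_{i_\ell}\bigr),
\]
and, using the structure of the symmetrization on variables $1,\dots,\ell$ which only depends on the ordered choice of $\ell$ distinct indices among $N$,
\[
\bigl(\varphi\otimes \mathbf{1}^{\otimes N-\ell}\bigr)_{\text{sym}}(V) \;=\; \frac{(N-\ell)!}{N!} \sum_{\substack{(i_1,\dots,i_\ell) \\ \text{pairwise distinct}}} \varphi\bigl(v_{i_1},\dots,v_{i_\ell}\bigr).
\]
Denote by $A(V)$ the sum over distinct $\ell$-tuples and by $B(V)$ the sum over $\ell$-tuples with at least one repetition, so that $\sum_{\vec\imath} \varphi = A(V) + B(V)$. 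The number of non-injective $\ell$-tuples is $N^\ell - N!/(N-\ell)!$, and from the elementary lower bound $\prod_{k=0}^{\ell-1}(1-k/N) \ge 1 - \ell(\ell-1)/(2N)$ (valid for $N\ge 2\ell$) one gets
\[
0 \;\le\; 1 - \frac{N!/(N-\ell)!}{N^\ell} \;\le\; \frac{\ell(\ell-1)}{2N}.
\]

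Next I would split the difference as
\[
\pi_N R^\ell_\varphi(V) - \bigl(\varphi\otimes \mathbf{1}^{\otimes N-\ell}\bigr)_{\text{sym}}(V) = A(V)\left[\frac{1}{N^\ell} - \frac{(N-\ell)!}{N!}\right] + \frac{B(V)}{N^\ell},
\]
and bound each piece using the $L^\infty$ control on $\varphi$: since $|A(V)| \le (N!/(N-\ell)!)\,\|\varphi\|_\infty$ and $|B(V)| \le (N^\ell - N!/(N-\ell)!)\,\|\varphi\|_\infty$, each of the two terms is bounded by $\|\varphi\|_\infty\, \ell(\ell-1)/(2N)$, giving the combined bound $\|\varphi\|_\infty\, \ell(\ell-1)/N \le 2\,\ell^2\,\|\varphi\|_\infty/N$. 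This yields \eqref{estim:symmetrization1}.

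Finally, for the consequence \eqref{eq:mNRphi}, since $f^N \in P_{\text{sym}}(E^N)$ the symmetrization leaves the integral invariant, hence
\[
\bigl\langle f^N, \varphi\bigr\rangle = \bigl\langle f^N, \bigl(\varphi\otimes\mathbf{1}^{\otimes N-\ell}\bigr)_{\text{sym}}\bigr\rangle,
\]
and \eqref{estim:symmetrization1} applied pointwise then integrated against $f^N$ yields the stated bound. The only non-routine point is the combinatorial estimate $\prod_{k=0}^{\ell-1}(1-k/N) \ge 1 - \ell(\ell-1)/(2N)$, which follows by induction on $\ell$ (or by expanding and keeping only the linear correction, all other terms being non-negative under $N\ge 2\ell$); everything else is bookkeeping, so I do not expect any serious obstacle.
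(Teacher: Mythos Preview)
Your proof is correct and follows essentially the same combinatorial approach as the paper: both split the sum over $\ell$-tuples into injective and non-injective parts and control the discrepancy via the ratio $1 - N!/((N-\ell)!\,N^\ell)$. The only cosmetic difference is that you use the Weierstrass product inequality $\prod_k(1-x_k)\ge 1-\sum_k x_k$ to get the (slightly sharper) bound $\ell(\ell-1)/(2N)$, whereas the paper uses $\ln(1-x)\ge -2x$ and $e^{-x}\ge 1-x$ to obtain $\ell^2/N$; either way, combining the two error terms yields the stated bound $2\ell^2\|\varphi\|_\infty/N$.
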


\begin{proof}[Proof of Lemma~\ref{lem:symmetrization}] 
  This lemma is a simple and classical combinatorial computation. We
  briefly sketch the proof for the sake of completeness.

  For a given $\ell \le N/2$ we introduce
\[ 
A_{N,\ell} := \left\{ (i_1, \dots, i_\ell) \in [|1,N|]^\ell \, : \
  \forall \, k \not= k' \in [|1,\ell|], \ i_k \not= i_{k'} \ \right\} 
\]
and 
\[
B_{N,\ell} := A_{N,\ell}^c = \left\{ (i_1, \dots, i_\ell) \in
  [|1,N|]^\ell \right\} \setminus A_{N,\ell}.
\]
Since there are $N !/(N-\ell)!$ ways of choosing $\ell$ distinct
indices among $\{ 1, \dots, N \}$, we get
\begin{eqnarray*}
  {\left|B_{N,\ell}\right|  \over N^\ell} &=& 
  1- \frac{N! }{(N-\ell)! \, N^\ell} \\
  &=& 
  1 - \left(1 - {1 \over N}\right) \, \cdots \, 
  \left(1 - {\ell-1 \over N}\right) 
  = 1 - \exp \left( \sum_{i = 0}^{\ell-1} 
    \ln \left(1 - \frac{i}N \right) \right) \\
  &\le& 1 - \exp \left( - 2 \sum_{i = 0}^{\ell-1} \frac{i}N \right) 
  \le 2 \sum_{i = 0}^{\ell-1} \frac{i}N \le {\ell^2 \over  N},
\end{eqnarray*}
where we have used 
\[
\forall \, x \in [0,1/2], \quad \ln ( 1 - x) \ge - 2 \, x \qquad \mbox{and}
 \qquad \forall \, x \in \R, \quad e^{-x} \ge 1 - x.
\]

Then we compute
\begin{eqnarray*} &&R^\ell_\varphi\left(\mu^N_V\right) =
  {1 \over N^\ell} \sum_{i_1, \dots, i_\ell = 1}^N 
 \varphi \left(v_{i_1}, \dots, v_{i_\ell}\right) \\
  &&= {1 \over N^\ell} \sum_{(i_1, \dots, i_\ell) \in A_{N,\ell}} 
  \varphi \left(v_{i_1}, \dots, v_{i_\ell}\right)
  +  {1 \over N^\ell} \sum_{(i_1, \dots, i_\ell) \in B_{N,\ell}}  
   \varphi \left(v_{i_1}, \dots, v_{i_\ell}\right) \\
  &&= {1 \over N^\ell} \, {1 \over (N-\ell)!} \sum_{\sigma \in \SSS_N}
  \varphi \left(v_{\sigma(1)}, \dots, v_{\sigma(\ell)}\right)
  +  {\mathcal O} \left(  {\ell^2 \over N} \, \|\varphi \|_{L^\infty}
  \right).
\end{eqnarray*}
We now use the same estimate 
\[
1 - \frac{N! }{(N-\ell)! \, N^\ell} \le {\ell^2 \over  N}
\]
as above to get
\begin{multline*}
R^\ell_\varphi\left(\mu^N_V\right) = {1 \over N!} \sum_{\sigma \in \SSS_N} 
  \varphi \left(v_{\sigma(1)}, \dots , v_{\sigma(\ell)}\right)
  +  {\mathcal O} \left(  {2 \, \ell^2 \over N} \, 
    \| \varphi \|_{L^\infty} \right)  \\ = \left( \varphi 
      \otimes {\bf 1}^{\otimes N-\ell} \right)_{\mbox{{\tiny sym}}}
  +  {\mathcal O} \left(  {2 \, \ell^2 \over N} \, 
    \| \varphi \|_{L^\infty} \right)
\end{multline*}
and the proof of (\ref{estim:symmetrization1}) is complete. 

Next for any $f^N \in P_{\mbox{{\tiny sym}}}(E^N)$ we have 
$$
\left\langle f^N, \varphi \right\rangle = \left\langle f^N,
  \left(\varphi \otimes {\bf
      1}^{\otimes N-\ell} \right)_{\mbox{{\tiny sym}}} \right\rangle,
 $$
 and \eqref{eq:mNRphi} trivially follows from
 (\ref{estim:symmetrization1}). 
\end{proof}

\bigskip\noindent {\bf Step~2: Estimate of the second term $\TT_2$. }
Let us prove that for any $t \in [0,T)$ and any $N \ge 2 \ell$ there
holds 
\begin{eqnarray}\label{estim:T2}
  \quad \TT_ 2 &:= & \left| \left\langle
      f_0^N, T^N_t \left( R^\ell_\varphi \circ \mu^N_V \right) \right\rangle -
    \left\langle f_0^N, \left(T_t ^\infty R^\ell_\varphi \right) \circ
        \mu^N_V \right\rangle \right| \\ \nonumber &\le&
  C _{m_{\GG_1},T} \, C_T ^\infty \, \eps(N) \, \ell^2 \, \| \varphi \|_{\FF_2^2 \otimes
    (L^\infty)^{\ell-2}}.
\end{eqnarray} 
Observing that the semigroup $T^\infty_t$ and its generator $G^\infty$ are well defined thanks to assumption {\bf (A2)} and 
Lemma~\ref{lem:H0}, we start with the following  identity
$$
T^N_t \pi_N - \pi_N T^\infty _t = - \int_0 ^t \frac{{\rm d}}{{\rm d}s} \left( T^N
  _{t-s} \, \pi_N \, T^\infty _s \right) \, {\rm d}s = \int_0 ^t T^N _{t-s}
\, \left[ G^N \pi_N - \pi_N G^\infty \right] \, T^\infty _s \, {\rm d}s.
$$

We then use assumptions {\bf (A1)-(i)}, {\bf (A1)-(ii)} and {\bf (A3)}
and we get for any $t \in [0,T)$
\begin{eqnarray}\nonumber
  && \left|  \left\langle f_0^N, 
      T^N_t \left( R^\ell_\varphi \circ \mu^N_V \right) \right\rangle 
    - \left\langle f_0^N, \left(T_t ^\infty R^\ell_\varphi \right) \circ 
      \mu^N_V  \right\rangle \right|  \\
  \nonumber
  &&\qquad \le \int_0 ^T \left| \left\langle M^N_{m_{\GG_1}}  \, 
      S^N _{t-s}\left(f_0 ^N\right), 
      \left( M^N _{m_{\GG_1}}\right)^{-1}  \, 
      \left[ G^N \pi_N - \pi_N G^\infty \right] \, 
      \left(T^\infty_s R^\ell_\varphi\right) \right\rangle \right| 
  \, {\rm d}s \\ \nonumber
  && \qquad \le
  \left( \sup_{0 \le t < T} \left\langle f^N _t, M^N _{m_{\GG_1}} \right 
    \rangle \right) \times \\  \nonumber
  && \qquad \qquad \qquad \qquad 
  \left( \int_0^T \left\| \left( M^N _{m_{\GG_1}}\right)^{-1} \, 
      \left[ G^N \pi_N - \pi_N G^\infty \right] \, 
      \left(T^\infty_s R^\ell_\varphi\right)
    \right\|_{L^\infty(\mathbb{E}_N)} \, {\rm d}s \right) \\
  \label{eq:trotter}
  &&\qquad \qquad  
  \le  C_{m_{\GG_1},T} \, \eps(N) \, \sup_{{\bf r} \in \RR_{\GG_1} } \int_0^T 
  \left[ T^\infty_s R^\ell_\varphi\right]_{C^{1,\eta}
    _{\Lambda_1}(\mathcal P_{\GG_1},{\bf r} )} \, {\rm d}s. 
\end{eqnarray}

Now, let us fix ${\bf r} \in \RR_{\GG_1}$. Since
\[
T^\infty_t(R^\ell_\varphi) = R^\ell_\varphi \circ S^{N \!  L}_t \
\mbox{ with } S^{N \! L}_t \in C^{1,\eta}_{\Lambda_2} (\mathcal P_{\GG_1,{\bf
    r}};\mathcal P_{\GG_2})
\]
and $R^\ell_\varphi \in C^{1,1}(\mathcal P_{\GG_2})$ because $\varphi \in
\FF_2^{\otimes\ell}$ (see subsection~\ref{sec:compatibility}), we can
apply Lemma~\ref{lem:DL} and use assumption {\bf (A4)} to obtain
\[
T^\infty_t(R^\ell_\varphi) \in
C^{1,\eta}_{\Lambda_1}(\mathcal P_{\GG_1,{\bf r}})
\]
with
\[
\left[ T^\infty_s\left(R^\ell_\varphi\right) \right]
_{C^{1,\eta} _{\Lambda_1} (\mathcal P_{\GG_1,{\bf r}})}
  \le \left( \left[ S^{N \! L}_t\right]_{C^{1,\eta}_{\Lambda_2}(\mathcal P_{\GG_1, {\bf r}},\mathcal P_{\GG_2})} 
  + \left[ S^{N \! L}_t\right]^2_{C^{0,(1+\eta)/2}_{\Lambda_2}(\mathcal P_{\GG_1, {\bf r}},\mathcal P_{\GG_2})} \right)
  \, \left\| R^\ell_\varphi \right\|_{C^{1, 1}(\mathcal P_{\GG_2})}
\]
and  $\Lambda_2 = \Lambda_1^{1/2}$.
%
We then deduce thanks to \eqref{eq:Ck1polyk} and assumption {\bf (A4)}:
\begin{equation}
  \label{estim:TtPhi}
  \int_0^T [ T^\infty_s(R^\ell_\varphi) ]_{C^{1,\eta} _{\Lambda_1} (\mathcal P_{\GG_1,{\bf r}})} \, {\rm d}s
  \le C^\infty_T \,\ell^2 \,  \left(\| \varphi \|_{\FF_2 \otimes
      (L^\infty)^{\ell-2}} 
    + \| \varphi \|_{\FF_2 \otimes (L^\infty)^{\ell-1}} \right).
\end{equation}
Then we go back to the computation \eqref{eq:trotter}, and plugging
(\ref{estim:TtPhi}) we deduce (\ref{estim:T2}).  

\bigskip\noindent {\bf Step~3: Estimate of the third term $\TT_3$. }
Let us prove that for any $t \ge 0$ and $N \ge \ell$ we have 
\bean
  \TT_3 &:=& \left| \left\langle f_0^N, \left(T_t ^\infty
        R^\ell_\varphi \right) \circ \mu^N_V \right\rangle -
    \left\langle \Big(S _t ^{NL} (f_0)\Big)^{\otimes \ell} ,
      \varphi \right\rangle \right|  \\
 & \le & \left[R_\varphi \right]_{C^{0,1}}  \,   
\Theta_{a^N,T}  \left( \WW_{1,\mathcal P_{\GG_3}} \left(\pi^N_P f^N_0,\delta_{f_0}\right) \right)
\eean
where $\Theta_{a,T}$ was introduced in assumption {\bf
  (A5)}, and $a=a^N$ is defined by 
\[
a^N := \max \left\{ M_{\GG_3}(f_0) \ , \ C_{m_{\GG_3},0} ^N \right\} , 
\]
where $C_{m_{\GG_3},0} ^N$ was introduced in assumption {\bf (A1)-(iii)}.

Assumption {\bf (A1)-(iii)} indeed implies that 
\[
\mbox{Supp} \, f_0^N \subset \KK := \left\{ V \in \R^{dN} \ \mbox{ s. t. }
\ M^N _{m_{\GG_3}} \left( \mu^N _V \right) = 
\frac1N \, \sum_{i=1} ^N m_{\GG_3}(v_i)  \le C^N_{m_{\GG_3},0} \right\}.
\]
Hence we are in position to apply {\bf (A5)} for the functions $f_0$
and $\mu^N _V$ \emph{on the support of $f^N_0$} since
$M_{m_{\GG_3}}(f_0)$ is bounded by assumption, and $M_{\GG_3}(\mu^N
_V)$ is bounded by $C_{m_{\GG_3},0}^N$ when restricting to $V \in \KK$
thanks to the previous equation.

Let us also recall that $R^\ell_\varphi \in C^{0,1}(\mathcal P_{\GG_3},\R)$
because $\varphi \in \FF_3^{\otimes\ell}$.

We then write 
\begin{eqnarray*}
  \TT_3 
  &=& \left| \left\langle f^N_0, 
      R^\ell_\varphi \left(S^{N\!L}_t (\mu^N_V)\right) 
    \right\rangle -  \left \langle f_0 ^N, R^\ell_\varphi
      \left(S^{N\!L}_t(f_0 )\right) \right \rangle \right| 
  \\
  &=& \left| \left\langle f^N_0, R^\ell_\varphi \left(S^{N\!L}_t (\mu^N_V)\right) 
      - R^\ell_\varphi \left(S^{N\!L}_t(f_0 )\right) \right\rangle \right| 
  \\
  &\le&  \big[R_\varphi\big]_{C^{0,1}(\mathcal P_{\GG_3})} \, \left\langle f^N_0, \, 
   d_{\GG_3} 
    \left( S^{N \! L}_t(f_0), S^{N \! L}_t( \mu^N_V) \right) \right\rangle.
\end{eqnarray*}

We now apply {\bf (A5)} to get 
\[
\forall \, t \in [0,T], \quad d_{\GG_3} \left( S^{N \! L}_t(f_0), S^{N
    \! L}_t( \mu^N_V) \right) \le \Theta_{a^N,T} \left( d_{\GG_3}
\left( f_0, \mu^N _V \right) \right)
\]
on the support of $f^N _0$, and therefore
\[ 
\TT_3 \le \big[R_\varphi\big]_{C^{0,1}(\mathcal P_{\GG_3})} \, \left\langle
  f^N_0, \, \Theta_{a^N,T} \left( \mbox{d}_{\GG_3} \left(f_0 ,
      \mu^N_V\right) \right) \right\rangle.
\]
We then obtain from the concavity of the $\Theta_{a^N,T}$ function:
\[
\TT_3 \le \big[R_\varphi\big]_{C^{0,1}(\mathcal P_{\GG_3})} \,
\Theta_{a^N,T} \left( \left\langle f^N_0, \, \mbox{d}_{\GG_3}
    \left(f_0 , \mu^N_V\right) \right\rangle \right)
\]
which concludes the proof of this step. 

\bigskip

The proof of Theorem~\ref{theo:abstract} is complete by combining the
previous steps.


\section{The $N$-particle approximation at initial time}
\label{sec:chaos}
\setcounter{equation}{0}
\setcounter{theo}{0}

\subsection{Comparison of distances on probabilities}
\label{app:distances}
  In the following lemma we compare the different
  metrics and norms defined Subsection~\ref{subsec:ExpleMetrics}. 
Let us write
$$
M_k(f,g) := \max \left\{ \big\langle f,\, \langle v \rangle^k
  \big\rangle \, ; \  \big\langle g,\, \langle v \rangle^k
  \big\rangle \right\}.
$$

\begin{lem}\label{lem:ComparDistances} Let $f,g \in P(\R^d)$ and $k
  \in (0,\infty)$, then the following estimates hold:
\begin{enumerate}
\item[(i)] For any $q \in (1,+\infty)$ and any $k \in [q-1,\infty)$: 
\begin{equation}\label{estim:W1Wq}
   W_1(f,g) \le W_q(f,g) \le 2^{\frac{k+1}{q}} \,  
   M_{k+1}(f,g)^{\frac{q-1}{q k}} \, W_1(f,g)^{\frac1q \left( 1-
       \frac{q-1}{k} \right)}.
\end{equation}
\item[(ii)] For any $s \in (0,1]$, 
\begin{equation}\label{estim:dsWs}
  | f - g |_s \le 2^{(1-s)} \, W_s(f,g) \le 2^{(1-s)} \,
  W_1(f,g)^s.
\end{equation}
\item[(iii)] For any $s \in (d/2,d/2+1)$, 
\begin{equation}\label{estim:H-kd1}
  \| f-g \|_{\dot H^{-s}}^2
  \le  \frac{8 \, \left|\mathbb
    S^{d-1}\right|}{(2s -d)} \, \left(
  \frac {(2s-d)}{4 ( d+2 - 2s)} \right)^{s-\frac{d}2} \, |f-g |_1^{2s-d}.
\end{equation}
\item[(iv)] For any $s>0$ and $k>0$ we have 
\begin{equation}\label{estim:*1s} 
[f-g]^*_1 \le C(d,s,k) \,
M_{k+1}(f,g)^{\frac{d}{d+k(d+s)}} \, | f-g |_s^{\frac{k}{d+k(d+s)}}
\end{equation}
for some constant $C(d,s,k)>0$ depending on $d$, $s$ and $k$. 
\item[(v)] For any 
\[
s \in \left(\max \left\{\frac{d}2 \, ; \, 1\right\} ,
  \frac{d}2+1 \right)
\]
and $k>0$ we have
\begin{equation}\label{estim:W1H-k}
  [f-g]_1 ^* \le C(d,s,k) \,  M_{k+1}(f,g) ^{\frac{d}{d+ 2 k s}} \,  \| f-g
  \|_{ \dot H^{-s}}^{\frac{2k}{d+ 2 k s}}
\end{equation}
for some constant $C(d,s,k)>0$ depending on $d$, $s$ and $k$.
\end{enumerate}
\end{lem}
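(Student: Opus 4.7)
The five estimates are all classical interpolations: each bounds a coarser distance (weak Wasserstein, Fourier, Sobolev, or Kantorovich) by a finer one, with moment control as a buffer. I would treat them in the order stated, as each uses only elementary tools once the right decomposition is identified.

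\textbf{Parts (i) and (ii).} The inequality $W_1 \le W_q$ follows by Jensen applied to any coupling $\pi \in \Pi(f,g)$, using concavity of $t \mapsto t^{1/q}$. For the reverse half of (i) I would fix an optimal $W_1$ coupling $\pi$ and split at radius $R > 0$:
\[
\int |x-y|^q \, d\pi \le R^{q-1} \int |x-y| \, d\pi + R^{q-1-k} \int |x-y|^{k+1} \, d\pi,
\]
bounding the tail moment by $2^k M_{k+1}(f,g)$ via the triangle inequality $|x-y|^{k+1} \le 2^k(|x|^{k+1}+|y|^{k+1})$, and then optimizing in $R$ (so that $R^k \sim M_{k+1}/W_1$). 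For (ii), plugging any coupling into $\hat f(\xi)-\hat g(\xi)$ and applying the elementary bound $|e^{ia}-e^{ib}|\le \min(2,|a-b|) \le 2^{1-s}|a-b|^s$ yields $|f-g|_s \le 2^{1-s} \int |x-y|^s \, d\pi$; taking the infimum over couplings gives the first half, and concavity of $t\mapsto t^s$ gives the second.

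\textbf{Part (iii).} I would split the Plancherel integral defining $\|f-g\|_{\dot H^{-s}}^2$ at a radius $R>0$. On $\{|\xi|\le R\}$ the definition of the Fourier norm gives $|\hat f(\xi)-\hat g(\xi)|\le |\xi|\,|f-g|_1$, producing a contribution bounded by $|f-g|_1^2\,|\mathbb{S}^{d-1}|\,R^{d+2-2s}/(d+2-2s)$. On $\{|\xi|>R\}$ the trivial bound $|\hat f-\hat g|\le 2$ and the condition $2s>d$ yield a contribution bounded by $4|\mathbb{S}^{d-1}|\,R^{d-2s}/(2s-d)$. Choosing $R$ so that the two contributions balance produces exactly the constants announced.

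\textbf{Parts (iv) and (v).} These are the main work of the lemma and follow the same scheme. By Kantorovich duality, $[f-g]^*_1=\sup\{\langle f-g,\varphi\rangle : [\varphi]_{\mathrm{Lip}}\le 1,\ \varphi(v_0)=0\}$. Given such a $\varphi$, I would decompose
\[
\varphi=(\varphi\chi_R)\ast\rho_\epsilon + r_{R,\epsilon},
\]
with $\chi_R$ a smooth cutoff to $\{|v|\le R\}$ and $\rho_\epsilon$ a standard mollifier. The ``bulk'' piece $\langle f-g,(\varphi\chi_R)\ast\rho_\epsilon\rangle$ is estimated in frequency space: the Fourier transform of $(\varphi\chi_R)\ast\rho_\epsilon$ is controlled in $L^\infty_{|\xi|^s}$ (resp.\ $L^2_{|\xi|^{s}}$) by a power of $R$ times a negative power of $\epsilon$, so that pairing with $f-g$ costs $|f-g|_s$ (resp.\ $\|f-g\|_{\dot H^{-s}}$). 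The remainder $r_{R,\epsilon}$ splits into a far-field contribution ($|v|>R$), bounded via Markov by $M_{k+1}(f,g)\,R^{-k}$ and the Lipschitz bound $|\varphi(v)|\le |v|$, and a mollification error bounded by $\epsilon$. Jointly optimizing $R$ and $\epsilon$ to balance the three pieces gives the exponents $d/(d+k(d+s))$ in (iv) and $d/(d+2ks)$ in (v). The main obstacle is not conceptual but bookkeeping: tracking the joint optimization carefully so that the precise powers of $M_{k+1}$ and of the weak norm come out as stated. Everything else is standard real-variable interpolation.
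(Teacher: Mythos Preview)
Your proposal is correct and follows essentially the same route as the paper: Jensen/concavity for (i)--(ii), the low/high frequency split at radius $R$ for (iii), and the truncation-plus-mollification decomposition $\varphi \to \varphi\chi_R \to (\varphi\chi_R)*\rho_\epsilon$ followed by joint optimization in $(R,\epsilon)$ for (iv)--(v). The paper in fact refers (i) to the literature without proof, whereas you give a clean splitting argument; for (iv)--(v) the paper uses a Gaussian mollifier and Parseval exactly as you describe, arriving at the same three-term balance $M_{k+1}R^{-k} + \epsilon + R^{d}\epsilon^{-(d+s-1)}|f-g|_s$ (resp.\ $R^{d/2}\epsilon^{-(s-1)}\|f-g\|_{\dot H^{-s}}$).
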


\begin{proof}[Proof of Lemma~\ref{lem:ComparDistances}]  Let us consider
 each inequality one by one. 
\medskip

\noindent {\it Point (i).} The inequality \eqref{estim:W1Wq} is
well-known in optimal transport theory, we refer for instance to
\cite{TV,coursCT}.  \medskip

\noindent {\it Point (ii).} Let us prove inequality
\eqref{estim:dsWs}.  Let $\pi \in \Pi(f,g)$. We write
\begin{eqnarray*}
  | \hat f (\xi) -  \hat g (\xi) | 
  &=&
  \left|  \int_{\R^d \times \R^d} \left(e^{-i \, v \cdot \xi} - e^{-i
        \, w \cdot \xi}\right) \, 
    {\rm d}\pi (v,w) \right|
  \\
  &\le&
  \int_{\R^d \times \R^d} \left|e^{-i \, v \cdot \xi} - e^{-i \, w \cdot \xi}\right| \, {\rm d}\pi (v,w) 
  \\
  &\le&
  2^{(1-s)} \, \int_{\R^d \times \R^d} |v-w|^s \, |\xi|^s \, {\rm d}\pi (v,w),
\end{eqnarray*}
which yields the first inequality in \eqref{estim:dsWs} by taking the
supremum in $\xi \in \R^d$ and the infimum in $\pi \in \Pi(f,g)$. The
second inequality is then immediate from the concavity estimate
\[
W_s(f,g) \le \left( W_1(f,g) \right)^s. 
\]
 
\medskip\noindent{\sl Point (iii).} Let us prove the inequality
\eqref{estim:H-kd1}. Consider $R > 0$ and the ball 
\[
B_R :=\left\{ x \in \R^d \ ; \ |x|\le R\right\},
\]
and write
\begin{eqnarray*}
  \| f-g \|_{\dot H^{-s}}^2
  &=& \int_{B_R} { | \hat f (\xi) -  \hat g (\xi) |^2  \over |\xi|^{2s}} \, {\rm d}\xi 
  +  \int_{B_R^c} { | \hat f (\xi) -  \hat g (\xi) |^2  \over |\xi|^{2s}} \, {\rm d}\xi
  \\
  &\le& |f- g|^2_1 \int_{B_R} { {\rm d}\xi  \over |\xi|^{2(s-1)}}
  +  4 \, \int_{B_R^c} {{\rm d}\xi  \over |\xi|^{2s}}
  \\
  &\le& \left|\mathbb S^{d-1}\right| \, R^{d-2s} \, \left(
    \frac{R^2}{(d+2- 2s)} \, |f - g|^2_1 +  \frac{4}{(2s-d)} \right).
\end{eqnarray*}
We optimize this estimate by choosing
\[
R = \left( \frac{4 ( d+2 - 2s)}{(2s-d)} \right)^{\frac12} \,
|f-g|_1 ^{-1}
\]
which yields 
\[
\| f-g \|_{\dot H^{-s}}^2 \le \frac{8 \, \left|\mathbb
    S^{d-1}\right|}{(2s -d)} \, \left(
  \frac {(2s-d)}{4 ( d+2 - 2s)} \right)^{s-\frac{d}2} \, |f-g|_1
^{2s -d}
\]
which concludes the proof of \eqref{estim:H-kd1}.
 
\medskip\noindent{\it Point (iv).} Let us now prove inequality
\eqref{estim:*1s}. We introduce a truncation function 
\[
\chi_R(x) = \chi\left( \frac{x}{R}\right), \quad R>0
\] 
where 
\[
\chi \in C_c^\infty(\R^d), \quad [\chi]_1 \le 1, \ \mbox{ and } \ 0
\le \chi \le 1, \quad \chi \equiv 1 \mbox{ on } B(0,1)
\]
and a mollifier 
\[
\gamma_\eps(x) = \eps^{-d} \, \gamma\left(\frac{x}{\eps}\right), \quad
\eps>0 \qquad \mbox{
  with } \ \gamma(x) = \frac{e^{-\frac{|x|^2}2}}{(2\pi)^{d/2}}.
\]
In particular we have an explicit formula for the Fourier transform of
this mollifier: 
\[
\hat \gamma_\eps (\xi) = \hat \gamma (\eps \, \xi) = e^{-
\eps^2 \, \frac{|\xi|^2}2}.
\]

Fix $\varphi \in W^{1,\infty}(\R^d)$ such that $[\varphi]_1 \le 1$,
$\varphi(0) = 0$ and define 
\[
\varphi_R := \varphi \, \chi_R, \quad \varphi_{R,\eps} = \varphi_R
\ast \gamma_\eps
\]
and write
 \begin{multline*}
 \int_{\R^d} \varphi \, ({\rm d}f-{\rm d}g) \\ = \int_{\R^d} \varphi_{R,\eps} \, ({\rm
   d}f-{\rm d}g) + \int_{\R^d} \left(\varphi_R - \varphi_{R,\eps}\right) \,
 ({\rm d}f-{\rm d}g) + \int_{\R^d} \left(\varphi - \varphi_R \right) \, ({\rm
   d}f-{\rm d}g).
\end{multline*}
For the last term, we have 
\begin{eqnarray}
\label{estim:*1s-1}
  \left|  \int_{\R^d} (\varphi_R- \varphi) \, ({\rm d}f-{\rm d}g) \right| 
  &\le&
   \int_{\R^d} (1- \chi_R) \,| \varphi |\, ({\rm d}f+{\rm d}g)
  \\  \nonumber
 &\le& \int_{B_R^c} [\varphi ]_1 \, {|x|^{k+1} \over R^k} \, ({\rm
   d}f+{\rm d}g) \le 
  \frac{M_{k+1}(f,g)}{R^k}.
\end{eqnarray}

Concerning the second term, we observe that 
$$
\left|\nabla \varphi_R(x) \right| \le \chi \left(\frac{x}{R}\right) +
|\varphi(x)|\, |\nabla (\chi_R)(x)| \le \chi \left(\frac{x}{R}\right) + {|x| \over R} \,
\left|\nabla \chi\left(\frac{x}{R}\right)\right|,
$$
so that
\[
\forall \, q \in [1,\infty], \quad 
\left\|\nabla \varphi_R \right\|_{L^q} \le C(q,d,\chi) \, R^{\frac{d}{q}},
\]
for some constant depending on $q$, $d$ and $\chi$.  Next, using that
\begin{multline*}
\| \varphi_R - \varphi_{R,\eps} \|_\infty \le \| \nabla \varphi_R
\|_\infty \left( \int_{\R^d} \gamma_\eps (x) \, |x| \, {\rm d}x
\right)  \\ = \eps \, \| \nabla
\varphi_R \|_\infty \left( \int_{\R^d} \gamma (x) \, |x| \, {\rm d}x \right) \le
C(q,d,\chi) \, \eps,
\end{multline*}
we find
\begin{equation}\label{estim:*1s-2}
\left| \int_{\R^d} \left(\varphi_R - \varphi_{R,\eps} \right) \, ({\rm d}f  -
  {\rm d}g) \right| \le C(q,d,\chi) \, \eps.
\end{equation}

Concerning the first term,  using Parseval's identity, we have (the
``hat'' denotes as usual the Fourier transform)
\begin{eqnarray}\nonumber
  \left| \int_{\R^d} \varphi_{R,\eps} \, ({\rm d}f-{\rm d}g) \right|
  &=& {1 \over (2\pi)^d} \left| \int_{\R^d} \hat \varphi_R \, \hat \gamma_\eps \,
    \overline{(\hat f - \hat g)} \, {\rm d}\xi \right| 
  \\ \nonumber
  &\le& {1 \over (2\pi)^d} \, \| \nabla \varphi_R \|_{L^1} \, 
  \left\| \frac{\hat f - \hat g}{|\xi|^s} \right\|_{L^\infty}  \,
  \left( \int_{\R^d} |\xi|^{s-1} 
  \, e^{- \eps^2 \, \frac{|\xi|^2}2} \, {\rm d}\xi \right)
  \\  \nonumber
  &\le& C(d,\chi) \, R^{d} \, 
   |f- g|_s  \, \eps^{-(d+s-1)}  
    \\ \label{estim:*1s-3}
  &\le& C \, R^{d} \, \eps^{-(d+s-1)}  \, |f - g|_s. 
\end{eqnarray}

 Gathering \eqref{estim:*1s-1}, \eqref{estim:*1s-2} and
 \eqref{estim:*1s-3}, we get 
 \[ [f - g]_1 ^* \le C(q,d,\chi) \, \left( \frac{M_{k+1}(f,g)}{R^k} +
   \eps +
   R^{d} \, \eps^{-(d+s-1)} \, |f - g|_s \right).
 \]
 This yields \eqref{estim:*1s} by optimizing the parameters $\eps$ 
 and $R$ with 
\[
R = M_{k+1}(f,g) ^{\frac{1}{d+k}} \, |f-g|_s ^{-\frac{1}{d+k}} \,
\eps^{\frac{d+s-1}{d+k}}
\]
and then
\[
\eps = M_{k+1}(f,g) ^{\frac{d}{d+k(d+s)}} \, |f-g|_s
^{\frac{k}{d+k(d+s)}}.
\]
 
\medskip\noindent{\it Point (v).} Let us now prove the inequality
\eqref{estim:W1H-k}.

Let us consider some smooth $\varphi$ such that $[\varphi]_1 \le 1$,
$\varphi(0) = 0$ and let us perform the same decomposition as before:
$$
\int_{\R^d} \varphi \, ({\rm d}f-{\rm d}g) = \int_{\R^d}
\varphi_{R,\eps} \, ({\rm d}f-{\rm d}g) +
\int_{\R^d} \left(\varphi_R - \varphi_{R,\eps}\right) \, ({\rm
  d}f-{\rm d}g) + \int_{\R^d}
\left(\varphi - \varphi_R \right) \, ({\rm d}f-{\rm d}g).
$$ 

The first term is controlled by
\[
\left| \int_{\R^d} \varphi_{R,\eps} \, ({\rm d}f-{\rm d}g) \right| 
= \left| \int_{\R^d} \hat \varphi_{R,\eps} \,
  |\xi|^s \, \frac{(\hat f - \hat g)}{|\xi|^s} \, {\rm d}\xi \right| \le 
\| \varphi_{R,\eps} \|_{\dot H^s} \, \| f - g \|_{\dot H^{-s}}
\]
with 
\begin{eqnarray*}\nonumber
  \|  \varphi_{R,\eps} \|_{\dot H^s} 
  &=&  \left(  \int_{\R^d} |\xi|^2 \, |\widehat{\varphi_R}|^2 \, |\xi|^{2(s-1)} \, |\hat\gamma_\eps|^2 \, {\rm d}\xi \right)^{1/2}
  \\  \nonumber  
  &\le& \left\| \nabla (\varphi_R) \right\|_{L^2} \, 
  \left\| |\xi|^{s-1} \, \hat  \gamma_\eps (\xi) \right\|_{L^\infty}
  \\ 
  &\le& C(s) \, \left\| \nabla (\varphi_R) \right\|_{L^2} \, \eps^{1-s} 
\le  C(d,s) \, R^{\frac{d}{2}} \,  \eps^{-(s-1)}
\end{eqnarray*} 
with the same arguments as above.

The second term and the last term are controlled exactly as in
\eqref{estim:*1s-1} and \eqref{estim:*1s-2}, which yields 
\[ 
[f-g]_1 ^* \le C \, \left( \eps + \frac{M_{k+1}(f,g)}{R^{k}} +
  R^{\frac{d}{2}} \, \eps^{-(s-1)} \, \|f-g\|_{\dot H^{-s}} \right).
\]
We deduce (\ref{estim:W1H-k}) by optimizing the parameters $\eps$ and
$R$ with 
\[
R = M_{k+1}(f,g) ^{\frac{2}{d+2k}} \, \|f-g\|_{\dot H^{-s}}  ^{-\frac{2}{d+2k}} \,
\eps^{\frac{2(s-1)}{d+2k}}
\]
and then
\[
\eps = M_{k+1}(f,g) ^{\frac{d}{d+2 k s}} \, \|f-g\|_{\dot H^{-s}} 
^{\frac{2k}{d+2 k s}}.
\]
\end{proof}

\subsection{Quantitative law of large numbers for measures}
\label{app:W}

Let us recall and extend the definition of the functional $\WW^N _d
(f)$ which was introduced in \eqref{eq:defOmegaN}. For any function 
\[
D : \PP_k(E) \times \PP_k(E) \to \R_+, \quad (f,g) \mapsto D(f,g)
\]
(where $k \ge 0$ is the index of a polynomial weight possibly required for the
correct definition of $D$) such that 
\[
D(f,g) = 0 \ \mbox{ if and only if } \ f=g
\]
it is legitimate to define 
\[
\forall \, f \in \PP_k(\R^d), \quad 
\WW^N_D (f) :=
\left\langle f^{\otimes N}, D\left( \pi^N_E , f\right) \right\rangle
= \int_{E^N} D\left( \mu^N _V, f\right) \,  {\rm d} f^{\otimes N} (v).
\]

For well chosen function $D$, the goal of the next lemma is to
quantity the rate of convergence 
\[
\WW^N_D (f) \xrightarrow[]{N \to +\infty} 0 \qquad \mbox{ in the case}
\ E=\R^d.
\]

\begin{lem}\label{lem:Rachev&W1} 
  We have the following rates for the $\WW$ function:
  \begin{itemize}
  \item[(i)] Let us consider 
    \[
    \forall \, f,g \in \PP_2(\R^d), \quad D_1(f,g) := \left\| f-g  \right\|^2_{\dot H^{-s}}.
    \]
    Then for any $s \in (d/2,d/2+1)$ and $N \ge 1$ there holds
    \begin{equation}\label{estim:RachevHdotk}
      \forall \, f \in \PP_2(\R^d), \quad \WW^N _{D_1} (f) = 
      \int_{\R^{dN}} \left\| \mu^N_V - f \right\|^2_{\dot H^{-s}} \, 
      {\rm d}f^{\otimes N} (V) 
      \le \frac{C}{N}  
    \end{equation}
    for some constant $C$ depending on the second moment of $f$. 
  \item[(ii)] Let us consider 
    \[
    \forall \, f,g \in \PP_2(\R^d), \quad D_2(f,g) := \left\| f-g  \right\|^2_{H^{-s}}.
    \]
    Then for any $s >d/2$ and $N \ge 1$ there holds
    \begin{equation}\label{estim:RachevHk}
      \forall \, f \in \PP_2(\R^d), \quad \WW^N _{D_2} (f) = 
      \int_{\R^{dN}} \left\| \mu^N_V - f \right\|^2_{H^{-s}} \, 
      {\rm d}f^{\otimes N} (V) 
      \le \frac{C}{N}  
    \end{equation}
    for some constant $C$ depending on the second moment of $f$.  

  \item[(iii)] Let us consider 
    \[
    \forall \, f,g \in \PP_1(\R^d), \quad D_3(f,g) := W_1(f,g).
    \]
    Then for any $\eta>0$ there exists $k \ge 1$ such that for any $N
    \ge 1$
    \begin{equation}\label{estim:NousW1}
      \forall \, f \in \PP_k(\R^d), \quad \WW^N _{D_3} (f) = 
      \int_{\R^{dN}} W_1\left(\mu^N _V, f\right) \, 
      {\rm d}f^{\otimes N} (V) 
      \le \frac{C}{N^{\frac{1}{\max\{d,2\} + \eta}}}
    \end{equation}
    for some constant $C$ depending on $\eta$ and the $k$-th moment of
    $f$.

 \item[(iv)] Let us consider 
    \[
    \forall \, f,g \in \PP_2(\R^d), \quad D_4(f,g) := \left( W_2(f,g)\right)^2.
    \]
    Then for any $\eta>0$ there exists $k \ge 2$ such that for any $N
    \ge 1$
    \begin{equation}\label{estim:NousW2}
      \forall \, f \in \PP_k(\R^d), \quad \WW^N _{D_4} (f) = 
      \int_{\R^{dN}} \left( W_2\left(\mu^N _V, f\right) \right)^2 \, 
      {\rm d}f^{\otimes N} (V) 
      \le \frac{C}{N^{\frac{1}{\max\{d,2\} + \eta}}}  
    \end{equation}
    for some constant $C$ depending on $\eta$ and the $k$-th moment of
    $f$. 
  \end{itemize}
\end{lem}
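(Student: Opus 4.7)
The strategy is a direct Fourier/variance computation exploiting that, under $f^{\otimes N}$, the velocities $v_1,\dots,v_N$ are i.i.d.\ with law $f$. Writing
\[
\widehat{\mu^N_V}(\xi) - \hat f(\xi) = \frac{1}{N}\sum_{i=1}^N \left( e^{-i\xi \cdot v_i} - \hat f(\xi)\right),
\]
the summands are centered, bounded, independent complex random variables of variance $1-|\hat f(\xi)|^2$, so the cross terms vanish and
\[
\mathbb E_{f^{\otimes N}}\left| \widehat{\mu^N_V}(\xi) - \hat f(\xi)\right|^2 \;=\; \frac{1 - |\hat f(\xi)|^2}{N}.
\]
Multiplying by the appropriate Sobolev weight and invoking Fubini yields
\[
\mathbb E_{f^{\otimes N}} \| \mu^N_V - f\|^2_{\dot H^{-s}} = \frac{1}{N}\int_{\R^d} \frac{1 - |\hat f(\xi)|^2}{|\xi|^{2s}}\, d\xi,
\]
and similarly for $H^{-s}$ with the weight $(1+|\xi|^2)^{-s}$. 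To check the integrability of the right-hand side I would use the two bounds $1-|\hat f(\xi)|^2 \le 1$ and $1 - |\hat f(\xi)|^2 \le M_2(f)\,|\xi|^2$ (the latter coming from $\mathbb E|e^{-i\xi\cdot v} - \hat f(\xi)|^2 \le \mathbb E|e^{-i\xi\cdot v} - 1|^2 \le M_2(f)|\xi|^2$). For (i) the first bound handles integrability at infinity precisely when $s>d/2$, and the quadratic bound handles integrability at the origin precisely when $s<d/2+1$, which are exactly the hypotheses. For (ii) only the bound at infinity is needed, so $s>d/2$ suffices. The constants depend only on $d$, $s$ and $M_2(f)$.

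\textbf{Plan for parts (iii) and (iv).} These are the quantitative empirical law of large numbers in Wasserstein distance. I would combine a truncation argument with classical bounded empirical $W_p$ bounds (of Dobri\'c--Yukich / Horowitz--Karandikar / Fournier--Guillin type). First, fix a large radius $R$ and split $f$ as its restriction to the ball $B_R$ plus a tail; the tail contribution to $W_1(\mu^N_V,f)$ (resp.\ $W_2^2$) is controlled on $f^{\otimes N}$-average using Markov's inequality together with the moment bound $M_k(f)<\infty$, yielding an error of order $R^{-(k-1)}$ (resp.\ $R^{-(k-2)}$). On $B_R$, classical Wasserstein empirical estimates give an average error of order $R\cdot N^{-1/d}$ in dimensions $d\ge 3$, with the known logarithmic correction in $d=2$ and the rate $N^{-1/2}$ in $d=1$. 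Optimizing $R=R(N)$ and invoking the moment bound to absorb the logarithmic deficit (in $d=2$) or the excess factor of $R$ produces the stated exponent $1/(\max\{d,2\}+\eta)$ for any $\eta>0$, at the cost of requiring sufficiently many moments $k=k(\eta)$.

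\textbf{Main obstacle.} Parts (i) and (ii) are essentially mechanical once the Fourier trick is set up; the only delicate point is confirming the integrability endpoints match the hypotheses on $s$. The real technical content is in (iii)--(iv): the optimal rate in the empirical Wasserstein distance is a substantial result of its own, and the critical dimension $d=2$ forces a logarithmic correction which is absorbed into the arbitrarily small $\eta$. Moreover, since we work on $\R^d$ rather than on a compact set, one must invest moments of $f$ to control the tail via truncation; balancing the truncation radius against the discretization error is what dictates the precise moment index $k$ required in each case, and this balance is the one quantitative step that requires care.
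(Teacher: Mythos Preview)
Your treatment of parts (i) and (ii) is exactly the paper's argument: the i.i.d.\ Fourier variance computation gives
\[
\mathbb E_{f^{\otimes N}}\big|\widehat{\mu^N_V}(\xi)-\hat f(\xi)\big|^2=\frac{1-|\hat f(\xi)|^2}{N},
\]
and the integrability of $(1-|\hat f(\xi)|^2)/|\xi|^{2s}$ is checked by the bound $1$ at infinity and the quadratic bound $O(M_2(f)|\xi|^2)$ near the origin.

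For parts (iii) and (iv), however, the paper takes a different and more self-contained route. Rather than invoking empirical Wasserstein bounds on compact sets (Dobri\'c--Yukich et al.) as a black box and then truncating, the paper \emph{bootstraps from its own part (i)} via the comparison inequality of Lemma~\ref{lem:ComparDistances}\,(v),
\[
W_1(\mu^N_V,f)=[\mu^N_V-f]^*_1\le C\,M_{k+1}(\mu^N_V,f)^{\frac{d}{d+2ks}}\,\|\mu^N_V-f\|_{\dot H^{-s}}^{\frac{2k}{d+2ks}},
\]
applies this pointwise in $V$, then uses H\"older with exponents $p=(d+2ks)/k$ and $p'=(d+2ks)/(d+k(2s-1))$, bounds the moment factor by $2M_{k+1}(f)$, and plugs in \eqref{estim:RachevHdotk}. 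This gives $\WW^N_{W_1}(f)\le C/N^{k/(d+2ks)}$; choosing $s$ close to $d/2$ (or $s=1$ when $d=1$) and $k$ large yields the exponent $1/(\max\{d,2\}+\eta)$. Part (iv) then follows from (iii) via \eqref{estim:W1Wq} and another H\"older inequality.

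Your approach is certainly valid --- indeed the paper's Remark~\ref{rem:Rachev}(3) discusses the Dobri\'c--Yukich route as an alternative --- but it outsources the hard work to a substantial external theorem, whereas the paper's argument is elementary once Lemma~\ref{lem:ComparDistances}\,(v) is in hand. The paper's method also makes the dependence on the moment index $k$ completely explicit through the interpolation exponents, rather than through a truncation-radius optimization.
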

 
 \begin{rems}\label{rem:Rachev} 
  \begin{enumerate} 
\item  Estimate \eqref{estim:NousW2} has to be compared with the 
   following classical estimate (see e.g.  \cite{BookRachev}): for
   any  $N \ge 1$ there holds
  \begin{equation}\label{estim:Rachev2}
    \forall \, f \in \PP_{d+5}(\R^d), \quad 
    \WW^N _{W_2^2} (f) \le \frac{C}{N^{{2 \over d+4}}}
 \end{equation}
 where the constant $C>0$ depends on the $(d+5)$-th moment of $f$.  It
 is worth mentioning that our estimate \eqref{estim:NousW2} improves
 on \eqref{estim:Rachev2} when $d \le 3$ and $k$ is large enough.

 Similarly, if one tries to translate \eqref{estim:Rachev2} into an
 estimate for the $W_1$ distance through 
 a H\"older inequality, it yields for any $N \ge 1$
  \begin{equation*}
   \forall \, f \in \PP_{d+5}(\R^d), \quad   \WW^N _{W_1} (f) \le 
      \frac{C}{N^{ {1 \over d+4}}}
    \end{equation*}
    where the constant $C>0$ depends on the $(d+5)$-th moment of
    $f$. Again observe that for any $d$ this last estimate is weaker
    than \eqref{estim:NousW1} as soon as the probability $f$ belongs
    to $\PP_{k}(\R^d)$ with $k$ large enough. 

  \item When $f, g \in P(\R^d)$ are compactly supported, observe that
    the estimate \eqref{estim:W1H-k} improves into
$$
\forall \, s \ge 1, \quad [f-g]^*_1 \le C \, \| f - g \|_{\dot H^{-s}}^{1/s},
$$
for a constant $C$ depending on $s$ and on a common bound $R$ of
the support of $f$ and $g$. 

If furthermore $d=1$, we can take $s=1$ in order to apply
\eqref{estim:RachevHdotk} in the proof of \eqref{estim:NousW1} below
and we obtain the {\it ``optimal rate''} of convergence in the
functional law of large numbers in Wasserstein distance $W_1$:
$$
\forall \, N \ge 1, \quad   \WW^N _{W_1} (f) \le 
     \frac{C}{\sqrt N}.
$$
In higher dimension $d \ge 2$, the restriction $s > d/2$ means that one
does not produce a better estimate than \eqref{estim:NousW1} by this
line of argument. 

\item As was kindly pointed out by M. Hauray, estimate
  \eqref{estim:NousW2} should also be compared with some estimates in
  \cite{DubricYukich} where the related quantity
$$
\ZZ^N(f) := \int_{\R^{2d N}} 
W_1\left(\mu^N_{V_1},\mu^N_{V_2}\right) \, {\rm d}f^{\otimes N} (V_1)
\, {\rm d}f^{\otimes N} (V_2)
$$
is considered. When $f \in P(\R^d)$ has compact support and $d \ge 3$,
they prove that 
\[
\ZZ^N(f) \le \frac{C}{N^{1/d}}
\]
where the constant depends on the support of $f$. 

Since for any $f, g \in \PP_1(\R^d)$ and for any $\varphi \in
\hbox{Lip}_1(\R^d)$ we have
\begin{eqnarray*}
\int_{\R^{dN}} W_1\left(g,\mu^N_V\right) \, {\rm d}f^{\otimes N}(V) 
&\ge& \int_{\R^{d(N+1)}} \varphi (v) \, \left({\rm d}g  - {\rm d}\mu^N_V\right)(v) \, {\rm d}f^{\otimes N}(V) 
\\
&=& \int_{\R^d} \varphi (v) \, {\rm d}g(v) -  {1 \over N} \sum_{i=1}^N 
\int_{\R^{dN}} \varphi\left(v_i\right) \, {\rm d}f^{\otimes N} (V)
\\
&=& \int_{\R^d} \varphi (v) \, ({\rm d}g - {\rm d}f) (v),
\end{eqnarray*}
we deduce by minimizing in $\varphi$ that 
$$
W_1(f,g) \le \int_{\R^{dN}} W_1\left(g,\mu^N_V\right) \, {\rm d}f^{\otimes N}(V),
$$
and therefore 
\[
\WW^N _{W_1} (f) \le \ZZ^N(f).
\]

As a consequence, when $f \in P(\R^d)$ has compact support and $d \ge
3$ we obtain from this line of argument the stronger estimate 
\[
\WW^N _{W_1} (f) \le \frac{C}{N^{1/d}}. 
\]
It is likely that one could obtain similar estimates to
\eqref{estim:NousW1} by tracking the formula for the constants in the
results of \cite{DubricYukich} and combining them with moment bounds
and some interpolation. 

On the other hand, observe that our estimate \eqref{estim:NousW1} is
{\it almost optimal} in the sense that we can not expect a better
convergence rate than \eqref{estim:NousW1} with $\eta = 0$, as it is
stressed in \cite[Appendix]{Peyre}.
\end{enumerate}
\end{rems}

\begin{proof}[Proof of Lemma~\ref{lem:Rachev&W1}]
 We split the proof into two steps. 

 \medskip\noindent{\sl Step~1.} 
Let us prove \eqref{estim:RachevHdotk} (note that
\eqref{estim:RachevHk} is then readily implied by
\eqref{estim:RachevHdotk}). 
  
Let us fix $f \in \PP_2(\R^d)$. We write in Fourier transform
 \[
 \left(\hat\mu^N_V - \hat f \right)(\xi) = \frac{1}{N} \, \sum_{j=1}^N
 \left(e^{-i \, v_j \cdot \xi } - \hat f(\xi) \right) ,
 \]
 which implies
 \begin{eqnarray*}
   \WW^N_{\| \cdot \|_{\dot H^{-s}}^2} (f) 
   &=&  \int_{\R^{Nd}} \left( \int_{\R^d} 
     \frac{\left|\hat\mu^N_V - \hat f \right|^2}{|\xi|^{2 \, s}} \,
     {\rm d}\xi\right) \, {\rm d}f^{\otimes N} (V) 
   \\
   &=& \frac{1}{N^2} \, \sum_{j_1,j_2=1}^N \int_{\R^{(N+1) d}} 
   \frac{\left(e^{-i \, v_{j_1} \cdot \xi} - \hat f(\xi)\right) \, 
     \left(\overline{ e^{-i \, v_{j_2} \cdot \xi} - \hat
         f(\xi)}\right)}
   {|\xi|^{2 \, s}} \, {\rm d}\xi  \, {\rm d}f^{\otimes N} (V).
 \end{eqnarray*}
 
 Observe then that 
 $$
  \int_{\R^d} (e^{-i \, v_j \cdot \xi}- \hat f(\xi) ) \, {\rm d}f(v_j) = 0,
  \quad j=1, \dots, d,
  $$
  which implies that 
\[
\int_{\R^{(N+1) d}} 
   \frac{\left(e^{-i \, v_{j_1} \cdot \xi} - \hat f(\xi)\right) \, 
     \left(\overline{ e^{-i \, v_{j_2} \cdot \xi} - \hat
         f(\xi)}\right)}
   {|\xi|^{2 \, s}} \, {\rm d}\xi  \, {\rm d}f^{\otimes N} (V) =0
\]
as soon as $j_1 \not = j_2$, and
 \begin{eqnarray*}
 \int_{\R^d} \left| e^{-i \, v \cdot \xi } - \hat f(\xi) \right|^2 \, {\rm d}f(v) 
 &=& \int_{\R^d} \left[ 1 - e^{-i \, v \cdot \xi } \,  
   \overline{\hat f(\xi)} - e^{i \, v \cdot \xi } \, 
   \hat f(\xi) + | \hat f(\xi) |^2 \right]  \, {\rm d}f(v) \\
 &=& 1 - |\hat f(\xi) |^2.
 \end{eqnarray*}

We deduce that 
 \begin{eqnarray*}
   \WW^N_{\| \cdot \|_{\dot H^{-s}}^2} (f) 
   &=&
   \frac{1}{N^2} \, \sum_{j=1}^N \int_{\R^{(N+1) d}} 
   \frac{ \left| e^{-i \, v_j \cdot \xi } - \hat
       f(\xi)\right|^2}{|\xi|^{2 \, s}} \, {\rm d}\xi  \, {\rm d} f^{\otimes N} (V) 
     \\
     &=& \frac{1}{N} \, \int_{\R^{2 d}} \frac{\left| e^{-i \, v \cdot
           \xi } - \hat f(\xi) 
       \right|^2}{|\xi|^{2 \, s}} \, {\rm d}\xi  \, {\rm d}f(v) 
     \\
     &=& \frac{1}{N} \, 
     \int_{\R^{d}} \frac{(1 - |\hat f(\xi) |^2)}{|\xi|^{2 \, s}} \, {\rm d}\xi.
 \end{eqnarray*}
 
 Finally, denoting 
\[
M_2 := \int_{\R^d} \langle v \rangle^2 \, {\rm d}f(v) 
\]
we have 
\[
\hat f(\xi) = 1 +i \, \langle f , v \rangle \cdot \xi + \OO(M_2 \,
|\xi|^2),
\]
and therefore
 \begin{eqnarray*}
   |\hat f(\xi)|^2  
   &=& \left(1 +i \,  \langle f , v \rangle \cdot \xi + \OO(M_2 \,
   |\xi|^2)\right) \, 
   \left(1  - i \,  \langle f , v \rangle \cdot \xi 
     + \overline{\OO(M_2 \, |\xi|^2)}\right) \\
   &=& 1 + \OO\left(M_2 \, |\xi|^2\right), 
 \end{eqnarray*}
which implies
 \begin{eqnarray*}
   \WW^N_{\| \cdot \|_{\dot H^{-s}}^2} (f) 
   &=&
   \frac{1}{N} \, \left( \int_{|\xi| \le 1} \frac{(1 - |\hat f(\xi)
       |^2)}{|\xi|^{2 \, s}} 
     \, {\rm d}\xi +  \int_{|\xi| \ge 1} \frac{(1 - |\hat f(\xi)
       |^2)}{|\xi|^{2 \, s}} 
     \, {\rm d}\xi \right)
   \\
   &=&
   \frac{1}{N} \, \left( \int_{|\xi| \le 1} \frac{M_2}{|\xi|^{2 \,
         (s-1) }} \, 
     {\rm d}\xi +  \int_{|\xi| \ge 1} \frac{1}{|\xi|^{2 \, s}} \, {\rm d}\xi
   \right) \le \frac{C}{N}
 \end{eqnarray*}
 from which (\ref{estim:RachevHdotk}) follows. 
 
 \medskip \noindent{\sl Step~2.} Let us now prove
 \eqref{estim:NousW1}.
 
 We use first (\ref{estim:W1H-k}) in order to get 
 \begin{eqnarray*}
   \WW^N_{W_1} (f)  
   &=& \int_{\R^{dN}} \left[ \mu^N_V - f \right]^*_1 \, {\rm d}f^{\otimes N}(V) \\
   &\le& C \,  \int_{\R^{dN}} \left( M_{k+1} (f) + M_{k+1}\left( \mu^N
       _V \right) \right)^{\frac{d}{d+2ks}} \, 
   \left(  \left\|\mu^N_V - f \right\|_{\dot H^{-s}}^2 \right)^{\frac{k}{d+2ks}} \, 
   {\rm d}f^{\otimes N}(V).   
 \end{eqnarray*}

We then perform a H\"older inequality with exponents 
\[
p = \frac{d+2ks}{k}, \quad p' = \frac{d+2ks}{d+k(2s-1)}
\]
and get 
\begin{multline*}
  \WW^N_{W_1} (f) \le C \, \left( \int_{\R^{dN}} \left( M_{k+1} (f) +
      M_{k+1}\left( \mu^N _V \right) \right)^{\frac{d}{d+k(2s-1)}} \,
    {\rm d}f^{\otimes N}(V) \right)^{\frac{d+k(2s-1)}{d+2ks}} \times
  \\ \left( \int_{\R^{dN}} \left\|\mu^N_V - f \right\|_{\dot H^{-s}}^2
    \, {\rm d}f^{\otimes N}(V) \right)^{\frac{k}{d+2ks}}.
\end{multline*}

Since 
\begin{multline*}
  \int_{\R^{dN}} \left( M_{k+1} (f) + M_{k+1}\left( \mu^N _V \right)
  \right)^{\frac{d}{d+k(2s-1)}} \, {\rm d}f^{\otimes N}(V) \\ \le \int_{\R^{dN}}
  \left( M_{k+1} (f) + M_{k+1}\left( \mu^N _V \right) \right) \,
  {\rm d}f^{\otimes N}(V) \\
  \le M_{k+1}(f) + \int_{\R^{dN}} M_{k+1}\left( \mu^N _V \right) \,
  {\rm d}f^{\otimes N}(V)  \\ \le M_{k+1}(f) + \frac1N \, \sum_{i=1} ^N
  \int_{\R^{dN}} \left\langle v_i \right\rangle^{k+1} \, {\rm d}f^{\otimes
    N}(V) \le 2 \, M_{k+1}(f)
\end{multline*}
we deduce by using \eqref{estim:RachevHdotk} that 
\[
\WW^N_{W_1} (f)  \le \frac{C(f,k)}{N^{\frac{k}{d+2ks}}}
\]
where the constant $C(f,k)$ depends on the $(k+1)$-th moment of $f$. 

This allows to conclude the proof of (\ref{estim:NousW1}) since 
\begin{itemize}
\item if $d=1$ we can take $s=1$ in \eqref{estim:W1H-k} and then $k$
  large enough so that $k/(d+2ks) = 2+\eta$ with some $\eta >0$ as
  small as wanted,
\item if $d \ge 2$ we take $s$ close to $d/2$ and then $k$ large
  enough so that  $k/(d+2ks) = 1/d+\eta$ with some $\eta >0$ as
  small as wanted.
\end{itemize}

Then the estimate \eqref{estim:NousW2} follows from
\eqref{estim:NousW1} with the help of \eqref{estim:W1Wq} and a
H\"older inequality.
\end{proof}

\subsection{Chaotic initial data with prescribed energy and momentum}
\label{subsec:pN0}

In many aspects, the simplest $N$-particle initial data is the
sequence of tensorized initial data $f^{\otimes N}$, $N \ge 1$, where
$f$ is a $1$-particle distribution. This means perfect chaoticity. On
the other hand it has a drawback: since in all applications we shall
use pointwise bounds on the energy of the $N$-particle system (and
also sometimes pointwise higher moment bound as in {\bf (A1)-(iii)}),
this implies for this kind of initial data that $f$ has to be
compactly supported. There is another ``natural'' choice of initial
data, by restricting it to one of the subspaces left invariant by the
dynamics as defined in \eqref{BolSphere}. Without loss of generality
we shall always set $\mathcal M =0$ in this formula in the sequel, and
therefore we shall consider
\begin{equation}
\label{def:SSN-EM} 
\mathcal S^N(\EE) : = \left\{ V \in \R^{dN} \ \mbox{ s. t. } \ \frac1N \,
  \sum_{i=1} ^N |v_i|^2 = \EE, \quad \frac1N \sum_{i=1} ^N v_i = 0 \right\}.
\end{equation}

The drawback is now that an initial data on $\mathcal S^N(\EE)$
\emph{cannot} be perfectly tensorized, and some additional chaoticity
error is paid at initial time. However an advantage of this viewpoint
is that it is simpler to study the asymptotic behavior of both the
$N$-particle and the limit mean-field equation in this setting.
Moreover it has historical value since this approach was introduced by
Kac (see the discussion in \cite[Section 5 ``Distributions having
Boltzmann's property'']{Kac1956}), although in his case there was only
one conservation law, namely the energy, and therefore $\mathcal S^N(\EE)$
was replaced by $\Sp^{N-1}(\sqrt{N})$. Finally in the case of hard
spheres it is easy to check that the relaxation rate degenerates as
$\EE \to 0$ both for the $N$-particle system and the limit equation,
but uniform in time chaoticity can be achieved by avoiding the zero
energy distributions thanks to the restriction to $\SS^N(\EE)$ with
$\EE>0$.

We shall present some results on the construction of chaotic initial
data on $\mathcal S^N(\EE)$, whose proofs are mostly extensions of the
precise statements and estimates recently established in \cite{CCLLV}
on this issue in the setting of Kac on $\Sp^{N-1}(\sqrt N)$. We refer to the
work \cite{kleber} where an extensive study and precise
computations of rates are performed. Without loss of generality we
only consider the case $\MM=0$ for simplicity.

\begin{lem}\label{lem:Bproperty}
Consider $\EE>0$ and an initial data 
\[
f_0 \in \PP_4\left(\R^d\right) \cap L^\infty\left(\R^d\right) 
\]
which fulfills some moment conditions 
\[
M_{m_{\GG_1}}(f) = \left\langle f, m_{\GG_1} \right\rangle < + \infty,
\quad 
M_{m_{\GG_3}}(f) = \left\langle f, m_{\GG_3} \right\rangle < + \infty
\]
for some positive radially symmetric increasing weight functions
$m_{\GG_1}$ and $m_{\GG_3}$, and 
\[
\int_{\R^d} v \, f_0(v) \, {\rm d}v = 0, \qquad 
\int_{\R^d} |v|^2 \, f_0(v) \, {\rm d}v = \EE.
\]

Let us define a non-decreasing sequence $(\alpha_N)_{N \ge 1}$ as
follows:
\begin{itemize}
\item If $f_0$ has compact support 
\begin{equation}\label{cpctsupp}
\mbox{{\em Supp}}f_0 \subset \left\{ v \in \R^d \, , \ |v| \le
  A \right\}
\end{equation}
for some $A>0$, then 
\[
\forall \, N \ge 1, \quad \alpha_N := m_{\GG_3}(A).
\]
\item If $f_0$ has non-compact support, then $(\alpha_N)_{N \ge 1}$
  can be any non-decreasing sequence such that 
\[
\lim_{N \to \infty} \alpha_N = +\infty
\]
(note in particular that this sequence can grow as slowly as wanted). 
\end{itemize}
\medskip

Then there exists 
\[
f^N_0 \in P(\R^{dN}), \quad N \ge 1, 
\]
such that
  \begin{itemize}
  \item[(i)] The sequence $(f^N_0)_{N \ge 1}$ is
    $f_0$-chaotic. 
  \item[(ii)] Its support satisfies $\mbox{{\em Supp}} \, f^N_0 \subset \mathcal
    S^N(\EE)$. 
  \item[(iii)] It satisfies the following integral moment bound based on
  $m_1$:
    \[ 
    \forall \, N \ge 0, \quad \left\langle f^N_0, M_{m_{\GG_1}}^N
    \right\rangle \le C \, \left\langle f_0, m_{\GG_1}\right\rangle
    \]
    where the constant $C>0$ depends on $M^{N\!L}_{0,m_1}$.
   \item[(iv)] It satisfies the following ``support moment bound'':
    \[
    \mbox{{\em Supp}} \,f^N_0 \subset 
    \left\{ V \in \R^{dN};
      \,\, M^N_{m_{\GG_3}} (V) \le \alpha_N \right\}.
    \]
          
      \item[(v)] It satisfies a uniform relative entropy bound
      \begin{equation*} 
      \frac{H(f^N_0 | \gamma^N)}{N} \le C,
      \end{equation*}
      for some constant $C > 0$ (see \eqref{def:RelativH} for notations). 
    \item[(vi)] If furthermore the Fisher information associated to
      $f_0$ is bounded, that is $I(f_0) < \infty$ (see
      \eqref{def:infoFisher} for notations), then $f^N_0$ can be built
      in such a way that it satisfies a uniform relative Fisher
      information bound 
      \begin{equation*}
      I(f^N_0 | \gamma^N) :={1 \over N} \int_{{\SS}^N(\EE)} \left|\nabla
      \ln {{\rm d} f^N_0 \over {\rm d}\gamma^N} \right|^2 \, {\rm d}f^N_0 \le C, 
      \end{equation*}
      for some constant $C > 0$, where the gradient in this formula
      stands for the Riemannian gradient on the manifold $\SS^N(\EE)$.
\end{itemize}
\end{lem}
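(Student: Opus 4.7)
\medskip

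\textbf{Proof plan for Lemma~\ref{lem:Bproperty}.} The natural candidate is to condition the tensor product $f_0^{\otimes N}$ onto the Boltzmann sphere $\SS^N(\EE)$. More precisely, let $\Psi_N : \R^{dN} \to \R^{d+1}$ be the empirical momentum-energy map
\[
\Psi_N(V) := \Bigl(\tfrac{1}{N}\sum_{i=1}^N v_i ,\ \tfrac{1}{N}\sum_{i=1}^N |v_i|^2\Bigr),
\qquad \SS^N(\EE)=\Psi_N^{-1}(0,\EE),
\]
and let $p^{\tilde f_0}_N$ denote the density on $\R^{d+1}$ of $\Psi_N(V)$ when $V\sim \tilde f_0^{\otimes N}$, where $\tilde f_0$ is either $f_0$ itself (in the compactly supported case) or a truncation to the ball $B_{R_N}$ chosen so that $m_{\GG_3}(R_N)\le\alpha_N$ with $R_N\to+\infty$ slowly enough (in the non compactly supported case). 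Since $f_0\in L^\infty\cap \PP_4$ and has centered momentum and energy $\EE$, the truncated law $\tilde f_0$ has a mean and energy $(e_N^{(1)},e_N^{(2)})\to (0,\EE)$ as $N\to\infty$. Define $f^N_0$ as the disintegration of $\tilde f_0^{\otimes N}$ conditioned on $\Psi_N=(0,\EE)$; by construction its support lies in $\SS^N(\EE)$, giving~(ii) immediately.

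The key analytic tool will be a quantitative \emph{local central limit theorem} with $(d+1)$ constraints: under the $L^\infty$ and fourth-moment hypothesis on $f_0$ one shows that
\[
p^{\tilde f_0}_N(r) = N^{(d+1)/2}\,\mathcal G\bigl(\sqrt N\,(r-(e_N^{(1)},e_N^{(2)}))\bigr)\,\bigl(1+o(1)\bigr)
\]
uniformly on compacts in $r$, where $\mathcal G$ is the Gaussian with covariance given by the covariance matrix of $(v,|v|^2)$ under $f_0$ (which is non-degenerate as soon as $\EE>0$). This is an extension of the one-dimensional local CLT used in \cite{CCLLV} on the Kac sphere; the hypotheses $f_0\in L^\infty\cap \PP_4$ give enough regularity of the characteristic function for a standard Fourier-inversion argument à la Stone. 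Disintegration then yields the explicit formula for marginals
\[
\Pi_\ell f^N_0(v_1,\dots,v_\ell)
= \tilde f_0^{\otimes \ell}(v_1,\dots,v_\ell)\,
\frac{p^{\tilde f_0}_{N-\ell}\!\left((0,\EE) - \tfrac{1}{N}\sum_{i\le\ell}(v_i,|v_i|^2)\right)}{p^{\tilde f_0}_N(0,\EE)},
\]
and the local CLT shows the ratio converges to $1$ uniformly on compacts, which implies~(i) $f$-chaoticity. The moment bound~(iii) follows at once from this formula combined with the lower bound $p^{\tilde f_0}_N(0,\EE)\ge c N^{-(d+1)/2}$ and symmetry of $f^N_0$, while (iv) is automatic in the compactly supported case (since then $|v_i|\le A$ a.s.) and is built into the definition of the truncation $R_N$ in the general case.

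For the entropy estimate (v) one writes
\[
H(f^N_0|\gamma^N) = N\,H(\tilde f_0|\gamma) + H(\tilde f_0^{\otimes N}|\gamma^N) - N\,H(\tilde f_0|\gamma) - \log\Bigl[\frac{p^{\tilde f_0}_N(0,\EE)}{p^{\gamma}_N(0,\EE)}\Bigr],
\]
where $\gamma$ is the Gaussian of energy $\EE$; the tensorization identity for relative entropy combined with the local CLT applied to both $\tilde f_0^{\otimes N}$ and $\gamma^N$ controls the logarithmic term by $O(1)$, leaving $H(f^N_0|\gamma^N)/N \le H(\tilde f_0|\gamma) + o(1)\le C$. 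For the Fisher information~(vi), one uses the Riemannian gradient on $\SS^N(\EE)$ via the orthogonal projection onto the tangent space; the fact that $f^N_0$ is obtained by a \emph{linear} conditioning of $\tilde f_0^{\otimes N}$ on the affine constraint defining $\SS^N(\EE)$ implies the convexity inequality $I(f^N_0|\gamma^N)\le I(\tilde f_0^{\otimes N}|\gamma^N)/N = I(\tilde f_0|\gamma)$, uniformly in $N$, provided $I(f_0)<\infty$ and the truncation $\tilde f_0$ is done smoothly (e.g.\ by mollified cutoff so as not to create singular derivatives).

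\textbf{Main obstacle.} The hard part is the quantitative multidimensional local CLT with $d+1$ constraints, and in particular securing it (a) uniformly in a truncation parameter $R_N\to\infty$ when $f_0$ is not compactly supported, and (b) with a rate good enough so that the correction factor in the marginal formula is controlled on the $v_i$-support one needs. This is precisely the place where one has to extend the arguments of \cite{CCLLV} from the single energy constraint to the full Boltzmann sphere with $d$ additional momentum constraints; non-degeneracy of the limiting Gaussian covariance is ensured by $\EE>0$, but the uniformity in $N$ of the remainder term requires a careful Fourier-inversion bookkeeping that I would carry out along the lines of Kleber~\cite{kleber} and the $(d+1)$-dimensional Berry--Esseen/Stone refinement.
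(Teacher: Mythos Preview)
Your construction and overall strategy match the paper's proof exactly: condition $f_0^{\otimes N}$ (or a truncation thereof) onto $\SS^N(\EE)$, invoke a local CLT in the $d{+}1$ constraint variables to show the conditioned measure is well-defined and chaotic, read off (iii)--(iv) from the explicit marginal formula, and defer the hard quantitative work to \cite{kleber}. The paper's own argument is in fact terser than yours --- it cites \cite[Theorem~9]{CCLLV} for (i)--(ii), says (iii) ``is just a consequence of the chaoticity'' with a truncation caveat, handles (iv) by the same truncation-and-diagonal argument you describe, and refers (v)--(vi) entirely to \cite{kleber}.

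Two corrections to your writeup. First, your displayed entropy identity is garbled: the two middle terms cancel trivially, and what remains, $H(\tilde f_0^{\otimes N}\,|\,\gamma^N)$, is ill-defined since $\tilde f_0^{\otimes N}$ lives on $\R^{dN}$ while $\gamma^N$ lives on $\SS^N(\EE)$. The correct computation uses that ${\rm d}f^N_0/{\rm d}\gamma^N = \bigl(\prod_i \tilde f_0(v_i)/\gamma(v_i)\bigr)\cdot p^{\gamma}_N(0,\EE)/p^{\tilde f_0}_N(0,\EE)$ on the sphere, whence
\[
\frac{1}{N}H(f^N_0|\gamma^N) = \int \log\frac{\tilde f_0}{\gamma}\,{\rm d}(\Pi_1 f^N_0) \;-\; \frac{1}{N}\log\frac{p^{\tilde f_0}_N(0,\EE)}{p^{\gamma}_N(0,\EE)},
\]
and both terms are controlled by your marginal estimate and the local CLT respectively. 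Second, the Fisher bound (vi) is not a ``convexity inequality under conditioning'' (conditioning does not in general decrease Fisher information) but a \emph{projection} inequality: since $\log({\rm d}f^N_0/{\rm d}\gamma^N) = \sum_i \log(\tilde f_0(v_i)/\gamma(v_i)) + \mathrm{const}$ on $\SS^N(\EE)$ and the Riemannian gradient is the orthogonal projection of the ambient $\R^{dN}$-gradient onto $T\SS^N(\EE)$, one gets the pointwise bound $|\nabla_{\SS^N}\log h^N|^2 \le \sum_i |\nabla\log(\tilde f_0/\gamma)(v_i)|^2$, and the result follows after integrating against $f^N_0$ and using symmetry plus the one-marginal estimate.
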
 

\begin{proof}[Proof of Lemma~\ref{lem:Bproperty}]
We aim at defining our initial data $f^N _0$ by conditioning the
tensorized initial data $f_0 ^{\otimes N}$ to $\mathcal S^N(\EE)$: 
\[
f^N_0(V) = \left[f_0^{\otimes N}\right]_{\mathcal S^N(\EE)} := \left( 
{ \prod_{j=1}^N f_0 (v_j) \over F_N\left(\sqrt{N}\right)} \right)
\Bigg|_{\mathcal S^N(\EE)}
\]
with
\[
F^N(r) := \int_{ \mathbb{S}^{dN-1} (r) \cap \{ \sum_{i=1} ^N v_i =0\}}
\left( \prod_{j=1}^N f_0 (v_j) \right) \, {\rm d}\omega.
\]
Such a construction obviously satisfies (ii).

It is proved similarly as in \cite{CCLLV} (see for instance Theorem~9
in this reference) that this conditioned measure is well-defined, and
that it is $f_0$-chaotic, which proves (i).

\begin{rem}
  Among many interesting intermediate steps and other results, it is
  also proved in \cite{CCLLV} the following estimate: assume for
  simplicity that $d=1$ and that $f_0$ has energy $1$, then the
  function
\[
\bar F^N(r) := \frac{F^N(r)}{\gamma^N(r)}
\]
is asymptotically divergent except for $r = \sqrt N$, for which 
\[
\bar F^N\left(\sqrt N\right) \sim_{N \to +\infty} \frac{\sqrt
  2}{\Sigma}
\]
with 
\[
\Sigma = \sqrt{\int_{\R} \left( v^2 - 1\right)^2 \, {\rm d}f(v)}.
\]
(In fact this result was sketched by Kac \cite{Kac1956} but the proof
is made more precise in \cite{CCLLV}). 

This shows in particular that the sequence of chaotic initial data
$f_0^{\otimes N}$, $N \ge 1$, as considered many times in the sequel,
asymptotically concentrates on the Boltzmann spheres $\SS^N(\EE)$.
This manifestation of the central limit theorem explains why
the construction of Kac (to condition to a given energy sphere) is
very natural. It also enlightens why it is possible to expect the kind
of uniform in time propagation of chaos results that we shall prove in
the next sections for such chaotic initial data. 
\end{rem}

Point (iii) is just a consequence of the chaoticity with the test
function $M^N _{m_{\GG_1}}$ (actually an easy truncation and passage
to the limit proceedure is needed in full rigor). 

Concerning point (iv), first if $f_0$ is compactly
supported~\eqref{cpctsupp} we deduce that 
\[
\mbox{Supp} \, f^N_0 \subset
\left\{ V \in \R^{d N}, \,\, M^N_{m_{\GG_3}} (V) \le m_{\GG_3}(A)
  \right\}
\]
and (iv) holds. 

In the non compactly supported case, for any increasing sequence
$(A_k)_{k \ge 1}$ of positive reals (with $A_0$ big enough for the
following to be well-defined) we define
\[
f_{0,k} := \frac{f_0 \, {\bf 1}_{|v| \le A_k}}{f_0\left( \{|v| \le
    A_k\}\right)}.
\]
Using the previous we know that 
\[
f^N_{0,k} := \left[ f_{0,k} ^{\otimes N} \right]_{\SS^N(\EE)}
\]
(conditioning to $\SS^N(\EE)$) is $f_{0,k}$-chaotic. Conditions (ii) and
(iii) will therefore be immediately satisfied. 

We now want to choose a sequence $k_N \to \infty$ such that (iv) is
satisfied and at the same proving chaoticity \emph{towards $f_0$}. 
It is clear that 
\[
\mbox{Supp} \,f^N_{0,k} \subset \left\{ V \in \R^{dN};
\,\, M^N_{m_3} (V) \le m_3(A_k) \right\}.
\]
For any given sequence $(\alpha_N)$ which tends to infinity, we define
$k_N$ in such a way that $m_3(A_{k_N}) \le \alpha_N$ so that $k_N \to
\infty$ when $N \to \infty$. The chaos property is equivalent to the
weak convergence of the $2$-marginal, which can be expressed in
Wasserstein distance for instance: 
\[
W_1\left( \left( f^N_{0,k} \right)_2, f_0 ^{\otimes 2} \right) \le 
W_1\left( \left( f^N_{0,k} \right)_2, f_{0,k} ^{\otimes 2}\right) 
+ W_1\left( f_{0,k} ^{\otimes 2} , f_0 ^{\otimes 2} \right).
\] 
The last term of the RHS converges to zero only depending on $k \to
0$, while the first term in the RHS converges to zero for fixed $k$ as
$N \to 0$ from the previous part of the proof. Therefore, maybe at the
price of a slower increasing sequence $k_N$ we can have both the
support moment condition (iv) and 
\[
W_1\left( \left( f^N_{0,k_N} \right)_2, f_0 ^{\otimes 2} \right)
\xrightarrow[]{N \to 0} 0
\]
which shows the chaoticity and concludes the proof. 

\smallskip
For the proof of (v) and (vi) we refer to \cite{kleber}. 
\end{proof}

\begin{rems}
\begin{enumerate}
\item We note that if one only wants to get rid of the compact support
  requirement in $f_0$ (used for deriving the support bounds on
  $f_0^N$ on the energy and $m_{\GG_3}$), and not necessarily to
  prescribe a given energy, another strategy could have been to simply
  perform the cutoff in the end of the previous proof. In principle it
  could allow to get better information on the rate of
  convergence. However a drawback of this approach is that, in the
  absence of conditioning to an energy sphere, the bound on the
  support of the energy of $f_0 ^N$ shall grow with $N$. In our
  applications it induces a growth in $N$ of the moment bounds that we
  prove \emph{along time} on the $N$-particle system. This growth
  should be matched by the decay of the scheme and a precise optimized
  balance could be searched for. We do not pursue this line of research.
\item Observe that the process of conditioning on the energy sphere is
  obviously compatible with the equilibrium states in the following
  sense: if one denotes by $\gamma$ a centered Gaussian equilibrium of
  the limit equation with energy $\EE$, then
\[
\gamma^N (V) := \left[\gamma^{\otimes N}\right]_{\mathcal S^N(\EE)}
\]
is the uniform measure on $\mathcal S^N(\EE)$, i.e. an equilibrium of the
$N$-particle system.
\end{enumerate}
\end{rems}

Let us also state a refinement of the previous lemma which is needed
for the applications.

\begin{lem}\label{lem:Bpropertybis}
We use the same setting and assumptions as in Lemma~\ref{lem:Bproperty}. 

Then the sequence $(f^N _0)$, $N \ge 1$ of Lemma~\ref{lem:Bproperty} satisfies the more precise
chaoticity estimate: 
    \begin{equation}\label{W1Kac}
      \WW_{W_1} \left( \pi^N _P \left(f_0 ^N\right), f_0 \right) = 
      \int_{\R^{d N}} W_1 \left( \mu^N _V, f_0\right) \, {\rm d}f^N_0(V) 
      = \int_{\SS^N(\EE)} W_1 \left( \mu^N _V, f_0\right) \, {\rm d}f^N_0(V)
      \xrightarrow[]{N \to +\infty} 0
    \end{equation}
with explicit polynomial rate.
  \end{lem}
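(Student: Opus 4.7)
\textbf{Proof plan for Lemma~\ref{lem:Bpropertybis}.}

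The strategy is to reduce the estimate for the conditioned measure $f^N_0 = [f_0^{\otimes N}]_{\SS^N(\EE)}$ to the analogous estimate for the unconstrained tensorized measure $f_0^{\otimes N}$, for which Lemma~\ref{lem:Rachev&W1}(iii) already yields a polynomial rate in $N$. Throughout we exploit the regularity and moment bounds provided by Lemma~\ref{lem:Bproperty}.

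First, I would pass from the $W_1$ cost to a Hilbertian cost via interpolation. By Lemma~\ref{lem:ComparDistances}(v), for any $s \in (d/2, d/2+1)$ and $k$ large enough,
\[
W_1(\mu^N_V, f_0) \le C \, M_{k+1}(\mu^N_V, f_0)^{\frac{d}{d+2ks}} \, \|\mu^N_V - f_0\|_{\dot H^{-s}}^{\frac{2k}{d+2ks}}.
\]
Integrating against $f^N_0$, H\"older's inequality separates a moment factor (uniformly bounded in $N$ by Lemma~\ref{lem:Bproperty}(iii), if $f_0$ has enough moments) from
\[
J_N := \int_{\SS^N(\EE)} \|\mu^N_V - f_0\|_{\dot H^{-s}}^2 \, {\rm d}f^N_0(V)
= \int_{\R^d} \frac{1}{|\xi|^{2s}} \, A_N(\xi) \, {\rm d}\xi,
\qquad
A_N(\xi) := \int_{\SS^N(\EE)} \bigl|\hat \mu^N_V(\xi) - \hat f_0(\xi)\bigr|^2 \, {\rm d}f^N_0(V),
\]
by Parseval and Fubini. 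Thus it suffices to bound $A_N(\xi)$ uniformly in $\xi$ (or integrably in $\xi/|\xi|^{2s}$) by $C/N$.

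The second step is to expand $A_N(\xi)$ using the definition of the empirical measure and the symmetry of $f^N_0$, as in the proof of Lemma~\ref{lem:Rachev&W1}(i). Writing $N^2\hat \mu^N_V \overline{\hat \mu^N_V} = \sum_{j,k} e^{-i\xi\cdot(v_j-v_k)}$ and separating diagonal and off-diagonal terms yields
\[
A_N(\xi) = \frac{1}{N}\bigl(1 - \langle (f^N_0)_2, e^{-i\xi\cdot(v_1-v_2)}\rangle\bigr) + \langle (f^N_0)_2, e^{-i\xi\cdot(v_1-v_2)}\rangle - 2\Re\bigl[\hat f_0(\xi) \overline{\widehat{(f^N_0)_1}(\xi)}\bigr] + |\hat f_0(\xi)|^2.
\]
In the unconstrained case $(f_0^{\otimes N})_1 = f_0$, $(f_0^{\otimes N})_2 = f_0^{\otimes 2}$ and one recovers the clean identity $(1-|\hat f_0(\xi)|^2)/N$ from Lemma~\ref{lem:Rachev&W1}(i). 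In the conditioned case, the quantitative local central limit theorem developed in \cite{CCLLV} and extended to the Boltzmann sphere in \cite{kleber} yields
\[
\bigl\| (f^N_0)_\ell - (f_0)^{\otimes \ell} \bigr\|_{TV} = O(1/N) \qquad \text{for } \ell=1,2,
\]
under the assumed $L^\infty$ and moment hypotheses on $f_0$; plugging these estimates in gives $A_N(\xi) \lesssim (1+|\xi|^2)/N$, hence $J_N = O(1/N)$, hence the announced polynomial rate on $\int W_1(\mu^N_V, f_0)\, {\rm d}f^N_0(V)$.

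The main obstacle is precisely this second step: one needs $\xi$-uniform, $N$-quantitative control of the low marginals of the microcanonical measure $f^N_0$ on the Boltzmann sphere, which in turn rests on a non-degenerate lower bound (and a one-term expansion) of the normalizing constant $F^N(\sqrt{N\EE})$ as $N\to\infty$. In the Kac setting ($d=1$, energy only) this is carried out in \cite{CCLLV} via a Berry--Esseen type local CLT, and the extension to the higher-dimensional Boltzmann sphere (where both momentum and energy are conserved, giving a $(d+1)$-dimensional constraint) is worked out in \cite{kleber}. The non-compactly supported case follows by the cutoff scheme used in the proof of Lemma~\ref{lem:Bproperty}(iv), choosing the cutoff sequence $A_{k_N}$ slowly enough so as not to spoil the polynomial rate obtained on compactly supported approximants.
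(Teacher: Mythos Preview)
Your strategy is sound and gives a polynomial rate, but it takes a genuinely different route from the paper's constructive argument. The paper proceeds in two lines. First, it quotes from \cite{kleber} the quantitative marginal estimate
\[
W_1\bigl(\Pi_\ell f^N_0,\,f_0^{\otimes\ell}\bigr)\le \frac{C}{\sqrt N}
\]
uniformly in $\ell$. Second, it invokes the abstract ``chaos quantification'' result \cite[Theorem~2.4]{hm}, which converts a bound on $W_1(\Pi_2 f^N_0, f_0^{\otimes 2})$ directly into a bound on $\int W_1(\mu^N_V,f_0)\,{\rm d}f^N_0(V)$, with no passage through Fourier or Sobolev norms. (The paper also sketches a first, non-constructive proof via tightness and \cite[Proposition~2.2]{S6} that gives convergence without a rate.) Your approach, by contrast, mirrors the proof of Lemma~\ref{lem:Rachev&W1}(i) and reduces everything to controlling the one- and two-marginals of $f^N_0$ in Fourier variables; this is more self-contained modulo the marginal convergence, but it trades the black-box \cite{hm} step for explicit $\dot H^{-s}$ estimates.

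One caution: your claim $A_N(\xi)\lesssim (1+|\xi|^2)/N$ is stated too quickly. The TV bound $\|(f^N_0)_\ell - f_0^{\otimes\ell}\|_{TV}\le\epsilon_N$ gives only $|R_N(\xi)|\le C\epsilon_N$ \emph{uniformly} in $\xi$, with no extra $|\xi|^2$ factor near the origin; and $\int_{\R^d}|\xi|^{-2s}\,{\rm d}\xi$ diverges at zero for $s>d/2$. You do have $R_N(0)=0$ and $\nabla R_N(0)=0$ (thanks to the momentum constraint), hence $|R_N(\xi)|\le C'|\xi|^2$ with $C'$ controlled by second moments but \emph{not} by $\epsilon_N$. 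Combining the two bounds over $\{|\xi|\le r\}$ and $\{|\xi|>r\}$ and optimising in $r$ yields $\int R_N(\xi)\,|\xi|^{-2s}\,{\rm d}\xi\lesssim \epsilon_N^{(d+2-2s)/2}$, which is still polynomial. Note also that the paper only cites a $W_1$ rate $C/\sqrt N$ for the marginals from \cite{kleber}; the sharper TV rate $O(1/N)$ you invoke would need to be checked against that reference.
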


  \begin{rems}
    \begin{enumerate} 
\item As an easy consequence, for any function $\Theta_a(x)$ such that
\[
\forall \, a >0, \quad \Theta_a(x) \xrightarrow[]{x \to +\infty} 0. 
\] 
(where the parameter $a$ should be thought as keeping track of
dependency of this functional on moment estimates on the
distributions), we have by a diagonal extraction process 
  \begin{equation}\label{W1KacTheta}
      \Theta_{a^N} \left( \WW_{W_1} \left( \pi^N _P \left(f_0
            ^N\right), f_0 \right) \right))  \xrightarrow[]{N \to +\infty} 0 
    \end{equation}
    for some explicit rate in terms of \eqref{W1Kac} and $\Theta$,
    with $a^N = \max \left\{ \alpha_N \, ; \, \left\langle f_0,
        m_{\GG_3} \right\rangle \right\}$.
      \item Using Lemma~\ref{lem:ComparDistances} it would be immediate to
    extend the previous statement to the other weak measure distances
    we have discussed so far.
  \end{enumerate}
\end{rems}

\begin{proof}[Proof of Lemma~\ref{lem:Bpropertybis}]
  Let us give two proofs. The first one is non-constructive and
  inspired from \cite{S6}. 
  First, thanks to the well-known result \cite[Proposition 2.2]{S6}
  and the fact that the sequence $f^N_0$ constructed in
  Lemma~\ref{lem:Bproperty} is $f_0$-chaotic, we deduce that
\[
\pi^N_P f^N_0 \rightharpoonup \delta_{f_0} \ \mbox{ in } \
P\left(P\left(\R^d\right)\right)
\]
(which means convergence when testing against functions in
$C(P(\R^d))$).

Next, thanks to \cite[Theorem 7.12]{VillaniTOT}, \eqref{W1Kac}
boils down to prove the tightness estimate
\begin{eqnarray}\label{eq:tightWW1}
  \lim_{R \to \infty} \sup_{N \in \N^*} \int_{W_1 \left(\rho,f_0 \right) \ge R}
  W_1\left(\rho,f_0\right) \, {\rm d}( \pi^N_P f^N_0)(\rho) = 0.
\end{eqnarray}

Let us prove that it easily follows from the following bound 
\begin{eqnarray}\label{eq:bddWW1}
  \sup_{N \in \N^*} \int_{E^N} \left( W_1\left(\mu^N_V,f_0\right)
  \right)^2 \, 
{\rm d}f^N_0(V) < \infty.
\end{eqnarray}
Indeed \eqref{eq:bddWW1} implies that uniformly in $N \ge 1$ 
\begin{multline*}
\int_{W_1 \left(\rho,f_0 \right) \ge R}
  W_1\left(\rho,f_0\right) \, {\rm d} (\pi^N_P f^N_0)(\rho) = 
\int_{\left\{V \in E^N \, \mbox{ {\scriptsize s.t.} } \, W_1 \left(\mu^N _V,f_0 \right) \ge R\right\}}
  W_1\left(\mu^N _0,f_0\right) \, {\rm d} f^N_0(V)  \\ \le 
\frac1R \, \int_{E^N}
  \left( W_1\left(\mu^N _V,f_0\right)\right)^2 \, {\rm d}f^N_0(V)
   \le \frac{C}{R}
  \xrightarrow[]{R \to \infty} 0
\end{multline*}
which concludes the proof of \eqref{eq:tightWW1}. 

In order to show \eqref{eq:bddWW1}, we infer that from
\cite[Theorem 7.10]{VillaniTOT}
\begin{eqnarray*}
 \left( W_1\left(\mu^N_V,f_0\right)\right)^2 
 &\le& \left\| \mu^N_V - f_0 \right\|_{M^1_1}^2 
\le 2 \left\| \mu^N_V  \right\|_{M^1_1}^2 + 2 \, \left\| f_0 \right\|_{M^1_1}^2  
\\
&\le& 2 \, \left( M^N_1(V) \right)^2 + 2 \, \left\| f_0 \right\|_{M^1_1}^2 
\le  2 \, M^N_2(V)+ 2 \, \left\| f_0 \right\|_{M^1_1}^2,
\end{eqnarray*}
which implies 
$$
\int_{E^N} \left( W_1 \left(\mu^N_V,f_0\right)\right)^2 \, {\rm d}f^N_0(V)
\le 2 \, \left\| f_0 \right\|_{M^1_1}^2 + 2 \, \left\langle f^N_0,M_2 ^N
\right\rangle,
$$
which, together with (ii) in Lemma~\ref{lem:Bproperty}, ends the proof
of \eqref{eq:bddWW1} and then of \eqref{W1Kac}.

Let us now give an alternative explicit argument, even if less
self-contained. From \cite[Theorem~3, (i)]{kleber} we deduce that 
\begin{equation*}
  \forall \, \ell \ge 1, \quad W_1 \left( \Pi_\ell f^N_0, f_0 ^{\otimes
      \ell}\right) \le \frac{C}{\sqrt N}
\end{equation*}
for some explicit constant $C>0$ uniform in $\ell$ and $N$. Then we
use the \cite[Theorem~2.4]{hm} to deduce that 
\begin{equation*}
  \int_{E^N} \left( W_1 \left(\mu^N_V,f_0\right)\right)^2 \, {\rm d}f^N_0(V)
  \le C \, \left( W_1 \left( \Pi_2 f^N_0, f_0 ^{\otimes 2}\right) + \frac1N \right)^{C'}
\end{equation*}
for some explicit constants $C,C'>0$ depending on the energy bound on
$f_0$, which concludes the proof.
\end{proof}


\section{True Maxwell molecules}
\label{sec:BddBoltzmann}
\setcounter{equation}{0}
\setcounter{theo}{0}


\subsection{The model}
\label{sec:modelEBbounded}

Let us consider $E = \R^d$, $d \ge 2$, and a $N$-particle system
undergoing space homogeneous random Boltzmann type collisions
according to a collision kernel 
\[
B(z,\cos \theta) = \Gamma(z) \, b (\cos \theta)
\]
(see Subsection~\ref{sec:introEB}).
More precisely, given a pre-collisional system of velocity variables
\[
V = (v_1, \dots, v_N) \in E^N = (\R^d)^N,
\]
the stochastic process is:
\begin{itemize}
\item[(i)] for any $i'\neq j'$, pick 
  a random time $ T_{\Gamma(|v_{i'}- v_{j'}|)}$ of collision
  accordingly to an exponential law of parameter $\Gamma(|v_{i'}-
  v_{j'}|)$, and then choose the collision time $T_1$ and the
  colliding pair $(v_i,v_j)$ (which is a.s. well-defined) in such a
  way that
  $$
  T_1 = T_{\Gamma(|v_i - v_j|)} := \min_{1 \le i' \neq j' \le N} T_{\Gamma(|v_{i'}-
    v_{j'}|)};
  $$
\item[(ii)] then pick a random unit vector $\sigma \in S^{d-1}$
  according to the law $b(\cos \theta_{ij})$,
  where 
  \[
  \cos \theta_{ij} = \sigma \cdot (v_j-v_i)/|v_j-v_i|;
  \]
\item[(iii)] the new state after collision at time $T_1$ becomes
  \begin{equation}\label{eq:def-vit-coll}
  V^*_{ij} = (v_1, \dots, v^*_i, \dots, v^*_j, \dots , v_N),
\end{equation}
where only velocities labelled $i$ and $j$ have changed, according
  to the rotation
  \begin{equation}\label{vprimvprim*}
    \quad\quad   v^*_i = {v_i + v_j \over 2} 
    + {|v_i - v_j| \, \sigma \over 2}, \quad
    v^*_j= {v_i + v_j \over 2} - {|v_i - v_j| \, \sigma \over 2}.
  \end{equation}
\end{itemize}

\smallskip The associated Markov process 
\[
(\VV_t ^N)_{t \ge 0} \ \mbox{ on } \ (\R^d)^N
\]
is then built by iterating the above construction. 
\medskip

After rescaling time $t \to t/N$ in order that the number of
interactions is of order $\OO(1)$ on finite time interval (see
\cite{spohn}) we denote by $f^N_t$ the law of $\VV_t ^N$ and $S^N_t$ the
associated semigroup. We recall that $G^N$ and $T^N_t$ respectively
denotes the dual generator and dual semigroup, as in the previous
abstract construction.

\medskip
The so-called {\em Master equation} on the law $f^N_t$ is given in
dual form by
\begin{equation}\label{eq:BoltzBddKolmo}
  \partial_t \left\langle f^N_t,\varphi \right\rangle = 
  \left\langle f^N_t, G^N \varphi \right\rangle 
\end{equation}
with 
\begin{equation}\label{defBoltzBddGN}
  \left(G^N\varphi\right) (V) =  \frac{1}{N} \, 
  \sum_{1 \le i < j \le N}  \Gamma\left(|v_i-v_j|\right)
  \, \int_{\mathbb{S}^{d-1}} b(\cos\theta_{ij}) \, \left[\varphi^*_{ij} -
    \varphi\right] \, {\rm d}\sigma
\end{equation}
\[
\mbox{where } \ \varphi^*_{ij}= \varphi \left(V^*_{ij} \right) \ \mbox{ and } \
\varphi = \varphi(V) \in C_b\left(\R^{Nd}\right).
\]

This collision process is invariant under velocities permutations and
satisfies the microscopic conservations of momentum and energy at any
collision time
$$
\sum_{j=1} ^N v^*_j = \sum_{j=1} ^N v_j \quad \mbox{ and } \quad
|V^*|^2 = \sum_{j=1} ^N |v^* _j|^2 = \sum_{j=1} ^N |v_j|^2 = |V|^2.
$$
As a consequence, for any symmetric initial law $f_0^N \in
P_{\mbox{{\scriptsize sym}}}((\R^{d})^N)$ the law $f_t^N$ at later times
is also a symmetric probability. Moreover the evolution conserves
momentum and energy: for any measurable function $ \phi : \R \to \R_+$
\[
\forall \, \alpha = 1, \dots, d, \quad \int_{\R^{dN}} \phi \left(
  \sum_{j=1}^N v_{j,\alpha} \right) \, {\rm d} f_t^N (v) =
\int_{\R^{dN}} \phi \left( \sum_{j=1}^N v_{j,\alpha} \right) \, {\rm
  d} f_0^N (v),
\]
where $(v_{j,\alpha})_{1 \le \alpha \le d}$ denote the components of
$v_j \in \R^d$, and
\begin{equation}\label{eq:preconservationE}
  \int_{\R^{dN}}\phi( |V|^2 ) \, {\rm d}f_t^N (v) = 
  \int_{\R^{dN}} \phi( |V|^2 )  \, {\rm d}f_0^N (v),
\end{equation}
(equalities between possibly infinite non-negative quantities).

\medskip The (expected) limit nonlinear homogeneous Boltzmann
equation is defined by \eqref{el}, \eqref{eq:collop},
\eqref{eq:rel:vit}. The equation generates a nonlinear semigroup
\[
S^{N\! L}_t (f_0) := f_t \ \mbox{ for any } \ f_0 \in \PP_2\left(\R^d\right)
\]
where $\PP_2(\R^d)$ denotes the space of probabilities with bounded
second moment. 

Concerning the Cauchy theory for the limit Boltzmann equation:
\begin{itemize}
\item In the case {\bf (GMM)} (Maxwell molecules with angular cutoff),
  see equation~\eqref{model:gmm} in Subsection~\ref{sec:introEB}, we
  refer to \cite{T1};
\item In the case {\bf (tMM)} (true Maxwell molecules without angular
  cutoff), see equation~\eqref{model:tmm} in
  Subsection~\ref{sec:introEB}, we refer to \cite{TV};
\item In the case {\bf (HS)} of hard spheres, see
  equation~\eqref{model:hs} in Subsection~\ref{sec:introEB}, we refer
  to \cite{MW99} ($L^1(\R^d)$ theory) and \cite{EM,Fo-Mo,Lu-Mouhot}
  ($P(\R^d)$ theory).
\end{itemize}

For these solutions, one has the conservation of momentum and energy
$$
\forall \, t \ge 0, \quad \int_{\R^{d}} v \, {\rm d}f_t (v) = \int_{\R^{d}}v
\, {\rm d}f_0 (v), \quad \int_{\R^{d}} |v|^2 \, {\rm d}f_t (v) =
\int_{\R^{d}}|v|^2 \, {\rm d}f_0 (v).
$$

Observe that the change of variable 
\[
\sigma \in \mathbb{S}^{d-1} \mapsto - \sigma \in \mathbb{S}^{d-1}
\]
maps the domain 
\[
\theta \in [-\pi,\pi/2] \cap [\pi/2,\pi] \ \mbox{ in } \ \theta \in
[-\pi/2,\pi/2].
\]
Therefore without restriction we can consider, for the limit
equation as well as the $N$-particle system, kernel function $b$ such
that $\mbox{Supp} \, b \subset [0,1]$.
We still denote by $b$ the symmetrized version of $b$ by a slight
abuse of notation.

In this section we aim at considering the case of the {\sl Maxwell
  molecules kernel}. We shall indeed make the following general
assumption:
\begin{equation}\label{Maxwelltrue}
  \left\{ 
    \begin{array}{l}
      \Gamma \equiv 1, \quad b \in
      L^\infty_{\mbox{{\tiny loc}}}([0,1)) \vspace{0.3cm} \\ \displaystyle 
      \forall \, \alpha > 0, \quad  
      C_{\alpha}(b) := \int_{\mathbb{S}^{d-1}}  b(\cos \theta)\, \left( 1-\cos
        \theta \right) ^{\frac14 + \alpha} \, 
      {\rm d}\sigma < \infty. 
\end{array}
\right.
\end{equation}

Let us show that Maxwell molecules model~\eqref{model:tmm} enters this
general framework.  Indeed for any positive real function $\psi$ and
any given vector $u \in \R^d$ we have
\[
\int_{\mathbb S^{d-1}} \psi (\hat u \cdot \sigma) \, {\rm d}\sigma =
|\mathbb{S}^{d-2}| \, \int_0^\pi \psi (\cos\theta) \, \sin^{d-2} \,
\theta \, {\rm d}\theta.
\]
Therefore the model~\eqref{model:tmm} satisfies (in dimension
$d=3$) 
\[
b(z) \sim K \, (1-z)^{-5/4} \ \mbox{ as } \ z \to 1,
\]
which hence fulfills (\ref{Maxwelltrue}). This assumption also
trivially includes the Grad's cutoff Maxwell molecules
model~\eqref{model:gmm}. 

\subsection{Statement of the results}
\label{sec:resultEBbounded} 

Our main propagation of chaos estimate result for this model then
states as follows:

\begin{theo}[Maxwell molecules detailed chaos estimates]\label{theo:tMM}
  Assume that the collision kernel $b$ satisfies \eqref{Maxwelltrue}.
  Let us consider a family of $N$-particle initial conditions $f_0 ^N
  \in P_{\mbox{{\scriptsize {\em sym}}}}((\R^d)^N)$, $N \ge 2$, and
  the associated $N$-particle system dynamics
$$
 f^N _t = S^N _t \left(f_0^N \right).
$$
 Let us also consider a $1$-particle initial distribution $f_0 \in \PP_2(\R^d)$  
with zero momentum and energy $\EE \in (0,+\infty)$
  $$
  \int_{\R^d} v \, f_0 \, {\rm d}v = 0, \quad \EE := \int_{\R^d} f_0
  \, {\rm d}v \in (0,\infty),
  $$ 
and the associated solution
  \[
  f_t = S^{N \! L} _t \left(f_0\right)
  \]
 of the limit mean-field equation.

 Then for any $\delta \in (0,1)$, for any $\ell \in \N^*$ and for any
  \[ 
  \varphi = \varphi_1 \otimes \varphi_2 \otimes \dots \otimes \,
  \varphi_\ell \in \FF^{\otimes\ell}, \quad \varphi_i \in \FF, \ i
  =1,\dots, \ell,
  \] 
  where $\FF$ shall be specified below, we have, for some constant
  $C_\delta>0$ (possibly blowing up as $\delta \to 0$) depending only
  on $\delta$, on the collision kernel,  on the size of the support
  and on some moments of $f_0$:

\begin{enumerate}
\item[(i)] \underline{Cases {\bf (GMM)} and {\bf (tMM)}}: Assume that $f^N _0 = f_0 ^{\otimes N}$ 
is a  tensorized $N$-particle initial datum and  that $f_0$ has compact support, and
  take
  $$
  \FF := \left\{ \varphi : \R^d \to \R; \,\, \| \varphi \|_\FF :=
    \int_{\R^d} (1 + |\xi|^4) \, |\hat \varphi (\xi)|\, {\rm d}\xi <
    \infty \right\}.
  $$
  Then we have 
  \begin{eqnarray} \label{eq:cvgBddBE} 
  && \forall \, N \ge 2 \ell, \quad \sup_{t \ge 0} \left| \left
        \langle \left( S^N_t(f_0 ^N) - \left( S^{N\! L}_t(f_0)
          \right)^{\otimes N} \right), \varphi \right\rangle \right|
  \\ \nonumber
  &&\le 
 C_\delta \, \Bigg[ \ell^2 \, \frac{\|\varphi\|_\infty}{N} 
  + 
    {\ell^2 \over N^{1-\delta}} \, 
  \|\varphi\|_{\FF ^2 \otimes (L^\infty)^{\ell-2}} 
  + \ell \, \, \|\varphi\|_{W^{1,\infty} \otimes (L^\infty)^{\ell-1}} \, 
  \WW^N_{W_2} ( f_0)   \Bigg].
\end{eqnarray}

We deduce the following rate of convergence as $N$ goes infinity
by using \eqref{estim:NousW2}-\eqref{estim:Rachev2}:
\begin{equation*}
\sup_{t \ge 0} \left| \left
        \langle \left( S^N_t(f_0 ^N) - \left( S^{N\! L}_t(f_0)
          \right)^{\otimes N} \right), \varphi \right\rangle \right|
\le {C _\delta \, \ell^2 \over N^{\kappa(d,\delta)}} \,
   \| \varphi \|_{\FF^{\otimes\ell}}
 \end{equation*}
 with 
    $$
    \kappa(d,\delta) := 
    \left\{
      \begin{array}{l} 
        {1 \over 4} - \delta \,\,\,\,\hbox{if}\,\,\,\, d \le 2, 
    \vspace{0.3cm} \\ {1 \over 6} - \delta \,\,\,\,\hbox{if}\,\,\,\,  d = 3, 
    \vspace{0.3cm} \\ {1 \over d+4} \,\,\,\,\hbox{if}\,\,\,\, d \ge 4.
  \end{array}
  \right.
  $$  
 This proves the propagation of chaos, uniformly in time and
  with explicit polynomial rates.
\smallskip

\item[(ii)] \underline{Case {\bf (GMM)} with optimal rate for finite
    time}: On a finite time interval $[0,T]$, the following variant is
  available: consider tensorized initial data $f^N _0 = f_0 ^{\otimes
    N}$ for the $N$-particle system and assume that $f_0$ has compact
  support, and take $\FF = H^s$ with $s > d/2$ high enough. Then we
  have
\begin{eqnarray*} 
  &&\sup_{0 \le t \le T} \left| \left
        \langle \left( S^N_t(f_0 ^N) - \left( S^{N\! L}_t(f_0)
          \right)^{\otimes N} \right), \varphi \right\rangle \right|
  \\ \nonumber
  &&\le 
 C_\delta \, \Bigg[ \ell^2 \, \frac{\|\varphi\|_\infty}{N} 
  + C^N_{T,4} \, {C_{\delta,\infty}^\infty \over N^{1-\delta}} \, \ell^2 \, 
  \|\varphi\|_{\FF ^2 \otimes (L^\infty)^{\ell-2}} 
    + \ell \, \, \|\varphi\|_{H^s \otimes (L^\infty)^{\ell-1}} \, 
  \WW^N_{H^{-s}} ( f_0)   \Bigg].
 \end{eqnarray*}

By using \eqref{estim:RachevHdotk}, this proves
\begin{equation*}
\sup_{t \in [0,T]} \left| \left
        \langle \left( S^N_t(f_0 ^N) - \left( S^{N\! L}_t(f_0)
          \right)^{\otimes N} \right), \varphi \right\rangle \right|
\le \ell^2 \, {C_{\delta,T} \over \sqrt N} \,
   \| \varphi \|_{\FF^{\otimes\ell}}
\end{equation*}
with $\FF = H^s$, and where the constant $C_{\delta,T}$ can also
depends on the final time of observation $T$. This proves the
propagation of chaos, on finite time intervals, but with the optimal
rate of the law of large numbers.  \smallskip

\item[(iii)] \underline{Cases {\bf (GMM)} and {\bf (tMM)} conditioned
    to the sphere}: Finally consider $\FF$ as in (i), assume that the
  $1$-particle initial datum $f_0$ belongs to $\PP_6\left(\R^d\right) \cap
  L^\infty\left(\R^d\right)$  and consider the associated $N$-particle initial data $(f_0^N)_{N
    \ge 1}$ constructed in Lemma~\ref{lem:Bproperty} and
  \ref{lem:Bpropertybis} by conditioning to $\SS^N(\EE)$. Then the solution $f^N_t = S^N_t(f_0 ^N)$ has its
  support included in $\SS^N(\EE)$ for all times
  \begin{equation}\label{eq:supportfNtspheres}
    \forall \, t \ge 0, \quad \hbox{{\em Supp}} \, f^N_t \subset \SS^N(\EE) 
  \end{equation}
and we have the estimate
    \begin{eqnarray*}
  &&\quad \sup_{t \ge 0}\left| \left \langle \left( S^N_t(f_0 ^N) - \left(
          S^{N \! L}_t(f_0) \right)^{\otimes N} \right), \varphi 
    \right\rangle \right|  \le
  \\ \nonumber 
  &&\quad 
  C_\delta \, \Bigg[ \ell^2 \, \frac{\|\varphi\|_\infty}{N} 
  + 
    {\ell^2 \over N^{1-\delta}} \, 
  \|\varphi\|_{\FF ^2 \otimes (L^\infty)^{\ell-2}} 
  + \ell \, \, \|\varphi\|_{W^{1,\infty} \otimes (L^\infty)^{\ell-1}} \, 
  \WW^N_{W_2} \left(\pi^N_P f^N_0,\delta_{f_0}\right)\Bigg]
  \end{eqnarray*}
  which goes to zero as $N$ goes infinity with polynomial rate thanks
  to Lemma~\ref{lem:Bpropertybis}. This proves the propagation of
  chaos, uniformly in time and with explicit polynomial rates.
\end{enumerate}
\end{theo}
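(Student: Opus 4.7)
The plan is to verify assumptions \textbf{(A1)}--\textbf{(A5)} of Theorem~\ref{theo:abstract} in a functional framework adapted to Maxwell molecules, and then simply read off the three statements from the conclusion \eqref{eq:cvgabstract1}. For the stability/consistency spaces $\GG_1,\GG_2$ I would use a Fourier-based norm of Toscani type as in Subsection~\ref{expleFourierGal} (say $|\cdot|_s$ with $s$ slightly greater than $2$, conditioned on fixed momentum and energy), because under Maxwell molecules this family enjoys a strict contraction along the Boltzmann flow and is well-suited to differentiation with respect to initial data. For the weak-stability space $\GG_3$ I would take $W_2$ (with polynomial moment weight $m_{\GG_3}=\langle v\rangle^k$, $k$ large) in cases (i) and (iii), and $H^{-s}$ with $s>d/2$ in case (ii), matching the duality $\FF_3=\hbox{Lip}_0$ or $\FF_3=H^s$ respectively.

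For \textbf{(A1)}, the microscopic conservation of momentum and energy at each collision, combined with the fact that for tensorized data \eqref{eq:preconservationE} transfers moments from $f_0$ to $f^N_0$, yields the integral moment bound (ii); in case (iii) the support constraint $\Ee_N=\SS^N(\EE)$ is automatic and \eqref{eq:supportfNtspheres} follows; a Povzner lemma (uniform in $N$ since $\Gamma\equiv 1$) propagates $\langle f^N_t,M^N_{\langle v\rangle^k}\rangle$ uniformly in time. \textbf{(A2)} is classical for Maxwell molecules (Tanaka--Villani for (tMM), Truesdell--Muncaster for (GMM)), giving a contractive semigroup in the relevant Fourier norm. \textbf{(A5)} is essentially the Tanaka--Villani $W_2$-nonexpansivity (case (i),(iii)) or the $H^{-s}$-stability deduced from the Fourier contraction via Lemma~\ref{lem:ComparDistances} (case (ii)).

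The two substantial steps are \textbf{(A3)} and \textbf{(A4)}. For \textbf{(A3)}, I would apply the generator $G^N$ to a polynomial observable $R^\ell_\varphi\circ\mu^N_V$ and Taylor expand in $V\mapsto V^*_{ij}$: symmetrization yields a sum over unordered pairs $\{i,j\}$, and a second-order expansion separates cleanly into a ``diagonal'' $1/N$ remainder (which is exactly the $\varepsilon(N)$ term, estimated by $[\Phi]_{C^{1,\eta}}$) and a leading term that, once paired with $1/N$, reproduces $\pi_N G^\infty R^\ell_\varphi$ as computed by formula~\eqref{eq:formulaGinfty} of Lemma~\ref{lem:H0}. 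The loss of weight $(M^N_{m_{\GG_1}})^{-1}$ in \textbf{(A3)} matches the polynomial moments propagated by \textbf{(A1)}(ii), which is why the Fourier norm with a few moment constraints is the right choice. Because $\Gamma\equiv 1$, the angular integral $\int b(\cos\theta)(1-\cos\theta)^{1+\alpha}d\sigma$ of \eqref{Maxwelltrue} controls the second-order term uniformly, so no cutoff argument on the angular singularity is needed.

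The main obstacle is \textbf{(A4)}: I must establish a $C^{1,\eta}_{\Lambda_2}$-type regularity of the nonlinear flow $S^{N\!L}_t$ between two distinct metric structures, and crucially this must be \emph{integrable in time on $[0,+\infty)$} so that $C_T^\infty$ is uniform in $T$. The strategy is to differentiate the Boltzmann equation formally with respect to the initial datum, obtaining the linearized equation $\partial_t h = Q(h,f_t)+Q(f_t,h)$, and to estimate $h$ in the same Fourier norm $|\cdot|_s$; for Maxwell molecules with the conservation constraints on zero mean and fixed energy the linearized semigroup is in fact a \emph{strict contraction} at exponential rate, which provides both the time-integrability and the H\"older control on the second derivative required in Definition~\ref{def:C1kcalculus}. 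Once \textbf{(A1)}--\textbf{(A5)} are in hand, part (i) follows from Theorem~\ref{theo:abstract} combined with the rate \eqref{estim:NousW2}; part (ii) follows by replacing $W_2$ by $H^{-s}$ and using the optimal $N^{-1/2}$ rate of \eqref{estim:RachevHdotk}, at the price of losing uniformity in time because the $H^{-s}$-stability estimate degenerates exponentially in $T$; and part (iii) follows by plugging in the chaotic initial data of Lemmas~\ref{lem:Bproperty}--\ref{lem:Bpropertybis}, which give the required polynomial decay of $\WW_{W_2}(\pi^N_P f^N_0,\delta_{f_0})$ and whose support lies in $\SS^N(\EE)$, thereby closing all pointwise moment requirements of \textbf{(A1)}(iii).
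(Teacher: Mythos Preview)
Your proposal is correct and follows essentially the same route as the paper: verify \textbf{(A1)}--\textbf{(A5)} in a Fourier-based framework and read off (i)--(iii) from Theorem~\ref{theo:abstract}, with $\GG_3$ taken as $W_2$ for (i),(iii) and $H^{-s}$ for (ii). Two points of difference are worth noting. First, the paper works with $|\cdot|_2$ exactly (not $s$ ``slightly greater than $2$''), constrained on mass, momentum and energy; this matters because the contraction $|f_t-g_t|_2\le |f_0-g_0|_2$ is only \emph{non-strict}, so your assertion that ``the linearized semigroup is a strict contraction at exponential rate'' is not immediate in $|\cdot|_2$. The paper's mechanism for \textbf{(A4)} is to pass to the auxiliary norm $|||\cdot|||_4$ (cf.~\eqref{eq:defToscanimodif}), where a genuine spectral gap $\bar\lambda>0$ appears from $\int b(\cos\theta)(1-\cos^2\theta)\,d\sigma$, combine this with Tanaka's exponential decay of moments $M_\alpha[f_t-g_t]$ for $|\alpha|\le 3$, and then interpolate back to $|\cdot|_2$ via $|d|_2\le \|\hat d\|_\infty^{1/2}|d|_4^{1/2}$; the same scheme handles $h_t$ and the second-order remainder $\omega_t=g_t-f_t-h_t$, whose moments up to order $3$ vanish identically. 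Second, contrary to your remark, an angular cutoff $b=b_K+b_K^c$ \emph{is} used throughout Lemmas~\ref{lem:contraction}--\ref{lem:a4maxwellfourier2} to make the Gronwall arguments rigorous in the non-cutoff case, with the truncation relaxed at the end. These are refinements rather than gaps: your overall plan and choice of spaces match the paper.
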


We now state another version of the propagation of chaos estimate, in
Wasserstein distance, but most importantly which is valid \emph{for
  any number of marginals}, at the price of a possibly worse (but
still constructive) rate. Combined with previous results on the
relaxation of the $N$-particle system we also deduce some estimate of
relaxation to equilibrium \emph{independent of $N$} and, again, for
any number of marginals.

\begin{theo}[Maxwell molecules Wasserstein chaos]
  \label{theo:max-wasserstein}
  We consider the same setting as in Theorem~\ref{theo:tMM}, where the
  initial data are chosen as follows:
  \begin{itemize}
  \item[(a)] either $f_0$ is compactly supported and $f_0 ^N = f_0 ^{\otimes
      N}$,
\smallskip

  \item[(b)] or $ f_0 \in \PP_6 (\R^d) \cap L^\infty(\R^d)$ with
    zero momentum and energy $\EE$, and $f_0 ^N$ is constructed by
    Lemma~\ref{lem:Bproperty} by conditioning to the sphere
    $\SS^N(\EE)$. 
  \end{itemize}

Then we have
\begin{equation}\label{eq:max-wass}
  \forall \, N \ge 1, \ \forall \, 1 \le \ell  \le N, \quad \sup_{t \ge 0} {W_1 \left( \Pi_{\ell} f^N
      _t,  f_t ^{\otimes \ell}  \right) \over \ell} \le \alpha(N)
\end{equation}
for some polynomial rate $\alpha(N) \to 0$ as $N \to \infty$.

Moreover in the case (b) the solution $f^N_t = S^N_t(f_0 ^N)$ has its
support included in $\SS^N(\EE)$ for all times and we have
\begin{equation}\label{eq:max-wass-relax}
  \forall \, N \ge 1, \ \forall \, 1 \le \ell  \le N, \  \forall \, t
  \ge 0, \quad  
    {W_1 \left( \Pi_{\ell} f^N
      _t, \Pi_\ell \left( \gamma^N \right) \right) \over \ell} \le
  \beta(t) 
\end{equation}
for some polynomial rate $\beta(t) \to 0$ as $t \to \infty$, where
$\gamma$ is the centered Gaussian equilibrium with energy $\EE$ and
$\gamma^N$ is the uniform probability measure on $\mathcal S^N
(\EE)$.
\end{theo}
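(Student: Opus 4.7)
The plan is to reduce \eqref{eq:max-wass} to the finite-marginal estimate of Theorem~\ref{theo:tMM} at $\ell=2$, to propagate the two-marginal chaos to all $\ell$-marginals uniformly via a Hauray-Mischler type argument \cite{hm}, and then to deduce \eqref{eq:max-wass-relax} by combining the resulting uniform-in-time chaos with the polynomial relaxation of the limit Boltzmann equation for Maxwell molecules together with the chaoticity of the Kac equilibrium $\gamma^N$ relative to $\gamma$.

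For \eqref{eq:max-wass} I would first apply Theorem~\ref{theo:tMM}(i) in case~(a) (respectively Theorem~\ref{theo:tMM}(iii) in case~(b)) with $\ell=2$ and tensor product test functions $\varphi_1 \otimes \varphi_2 \in \FF^{\otimes 2}$, obtaining a uniform-in-time bound on $\langle \Pi_2 f^N_t - f_t^{\otimes 2}, \varphi_1 \otimes \varphi_2 \rangle$ of order $N^{-(1-\delta)}$. To convert this $\FF^{\otimes 2}$-dual estimate into a $W_1$ control, I would regularize a Lipschitz test function by convolution at scale $\varepsilon$, control the approximation error via the uniform moment bounds on $f^N_t$ propagated by the Kac jump process (assumption~\textbf{(A1)-(ii)}) and the interpolation inequalities of Lemma~\ref{lem:ComparDistances}, and optimize in $\varepsilon$. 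This yields $\sup_{t \ge 0} W_1(\Pi_2 f^N_t, f_t^{\otimes 2}) \le C\,N^{-\kappa_1}$ for some $\kappa_1 > 0$. Plugging this into the Hauray-Mischler inequality \cite[Theorem~2.4]{hm} (already used in the proof of Lemma~\ref{lem:Bpropertybis}) produces
\[
\sup_{t \ge 0} \int_{E^N} W_1^2(\mu^N_V, f_t) \, df^N_t(V) \le C\,N^{-\kappa_2}.
\]
From here \eqref{eq:max-wass} follows by interpolating between two complementary decompositions of $W_1(\Pi_\ell f^N_t, f_t^{\otimes \ell})/\ell$: a combinatorial sampling argument (writing $\Pi_\ell f^N_t$ as the average of $\ell$-fold tensor products of empirical measures plus an $O(\ell^2/N)$ symmetrization error), which is efficient for $\ell \ll N$; and the symmetric-measure identity $W_1(\Pi_\ell f^N_t, f_t^{\otimes \ell})/\ell = \mathcal{W}_1(\pi^\ell_P \Pi_\ell f^N_t, \pi^\ell_P f_t^{\otimes \ell})$ combined with a law of large numbers at sample size $\ell$ (Lemma~\ref{lem:Rachev&W1}), which is efficient for $\ell$ close to $N$.

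For \eqref{eq:max-wass-relax} in case~(b), I would combine three ingredients via the triangle inequality
\[
\frac{W_1(\Pi_\ell f^N_t, \Pi_\ell \gamma^N)}{\ell} \le \frac{W_1(\Pi_\ell f^N_t, f_t^{\otimes \ell})}{\ell} + W_1(f_t, \gamma) + \frac{W_1(\gamma^{\otimes \ell}, \Pi_\ell \gamma^N)}{\ell},
\]
bounding the first term by \eqref{eq:max-wass}, the second by the Tanaka/Toscani-Villani polynomial relaxation of the limit Maxwell-molecules Boltzmann equation, and the third by applying Hauray-Mischler to $\gamma^N$ (which is $\gamma$-chaotic with polynomial rate in $W_1$ by Lemma~\ref{lem:Bpropertybis}). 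The main obstacle, and the technical heart of the argument, is making this bound uniform in $N$: the two chaoticity terms degrade when $N$ is small, while the limit-equation term is $N$-independent. The remedy is to introduce a threshold $N_*(t) \to \infty$ and treat $N \ge N_*(t)$ via the above chain (so that the chaoticity error is absorbed into the final rate), and the remaining range $N \le N_*(t)$ (for which $\ell \le N_*(t)$ is also bounded) via the finite-$N$ ergodicity of the Kac jump process on $\SS^N(\EE)$: here one invokes the uniform $L^2$ spectral gap of Carlen-Carvalho-Loss \cite{CarlenCL2003}, combined with a Talagrand-type transport inequality valid uniformly in $N$ (since $\SS^N(\EE)$ has Ricci curvature bounded below independently of $N$). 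A balanced choice of $N_*(t)$ produces a polynomial rate $\beta(t) \to 0$ independent of $N$ and $\ell$.
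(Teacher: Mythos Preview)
Your proposal is correct and follows the same strategy as the paper: pass from Theorem~\ref{theo:tMM} at $\ell=2$ to a $W_1$ estimate via the interpolation inequalities of Lemma~\ref{lem:ComparDistances}, propagate to all $\ell$ via \cite{hm}, and for \eqref{eq:max-wass-relax} combine the uniform-in-time chaos with a threshold-in-$N$ argument using the Kac spectral gap. Two places where the paper is more direct than your sketch: (i) for \eqref{eq:max-wass} the paper invokes the \cite{hm} inequality in the form
\[
\frac{W_1\!\left(\Pi_\ell f^N, f^{\otimes\ell}\right)}{\ell} \le C\!\left(W_1\!\left(\Pi_2 f^N, f^{\otimes 2}\right)^{\alpha_1} + N^{-\alpha_2}\right),
\]
which already gives the uniform-in-$\ell$ conclusion and makes your ``interpolation between two complementary decompositions'' unnecessary; (ii) for the small-$N$ regime in \eqref{eq:max-wass-relax} the paper bypasses any curvature/Talagrand machinery and simply uses $W_1(f^N,\gamma^N) \le N\,\EE^{1/2}\,\|h^N-1\|_{L^1(\gamma^N)} \le N\,\EE^{1/2}\,\|h^N-1\|_{L^2(\gamma^N)} \le N\,\EE^{1/2}\,A^N e^{-\lambda_2 t}$, which is exponentially bad in $N$ but suffices once $N \le N_*(t) := \lambda_2 t/(2\ln A)$. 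Also note that the limit equation relaxes exponentially (not merely polynomially) by \cite{GabettaTW95,CGT}, which only helps.
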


In order to prove Theorem~\ref{theo:tMM}, we shall establish the
assumptions {\bf (A1)-(A2)-(A3)-(A4)-(A5)} of
Theorem~\ref{theo:abstract} with $T < \infty$ or $T=\infty$  for a suitable choice
of functional spaces. The application of the latter theorem then
exactly yields Theorem~\ref{theo:tMM} by following carefully each
constant computed below. Then the proof of
Theorem~\ref{theo:max-wasserstein} will be done in
Subsection~\ref{sec:infchaos}: it is deduced from
Theorem~\ref{theo:tMM} by using Lemma~\ref{lem:ComparDistances}
together with a result from \cite{hm}.

\subsection{Proof of condition (A1)} \label{sec:MaxA1} When the collision kernel
$B$ is bounded the operator $G^N$ is a linear bounded operator on
$C(B_R)$ with $B_R := \{ V \in \R^{dN}; \, |V| \le R \}$ for any $R
\in (0,\infty)$ with an operator norm independent of $R$. As a
consequence, $G^N$ is also well defined and bounded on 
$$
C_{-k,0} ^0 (\R^{dN}) := \left\{ \varphi \in C (\R^{dN}) \quad
  \mbox{s. t. } \quad 
  \frac{\varphi(V)}{|V|^{k}} \to 0 \quad \mbox{as} \quad  |V| \to \infty \right\}
$$
endowed with the norm 
$$
\| \varphi \|_{L^\infty_{-k}} := \sup_{V \in \R^{dN}} | \varphi (V)| \, \langle V
\rangle^{-k}
$$
for any $k \in \R$.  It is also easy (and classical) to verify that
$G^N$ is dissipative in the sense that
\[
\forall \, \varphi \in C_{-k,0} ^0 \left(\R^{dN}\right), \,\, \forall \, \lambda > 0,
\quad \left\|\left(\lambda - G^N\right) \, \phi \right\|_{L^\infty_{-k}} \le \lambda \,
\| \phi \|_{L^\infty_{-k}}.
\]
From the Hille-Yosida theory we deduce that $G^N$ generates a Markov
type semigroup $T^N_t$ on $C_{-k,0} ^0 (\R^{dN})$ and we may also define
$S^N_t$ by duality as a semigroup on $\PP_k(\R^{dN})$.  The nonlinear
semigroup $S^{N\! L}_t$ is also well defined on $\PP_k(\R^d)$, see for
instance \cite{TV,Fo-Mo,EM,Lu-Mouhot}.

\smallskip For the true Maxwell molecules model, the operator $G^N$ is
not bounded anymore and some additional explanations are needed. The
simplest argument is to say that $B$ can be approximated by a
sequence of bounded collision kernels
$$
B_\eps := b_\eps (\cos \theta) \quad \mbox{with} \quad b_\eps \in
L^\infty \quad \mbox{and} \quad b_\eps \nearrow b.
$$
We may then define the associated generator $G^{N,\eps}$, the
associated semigroups $T^{N,\eps}$ on continuous functions and
$S^{N,\eps}_t$ on probabilities, and the nonlinear semigroup
$S^{N\! L,\eps}_t$ on probabilities. We first write estimate
\eqref{eq:cvgBddBE} for any fixes $\eps > 0$. Then since (a) the
right-hand side term in \eqref{eq:cvgBddBE} does not depend on $\eps >
0$ (as a consequence of the estimates established in the proof below),
(b) $S^{N\! L,\eps}(f_0) \wto S^{N\! L}(f_0)$ weakly in $P(\R^d)$, and
(c) $S^{N,\eps}(f_0^N) \wto f^N_t$ weakly in $P(\R^{Nd})$, we can
conclude that \eqref{eq:cvgBddBE} holds for the true Maxwell molecules
model by letting $\eps$ go to $0$. 

Possible other direct arguments (without using approximations) could
be (a) to establish and use stability estimates in Wasserstein
distance of the many-particle equation, or (b) use the {\em core}
$\CC := W^{1,\infty}_{k+2}$ and prove that $\varphi \in
W^{1,\infty}_{k+2}$ implies $G^N \varphi \in C_{k,0}$ (this follows
from an easy decomposition between singular and non-singular angles in
the formula for $G^N$).

\smallskip

Hence the semigroups $T^N _t$ and 
\[
S^N _t = \left( T^N _t \right)^* = T^N _t
\]
are well defined on $C^0 _{-k,0}(\R^{dN})$. Moreover since for
$\varphi \in L^2(\mathcal S^N(\EE))$ we have
\[
\left\langle G^N \varphi, \varphi
\right\rangle_{L^2\left(\mathcal S^N(\EE)\right)} = - {1 \over N} \, \sum_{i,j=
  1}^N 
\int_{\mathcal S^N(\EE)} 
\int_{\mathbb{S}^{d-1}} b(\cos\theta_{ij}) \, \left[\varphi^*_{ij} -
  \varphi\right]^2 \, {\rm d}\sigma \, {\rm d}\gamma^N(v) \le 0, 
\]
it is easily seen by arguing similarly as above that they are
$C_0$-semigroups of contractions on this space $L^2(\mathcal
S^N(\EE))$.


\medskip 
Then it remains to prove bounds on the polynomial moments of the
$N$-particle system. We shall prove the following more general lemma:

\begin{lem}\label{lem:momentsN}
  Consider the collision kernel 
\[
B = |v-v_*|^\gamma \, b(\cos \theta) \ \mbox{ with } \ \gamma = 0
\mbox{ or } 1
\]
and $b \ge 0$ such that
  \[
 C_b :=  \int_0 ^1 b(z) \, (1-z)^2\, {\rm d}z <+\infty.
  \]
  This covers the three cases {\bf (HS)}, {\bf (tMM)} and {\bf (GMM)}.

Assume that the initial datum $f^N_0$ of the $N$-particle system satisfies:
\[
\hbox{{\em Supp}} \, f^N_0 \subset \left\{V \in \R^{Nd}; \,\, M^N_2(V)
  \le \EE_0 \right\} \ \mbox{ and } 
  \left \langle f^N _0, M_k ^N \right \rangle \le C_{0,k} <\infty, \quad k > 2, 
  \]
with  
 \[
 \forall \, V \in \R^{dN}, \quad  M_k ^N(V) = \frac1N \, \sum_{j=1} ^N |v_j|^k.
\]
Then we have
\begin{equation}\label{eq:MaxwellSupportE}
\forall \, t \ge 0, \quad \hbox{\emph{Supp}} \, f^N_t \subset 
\left\{V \in \R^{Nd}; \,\, M^N_2(V) \le \EE_0 \right\}.
\end{equation}
and
\[
\sup_{t \ge 0} \left \langle f^N _t, M_k ^N \right\rangle \le 
\max \left\{ C_{0,k}; \, \bar a_k \right\} 
\]
where $\bar a_k \in (0,\infty)$ depends on $k$ and $\EE_0$. Moreover, when 
  \[
\hbox{{\em Supp}} \, f^N_0 \subset   \SS^N(\EE) =  \left\{V \in \R^{Nd}; \,\, M_2(\mu^N_V) =  \EE, \,\, \langle \mu^N_V , z \rangle = 0 \right\}, 
  \]
 then 
 \begin{equation}\label{eq:MaxwellSupport=E}
\forall \, t \ge 0, \quad \hbox{{\em Supp}} \, f^N_t \subset  \SS^N(\EE) =  
  \left\{V \in \R^{Nd}; \,\, M_2(\mu^N_V) =  \EE, \,\, \langle \mu^N_V , z \rangle = 0 \right\}.
  \end{equation}
\end{lem}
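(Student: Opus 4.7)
\textbf{Plan of proof for Lemma~\ref{lem:momentsN}.}

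The proof splits naturally into two independent parts: the support (conservation) bounds on energy/momentum, and the propagation of higher polynomial moments. The first part is essentially trivial and the second rests on a Povzner-type inequality.

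First I would handle the support bounds \eqref{eq:MaxwellSupportE} and \eqref{eq:MaxwellSupport=E}. Recall from \eqref{vprimvprim*} that each binary collision $(v_i,v_j) \mapsto (v_i^*,v_j^*)$ preserves both $v_i+v_j$ and $|v_i|^2+|v_j|^2$ pointwise; consequently $M_2^N(V^*_{ij}) = M_2^N(V)$ and $\sum_k v_k$ is unchanged at every jump of the Markov process $\VV^N_t$. Therefore the sets $\{V:M_2^N(V)\le \EE_0\}$ and $\SS^N(\EE)$ are invariant under the trajectories $\VV^N_t$, which gives \eqref{eq:MaxwellSupportE} and \eqref{eq:MaxwellSupport=E}. (Equivalently, this can be read directly off the generator $G^N$ in \eqref{defBoltzBddGN} acting on any function of $|V|^2$ or $\sum_k v_k$.)

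Second, for the moment bounds, the plan is to write the evolution equation for the $k$-th moment of the empirical measure and close it by a Gronwall argument. Starting from \eqref{eq:BoltzBddKolmo} with test function $\varphi(V)=M^N_k(V)$, and noting that only the coordinates $v_i,v_j$ change in a collision, one obtains
\[
\frac{{\rm d}}{{\rm d}t}\langle f^N_t,M^N_k\rangle
= \frac{1}{N^2}\sum_{i<j}\left\langle f^N_t,\,\Gamma(|v_i-v_j|)\!\int_{\Sp^{d-1}}\! b(\cos\theta_{ij})\bigl(|v_i^*|^k+|v_j^*|^k-|v_i|^k-|v_j|^k\bigr)\,{\rm d}\sigma\right\rangle.
\]
Using the symmetry of $f^N_t$ the right-hand side becomes $\frac{N-1}{2N}$ times the expectation, under $f^N_t$, of the two-particle integrand on $(v_1,v_2)$. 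To this $1$-body quantity I would apply the sharp Povzner--Wennberg inequality (in the form of Bobylev--Gamba--Panferov--Villani / Mischler--Mouhot--Ricard), valid for any $k>2$ under the integrability condition on $b$ guaranteed by the assumption $C_b<\infty$: there exist constants $\lambda_k>0$ and $\mu_k<\infty$ such that, uniformly in $(v,v_*)$,
\[
\int_{\Sp^{d-1}}\! b(\cos\theta)\bigl(|v^*|^k+|v^*_*|^k-|v|^k-|v_*|^k\bigr)\,{\rm d}\sigma
\;\le\; -\lambda_k\bigl(|v|^k+|v_*|^k\bigr)+\mu_k\bigl(|v|^2+|v_*|^2\bigr)^{k/2},
\]
where the gain term on the right involves only mixed moments. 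In the hard-sphere case $\gamma=1$ the extra factor $|v-v_*|\le |v|+|v_*|$ can be absorbed into constants of the same type at the cost of a slight loss in the exponents on the right.

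Third, I would close the estimate. Expanding $(|v|^2+|v_*|^2)^{k/2}$ binomially (for even $k$; for non-integer $k$ via the standard interpolation trick) one dominates the cross terms by $|v|^k+|v_*|^k$ plus products $|v|^a|v_*|^b$ with $a+b=k$, $1\le a,b\le k-2$; using the a-priori bound $M_2^N\le\EE_0$ already proved in the first paragraph (equivalently, $\langle f^N_t,|v_1|^2\rangle \le \EE_0$ by symmetry), Young's inequality and Cauchy--Schwarz yield
\[
\langle f^N_t, (|v_1|^2+|v_2|^2)^{k/2}\rangle \;\le\; \tfrac{\lambda_k}{2\mu_k}\,\langle f^N_t,M^N_k\rangle + C_k\,\EE_0^{k/2},
\]
so that, combining everything,
\[
\frac{{\rm d}}{{\rm d}t}\langle f^N_t,M^N_k\rangle \;\le\; -\tfrac{\lambda_k}{2}\,\langle f^N_t,M^N_k\rangle + C'_k\,\EE_0^{k/2}.
\]
Gronwall's lemma then gives $\sup_{t\ge 0}\langle f^N_t,M^N_k\rangle \le \max\{C_{0,k},\bar a_k\}$ with $\bar a_k:=2C'_k\EE_0^{k/2}/\lambda_k$, independently of $N$.

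The main obstacle lies in the third step: producing \emph{clean} control of the cross terms in the Povzner gain so that their coefficient is strictly dominated by the dissipation $\lambda_k$, and doing so in a way that uses only the pointwise energy bound $M_2^N\le\EE_0$ (which we have) rather than an unavailable bound on $M_k^N$. For $\gamma=1$ the additional velocity factor has to be distributed between the two sides carefully so as not to destroy the sign; this is the standard Povzner-inequality technique and is by now well established.
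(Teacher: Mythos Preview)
Your plan matches the paper's proof in all essential respects: pointwise conservation for the support statements, a Povzner inequality for the moment evolution, and an ODE comparison to close. The paper invokes the Povzner lemma in the Mischler--Wennberg form
\[
\int_{\Sp^{d-1}} b(\cos\theta)\bigl(|v'|^k+|v'_*|^k-|v|^k-|v_*|^k\bigr)\,{\rm d}\sigma
\;\le\; C_1\bigl(|v|^{k-1}|v_*|+|v||v_*|^{k-1}\bigr)-C_2\bigl(|v|^k+|v_*|^k\bigr),
\]
and ends with the nonlinear ODE $y'\le -K_1 y^{\theta_1}+K_2 y^{\theta_2}+K_3$ ($\theta_1>\theta_2$) closed by a maximum-principle argument rather than Gronwall; apart from this cosmetic difference the structure is identical.

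There is, however, a real gap in your third step as written. The displayed bound
\[
\langle f^N_t,(|v_1|^2+|v_2|^2)^{k/2}\rangle\;\le\;\tfrac{\lambda_k}{2\mu_k}\,\langle f^N_t,M^N_k\rangle+C_k\,\EE_0^{k/2}
\]
cannot hold for $k$ close to $2$: the binomial expansion of $(|v_1|^2+|v_2|^2)^{k/2}$ already contains the pure terms $|v_1|^k+|v_2|^k$ with coefficient~$1$, so the left-hand side is always at least $2\langle f^N_t,M^N_k\rangle$, while in the Povzner form you quote one has $\mu_k/\lambda_k\to 1$ as $k\to 2^+$. No application of Young or Cauchy--Schwarz can push the coefficient on the right below~$2$. (Note also that $f^N_t$ is not tensorized, so the bound $\langle f^N_t,|v_1|^2\rangle\le\EE_0$ by itself does not let you factor $\langle f^N_t,|v_1|^a|v_2|^b\rangle$.)

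The fix is to do what you describe in words but not in the formula: first split $(|v|^2+|v_*|^2)^{k/2}=|v|^k+|v_*|^k+(\text{genuine cross terms})$, combine the pure part with the dissipation using the sharp Povzner constant $\mu_k<\lambda_k$, and only then bound the cross terms. For those, symmetry plus the \emph{pointwise} energy bound give
\[
\langle f^N_t,|v_1|^{k-2}|v_2|^2\rangle
=\frac{1}{N-1}\Bigl\langle f^N_t,|v_1|^{k-2}\!\sum_{j\ne 1}|v_j|^2\Bigr\rangle
\le\frac{N}{N-1}\,\EE_0\,\langle f^N_t,M^N_{k-2}\rangle
\le C\,\EE_0\,\langle f^N_t,M^N_k\rangle^{(k-2)/k},
\]
after which either Young yields your linear ODE, or one stops here and recovers the paper's nonlinear one. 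This is precisely why the paper starts from the Povzner inequality already written in cross-product form.
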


\begin{proof}[Proof of Lemma~\ref{lem:momentsN}]
By using \eqref{eq:preconservationE} with the function of the energy
\[
\phi : \R_+ \to \R, \quad \phi (z) := {\bf 1}_{z > N \, \EE_0}  
\]
and the assumptions on $f^N_0$ we deduce \eqref{eq:MaxwellSupportE}.
On the other hand, by using \eqref{eq:preconservationE} with the functions of the momentum and energy
\[
\phi_\eps: \R^{d+1}Ê\to \R, \quad  
\phi_\eps(z) := {\bf 1}_{|z_0/N-\EE |+|z_1| + ... + |z_d|> \eps}, \,\,\, \forall \, \eps > 0, 
\]
and the assumptions on $f^N_0$ we deduce \eqref{eq:MaxwellSupport=E}.

Next, we write the differential equality on the $k$-th moment:
$$
\frac{{\rm d}}{{\rm d}t} \left\langle f^N _t , \frac1N \, \sum_{j=1} ^N
  |v_j|^{k}\right\rangle = \frac{1}{N^2} \, \sum_{j_1 \neq j_2} ^N
\left\langle f^N _t, \left|v_{j_1}-v_{j_2}\right|^\gamma \,
  \KK\left(v_{j_1},v_{j_2}\right) \right\rangle,
$$
with 
$$
\KK\left(v_{j_1},v_{j_2}\right) = {1 \over 2} 
\int_{\mathbb{S}^{d-1}} b(\theta_{j_1 j_2}) \, \left[ |v_{j_1} ^*|^{k}
  + |v_{j_2} ^*|^{k} - |v_{j_1}|^{k} - |v_{j_2}|^{k} \right] \, {\rm d}\sigma.
$$

We then apply the so-called \emph{Povzner Lemma} proved in \cite[Lemma
2.2]{MW99} (valid for singular collision kernel as in our case) which
implies
$$
\KK (v_{j_1},v_{j_2}) \le C_1 \, \left(|v_{j_1}|^{k-1} \, |v_{j_2}| +
|v_{j_1}| \, |v_{j_2}|^{k-1}\right) - C_2 \, \left(|v_{j_1}|^{k} + |v_{j_2}|^{k}\right)
$$
for some constants $C_1, C_2 \in (0,\infty)$ depending only on $k$ and
$b$.

By using the inequalities $|v_{j_1}-v_{j_2}| \ge |v_{j_1}| -
|v_{j_2}|$ and $|v_{j_1}-v_{j_2}| \ge |v_{j_2}| - |v_{j_1}|$ in order
to estimate the last term when $\gamma =1$, we then deduce
\begin{multline*}
  |v_{j_1}-v_{j_2}|^\gamma \, \KK (v_{j_1},v_{j_2}) 
  \le C_3 \, [ (1+|v_{j_1}|^{k+\gamma-1}) \, (1+ |v_{j_2}|^2) \\ + (1+ |v_{j_1}|^2)
  \, (1+|v_{j_2}|^{k+\gamma-1})] - C_2 \, (|v_{j_1}|^{k+\gamma} +
  |v_{j_2}|^{k+\gamma}),
\end{multline*}
for a constant $C_3$ depending on $C_1$ and $C_2$. 

Using (symmetry hypothesis) that
$$
\forall \, k \ge 0, \quad \left\langle f^N_t , |v_1|^k \right\rangle = \left
  \langle f^N_t, M^N_k \right \rangle,
$$
and  \eqref{eq:MaxwellSupportE}  we get 
\begin{multline*} 
  \frac{{\rm d}}{{\rm d}t} \left\langle f^N_t , |v_1|^{k} \right\rangle \le 2 \,
  C_3 \left\langle f^N_{t} , (1 + M^N_{k+\gamma-1}) \, (1 + M^N_{2})
  \right\rangle - 2 \, C_2 \left\langle f^N_{t} , M_{k+\gamma} \right\rangle \\
  \le 2 \, C_3 \, (1+\EE) \, \left(1 + \left\langle f^N_{t} ,
      |v_1|^{k+\gamma-1} \right\rangle \right) - 2 \, C_2 \left\langle
    f^N_{t} , |v_1|^{k+\gamma} \right\rangle.
\end{multline*}
Using finally H\"older's inequality 
\[
\left\langle f^N_1,|v|^{k-\gamma+1} \right\rangle \le \left\langle
f^N_1,|v|^{k+\gamma} \right\rangle^{(k-\gamma+1)/(k+\gamma)}
\]
we conclude that $y(t) = \langle f^N_t , |v_1|^{k} \rangle$ satisfies a
differential inequality of the following kind
$$
y' \le - K_1 \, y^{\theta_1} +  K_2 \, y^{\theta_2} + K_3
$$
with $\theta_1 \ge 1$ and $\theta_2 < \theta_1$, and for some
constants $K_1$, $K_2$, $K_3 >0$, which concludes the
proof of the lemma by a maximum principle argument. 
\end{proof}

\medskip 
We have
$$
\hbox{Supp} f^N_0 \subset  \Ee_N :=  \{ V \in E^N, \, M^N_2(V) \le \EE_0 \},
$$
with $\EE_0 = A^2$, where $A$ is such that $\hbox{Supp} \, f_0 \subset
B_{\R^d}(0,A)$ under assumptions (i) or (ii) in Theorem~\ref{theo:tMM},
and $\EE_0 = \EE$ under assumption (iii) in Theorem~\ref{theo:tMM}.
Lemma~\ref{lem:momentsN} proves {\bf (A1)-(i)} with the constraint
function ${\bf m}_{\GG_1} : \R^d \to \R_+ \times \R^d$, ${\bf
  m}_{\GG_1} (v) := (|v|^2,v)$ for all $v \in \R^d$ and with the set
of constraints
$$
\RR_{\GG_1} := \{ (r_0,r') \in \R_+ \times \R^d;
\,\, |r'|^2 \le r_0 \le \EE_0 \}.
$$ 
Lemma~\ref{lem:momentsN} also proves {\bf (A1)-(ii)} with 
%
\[
m_{\GG_1}(v) := \langle v\rangle^6 = (1+|v|^2)^3.
\]
Moreover we do not need {\bf (A1)-(iii)} in the present case and we
may take $m_{\GG_3} \equiv 0$. Finally, under assumption (iii) in
Theorem~\ref{theo:tMM}, the support condition
\eqref{eq:supportfNtspheres} is nothing but
\eqref{eq:MaxwellSupport=E} and is also proved by Lemma~\ref{lem:momentsN}.


\subsection{Proof of condition (A2)}
Let us define the space of probability measures 
\[
\mathcal P_{\GG_1} := \left\{ f \in P(\R^d) \, ; \ 
  M_6(f)   <+\infty \right\}, 
\] 
for ${\bf r} \in \RR_{\GG_1}$, ${\bf r} = (r_0,r')$, $r' = (r_1, \dots, r_d)$, 
the constrained space 
\begin{equation*}
  \mathcal P_{\GG_1,{\bf r}} := \left\{ f \in \PP_6(\R^d) \, ;  \ \langle f, |v|^2
  \rangle = r_0, \
  \forall \, i=1, \dots, d, \ \langle f, v_i \rangle = r_i  \right\}
\end{equation*} 
and the vector space
$$
\GG_1 := \left\{ h \in M^1_6(\R^d) \, ; \quad  \forall \, i=1, \dots, d, \
  \langle h, v_i \rangle = \langle h, 1 \rangle = \langle h, |v|^2
  \rangle = 0 \right\}
$$
endowed with the modified Fourier-based norm $\| \cdot \|_{\GG_1} :=
\| \cdot \|_2$ defined in \eqref{eq:defToscanimodif}.  We also define
\[
\BB \mathcal P_{\GG_1,a} := \left\{ f \in \mathcal P_{\GG_1} \, ; \ M_6(f) \le a \right\}
\] 
as well as for any ${\bf r} \in {\RR}_{\GG_1}$
\[
\BB \mathcal P_{\GG_1,a,{\bf r}} := \Bigl\{ f \in \BB \mathcal
P_{\GG_1,a} \, ; \  \langle f, |v|^2 \rangle = r_0, \  \forall \, i=1, \dots, d, \ \langle f, v_i \rangle =
r_i  \Bigr\}.
\] 
These spaces of probability measures are endowed with the distance
$d_{\GG_1}$ associated to the norm $\| \cdot \|_{\GG_1}$.  Observe
that for any $f,g \in \mathcal P_{\GG_1,{\bf r}}$, ${\bf r} \in
\RR_{\GG_1}$, the fact that the two distributions have equal momentum
implies that $d_{\GG_1} (f,g) = \| f-g \|_{\GG_1} = |f-g|_2$, where $
| \cdot|_2$ is the usual Fourier-based norm defined in
\eqref{eq:defToscani}.

\medskip Let us recall the following result proved in
\cite{T1,GabettaTW95,CGT,TV}.  We briefly outline its proof for the
sake of completeness but also, most importantly, since we shall need
to modify it in order to adapt it to our purpose in the next sections.

\begin{lem} \label{lem:contraction} For any $f_0,g_0 \in \PP_2(\R^d)$
  with same momentum (first-order moments), the associated
  solutions $f_t$ and $g_t$ to the Boltzmann equation for Maxwell
  molecules satisfy
\begin{equation}\label{estim:C01TrueMax}
  \sup_{t \ge 0} \left|f_t-g_t\right|_2 \le 
  \left|f_0-g_0\right|_2.
\end{equation}
Moreover, there exists $\bar a \in (0,\infty)$ such that for any $a
\in [\bar a, \infty)$ and any ${\bf r} \in \RR_{\GG_1}$, the
nonlinear semigroup $S^{N\!L}_t$ maps $\BB \mathcal P_{\GG_1,a,{\bf
    r}}$ into $\BB \mathcal P_{\GG_1,a,{\bf r}}$. 
\end{lem}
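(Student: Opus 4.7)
The plan is to work entirely in Fourier variables, where the Maxwellian collision operator takes the Bobylev form, and to exploit the orthogonal decomposition $|\xi^+|^2+|\xi^-|^2 = |\xi|^2$ which matches exactly the weight in the definition of $|\cdot|_2$. First I would derive Bobylev's identity: taking the Fourier transform of \eqref{el}--\eqref{eq:collop} with $B = b(\cos\theta)$, the pre-postcollisional symmetry $(v,v_*) \leftrightarrow (v',v'_*)$ yields
$$
\partial_t \hat f_t(\xi) = \int_{\mathbb{S}^{d-1}} b(\sigma\cdot\xi/|\xi|)\bigl[\hat f_t(\xi^+)\hat f_t(\xi^-) - \hat f_t(0)\hat f_t(\xi)\bigr]\,d\sigma,
$$
with $\xi^\pm := (\xi \pm |\xi|\sigma)/2$ satisfying the fundamental orthogonal identity $|\xi^+|^2+|\xi^-|^2 = |\xi|^2$.

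Next I would set $h_t := f_t-g_t$. Since $f_0$ and $g_0$ share mass and momentum, $\hat h_t(0) = 0$ and $\nabla\hat h_t(0) = 0$ at all times, so $\hat h_t(\xi)/|\xi|^2$ is bounded globally and $u(t) := |h_t|_2$ is well-defined. Subtracting the Bobylev identities for $f$ and $g$, using $|\hat f_t|,|\hat g_t|\le 1$ and $\hat f_t(0) = \hat g_t(0) = 1$, dividing by $|\xi|^2$, and writing the Duhamel form with $\nu := \int_{\mathbb{S}^{d-1}} b\,d\sigma$, I would arrive at
$$
u(t) \le e^{-\nu t} u(0) + \int_0^t e^{-\nu(t-s)}\int_{\mathbb{S}^{d-1}} b(\sigma\cdot\xi/|\xi|)\,\frac{|\xi^+|^2+|\xi^-|^2}{|\xi|^2}\,d\sigma\, u(s)\, ds = e^{-\nu t} u(0) + \nu \int_0^t e^{-\nu(t-s)}u(s)\, ds,
$$
and Gronwall would yield $u(t) \le u(0)$, which is \eqref{estim:C01TrueMax}.

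For the invariance of $\BB\mathcal P_{\GG_1,a,{\bf r}}$, the constraints encoded by ${\bf m}_{\GG_1}$ are preserved thanks to conservation of mass, momentum and energy along $S^{N\!L}_t$, so $\mathcal P_{\GG_1,{\bf r}}$ is forward-invariant. Propagation of the $M_6$ moment bound follows by applying the Povzner estimate \cite[Lemma~2.2]{MW99} to the single-particle Boltzmann equation, exactly along the lines of the proof of Lemma~\ref{lem:momentsN}: one obtains for $y(t) := \langle f_t,|v|^6\rangle$ a differential inequality of the form $y'\le -K_1 y^{\theta_1}+K_2y^{\theta_2}+K_3$ with $\theta_1>\theta_2$, and a maximum principle furnishes $\bar a>0$ depending only on the energy $\EE$ such that $\BB\mathcal P_{\GG_1,a,{\bf r}}$ is stable for all $a\ge\bar a$.

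The hard part will be to make the previous Gronwall argument rigorous in the non-cutoff case (tMM), where $\nu$ diverges. The standard fix is the symmetrized Bobylev splitting $\hat f(\xi^+)\hat f(\xi^-) - \hat f(0)\hat f(\xi) = [\hat f(\xi^+)-\hat f(\xi)]\hat f(\xi^-) + \hat f(\xi)[\hat f(\xi^-)-\hat f(0)]$, combined with the observation that each bracket vanishes like $O((1-\cos\theta)^{1/2})$ thanks to the $C^{1,1}$ regularity of $\hat f$ inherited from bounded second moments; paired with the integrability assumption \eqref{Maxwelltrue} on $b\,(1-\cos\theta)^{1/4+\alpha}$, this makes every integral in the argument absolutely convergent and preserves the cancellation that drives the contraction. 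An alternative route, already used for {\bf (A1)} in Subsection~\ref{sec:MaxA1}, is to approximate $b$ by truncated kernels $b_\eps \in L^\infty$, prove \eqref{estim:C01TrueMax} with uniform-in-$\eps$ constants, and pass to the weak limit.
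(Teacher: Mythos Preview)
Your argument is correct and follows the paper's route: Bobylev's identity, the difference equation $\partial_t D=\hat Q(S,D)$ with $D=\hat f-\hat g$, $S=\hat f+\hat g$, the key identity $|\xi^+|^2+|\xi^-|^2=|\xi|^2$, and Gronwall; the invariance of $\BB\mathcal P_{\GG_1,a,{\bf r}}$ is likewise reduced to conservation laws plus the Povzner moment argument, exactly as the paper indicates. For the non-cutoff case the paper implements your ``alternative route'' directly at the level of the estimate rather than of solutions: it splits $b=b_K+b_K^c$ with $\int b_K\,d\sigma=K$, bounds the grazing remainder
\[
R_K(\xi)=\int_{\mathbb S^{d-1}} b_K^c\Bigl[\tfrac{D^+S^-}{2}+\tfrac{D^-S^+}{2}-D\Bigr]\,d\sigma
\]
by $r_K|\xi|^2$ with $r_K\to0$ (using $D(0)=0$, $S(0)=2$ and second moments), obtains $u(t)\le u(0)+Cr_K/K$, and lets $K\to\infty$. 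One small caution: the symmetrized splitting you wrote is for the nonlinear term $\hat f^+\hat f^- - \hat f(0)\hat f$ in the equation for a \emph{single} $\hat f$, which only shows the equation makes sense; for the contraction you need the cancellation in the \emph{difference} equation, namely that $\tfrac{D^+S^-}{2}+\tfrac{D^-S^+}{2}-D$ vanishes at $\theta=0$, and this is precisely what the paper's control of $R_K$ exploits.
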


\begin{proof}[Proof of Lemma~\ref{lem:contraction}] 
  We only prove \eqref{estim:C01TrueMax}. The fact that $S^{N\!L}_t$
  maps $\BB \mathcal P_{\GG_1,a,{\bf r}}$ into $\BB \mathcal
  P_{\GG_1,a,{\bf r}}$ follows from the conservations of momentum and
  energy and higher moment estimates similar to
  Lemma~\ref{lem:momentsN} but on the limit equation. We refer to
  \cite{T1,GabettaTW95,CGT,TV} for this classical results.

We recall Bobylev's identity for Maxwellian collision kernel
(cf. \cite{Bobylev-88})
$$
\FF\left(Q^+(f,g)\right) (\xi) = \hat Q^+ (F,G) (\xi) =: {1 \over 2}
\int_{\mathbb S^{d-1}} b\left(\sigma \cdot \hat\xi\right) \, \left[F^+ \, G^- + F^- \, G^+ \right]\,
{\rm d}\sigma,
$$
with 
\[
F = \hat f, \quad G = \hat g, \quad F^\pm= F\left(\xi^\pm\right), \quad G^\pm=
G\left(\xi^\pm\right), \quad \hat \xi = \frac{\xi}{|\xi|}
\]
and
$$
\xi^+ = {1\over 2} (\xi +  |\xi| \, \sigma),
 \quad
\xi^- = {1\over 2} (\xi -  |\xi| \, \sigma).
$$

With the shorthand notation $D = \hat g - \hat f$, $S = \hat g + \hat
f$, the following equation holds
\begin{equation}\label{eq:BoltzMaxD}
  \partial_t D = \hat Q (S,D) =  \int_{\mathbb S^{d-1}} b
  \left(\sigma \cdot \hat \xi \right) \, 
  \left[ \frac{D^+ \, S^-}{2} + \frac{D^- \, S^+}{2} - D \right] \, {\rm d}\sigma.
\end{equation}
We perform the following cutoff decomposition on the angular collision
kernel:
\[
b = b_K + b_K ^c \ \mbox{ with } \ \int_{\mathbb{S}^{d-1}} b_K \left(
  \sigma \cdot \hat \xi \right) \, {\rm d}\sigma = K, \quad b_K = b \, {\bf
  1}_{|\theta|\ge \delta(K)}
\]
for some well-chosen $\delta(K)$.  As in \cite{TV} observe that
\[
R_K (\xi) = \int_{\mathbb S^{d-1}} b_K ^c
  \left(\sigma \cdot \hat \xi \right) \, 
  \left[ \frac{D^+ \, S^-}{2} + \frac{D^- \, S^+}{2} - D \right] \,
  {\rm d}\sigma 
\]
satisfies
\[
\forall \, \xi \in \R^d, \quad \left| R_K (\xi)\right| \le 
 r_k \, |\xi|^2 \quad \mbox{ where } \quad r_k \xrightarrow[]{K \to \infty} 0
\]
and $r_K$ depends on moments of order $2$ on $d$ and $s$ (hence
bounded by the energy). 

Using that $\| S \|_\infty \le 2$, we deduce in distributional sense
$$
\frac{{\rm d}}{{\rm d}t} {|D| \over |\xi|^2} + K \, {|D| \over |\xi|^2} \le 
\left( \sup_{\xi \in \R^d} {|D| \over |\xi|^2} \right) \, \left( \sup_{\xi \in \R^d}
\int_{S^{d-1}} b_K\left(\sigma \cdot \hat \xi \right) \, \left(
  \left|\hat\xi^+\right|^2 + \left|\hat\xi^-\right|^2 \right) \,
{\rm d}\sigma \right) + r_K
$$
with
$$
\left|\hat\xi^+\right| = 
{1 \over \sqrt{2}} \, \left(1 + \sigma \cdot \hat\xi\right)^{1/2},
\qquad \left|\hat\xi^-\right| = {1 \over \sqrt{2}} \, \left(1 - \sigma \cdot
\hat\xi\right)^{1/2}.
$$
By using 
\[
\left|\hat\xi^+\right|^2 + \left|\hat\xi^-\right|^2 =1,
\]
we deduce
\[
\frac{{\rm d}}{{\rm d}t} {|D| \over |\xi|^2} + K \, {|D| \over |\xi|^2} \le 
K \, \left( \sup_{\xi \in \R^d} {|D| \over |\xi|^2} \right) +r_K
\]
which implies
\[
\sup_{\xi \in \R^d} \frac{|D_t(\xi)|}{|\xi|^2}  \le 
 \sup_{\xi \in \R^d} \frac{|D_0(\xi)|}{|\xi|^2}  + C \, \frac{r_K}{K}
\]
for any value of the cutoff parameter $K$. Therefore by relaxing $K
\to \infty$, we deduce \eqref{estim:C01TrueMax}.
\end{proof}
 
Hence we deduce from the previous lemma that $S^{N\!L}_t$ is Lipschitz
on $\BB \mathcal P_{\GG_1,a,{\bf r}}$ (uniformly in time) and {\bf
  (A2)-(i)} is proved.

\begin{lem}
  \label{lem:estimQtoscani1}
  Consider $f,g \in \PP_{2}(\R^d)$ two probabilities with same momentum
  (first order moments).  Then we have
   \begin{equation}\label{eq:Qtoscani2}
    \left| Q(f,f) \right|_2 \le C \, \left( \int_{\R^d} \left(1+|v|^2\right) \,
      {\rm d}f(v) \right)^2,
  \end{equation}
   \begin{equation}\label{eq:Qbiltoscani2}
    \left| Q(f+g,f-g) \right|_2 \le C \, \left( \int_{\R^d} (1+|v|) \,
      \left( {\rm d}f(v) + {\rm d}g(v) \right) \right) \, 
    \Big( | f-g |_2 + \big|  (f-g) \, v  \big|_1 \Big).
  \end{equation}

  As a consequence, the assumption {\bf (A2)-(ii)} is satisfied in
  the sense that $Q$ is H\"older continuous on $\BB \mathcal
  P_{\GG_1,a,{\bf r}}$ for any $a > 0$, ${\bf r} \in \RR_{\GG_1}$:
  there exists $C > 0$ and $\zeta \in (0,1)$ so that
  $$
  \forall \, f, g \in \BB \mathcal P_{\GG_1,a,{\bf r}}, \quad |Q(f,f) - Q(g,g)|_2 \le C \, |f-g|^\zeta_2.
  $$
\end{lem}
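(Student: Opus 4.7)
The plan is to establish both Fourier-norm estimates via Bobylev's identity combined with Taylor expansion and the integrability condition \eqref{Maxwelltrue}, and then deduce the H\"older continuity from the bilinear estimate by a pointwise Landau--Kolmogorov interpolation. I would start from Bobylev's identity
\[
\widehat{Q(f,f)}(\xi) = \int_{\mathbb S^{d-1}} b(\sigma\cdot\hat\xi)\bigl[\hat f(\xi^+)\hat f(\xi^-) - \hat f(\xi)\bigr]\,{\rm d}\sigma,
\]
and write the integrand as
\[
\hat f(\xi^+)\hat f(\xi^-) - \hat f(\xi) = \int\!\!\!\int e^{-iv\cdot\xi}\bigl[e^{i(v-w)\cdot\xi^-}-1\bigr]\,{\rm d}f(v)\,{\rm d}f(w).
\]
Splitting $e^{i(v-w)\cdot\xi^-}-1 = [e^{i(v-w)\cdot\xi^-}-1-i(v-w)\cdot\xi^-] + i(v-w)\cdot\xi^-$, the remainder is bounded by $|(v-w)\cdot\xi^-|^2/2$ and integrated using $\int b(1-\cos\theta)\,{\rm d}\sigma < \infty$ (ensured by \eqref{Maxwelltrue}). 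The linear piece is integrable since $\int b\,\xi^-\,{\rm d}\sigma = (\xi/2)\int b(1-\cos\theta)\,{\rm d}\sigma$ is a convergent multiple of $\xi$, and after contraction with $\int\!\!\int (v-w) e^{-iv\cdot\xi}\,{\rm d}f(v){\rm d}f(w)$ it collapses to a combination of $\hat f(\xi)$ and $\nabla\hat f(\xi)$ whose joint cancellation at $\xi=0$ (from mass and momentum conservation) yields an $|\xi|^2 M_2(f)$ bound near the origin, while the trivial bound $|\hat f|\le 1$ controls large $|\xi|$ after dividing by $|\xi|^2$.

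For \eqref{eq:Qbiltoscani2}, the same scheme is applied to the bilinear Bobylev kernel acting on $(f+g, f-g)$. The crucial additional input is that $h := f-g$ has zero mass \emph{and} zero momentum (because $f$ and $g$ share momenta), so $\hat h(\xi^-) = O(|\xi^-|^2 |f-g|_2)$ and the linear-in-$\xi^-$ contributions reduce to quantities controlled by $\nabla\hat h(\xi) = -i\widehat{(f-g)v}(\xi)$ and $\hat h(\xi)$, hence by $|(f-g)v|_1$ and $|f-g|_2$; the prefactor $\int(1+|v|)({\rm d}f+{\rm d}g)$ arises from absorbing the common momentum of $f+g$ and handling the surviving $\hat F(\xi^\pm)$ factors.

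For the H\"older continuity, I would use the bilinearity $Q(f,f) - Q(g,g) = Q(f+g, f-g)$, apply \eqref{eq:Qbiltoscani2}, and reduce matters to bounding $|(f-g)v|_1$ by a power of $|f-g|_2$. The key tool is the pointwise Landau--Kolmogorov inequality: for any $C^2$ function $u$ on $\R^d$ with $u(0) = \nabla u(0) = 0$ and any $\xi_0 \neq 0$, $\rho \in (0, |\xi_0|]$,
\[
|\nabla u(\xi_0)| \le \frac{\|u\|_{L^\infty(B(\xi_0,\rho))}}{\rho} + C\rho\,\|\mbox{Hess}\,u\|_\infty.
\]
Applied with $u = \widehat{f-g}$, using $\|u\|_{L^\infty(B(\xi_0,\rho))} \le (|\xi_0|+\rho)^2 |f-g|_2$ and $\|\mbox{Hess}\,u\|_\infty \le M_2(|f-g|) \le 2 a^{1/3}$ on $\BB \mathcal P_{\GG_1,a,{\bf r}}$ (which holds thanks to the $M_6$ bound built into $\mathcal P_{\GG_1}$), optimization over $\rho$ yields $|\widehat{(f-g)v}(\xi_0)|/|\xi_0| \le C a^{1/6}|f-g|_2^{1/2}$ and therefore $|(f-g)v|_1 \le C a^{1/6}|f-g|_2^{1/2}$, giving H\"older continuity of $Q$ with exponent $\zeta = 1/2$.

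The main obstacle will be matching the singular near-grazing integrability of $b$ encoded in \eqref{Maxwelltrue} with the Taylor-cancellation rates: in particular, tracking the book-keeping so that the surviving linear-in-$\xi^-$ terms combine (via the conservation laws) to produce a clean $|\xi|^2$ factor uniform in $\xi \in \R^d$, and for the bilinear estimate verifying that the prefactor involves only the first-order moment $\int(1+|v|)({\rm d}f+{\rm d}g)$ rather than higher-order ones.
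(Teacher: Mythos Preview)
Your proposal is correct and the overall strategy (Bobylev identity plus angular cancellation plus interpolation) is the same as the paper's, but the execution differs in two places worth noting.

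For the Fourier estimates, the paper reverses your order: it proves the bilinear bound \eqref{eq:Qbiltoscani2} first, by writing
\[
\hat Q(D,S)(\xi)=\tfrac12\int_{\mathbb S^{d-1}}b(\sigma\cdot\hat\xi)\bigl[S(\xi^+)D(\xi^-)+S(\xi^-)D(\xi^+)-2D(\xi)\bigr]\,{\rm d}\sigma
\]
and splitting into three explicit pieces: $S(\xi^+)D(\xi^-)$ (controlled by $|D|_2$ via $|\xi^-|^2/|\xi|^2$), $[S(\xi^-)-2]D(\xi^+)$ (controlled by $M_1(f+g)$ times a norm of $D$), and $2[D(\xi^+)-D(\xi)]$ (controlled by $|(f-g)v|_1$ via the mean value theorem on $\hat D$). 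Then \eqref{eq:Qtoscani2} is deduced for free by writing $Q(f,f)=Q(f-\gamma,f+\gamma)$ with $\gamma$ the Maxwellian of same momentum and energy. Your direct Taylor expansion of $e^{i(v-w)\cdot\xi^-}-1$ for \eqref{eq:Qtoscani2} works too and in fact gives a slightly sharper bound $C(1+M_2(f))$; the paper's route is shorter but relies on having \eqref{eq:Qbiltoscani2} in hand.

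For the H\"older continuity, the difference is more substantial. The paper bounds $|(f-g)v_\alpha|_1$ by a physical-space truncation argument: splitting with a cutoff $\chi_R$, it obtains $|(f-g)v_\alpha|_1 \le C(R^2[f-g]^*_1 + a/R^4)$, optimizes over $R$ to get $Ca^{1/3}([f-g]^*_1)^{2/3}$, and then invokes the distance-comparison Lemma~\ref{lem:ComparDistances} to pass from $[f-g]^*_1$ back to $|f-g|_2$. Your Landau--Kolmogorov interpolation on $\hat h$ in Fourier space is more self-contained --- it avoids the detour through Wasserstein-type distances and Lemma~\ref{lem:ComparDistances} entirely --- and yields the explicit exponent $\zeta=1/2$. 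Both approaches use the $M_6$ bound encoded in $\BB\mathcal P_{\GG_1,a,{\bf r}}$ (you via $\|\mathrm{Hess}\,\hat h\|_\infty\le 2a^{1/3}$, the paper via the tail estimate $a/R^4$).
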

 
\begin{proof}[Proof of Lemma~\ref{lem:estimQtoscani1}] We split the
  proof into two steps.

\noindent{\sl Step 1: proof of \eqref{eq:Qtoscani2}
and \eqref{eq:Qbiltoscani2}. }
  We prove the second inequality \eqref{eq:Qbiltoscani2}. The first
  inequality \eqref{eq:Qtoscani2} then follows immediately by
  writing
  \[
  Q(f,f) = Q(f,f) - Q(\gamma,\gamma) = Q(f-\gamma,f+\gamma)
  \]
  where $\gamma$ is the Maxwellian distribution with same momentum and energy
  as $f$, and then applying \eqref{eq:Qbiltoscani2} with $f-\gamma$ and
  $f+\gamma$.

We write in Fourier:
\bean
\mathcal{F}\left( Q(f+g,f-g) \right) &=& \hat Q(D,S) \\ 
&=& \frac12 \,
\int_{\mathbb{S}^{d-1}} b(\sigma \cdot \hat \xi) \, \left( S(\xi^+) \,
  D(\xi^-) + S(\xi^-) \, D(\xi^+) - 2 \, D(\xi) \right)
\eean
where $\hat Q$ is the Fourier form the symmetrized collision operator
$Q$, which we can rewrite
\[
\frac{\left| \hat Q(D,S) \right|}{|\xi|^2} \le \TT_1 + \TT_2 + \TT_3.
\]

We have 
\[
\TT_1 \le \int_{\mathbb{S}^{d-1}} b(\sigma \cdot \hat \xi) \, 
\left|S(\xi^+)\right| \, \frac{\left| D(\xi^-) \right|}{|\xi^-|^2} \,
\frac{\left| \xi^- \right|^2}{|\xi|^2} \, {\rm d}\sigma 
\le C \, | D |_2
\]
for some constant $C>0$, where we have used 
\[
\frac{\left| \xi^- \right|^2}{|\xi|^2} = (1-\cos \theta)^2
\]
which permits to control the angular singularity of $b$. 

Similarly we compute 
\[
\TT_2 \le \int_{\mathbb{S}^{d-1}} b(\sigma \cdot \hat \xi) \,
\frac{\left|D(\xi^+)\right|}{|\xi^+|} \, \frac{\left| S(\xi^-) -2
  \right|}{|\xi^-|} \, \frac{\left| \xi^- \right|}{|\xi|} \, {\rm d}\sigma
\le C \, | D |_1 \, \left( \int_{\R^d} (1+|v|) \, ( {\rm d}f(v) + {\rm d}g(v) )
\right) 
 \] 
for some constant $C>0$, and 
\begin{multline*}
\TT_3 \le 2 \, \int_{\mathbb{S}^{d-1}} b(\sigma \cdot \hat \xi) \,
\frac{\left| D(\xi^+) - D(\xi) \right|}{|\xi|} \, {\rm d}\sigma \\
\le \int_{\mathbb{S}^{d-1}} b(\sigma \cdot \hat \xi) \,
\frac{|\xi^-|}{|\xi|} \, \int_0 ^1 \frac{|\nabla D (\theta \xi +
  (1-\theta) \xi^+)|}{|\theta \xi +
  (1-\theta) \xi^+|} \, {\rm d}\theta \, {\rm d}\sigma 
\le C \, | (f - g)  \, v |_1
\end{multline*}
for some constant $C>0$. This concludes the proof of
\eqref{eq:Qbiltoscani2} by combining these estimates.

\smallskip
\noindent{\sl Step 2:  proof of \textbf{(A2)-(ii)}.}
First, for $ f,g \in \BB P_{ \GG_1,a,{\bf r}}$ and $1 \le \alpha \le d$, we
have for any $R > 0$
\begin{eqnarray*}
\left|(f-g) \, v_\alpha \right|_1 
&=& \sup_{\xi \in \R^d} {1 \over |\xi|} \left| \int_{\R^d} v_\alpha \,
  \Bigl( e^{-i \, v \cdot \xi} - 1 \Bigr) \, ({\rm d}f-{\rm d}g)(v) \right|
\\
&\le& \sup_{\xi \in \R^d}  \left| \int_{\R^d} \varphi_\xi(v) \, ({\rm
    d}f-{\rm d}g)(v) \right|
+  \sup_{\xi \in \R^d} \left| \int_{\R^d} \psi_\xi(v) \, ({\rm
    d}f-{\rm d}g)(v) \right|
\end{eqnarray*}
with 
$$
\varphi_\xi(v) := {1 \over |\xi|} \, \chi_R  \, v_\alpha \, \Bigl( e^{-i \, v \cdot \xi} - 1 \Bigr), 
\quad
\psi_\xi(v) :=  {1 \over |\xi|} \, (1-\chi_R) \, v_\alpha \, \Bigl( e^{-i \, v \cdot \xi} - 1 \Bigr),
 $$
 and where $\chi_R$ is a truncation function just as in the proof of
 point (iv) in Lemma~\ref{lem:ComparDistances}. 

 Next, we observe that for $R \ge 1$ we have
$$
\forall \, v,\xi \in \R^d, \quad |\nabla_v \varphi_\xi (v) | \le C_1 \, R^2,
\quad
|\psi_\xi (v) | \le |v|^2 \, (1-\chi_R).
$$

Using  the bound on the sixth moment of $f$ and $g$, we deduce
\begin{eqnarray*} 
\left|(f-g) \, v_\alpha \right|_1 
&\le& \inf_{R \ge 1} \Bigl\{ C_1 \, [f-g]^*_1 \, R^2 + C_2 \, {a \over R^4} \Bigr\}
 \\
&\le&   C_3 \, \min \left\{ a^{1/3}\, \left([f-g]^*_1\right)^{2/3}, [f-g]^*_1 \right\} 
\\
&\le&  C_3 ' \, a^{1/3}\, \left([f-g]^*_1\right)^{2/3}.
 \end{eqnarray*}

From \eqref{estim:*1s}, we then obtain  
  $$
 f,g \in \BB P_{ \GG_1,a,{\bf r}}, \quad \left|(f-g) \, v_\alpha \right|_1 
 \le C \, |f-g|_2^\zeta,
 $$
 for some constants $C >0$ and $\zeta \in (0,1)$ depending on $d$ and $a$. 

 Gathering this last estimate with \eqref{eq:Qbiltoscani2} we conclude that for any $ f,g \in \BB P_{ \GG_1,a,{\bf r}}$
\begin{eqnarray*}
\left| Q(f,f) - Q(g,g) \right|_2 &=&  \left| Q(f+g,f-g) \right|_2 
\\
 &\le& C \, M_1(f+g) \, 
    \Big( | f-g |_2 + \big|  f-g  \big|_2^\zeta \Big)
\le C' \,  \big|  f-g  \big|_2^\zeta , 
\end{eqnarray*}
for some constant $C' > 0$ depending on $d$ and $a$. 
\Black
\end{proof}


\subsection{Proof of condition (A3)}\label{subsect:ProofA3tMM}
Let us define $m' _{\GG_1} = \langle v \rangle^4$ and 
\[
\Lambda_1(f) := \left\langle f , m'_{\GG_1} \right\rangle = \left\langle f,
  \langle v \rangle^4 \right\rangle 
\]
for any $f \in \mathcal P_{\GG_1}$.

Let us prove that for any 
\beqn\label{eq:PhiCapC1}
 \Phi \in \bigcap_{  {\bf r} \in \RR_{\GG_1}}
C^{1,\eta}_{\Lambda_1}\left(\mathcal P_{\GG_1,{\bf r}}\right)
\eeqn
we have
\begin{equation*}
  \left\| \left( M^N_{m_{\GG_1}} (V)\right) ^{-1} \, \left( G^N \, \pi^N - \pi^N \, G^\infty \right)  \,
      \Phi \right\|_{L^\infty(\Ee_N)} \le
    {C_{1} \, \EE_0 \over N^\eta} \, \sup_{  {\bf r} \in \RR_{\GG_1} } [ \Phi ]_{C^{1,\eta}_{\Lambda_1} \left(\mathcal P_{\GG_1,{\bf r}}\right)},
\end{equation*}
for some constant $C_1 >0$. 

First, consider velocities $v,v_*, w,w_* \in \R^d$ such that
$$
w = {v+v_* \over 2} + {|v-v_*| \over 2} \, \sigma, \quad w_* = {v+v_*
  \over 2} - {|v-v_*| \over 2} \, \sigma, \quad \sigma \in \mathbb{S}^{d-1}.
$$
Then $\delta_v + \delta_{v_*} - \delta_w - \delta_{w_*} \in \mathcal{I} \mathcal P_{\GG_1}$.
Performing Taylor expansions, we get 
\begin{eqnarray*}
  && e^{i v \cdot \xi} + e^{i v_* \cdot
    \xi} - e^{i w \cdot \xi} - e^{i w_* \cdot \xi} \\
  && = i \, (w-v) \, \xi \, e^{i v \cdot \xi} + \OO\left(|w-v|^2 \, |\xi|^2\right) +
  i \, (w_*-v_*) \, \xi \, e^{i v_* \cdot \xi} + \OO\left(|w_*-v_*|^2 \,
  |\xi|^2\right)
  \\
  && = i \, (w-v) \, \xi \, e^{i v \cdot \xi} + \OO\left(|w-v|^2 \, |\xi|^2\right) \\ 
  && \qquad \qquad \qquad + i
  \, (w_*-v_*) \, \xi \, (e^{i v_ \cdot \xi} + \OO \left( |v-v_*|\, |\xi|\right) +
  \OO\left(|w_*-v_*|^2 \, |\xi|^2\right)
  \\
  && = \OO \left( |v-v_*|^2 \, |\xi|^2 \, \sin \theta/2 \right)
 \end{eqnarray*}   
thanks to the impulsion conservation and the fact that 
 $$
 |w-v| = |w_* - v_*| = |v-v_*| \, \sin \frac{\theta}{2}.
 $$

We hence deduce 
\begin{equation*}
    \left| \delta_v + \delta_{v_*}- \delta_w - \delta_{w_*} \right|_2 
    = \sup_{\xi \in \R^d} \frac{ \left| e^{i v \cdot \xi} + e^{i v_* \cdot \xi} 
          - e^{i w \cdot \xi} - e^{i w_* \cdot  \xi}\right| }{|\xi|^2}
          \le C \, |v-v_*| ^2 \, (1-\cos \theta).
\end{equation*}

As an immediate consequence, for any $V \in \Ee_N$ and $V^*_{ij}$
defined by \eqref{vprimvprim*}, we have
\begin{equation}\label{eq:deltamuNToscani2}
\left|\mu^N_{V^*_{ij}} - \mu^N_V\right|_2 \le {C \over N} \, \left|v_i -
v_j \right|^2 \, (1 - \cos \, \theta_{ij})
\end{equation}
and 
$$
{\bf r}_V := \left( \left\langle \mu^N_V, |z|^2
  \right\rangle, \left\langle \mu^N_V,z_1 \right\rangle, \dots,
  \left\langle \mu^N_V,z_d \right\rangle\right) \in \RR_{\GG_1},
  $$
where $z=(z_1,\dots,z_d) \in \R^d$ has to be understood as the blind
integration variable in the duality bracket. 

Then for given a $\Phi$ satisfying \eqref{eq:PhiCapC1},  for any $V \in \Ee_N$,
and any $1 \le i, j \le N$, 
we set 
$
\phi := D\Phi \left[ \mu_V ^N \right]$,  $  u_{ij} = (v_i-v_j)
$
and we compute:
\begin{eqnarray*}
&&  G^N\left(\Phi \circ  \mu ^N _V\right) 
=  \frac{1}{2N} \, \sum_{i,j=1} ^N 
  \int_{\mathbb{S}^{d-1}} \left[ \Phi\left(\mu^N_{V^*_{ij}}\right) 
    - \Phi\left(\mu^N _V\right) \right] \, b\left(\cos \theta_{ij}\right) \, {\rm d}\sigma 
\\
&&= \frac{1}{2N} \, \sum_{i,j=1} ^N
  \int_{\mathbb{S}^{d-1}} \left\langle \mu^N _{V^*_{ij}}
    - \mu^N _V , \phi \right\rangle \, b\left(\cos \theta_{ij}\right) \, {\rm d}\sigma 
\\
&&+  \frac{[\Phi]_{ C_\Lambda^{1,\eta}(\mathcal P_{\GG_1,{\bf r}_V}) }}{2N} \, 
  \sum_{i,j=1} ^N  \int_{\mathbb{S}^{d-1}} 
  \!\!\! \ \left[ M_{m'_{\GG_1}}\left(\mu^N _{V^*_{ij}}\right)  +
    M_{m'_{\GG_1}} \left( \mu^N _{V}\right) \! \right] \, 
  \OO \! \left( \left| \mu^N _{V^*_{ij}} 
      - \mu^N _V \right|_2^{1+\eta} \right)  \,b\left(\cos \theta_{ij}\right) \, {\rm d}\sigma \\
&&\quad =:  I_1(V) + I _2(V).
\end{eqnarray*}

Concerning the first term $I_1(V)$, thanks to Lemma~\ref{lem:H0}, we
have
\begin{multline*}
  I_1(V) = \frac{1}{2N^2} \, \sum_{i,j=1} ^N
  \int_{\mathbb{S}^{d-1}} b\left(\cos \theta_{ij}\right) \, \left[ \phi(v_i ^*) +
    \phi(v_j^*) - \phi(v_i) - \phi(v_j) \right] \, {\rm d}\sigma \\
  = \frac{1}{2} \, \int_v \int_w 
  \int_{\mathbb{S}^{d-1}} b(\cos \theta) \, \left[ \phi(v^*) +
    \phi(w^*) - \phi(v) - \phi(w) \right] \, {\rm d}\mu^N _V(v) \, 
  {\rm d}\mu^N_V (w) \, {\rm d}\sigma \\
  = \left\langle Q(\mu^N _V, \mu^N _V), \phi \right\rangle = 
  \left(G^\infty\Phi\right)(\mu^N _V).
\end{multline*}

For the second term $I_2(V)$, using estimate
\eqref{eq:deltamuNToscani2} and the following 
inequality which holds for any $k \ge 2$ and any $V \in \Ee^N$ 
\begin{eqnarray}\label{eq:MkmuV*}
  M_{k}\left(\mu^N_{V^*_{ij}}\right) 
  &:=& \frac1N \, \sum_{\ell=1} ^N   \left\langle (V^* _{ij})_\ell \right\rangle^k \\ \nonumber
  &\le& 2^{k/2}
  \, \left( 1+ {1 \over N} \, 
    \left( \left(\sum_{\ell \not= i,j} |v_\ell|^{k} \right) + |v^*_{i}|^{k} +
      |v^*_{j}|^{k} \right) \right)
   \\ \nonumber
  &\le& 2^{k/2} \, \left( 1+ {1 \over N} \, 
  \left( \left( \sum_{\ell \not= i,j} |v_\ell|^{k} \right) 
    +  \left(|v_i|^2 + |v_j|^2\right)^{k/2} \right) \right)
   \\ \nonumber
  &\le& 2^k \, \left( 1+ \frac{1}{N} \,  \sum_{\ell =1} ^N |v_\ell|^{k} \right)   \le 2^k \,
M_{k}\left( \mu^N _V \right) = 2^k \, M^N _{k} (V),
\end{eqnarray}
we deduce, for some constant $C>0$ depending on $k$ and  $\eta \in (0,1]$, 
\begin{eqnarray*}
  \left|I_2(V)\right|  
  &\le& \frac{C}{N^{2+\eta}} \, M^N_{m'_{\GG_1}}(V)  \, 
  [\Phi]_{C^{1,\eta}_\Lambda(\mathcal P_{\GG_1,{\bf r}_V})}
  \\
  &&\quad \times \sum_{i,j= 1}^N   \int_{\mathbb{S}^{d-1}}
    b \left(\cos \theta_{ij}\right)  \, 
    \left(1+|v_i|^2+|v_j|^2\right)^{1+\eta} \, 
    \left(1 - \sigma \cdot \hat u_{ij}\right) \, {\rm d}\sigma   
    \\
   &\le& \frac{C}{N^{\eta}} \, M^N_{m'_{\GG_1}}(V)  \, 
  [\Phi]_{C^{1,\eta}_\Lambda(\mathcal P_{\GG_1,{\bf r}_V})} \, C_b \, M_4^N(V)  . 
  \end{eqnarray*}

  We finally use 
\[
\forall \, V \in \Ee_N, \quad M^N_{m'_{\GG_1}}(V) = M_4^N (V) \le
  M^N_{2}(V)^{1/2} \, M_6^N(V)^{1/2} = \EE_0^{1/2} \,
  M^N_{m_{\GG_1}}(V)^{1/2}
\]
by Cauchy-Schwartz inequality and the energy constraint, which implies
\[
\left|I_2(V)\right|  \le \frac{C_1 \, \EE_0}{N^{\eta}} \, M^N_{m_{\GG_1}}(V)  \, 
  [\Phi]_{C^{1,\eta}_\Lambda(\mathcal P_{\GG_1,{\bf r}_V})}
\]
and concludes the proof.

\subsection{Proof of condition (A4) uniformly in time}
 
Let us consider some $1$-particle initial data $f_0, g_0 \in
\PP_4(\R^d)$ (space of probability measures with bounded fourth moment) and
the associated solutions $f_t$ and $g_t$ to the nonlinear Boltzmann
equation \eqref{el} under the assumption \eqref{Maxwelltrue} as well as  
\[
h_t := \DD^{N\!  L}_t\left[f_0\right](g_0-f_0)
\]
the solution to the linearized Boltzmann equation around $f_t$. Those
solutions are given by
\[
\left\{
\begin{array}{l}
\partial_t f_t = Q(f_t,f_t), \qquad f_{|t=0} = f_0 \vspace{0.3cm} \\
\partial_t g_t = Q(g_t,g_t), \qquad g_{|t=0} = g_0 \vspace{0.3cm} \\
\partial_t h_t = 2 \, Q(h_t,f_t), \qquad h_{|t=0} = h_0 := g_0 - f_0.
\end{array}
\right.
\]

We shall now \emph{expand the limit nonlinear semigroup} in terms
of the initial data, around $f_0$. 
 
\begin{lem}\label{lem:a4maxwellfourier} 
  There exists $\lambda \in (0,\infty)$ and, for any $\eta \in (1/2,1)$,
  there exists $C_\eta >0$ such that for any 
\[
{\bf r} \in \RR_{\GG_1}    \ \mbox{ and } \ f_0, g_0 \in \mathcal P_{\GG_1,{\bf r}} 
\]
we have
\begin{equation}\label{ineq;tMMC01}
  \left| f_t - g_t \right|_2 \le 
  C_\eta \, e^{- (1-\eta) \, \lambda \, t } \, 
   M_4(f_0+g_0)^{\frac12}
  \, 
 \left| f_0 - g_0 \right|_2^\eta, 
\end{equation}
and 
 \begin{equation}\label{ineq;tMMC10}
   \left| h_t \right|_2 \le C_\eta \, e^{- (1-\eta) \, \lambda \, t }
   \, 
   M_4(f_0+g_0)^{\frac12} \, 
   \left| f_0 - g_0 \right|_2^\eta, 
\end{equation}
where we recall that 
\[
\forall \, f \in P(\R^d), \quad M_4(f) := \big\langle f, \langle v \rangle^4 \big\rangle.
\]

\end{lem}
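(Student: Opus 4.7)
The strategy is to obtain both estimates as the geometric interpolation of two complementary bounds on $|f_t - g_t|_2$ (respectively $|h_t|_2$): a non-expansive estimate, which is already supplied by Lemma~\ref{lem:contraction}, and a pure exponential-decay estimate whose initial data are replaced by a moment quantity. More precisely, I plan to establish a companion bound of the form
\begin{equation*}
|f_t - g_t|_2 \,\le\, C\, M_4(f_0 + g_0)^{1/2}\, e^{-\lambda t},
\end{equation*}
and then combine it with $|f_t - g_t|_2 \le |f_0 - g_0|_2$ via $\min(A,B) \le A^\eta B^{1-\eta}$; using also that $M_4(f_0 + g_0)$ is bounded below by a positive constant depending only on the prescribed energy, this yields \eqref{ineq;tMMC01} with the claimed rate $(1-\eta)\lambda$ and moment power $1/2$. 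The restriction $\eta \in (1/2,1)$ is to ensure that the interpolation sits within the range where both the $|\cdot|_2$-contraction and the moment-based decay are simultaneously effective.

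The core of the proof is therefore the moment-controlled decay estimate. I would work in Fourier variables, writing $D_t = \hat g_t - \hat f_t$ and using Bobylev's identity \eqref{eq:BoltzMaxD} exactly as in the proof of Lemma~\ref{lem:contraction}, but pushing the analysis one step further. The key algebraic fact is that, for any $s > 2$, the Bobylev eigenvalue
\begin{equation*}
\lambda_s \,:=\, \int_{\mathbb{S}^{d-1}} b(\cos\theta)\Big(1 - \big((1+\cos\theta)/2\big)^{s/2} - \big((1-\cos\theta)/2\big)^{s/2}\Big)\,{\rm d}\sigma
\end{equation*}
is strictly positive under assumption \eqref{Maxwelltrue}. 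This gives a spectral gap and hence exponential decay of $|D_t|_s$ whenever $|D_0|_s$ is finite. The difficulty is that $|D_0|_s$ is \emph{generically infinite} for $s > 2$ when $f_0, g_0$ only share mass, momentum and energy (since $\hat f_0 - \hat g_0$ is of order $|\xi|^2$ near zero with non-vanishing quadratic coefficient). I would circumvent this by splitting $\xi$ into low and high frequencies: for $|\xi| \le R$, Taylor expansion at the origin bounds $|D_t(\xi)|/|\xi|^2$ by a constant times $M_4(f_t + g_t)^{1/2}$ (via Cauchy--Schwarz on the quadratic moments), while for $|\xi| > R$, one uses the trivial bound $|D_t(\xi)|/|\xi|^2 \le 2/R^2$. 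Optimizing $R$ and combining with the exponential contraction of the time-propagated high-frequency part (using the cutoff argument from Lemma~\ref{lem:contraction} iterated in time) produces the claimed decay, with the $M_4^{1/2}$ prefactor. Uniform-in-time control of $M_4(f_t + g_t)$ is obtained from a Povzner-type estimate on the limit Boltzmann equation, exactly analogous to Lemma~\ref{lem:momentsN}.

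For the linearized semigroup estimate \eqref{ineq;tMMC10}, the strategy is parallel: the equation $\partial_t h = 2Q(h, f_t)$ is linear in $h$, and its Fourier form $\partial_t \hat h = \hat Q(2 \hat f_t, \hat h)$ falls into the same Bobylev framework, with $S_t$ replaced by $2 \hat f_t$. A Bobylev-type contraction at rate $\lambda$ holds on Toscani norms $|\cdot|_s$ with $s > 2$ around $f_t$, and the same low/high frequency splitting and moment-based interpolation argument applies, after first establishing a non-expansive estimate $|h_t|_2 \le |h_0|_2 = |f_0 - g_0|_2$ for the linearized flow (which is proved by essentially the same computation as Lemma~\ref{lem:contraction} with $S = 2\hat f_t$ instead of $\hat f_t + \hat g_t$).

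The main obstacle is reconciling the two regimes: the $|\cdot|_2$ norm only contracts non-strictly because of the kernel of the linearized Boltzmann operator (mass, momentum, energy), while Bobylev's spectral gap lives in strictly higher Fourier norms where the initial data are not finite. Bridging this gap rigorously — via the moment-controlled frequency decomposition together with the quantitative spectral-gap positivity of $\lambda_s$ — is the technical heart of the argument, and also the reason why the statement carries a H\"older exponent $\eta$ strictly less than $1$ and a fourth-moment prefactor.
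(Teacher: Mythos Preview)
Your overall interpolation strategy --- combining the contraction $|f_t - g_t|_2 \le |f_0 - g_0|_2$ from Lemma~\ref{lem:contraction} with a moment-controlled exponential decay estimate via $|f_t-g_t|_2 \le |f_t-g_t|_2^\eta\,|f_t-g_t|_2^{1-\eta}$ --- is exactly what the paper does in its Step~2, and the analogous treatment of $h_t$ is likewise correct in spirit.

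The gap is in your proposed proof of the decay estimate $|f_t-g_t|_2 \le C\,M_4^{1/2}\,e^{-\lambda t}$. Your frequency-splitting argument does not close: the low-frequency Taylor bound $|D_t(\xi)|/|\xi|^2 \le C\,M_4^{1/2}$ carries no time decay, the high-frequency bound $2/R^2$ carries no time decay either, and the Bobylev collision dynamics mixes frequencies through $\xi \mapsto (\xi^+,\xi^-)$, so there is no mechanism to ``propagate the high-frequency part'' separately with an exponential rate. The obstruction you correctly identify --- that $|d_0|_s=\infty$ for $s>2$ because the off-diagonal second moments and the third-order moments of $d=f-g$ are generically nonzero --- is simply not removed by a static frequency cut.

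The paper's route relies on two ingredients you are missing. First, one subtracts from $d$ a moment correction $\MM_4[d]$ (a smooth, Fourier-localized function matching all moments of $d$ up to order~$3$), so that $\tilde d := d-\MM_4[d]$ satisfies $|\tilde d|_4<\infty$. Second, and this is the crucial dynamical input, one invokes the classical Truesdell--Tanaka result specific to Maxwell molecules: the moments $M_\alpha[f_t-g_t]$ for $|\alpha|\le 3$ satisfy a \emph{closed} finite-dimensional linear ODE with strictly negative spectrum, hence decay like $e^{-\lambda_1 t}$. One then writes the evolution of $\tilde D$, checks that the commutator $\hat Q(\hat\MM[d],S)-\hat\MM[Q(d,s)]$ is $O(|\xi|^4)\sum_{|\alpha|\le 3}|M_\alpha[d]|$, and runs the Bobylev argument at order~$4$ (where $|\hat\xi^+|^4+|\hat\xi^-|^4<1$ gives a genuine gap) to get $|||d_t|||_4 \le C\,e^{-\lambda_2 t}\,|||d_0|||_4$. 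Finally the interpolation $|\tilde d|_2 \le \|\tilde D\|_{L^\infty}^{1/2}\,|\tilde d|_4^{1/2}$, combined with the moment decay, yields $|d_t|_2 \le C\,M_4(f_0+g_0)\,e^{-\lambda_2 t/2}$; the restriction $\eta>1/2$ then arises precisely so that the resulting moment exponent $1-\eta$ does not exceed $1/2$.
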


\begin{rem}
  Observe that the decay rate $\lambda$ in this statement is
  uniform in terms of the energy $\EE \ge 0$. 
\end{rem}

\begin{proof}[Proof of Lemma~\ref{lem:a4maxwellfourier}]
We shall proceed in several steps. 

\bigskip
\noindent {\sl Step 1. Estimate in $|\cdot|_4$. } We closely follow
ideas in \cite{MR0075725,T1,GabettaTW95,CGT}.  We shall use the notation 
\[
\MM = \MM_4, \quad \hat \MM = \hat \MM_4,
\]
introduced in Example~\ref{expleFourierGal}, as well as
\[
d:=f-g, \quad s:=f+g 
\]
and 
\[
\tilde d := d - \MM[d], \quad D := \FF(d), \quad S := \FF(s)
\ \mbox{ and } \ \tilde D := \FF(\tilde d) = D - \hat{\mathcal{M}}[d].
\]
The equation satisfied by $\tilde D$ is 
\begin{eqnarray}\label{eq:BoltzTildeD}
  \partial_t \tilde D 
  &=&  \hat Q (D,S) - \partial_t \hat \MM[d]
  \\ \nonumber
  &=&  \hat Q (\tilde D,S) + \left( \hat Q\left(\hat\MM[d],S\right) 
    - \hat\MM[Q(d,s)] \right).
\end{eqnarray}
We infer from \cite{MR0075725,T1} that for any $\alpha \in \N^d$, there exists
some absolute coefficients $(a_{\alpha,\beta})$, $\beta \le \alpha$
(which means $\beta_i \le \alpha_i$ for any $1 \le i \le d$),
depending on the collision kernel $b$ through
\begin{equation}\label{eq:momentsmax}
\int_{\mathbb{S}^{d-1}} b(\cos\theta) \left[\left(v^\alpha\right)' +
  \left(v^\alpha\right)'_* - \left(v^\alpha\right) - \left(v^\alpha\right)_* \right] \,
  {\rm d}\sigma = \sum_{\beta, \, \beta \le \alpha} a_{\alpha,\beta} \, \left(v^\beta\right) \,
  \left(v^{\alpha-\beta}\right)_*
\end{equation}
where $\alpha, \beta \in \N^d$ are \emph{coordinates}
indices and 
\[
v^\alpha := v_1 ^{\alpha_1} \, v_2 ^{\alpha_2} \dots v_d ^{\alpha_d}.
\]
These multi-indices are compared through the usual lexicographical
order, and we use the standart notation 
\[
|\alpha| := \sum_{k=1}^d \alpha_k.
\]

We deduce that 
\[
\forall \, |\alpha|\le 3, \quad \nabla^\alpha _\xi  
\mathcal{\hat M}[Q(d,s)]_{\big|\xi=0} = M_\alpha[Q(d,s)] = \sum_{\beta, \, \beta \le \alpha}
a_{\alpha,\beta} \, M_\beta[d] \, M_{\alpha-\beta}[s]
\]
together with 
\begin{multline*}
  \forall \, |\alpha|\le 3, \quad \nabla^\alpha _\xi \hat
  Q(\mathcal{\hat
    M}[d],S)_{\big|\xi=0} = M_\alpha[Q(\mathcal{M}[d],s)] \\
  = \sum_{\beta, \, \beta \le \alpha} a_{\alpha,\beta} \,
  M_\beta[\mathcal{M}[d]] \, M_{\alpha-\beta}[s] = \sum_{\beta, \,
    \beta \le \alpha} a_{\alpha,\beta} \, M_\beta[d] \, M_{\alpha-\beta}[s]
\end{multline*}
since  
\[
M_\alpha[\mathcal{M}[d]] = M_\alpha[d]
\]
for any $|\alpha| \le 3$ by construction. As a consequence, we get
\begin{equation}\label{eq:MMQds-QMMds}
\forall \, \xi \in \R^d,\quad 
\left| \hat{\mathcal{M}}[Q(d,s)] -
  \hat Q( \hat{\mathcal{M}}[d], S ) \right| 
\le C \, |\xi|^4 \,
\left( \sum_{|\alpha| \le 3} \left|M_\alpha[f-g]\right| \right).
\end{equation}

On the other hand, from \cite[Theorem 8.1]{T1} and its corollary, we
know that there exists some constants $C >0$ and $\lambda_1 >0$ (given
by $\lambda_1 := \min_{\alpha, |\alpha| \le 3} \{ -
a_{\alpha,\alpha}\} \in (0,\infty)$) so that
\begin{equation}\label{eq:MomentsTanaka}
\forall \, t \ge 0, \quad 
\left( \sum_{|\alpha| \le 3} \left|M_\alpha[f_t-g_t]\right| \right) \le 
C \, e^{-\lambda_1 \, t} \left( \sum_{|\alpha| \le 3} \left|M_\alpha[f_0-g_0]\right| \right). 
\end{equation}

We perform the same decomposition on the angular collision kernel
\[
b = b_K + b_K ^c \ \mbox{ with } \ \int_{\mathbb{S}^{d-1}} b_K \left(
  \sigma \cdot \hat \xi \right) \, {\rm d}\sigma = K, \quad b_K = b \, {\bf
  1}_{|\theta|\ge \delta(K)}
\] 
as in the proof of Lemma~\ref{lem:contraction} and use the
straightforward estimate 
\[
R_K(\xi) := \hat Q_{b_K ^c} (\tilde D, S)(\xi)
\]
satisfies 
\[
\forall \, \xi \in \R^d, \quad \left| R_K (\xi) \right| \le  r_K \,
|\xi|^4 \quad \mbox{ where } \ r_K \xrightarrow[]{K \to \infty} 0 
\]
where $Q_{b_K ^c}$ denotes the collision operator associated with the
part $b_K ^c$ of the decomposition of the angular collision kernel,
and where $r_K$ depends on moments of order $4$ on $d$ and $s$.

Then we gather \eqref{eq:BoltzTildeD}, \eqref{eq:MMQds-QMMds} and
\eqref{eq:MomentsTanaka} and we have
\begin{multline*}
  \frac{{\rm d}}{{\rm d}t} {|\tilde D(\xi)| \over |\xi|^4} + K \, {|\tilde D(\xi)|
    \over |\xi|^4} \le \left( \sup_{\xi \in \R^d} {|\tilde D(\xi)|
      \over |\xi|^4} \right) \, \left( \sup_{\xi \in \R^d}
    \int_{\mathbb S^{d-1}} b_K\left(\sigma \cdot \hat \xi \right) \, \left(
      \left|\hat\xi^+\right|^4 + \left|\hat\xi^-\right|^4 \right) \,
    {\rm d}\sigma \right) \\
  + C \, e^{-\lambda_1 \, t} \left( \sum_{|\alpha| \le 3}
    \left|M_\alpha[f_0-g_0]\right| \right) + r_K.
\end{multline*}
Let us compute (the supremum has been droped thanks to the spherical invariance)
\[
\lambda_K := \int_{\mathbb S^{d-1}} b_K\Bigl(\sigma \cdot \hat \xi \Bigr) \, \Bigl(
  \left|\hat\xi^+\right|^4 + \left|\hat\xi^-\right|^4 \Bigr) \,
{\rm d}\sigma = \int_{\mathbb S^{d-1}} b_K\left(\sigma \cdot \hat \xi \right) \,  
 \frac{1+ \left( \sigma \cdot \hat \xi \right)^2}{2}   \,
{\rm d}\sigma,
\]
so that  
\begin{eqnarray*}
  \lambda_K - K& =& -  \int_{\mathbb S^{d-1}} b_K\left(\sigma \cdot \hat \xi
    \right) \,  \frac{1- \left( \sigma \cdot \hat \xi
        \right)^2}{2}  \, {\rm d}\sigma  
        \\
&&\xrightarrow[K \to
  \infty]{} - \int_{\mathbb S^{d-1}} b\left(\sigma \cdot \hat \xi \right) \,
  \frac{1- \left( \sigma \cdot \hat \xi \right)^2}{2}  
  \, {\rm d}\sigma := - \bar\lambda \in (-\infty,0),
\end{eqnarray*}
where in the last step we have used the \eqref{Maxwelltrue}. 

Then, thanks to Gronwall lemma, we get 
\begin{multline*}
  \left( \sup_{\xi \in \R^d} \frac{|\tilde D_t(\xi)|}{|\xi|^4} \right)
  \le e^{(\lambda_K-K) \, t} \, \left( \sup_{\xi \in \R^d}
    \frac{|\tilde D_0(\xi)|}{|\xi|^4} \right)
  \\
  + C_3 \, \left( \sum_{|\alpha| \le 3} \left|M_\alpha[f_0-g_0]\right| \right)
  \, \left( \frac{e^{-\lambda_1 \, t}}{K-\lambda_K-\lambda} -
    \frac{e^{(\lambda_K-K) \, t}}{K-\lambda_K-\lambda} \right) + 
C\, \frac{r_K}{K(K -\lambda_K)}.
\end{multline*}

Therefore, passing to the limit $K \to \infty$ and choosing (without
restriction) $\lambda_2 \in (0,\min\{\lambda_1 ; \bar\lambda\})$, we obtain
\[
\sup_{\xi \in \R^d} \frac{|\tilde D_t(\xi)|}{|\xi|^4} \le C \, e^{ -
  \lambda_2 \, t} \, \left( \sup_{\xi \in \R^d} \frac{|\tilde
    D_0(\xi)|}{|\xi|^4} + \sum_{|\alpha| \le 3} \left|M_\alpha[f_0-g_0]\right|
\right)
\]
from which we conclude thanks to \eqref{eq:MomentsTanaka} and with the
notations of Example~\ref{expleFourierGal}
\begin{equation}\label{eq:decay-d-4}
||| d_t |||_4  \le C \, e^{ - \lambda_2 \, t} \, |||d_0 |||_4 . 
\end{equation}

\bigskip \noindent {\sl Step 2. From $|\cdot|_4$ to $|\cdot|_2$ on the
  difference.} From the preceding step and a straightforward interpolation
argument, we have
\begin{eqnarray}\label{eq:dtTosc2exp-t}
\left| f - g \right|_2 
&\le&
 \left| f - g - \mathcal{M}[f-g] \right|_2 +
C \,   \sum_{|\alpha| \le 3} \left|M_\alpha[f-g]\right|   
\\ \nonumber
&\le& 
\left\| \hat f - \hat g - \hat{\mathcal{M}}[f-g] \right\|_{L^\infty(\R^d)}^{1/2}
\, \left| f - g - \mathcal{M}[f-g] \right|_4 ^{1/2} 
\\ \nonumber
&&+ 
C \, \sum_{|\alpha| \le 3} \left|M_\alpha[f-g]\right|  
\\  \nonumber
&\le& C \, M_4(f_0+g_0) \, e^{ - \frac{\lambda_2}2 \, t}
\end{eqnarray}
where we have used $||| d |||_4 \le C M_4(d)$. 

Then by writing 
$$
\left| f - g \right|_2 \le  \left| f - g \right|_2^\eta \, \left| f - g \right|_2^{1-\eta}
$$
with $\eta \in (1/2,1)$, using Lemma~\ref{lem:contraction} for the
first term of the right-hand side and the previous decay estimate
\eqref{eq:decay-d-4} for the second term, and $(1-\eta)\le 1/2$, we
obtain \eqref{ineq;tMMC01}.

\medskip\noindent {\sl Step 3. From the difference to the linearized
  semigroup.} A similar line of argument imply the same
estimate on $h_t$ as on the difference $(f_t-g_t)$, that is
inequality~\eqref{ineq;tMMC10}. 

Let us briefly sketch the argument. We define
\[
\tilde h := h - \MM[h], \quad H := \FF(h), \quad F := \FF(f)
\ \mbox{ and } \ \tilde H := \FF(\tilde h) = H - \hat{\mathcal{M}}[h].
\]

The equation satisfies by $H$ is 
\begin{equation*}
  \partial_t H = Q(H,F)
\end{equation*}
and arguing exactly as in Lemma~\ref{lem:contraction} one deduces 
\begin{equation}\label{eq:contraction-h}
  \forall \, t \ge 0, \quad  \left| h_t \right|_2 \le \left| h_0
  \right|_2 = \left| f_0 - g_0 \right|_2. 
\end{equation}

Then the equation satisfied by $\tilde H$ is 
\begin{eqnarray*}
  \partial_t \tilde H 
  &=&  \hat Q (H,F) - \partial_t \hat \MM[h]
  \\ \nonumber
  &=&  \hat Q (\tilde H,F) + \left( \hat Q\left(\hat\MM[h],F\right) 
    - \hat\MM[Q(h,f)] \right).
\end{eqnarray*}
We infer from \eqref{eq:momentsmax} again that 
\[
\forall \, |\alpha|\le 3, \quad \nabla^\alpha _\xi  
\mathcal{\hat M}[Q(h,f)]_{\big|\xi=0} = M_\alpha[Q(h,f)] = \sum_{\beta, \, \beta \le \alpha}
a_{\alpha,\beta} \, M_\beta[h] \, M_{\alpha-\beta}[f]
\]
together with 
\begin{multline*}
  \forall \, |\alpha|\le 3, \quad \nabla^\alpha _\xi \hat
  Q(\mathcal{\hat
    M}[h],F)_{\big|\xi=0} = M_\alpha[Q(\mathcal{M}[h],f)] \\
  = \sum_{\beta, \, \beta \le \alpha} a_{\alpha,\beta} \,
  M_\beta[\mathcal{M}[h]] \, M_{\alpha-\beta}[f] = \sum_{\beta, \,
    \beta \le \alpha} a_{\alpha,\beta} \, M_\beta[h] \, M_{\alpha-\beta}[f].
\end{multline*}
As a consequence, we get
\begin{equation*}
\forall \, \xi \in \R^d,\quad 
\left| \hat{\mathcal{M}}[Q(h,f)] -
  \hat Q( \hat{\mathcal{M}}[h], F ) \right| 
\le C \, |\xi|^4 \,
\left( \sum_{|\alpha| \le 3} \left|M_\alpha[h]\right| \right).
\end{equation*}

On the other hand, arguing as in \cite[Theorem 8.1]{T1} and its
corollary on the linearized equation around $f_t$, we deduce that there
exists some constants $C >0$ and $\lambda_1 >0$ (given by $\lambda_1 :=
\min_{\alpha, |\alpha| \le 3} \{ - a_{\alpha,\alpha}\} \in
(0,\infty)$) so that
\begin{equation}\label{eq:MomentsTanakaH}
\forall \, t \ge 0, \quad 
\left( \sum_{|\alpha| \le 3} \left|M_\alpha[h_t]\right| \right) \le 
C \, e^{-\lambda_1 \, t} \left( \sum_{|\alpha| \le 3} \left|M_\alpha[h_0]\right| \right). 
\end{equation}

Then by exactly the same proof as before we deduce
\[
\sup_{\xi \in \R^d} \frac{|\tilde H_t(\xi)|}{|\xi|^4} \le C \, e^{ -
  \lambda_2 \, t} \, \left( \sup_{\xi \in \R^d} \frac{|\tilde
    H_0(\xi)|}{|\xi|^4} + \sum_{|\alpha| \le 3} \left|M_\alpha[h_0]\right|
\right)
\]
for some $\lambda_2 \in (0,\lambda_1)$, from which we conclude
\begin{equation}\label{eq:decay-h-4}
||| h_t |||_4  \le C \, e^{ - \lambda_2 \, t} \, |||d_0 |||_4
\end{equation}
thanks to \eqref{eq:MomentsTanakaH}, and using that $h_0=d_0$. 

Next we write
\begin{eqnarray*}
\left| h \right|_2 
&\le&
 \left|h - \mathcal{M}[h] \right|_2 +
C \,   \sum_{|\alpha| \le 3} \left|M_\alpha[h]\right|   
\\ \nonumber
&\le& 
\left\| \hat h - \hat{\mathcal{M}}[h] \right\|_{L^\infty(\R^d)}^{1/2}
\, \left| h - \mathcal{M}[h] \right|_4 ^{1/2} + 
C \, \sum_{|\alpha| \le 3} \left|M_\alpha[h]\right|  
\\  \nonumber
&\le& C \, M_4(f_0+g_0) \, e^{ - \frac{\lambda_2}2 \, t}
\end{eqnarray*}
and 
$$
\left| h \right|_2 \le  \left| h \right|_2^\eta \, \left| h \right|_2^{1-\eta}
$$
with $\eta \in (1/2,1)$. Using Lemma~\ref{lem:contraction} for the
first term of the right-hand side and the previous decay estimate
\eqref{eq:decay-h-4} for the second term, and $(1-\eta) \le 1/2$, we
obtain \eqref{ineq;tMMC10}.
\end{proof}

We can now consider the \emph{second-order} term in the expansion of
the semigroup. Let us recall that the crucial point here is to prove
that this second-order term is controlled in terms of some power
\emph{strictly greater than $1$} of the initial difference. 

\begin{lem}\label{lem:a4maxwellfourier2}
  There exists $\lambda \in (0,\infty)$ and, for any $\eta \in (1/2,1)$,
  there exists $C_\eta$ such that for any 
  \[
  {\bf r} \in \RR_{\GG_1} \ \mbox{ and } \ f_0, g_0 \in
  \mathcal P_{\GG_1,{\bf r}},
  \]
we have
  $$
  \left| \omega_t \right|_{4} \le C \, e^{- (1-\eta) \, \lambda \,  t} \, 
  M_4(f_0+g_0)^{\frac12} \,   \big|g_0 - f_0\big|_2 ^{1+\eta} 
    $$
  where 
  \[
  \omega_t := g_t - f_t - h_t = S^{N \! L}_t(g_0) - S^{N \! L}_t(f_0)-
  \DD^{N \! L}_t[f_0] (g_0 - f_0).
  \]
\end{lem}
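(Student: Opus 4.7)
\smallskip
\noindent
\textbf{Proof plan.} The plan is to derive the PDE satisfied by $\omega_t$, recognize it as the linearized Boltzmann equation around $f_t$ with a quadratic source in $d_t = g_t - f_t$, and then redo the Fourier/Bobylev decay argument of Lemma~\ref{lem:a4maxwellfourier} adapted to this inhomogeneous equation. More precisely, using that $Q$ is symmetric and bilinear, a direct computation gives
\[
\partial_t \omega_t \;=\; Q(g_t,g_t) - Q(f_t,f_t) - 2\,Q(h_t,f_t)
\;=\; 2\,Q(\omega_t,f_t) \;+\; Q(d_t,d_t), \qquad \omega_0=0.
\]
Thus $\omega_t$ solves the linearized Boltzmann equation around $f_t$ with zero initial datum and source $Q(d_t,d_t)$. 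Since $\omega_0 = 0$ and the linearized evolution preserves mass, momentum and energy, the moments $M_\alpha[\omega_t]$ for $|\alpha|\le 2$ vanish identically; the third-order moments satisfy closed affine ODEs of the form $\frac{d}{dt} M_\alpha[\omega_t] = a_{\alpha,\alpha} M_\alpha[\omega_t] + (\mbox{source in moments of } d_t)$, exactly as in \eqref{eq:momentsmax}, and can be bounded by Gronwall together with \eqref{eq:MomentsTanaka}.

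\smallskip
The second step is the key source estimate: using Bobylev's identity and the cancellation $\hat d_t(0) = 0$ (since $d_t$ has zero mass), one has $\widehat{Q(d_t,d_t)}(\xi) = \int_{\mathbb{S}^{d-1}} b(\hat\xi\cdot\sigma)\, D_t(\xi^+)\,D_t(\xi^-)\,{\rm d}\sigma$, and therefore
\[
\frac{\bigl|\widehat{Q(d_t,d_t)}(\xi)\bigr|}{|\xi|^4}
\;\le\; |d_t|_2^{\,2} \int_{\mathbb{S}^{d-1}} b(\hat\xi\cdot\sigma)\,
\frac{|\xi^+|^2\,|\xi^-|^2}{|\xi|^4}\,{\rm d}\sigma
\;=\; |d_t|_2^{\,2}\int_{\mathbb{S}^{d-1}} b(\cos\theta)\,\frac{\sin^2\theta}{4}\,{\rm d}\sigma,
\]
and the integral is finite by assumption \eqref{Maxwelltrue}. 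Now we combine the \emph{uniform} contraction $|d_t|_2 \le |d_0|_2$ from Lemma~\ref{lem:contraction} with the decay estimate \eqref{ineq;tMMC01} to write $|d_t|_2^{\,2} \le |d_0|_2 \cdot |d_t|_2 \le C\,e^{-(1-\eta)\lambda t}\,M_4(f_0+g_0)^{1/2}\,|d_0|_2^{1+\eta}$, so that
\[
\bigl|Q(d_t,d_t)\bigr|_4 \;\le\; C\,e^{-(1-\eta)\lambda t}\,M_4(f_0+g_0)^{1/2}\,|d_0|_2^{1+\eta}.
\]

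\smallskip
The final step is to run the cutoff argument $b = b_K + b_K^c$ of Lemma~\ref{lem:a4maxwellfourier} on the equation for the modified Fourier transform $\tilde\Omega_t := \widehat{\omega_t - \mathcal{M}[\omega_t]}$. Exactly as there, one obtains, after passing to the limit $K\to\infty$, a differential inequality of the form
\[
\frac{{\rm d}}{{\rm d}t}\sup_{\xi}\frac{|\tilde\Omega_t(\xi)|}{|\xi|^4}
\;\le\; -\bar\lambda\,\sup_{\xi}\frac{|\tilde\Omega_t(\xi)|}{|\xi|^4}
\;+\; C\sum_{|\alpha|\le 3}|M_\alpha[\omega_t]|
\;+\; \bigl|Q(d_t,d_t)\bigr|_4,
\]
where $\bar\lambda>0$ is the same spectral gap produced in Step~1 of the proof of Lemma~\ref{lem:a4maxwellfourier}. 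Together with the moment estimates already noted (which decay at the rate $e^{-\lambda_1 t}$), applying Duhamel/Gronwall against the source bound above yields, provided $\eta$ is chosen so that $(1-\eta)\lambda < \min\{\bar\lambda,\lambda_1\}$ (which is possible since $\eta \in (1/2,1)$ can be taken close to $1$ and the constants are uniform),
\[
|||\omega_t|||_4 \;\le\; \int_0^t e^{-\bar\lambda(t-s)}\,C\,e^{-(1-\eta)\lambda s}\,M_4(f_0+g_0)^{1/2}\,|d_0|_2^{1+\eta}\,{\rm d}s
\;\le\; C_\eta\,e^{-(1-\eta)\lambda t}\,M_4(f_0+g_0)^{1/2}\,|d_0|_2^{1+\eta},
\]
which is the desired estimate. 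The main obstacle, in my opinion, is bookkeeping the third-order moment correction $\mathcal{M}[\omega_t]$ (for which the norm $|\cdot|_4$ is defined only after subtraction of the Taylor expansion at the origin), and verifying that its own exponential decay can be absorbed into the same constant $C_\eta$; everything else is a direct adaptation of the arguments already carried out for Lemma~\ref{lem:a4maxwellfourier}, with the essential improvement coming from the extra factor $|d_0|_2$ recovered from the contraction property.
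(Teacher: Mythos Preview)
Your proof is correct and follows essentially the same Fourier/Bobylev cutoff-and-Gronwall strategy as the paper. Two differences are worth noting.

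First, you split the equation as $\partial_t\omega_t = 2\,Q(\omega_t,f_t) + Q(d_t,d_t)$, while the paper uses the equivalent decomposition $\partial_t\omega_t = Q(\omega_t,f_t+g_t) + Q(h_t,d_t)$ (with $Q(h_t,d_t)=Q^+(h_t,d_t)$ since both factors have zero mass). Both lead to the same differential inequality for $\sup_\xi |\Omega(\xi)|/|\xi|^4$; your source bound $|Q(d_t,d_t)|_4 \le C\,|d_t|_2^2$ and the paper's bound $|\hat Q^+(H_t,D_t)|/|\xi|^4 \le C\,|h_t|_2\,|d_t|_2$ are then combined with contraction plus the interpolated decay from Lemma~\ref{lem:a4maxwellfourier} in the same way.

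Second, and more usefully: what you flag as ``the main obstacle'' --- the moment correction $\mathcal{M}[\omega_t]$ --- is in fact not needed. The paper observes that \emph{all} moments $M_\alpha[\omega_t]$ with $|\alpha|\le 3$ vanish identically (not just $|\alpha|\le 2$). In your decomposition this follows too: the source $\sum_{\beta\le\alpha} a_{\alpha,\beta}\,M_\beta[d_t]\,M_{\alpha-\beta}[d_t]$ requires both $|\beta|\ge 2$ and $|\alpha-\beta|\ge 2$ to be nonzero (since $M_\gamma[d_t]=0$ for $|\gamma|\le 1$), which is impossible when $|\alpha|\le 3$. Hence $\frac{d}{dt}M_\alpha[\omega_t]$ is a homogeneous linear ODE in the $M_\beta[\omega_t]$ with zero initial data, so $M_\alpha[\omega_t]\equiv 0$. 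Consequently $|\omega_t|_4$ is directly well-defined, no $\tilde\Omega$ is needed, and the bookkeeping you anticipated disappears.
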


\begin{rem}
  As proved below $\omega_t$ always has vanishing moments up to order
  $3$, which implies that the norm $| \omega_t |_4$ is
  well-defined.  
\end{rem}

\begin{proof}[Proof of Lemma~\ref{lem:a4maxwellfourier2}]
  We consider the angular cutoff decomposition as in Lemma
  \ref{lem:contraction}. 
Consider the error term
$$
\omega := g-f-h, \qquad \Omega := \hat \omega.
$$
which satisfies the evolution equation 
\[
\partial_t \omega_t =  Q\left(\omega_t,f+g\right) - Q^+(h,f-g), \quad
\omega_0 = 0
\] 
and (in Fourier variable) 
\begin{equation*}
  \partial_t \Omega = \hat Q(\Omega,S) - \hat Q^+(H,D).
\end{equation*}

Let us prove that 
\[
\forall \, |\alpha|\le 3, \ \forall \, t \ge 0, \quad M_\alpha[\omega_t] :=
\int_{\R^d} v^\alpha \, {\rm d}\omega_t(v) = 0.
\]

We shall use again the fact that, for Maxwell molecules, the $\alpha$-th moment of
$Q(f_1,f_2)$ is a sum of terms given by product of moments of $f_1$
and $f_2$ whose orders sum to $|\alpha|$, see
equation~\eqref{eq:momentsmax}. 

We obtain
\[
\forall \, |\alpha|\le 3, \quad 
\frac{{\rm d}}{{\rm d}t} M_\alpha[\omega_t]  = \sum_{\beta \le \alpha} 
  a_{\alpha,\beta} \, M_\beta[\omega_t] \, M_{\alpha-\beta}[f_t+g_t]
 + \sum_{\beta \le \alpha} a_{\alpha,\beta} \, M_\beta[h_t] \, M_{\alpha-\beta}[f_t-g_t]
\]
and since
\[
\forall \, |\alpha|\le 1, \quad M_\alpha[h_t]  = M_\alpha[f_t-g_t] = 0,
\]
we deduce 
\[
\forall \, |\alpha|\le 3, \quad 
\frac{{\rm d}}{{\rm d}t} M_\alpha[\omega_t]  = \sum_{\beta \le \alpha} 
  a_{\alpha,\beta} \, M_\beta[\omega_t] \, M_{\alpha-\beta}[f_t+g_t].
\]
This concludes the proof of the claim about the moments of $\omega_t$
since $\omega_0 = 0$.

We now consider the equation in Fourier form
\[
\partial_t \Omega = \hat Q(\Omega,S) - \hat Q^+(H,D)
\]
and we deduce in distributional sense
$$
\left( \frac{{\rm d}}{{\rm d}t} {|\Omega (\xi)| \over |\xi|^4} + K \,
  {|\Omega (\xi)| \over |\xi|^4}\right) \le \TT_1 + \TT_2 + r_K, 
\quad
r_K \xrightarrow[]{K \to \infty} 0
$$
(depending on some moments of order $1$ of $\omega$, $h$, $d$), and
\begin{eqnarray*}
  \TT_1
  &:=& \sup_{\xi \in \R^3} \int_{\mathbb{S}^{d-1}} 
  {b\left(\sigma \cdot \hat\xi\right) \over |\xi|^4} \, 
  \left( \left| \frac{\Omega (\xi^+) \, S (\xi^-)}{2} \right|  
    + \left|\frac{\Omega (\xi^-) \, S
        (\xi^+)}{2} \right| \right) \, {\rm d}\sigma
  \\
  &\le& \sup_{\xi \in \R^3} \int_{\mathbb{S}^{d-1}} 
  b\left(\sigma \cdot \hat\xi\right) \, 
  \left( {\left|\Omega (\xi^+)\right| \over \left|\xi^+\right|^4} \, 
    {\left|\xi^+ \right|^4 \over \left|\xi\right|^4}
    + {\left|\Omega (\xi^-)\right| \over \left|\xi^-\right|^4} \, 
    {\left|\xi^-\right|^4 \over \left|\xi\right|^4} \right) \, {\rm d}\sigma
  \\
  &\le&  \left( \sup_{\xi \in \R^3} 
    {\left|\Omega (\xi)\right|^2 \over \left|\xi\right|^2} \right) \,  
  \left( \sup_{\xi \in \R^3} 
    \int_{\mathbb{S}^{d-1}} b\left(\sigma \cdot \hat\xi\right) \, 
    \left( \left|\hat\xi^+\right|^4 + \left|\hat\xi^-\right|^4 \right)
    \, {\rm d}\sigma \right) \\
  &\le&  \lambda_K \, \left( \sup_{\xi \in \R^3} 
    {\left|\Omega (\xi)\right| \over \left|\xi\right|^4} \right),
\end{eqnarray*}
where $\lambda_K$ was defined in Lemma \ref{lem:contraction}, and
\begin{eqnarray*}
  \TT_2 &:=& {1\over 2} \,  \sup_{\xi \in \R^3} \int_{\mathbb{S}^{d-1}} 
  {b\left(\sigma \cdot \hat\xi\right) \over |\xi|^4} \, 
  \left| H (\xi^+) \, D (\xi^-) +  H (\xi^-) \, 
    D (\xi^+) \right|  \,  {\rm d}\sigma
  \\ 
  &\le& {1 \over 2} \, \sup_{\xi \in \R^3} \int_{\mathbb{S}^{d-1}} 
  b\left(\sigma \cdot \hat\xi\right)
  \, \left(  {| H (\xi^+) | \over |\xi^+|^2} \, {| D (\xi^-) | \over |\xi^-|^2}
    \, {| \xi^- |^2 \over |\xi|^2}  + {| D (\xi^+) |^2 \over |\xi^+|^2} \, 
    {| H (\xi^-) |^2 \over |\xi^-|^2}
    \, {| \xi^- |^2 \over |\xi|^2}  \right) \, {\rm d}\sigma
  \\
  &\le&
  \int_{\mathbb{S}^{d-1}} b\left(\sigma \cdot
    \hat\xi_0\right) \, 
  \left(1 - \sigma \cdot \hat\xi_0\right) \, {\rm d}\sigma \,  \left| h_t \right|_2 \, \left| d_t \right|_2
\le C_b \, |d_0|_2^{1+\eta} \, |d_t|^{1-\eta} 
  \\
  &\le&  
  C \, e^{-(1-\eta) \, \frac{\lambda_2}2 \, t} \, M_4(f_0+g_0)^{1-\eta} \,  \left| d_0 \right|_2 ^{1+\eta}
\end{eqnarray*}
by using the estimates \eqref{eq:contraction-h} and \eqref{eq:dtTosc2exp-t}.

Hence we obtain
$$
\left( \frac{{\rm d}}{{\rm d}t} {|\Omega (\xi)| \over |\xi|^4} + K \, {|\Omega
    (\xi)| \over |\xi|^4}\right) \le \lambda_K \, \left( \sup_{\xi \in \R^3}
  {\left|\Omega (\xi)\right| \over \left|\xi\right|^4} \right) + C \,
e^{-(1-\eta) \, \frac{\lambda_2}2 \, t} \,  M_4(f_0+g_0)^{1-\eta} \, \left| d_0 \right|_2 ^{1+\eta} + r_K.
$$
We then deduce from the Gronwall inequality, relaxing the cutoff
parameter $K$ as in Lemma \ref{lem:a4maxwellfourier} and choosing
without restriction $\lambda>0$ so that $(1-\eta) \lambda \le
\min\{(1-\eta) \lambda_2/2; \bar \lambda\}$, that
\[
\left( \sup_{\xi \in \R^3}
  {\left|\Omega_t (\xi)\right| \over \left|\xi\right|^4} \right) \le C
\, e^{-(1-\eta) \, \lambda \, t} \,  M_4(f_0+g_0)^{1-\eta} \, \left| g_0 - f_0 \right|_2 ^{1+\eta}.
\]
This concludes the proof (using $(1-\eta) \le 1/2$ on the moment exponent). 
\end{proof}

\subsection{Proof of condition (A5) uniformly in time in Wasserstein
  distance}
We know from \cite{T1} that for $f_0$ and $g_0$ with same momentum and
energy one has
$$
\sup_{t \ge 0}W_2 \left( S^{N\! L}_t f_0, S^{N\! L}_t g_0\right) \le W_2 \left(f_0,g_0\right). 
$$
As a consequence, by using 
\[
[ \cdot ]^*_1 = W_1 \le W_2,
\]
we deduce that {\bf (A5)} holds with 
\[
\Theta(x) = x, \quad \FF_3 =
\hbox{Lip}(\R^d) \ \mbox{ and } \ \mathcal P_{\GG_3} = \PP_2(\R^d)
\]
endowed with the distance $d_{\GG_3} = W_2$ and the contraints
corresponding to the momentum and energy. 

By using Theorem~\ref{theo:abstract} whose assumptions have been
proved above, this proves point (i) in Theorem~\ref{theo:tMM} and the
rate follows from the estimate on $\WW_{W_2 ^2}^N (f)$ from
Lemma~\ref{lem:Rachev&W1}.

By using Lemma~\ref{lem:ComparDistances} in order to relate $\WW_{W_2
  ^2} ( \pi^N _P (f_0 ^N), \delta_{f_0} )$ with $\WW_{W_1} ( \pi^N _P (f_0 ^N),
 \delta_{f_0})$ and then Lemma~\ref{lem:Bpropertybis} in order to estimate
$$ 
\WW_{W_1} \left( \pi^N _P \left(f_0 ^N\right), f_0 \right)
\xrightarrow[]{N \to \infty} 0
$$ 
for the sequence of initial data conditioned on the energy sphere
constructed in Lemma~\ref{lem:Bproperty}, we then deduce point (iii)
in Theorem~\ref{theo:tMM}.

\subsection{Proof of condition (A5) with time growing bounds in Sobolev
  norms}

It is also possible (and in fact easier) to prove, in the cutoff case,
that the weak stability holds in negative Sobolev spaces with
\emph{non-uniform-in-time estimates}. 

\begin{lem}\label{lem:stab-max-sob}
  For any $T \ge 0$ and $s > d/2$ there exists $C_{T,s}$ such that for
  any $f_t$, $g_t$ solutions of the Boltzmann equation for Maxwell
  molecules~\eqref{Maxwelltrue} and initial data $f_0$ and $g_0$,
  there holds
   $$
   \sup_{[0,T]} \left\| f_t - g_t \right\|_{H^{-s}} \le C_{T,s}
   \, 
   \left\| f_0 - g_0 \right\|_{H^{-s}} .
   $$
 \end{lem}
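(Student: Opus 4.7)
Under the cutoff assumption, $\mu := \int_{\mathbb{S}^{d-1}} b(\cos\theta)\,{\rm d}\sigma < \infty$, so for probability solutions the collision operator decomposes as $Q(f,f) = Q^+(f,f) - \mu\, f$. Setting $d_t := f_t - g_t$ and $s_t := f_t + g_t$, the difference satisfies the \emph{linear} evolution
\[
\partial_t d_t + \mu\, d_t \;=\; \tfrac12 Q^+(d_t, s_t) + \tfrac12 Q^+(s_t, d_t),
\]
to which Bobylev's identity applies in Fourier variables. The plan is then to close a Gronwall estimate on $\|d_t\|_{\dot H^{-s}}^2 = \int |\hat d_t(\xi)|^2\, \langle\xi\rangle^{-2s}\,{\rm d}\xi$ by testing the Fourier formulation against $\overline{\hat d_t(\xi)}/\langle\xi\rangle^{2s}$.

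After this multiplication and integration, the linear structure together with the pointwise bound $|\hat s_t(\xi)| \le 2$ (since $f_t,g_t$ are probabilities) yields
\[
\tfrac12\, \tfrac{{\rm d}}{{\rm d}t}\|d_t\|_{H^{-s}}^2 + \mu \|d_t\|_{H^{-s}}^2 \;\le\; \int_{\R^d}\int_{\mathbb{S}^{d-1}} \frac{|\hat d_t(\xi)|}{\langle\xi\rangle^{2s}}\,b(\sigma\cdot\hat\xi)\bigl(|\hat d_t(\xi^+)| + |\hat d_t(\xi^-)|\bigr)\,{\rm d}\sigma\,{\rm d}\xi.
\]
A Cauchy--Schwarz inequality (in $\xi$ and $\sigma$ with weight $b(\sigma\cdot\hat\xi)\,{\rm d}\sigma\,{\rm d}\xi/\langle\xi\rangle^{2s}$) reduces the right-hand side to $\sqrt{\mu}\,\|d_t\|_{H^{-s}}\, \mathcal{J}^{1/2}$, where
\[
\mathcal{J} \;:=\; \int_{\R^d}\int_{\mathbb{S}^{d-1}} \frac{b(\sigma\cdot\hat\xi)}{\langle\xi\rangle^{2s}}\, \bigl(|\hat d_t(\xi^+)|^2 + |\hat d_t(\xi^-)|^2\bigr)\,{\rm d}\sigma\,{\rm d}\xi.
\]
The key bilinear estimate to establish is $\mathcal{J} \le C_{d,s,b}\,\|d_t\|_{H^{-s}}^2$. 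Using the elementary inequality $\langle\xi^\pm\rangle \le \langle\xi\rangle$ (which follows from $|\xi^+|^2 + |\xi^-|^2 = |\xi|^2$), the weight $\langle\xi\rangle^{-2s}$ can be replaced by $\langle\xi^\pm\rangle^{-2s}$. Then for each fixed $\sigma$ one performs the change of variable $\xi \mapsto \eta = \xi^\pm$ (a smooth bijection from $\R^d$ onto the half-space $\{\eta\cdot\sigma \geq 0\}$, resp. $\{\eta\cdot\sigma \leq 0\}$), whose Jacobian in polar coordinates computes explicitly to be $2^d/\cos^2\alpha$, where $\alpha$ denotes the angle between $\hat\eta$ and $\pm\sigma$. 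Since $\alpha$ is related to the deviation angle $\theta$ by $\alpha = \theta/2$, and since under angular cutoff $b$ is supported in $\{\theta \le \pi/2\}$ (equivalently $\alpha \le \pi/4$), the Jacobian is uniformly bounded, and $\mathcal{J} \le C\,\|d_t\|_{H^{-s}}^2$ follows.

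Combining everything gives $\tfrac{{\rm d}}{{\rm d}t}\|d_t\|_{H^{-s}}^2 \le 2(C-\mu)\|d_t\|_{H^{-s}}^2$, and Gronwall's inequality yields the claimed bound with $C_{T,s} = e^{(C-\mu)T}$. The passage from $\dot H^{-s}$ to $H^{-s}$ is harmless since $s > d/2$ and $d_t$ has bounded total variation. The main technical obstacle is the change-of-variables step and the control of its Jacobian; this is precisely what restricts the argument to the cutoff setting, since in the non-cutoff case the Jacobian degenerates at grazing collisions ($\theta \to \pi$) and more delicate cancellations of the Bobylev type used in Lemma~\ref{lem:a4maxwellfourier} would be required, producing the non-uniform-in-time growth that we here accept.
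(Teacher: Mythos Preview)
Your proof is correct and follows essentially the same route as the paper's sketch: test the Fourier equation for $D=\hat f-\hat g$ against $\bar D/\langle\xi\rangle^{2s}$, use $|\hat s_t|\le 2$, and close by Gronwall. The paper invokes Young's inequality where you use Cauchy--Schwarz, but both reduce to controlling $\int\!\int b\,|D(\xi^\pm)|^2\langle\xi\rangle^{-2s}\,{\rm d}\sigma\,{\rm d}\xi$, which you treat more explicitly via the change of variables $\xi\mapsto\xi^\pm$ (the Jacobian is actually $2^{d-1}/\cos^2(\theta/2)$, a harmless discrepancy). One small correction to your closing remark: the restriction $\theta\le\pi/2$ is the paper's symmetrization convention rather than a consequence of cutoff, and the real obstruction in the non-cutoff case is the non-integrability of $b$ near $\theta\to 0$ (so that $\mu$ and your constant $C_{d,s,b}$ blow up), not the Jacobian degeneracy at $\theta\to\pi$, which the symmetrization already excludes.
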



\begin{proof}[Sketch the proof of Lemma~\ref{lem:stab-max-sob}] 
  We integrate \eqref{eq:BoltzMaxD} against $D/(1+|\xi|^{2})^s$:
\[
\frac{{\rm d}}{{\rm d}t} \| D \|_{{H}^{-k}} ^2 = { 1 \over 2 } \, \int_\xi
\int_{\mathbb{S}^{d-1}} b\left(\sigma \cdot \hat \xi\right) \, \frac{\left[ D^-
    S^+ D + D^+ S^- \, D - 2 \, 
    |D|^2\right]}{(1+|\xi|^{2})^s} \, {\rm d}\sigma \, {\rm d}\xi
\]
and we use Young's inequality together with the bounds
\[
\left\|S^+\right\|_\infty, \quad \left\| S^{-} \right\|_\infty 
\le \|f+g\|_{M^1}\le 2
\]
to conclude.
\end{proof}

This proves {\bf (A5)} with the alternate choice 
   \[
   \Theta(x) = x, \quad \FF_3 = H^s(\R^d) \ \mbox{ and } \mathcal P_{\GG_3} =
   \PP_2(\R^d)
   \]
   endowed with the distance of the normed space $\GG_3 = H^s(\R^d)$.
   Then point (ii) in Theorem~\ref{theo:tMM} follows from the abstract
   theorem~\ref{theo:abstract} where the (optimal) rate is provided by  the estimate on
$\WW_{\| \cdot \|_{H^{-s}}^2}^N (f)$ from Lemma~\ref{lem:Rachev&W1}.

\subsection{Proof of infinite-dimensional Wasserstein chaos}
\label{sec:infchaos}

We shall prove Theorem~\ref{theo:max-wasserstein} in this subsection.
We only present the proof in the case of assumption (b), since the
case of assumption (a) is similar. Let us proceed in several
steps. Let us emphasize that we do not search for optimality on the
rate functions in this subsection.

\medskip

\noindent
{\bf Step 1: Finite-dimensional Wasserstein chaos.} 
It is immediate that Theorem~\ref{theo:tMM} implies that, under one
of the two possible assumptions on the initial data, for any given
$\ell \ge 1$, one has 
\[
\sup_{t \ge 0} \left\| \Pi_\ell\left[ f^N _t \right] -  f_t ^{\otimes \ell}
\right\|_{H^{-s}} \le \alpha_0(\ell,N)
\]
for some power law rate function $\alpha_0(\ell,N) \to 0$ as $N \to 0$.

Then by using Lemma~\ref{lem:ComparDistances} we deduce that 
 \[
\sup_{t \ge 0} W_1 \left( \Pi_\ell\left[ f^N _t \right], f_t ^{\otimes \ell}
\right) \le \alpha(\ell,N)
\]
for some power law rate function $\alpha(\ell,N) \to 0$ as $N \to 0$.

Note carefully that at this point our rate function still depends on
$\ell$ and in fact a quick look at Theorem~\ref{theo:tMM} shows that
they scale like $\ell^2$, therefore making impossible to choose $\ell
\sim N$. 

\medskip

\noindent
{\bf Step 2: Infinite-dimensional Wasserstein chaos.} 
We shall use here the following result
obtained in \cite{hm}, see also \cite[Th\'eor\`eme 2.1]{MedpX}: for any $f \in P(\R^d)$ and
sequence $f^N \in P_{\mbox{{\tiny sym}}}(\R^d)$ we have 
$$
\forall \, 1 \le \ell \le N, \quad \frac{W_1\left( \Pi_\ell\left[f^N
      \right], f ^{\otimes \ell} \right)}{\ell} \le C \, \left( W_1\left( \Pi_2\left[f^N
      \right], f ^{\otimes 2} \right)^{\alpha_1} +
    \frac{1}{N^{\alpha_2}} \right) 
$$
for some constructive constant $C$, $\alpha_1$, $\alpha_2 >0$. 

By combining this estimate with the previous step we immediately
obtain 
\[
\sup_{1 \le \ell \le N} \sup_{t \ge 0} \frac{W_1 \left(
  \Pi_\ell\left[ f^N _t \right], f_t ^{\otimes \ell} \right)}{\ell} \le
\alpha(N)
\]
for some power law rate function $\alpha(N) \to 0$ as $N \to 0$. This
concludes the proof of \eqref{eq:max-wass}. 
\medskip

\noindent
{\bf Step 3: Relaxation in Wasserstein distance.} 
We shall prove \eqref{eq:max-wass-relax} and then we shall consider here
initial data $f_0 ^N$ constructed by conditioning $f_0 ^{\otimes N}$
to the Boltzmann sphere $\SS^N(\EE)$.  We first write
$$
\frac{W_1 \left( f^N _t , \gamma^N \right)}{N} \le \frac{W_1 \left( f^N
      _t, f_t ^{\otimes N} \right)}{N} + \frac{W_1 \left(
    f_t ^{\otimes N}, \gamma ^{\otimes N} \right)}{N} 
+ \frac{W_1 \left( \gamma ^{\otimes N},
    \gamma^N \right)}{N}.
$$

Since $f^N _t \to \gamma^N$ in $L^2$ and $f_t \to \gamma$ in $L^1$ as
$t \to +\infty$, one can pass to the limit in the Wasserstein distance
and get from the previous step
\[
\frac{W_1 \left( \gamma ^{\otimes N}, \gamma^N \right)}{N} \le
\alpha(N).
\]
Moreover it is immediate that 
\[
\frac{W_1 \left( f_t ^{\otimes N}, \gamma ^{\otimes N} \right)}{N} =
W_1 \left( f_t, \gamma \right).
\]
Finally it was proved in \cite{GabettaTW95,CGT} that under our assumptions on $f_0$ one
has
$$
\left\| \left( f_t - \gamma\right) \, \langle v \rangle \right\|_{L^1}
\le C \, e^{-\lambda_1 \, t} 
$$ 
for some constants $C >0$ and $\lambda_1 >0$ which implies 
$$
W_1 \left( f_t, \gamma\right) \le \left\| \left( f_t - \gamma\right)
  \, \langle v \rangle \right\|_{L^1} \le C \, e^{-\lambda_1 \, t}.
$$ 
Hence, gathering these three estimates, we deduce that 
\begin{equation}\label{estim:Kac1}
\frac{W_1 \left( f^N _t, \gamma^N \right)}{N} \le 2 \, \alpha(N) + C
\, e^{-\lambda_1 \, t}
\end{equation}
for some polynomial rate $\alpha(N) \to 0$ as $N \to +\infty$. 

On the other hand, it was proved in \cite{CarlenGeronimoLoss2008} that
there exists $\lambda_2 > 0$ such that
$$
\forall \, N \ge 1, \,\, \forall \, t \ge 0, \quad \left\| h^N - 1
\right\|_{L^2\left(\SS^N(\EE),\gamma^N\right)} \le e^{-\lambda_2 \, t} \,
\left\| h^N_0 - 1 \right\|_{L^2\left(\SS^N(\EE),\gamma^N\right)},
$$
where $h^N = {\rm d}f^N/{\rm d}\gamma^N$ is the Radon-Nikodym
derivative of $f^N$ with respect to the measure $\gamma^N$ so that
$f^N = h^N \, \gamma^N$. When $f^N_0 = [f_0^{\otimes N}]_{\SS^N(\EE)}$
with $f_0 \in \PP_4(\R^d)$ we easily bound from above the right-hand
side term by
$$
\left\| h^N_0 - 1 \right\|_{L^2\left(\SS^N(\EE),\gamma^N\right)} \le A^N,
$$
where $A = A(f_0) > 1$.  Then by the Cauchy-Schwartz inequality we also have
\begin{equation*}
\left\| h^N -  1 \right\|_{L^2\left(\SS^N(\EE),\gamma^N\right)} 
\ge \left\| h^N -  1 \right\|_{L^1\left(\SS^N(\EE),\gamma^N\right)} 
\end{equation*}
and the Wasserstein distance can be controlled as 
\begin{eqnarray*}
W_1\left(f^N,\gamma^N\right) &=& \sup_{\| \varphi
  \|_{C^{0,1}(\R^{dN})} \le 1} \int_{\R^{dN}} \varphi \, \left({\rm d}f^N -
{\rm d}\gamma^N \right) \\
&=& \sup_{\| \varphi
  \|_{C^{0,1}(\R^{dN})} \le 1} \int_{\R^{dN}} \left( \varphi -
  \varphi(0) \right) \, \left({\rm d}f^N -
{\rm d}\gamma^N \right) \\
&\le& \int_{\R^{dN}}  \left( \sum_{i=1}^N \left|v_i\right|\right) \,
\left|{\rm d}f^N - {\rm d}\gamma^N\right|
\\
&\le& N \, \EE^{1/2} \, \left\| h^N -  1 \right\|_{L^1\left(\SS^N(\EE),\gamma^N\right)}.
\end{eqnarray*}
We hence deduce 
\begin{equation}\label{estim:Kac2}
\forall \, N \ge 1, \ \forall \, t \ge 0, \quad
\frac{W_1\left(f^N _t, \gamma^N\right)}{N} \le A^N \, e^{-\lambda_2 \, t}.
\end{equation}

Finally by combining \eqref{estim:Kac1} when 
\[
N \ge N(t):= \frac{\lambda_2 t}{2 \ln A}, 
\]
and \eqref{estim:Kac2} when $N \le N(t)$, we easily obtain
$$
\forall \, N \ge 1, \ \forall \, t \ge 0, \quad
\frac{W_1\left(f^N _t, \gamma^N\right)}{N} \le \min\left\{
e^{-\frac{\lambda_2}2 t}; \alpha(N(t)) + C \, e^{-\lambda_1 t}\right\}
=: \beta(t) 
$$ 
for some polynomial rate $\beta(t) \to 0$ as $t \to +\infty$, which
concludes the proof of \eqref{eq:max-wass-relax}.

\section{Hard spheres}
\label{sec:hardspheres}
\setcounter{equation}{0}
\setcounter{theo}{0}

\subsection{The model}
\label{sec:modelHS}
The limit equation was introduced in Subsection~\ref{sec:introEB}
and the stochastic model has been already discussed
Subsection~\ref{sec:modelEBbounded}. 

We consider here the case of the Master equation
\eqref{eq:BoltzBddKolmo}, \eqref{defBoltzBddGN} and the limit
nonlinear homogeneous Boltzmann equation \eqref{el},
\eqref{eq:collop}, \eqref{eq:rel:vit} with 
\begin{equation}\label{hypHS}
B(z,\cos \theta) = \Gamma(z) \, b(\cos \theta) = \Gamma(z) = |z|.
\end{equation}

\subsection{Statement of the result}
\label{sec:statementHS}
Our fluctuations estimate result for this model then states as follows:

\begin{theo}[Hard spheres detailed chaos estimates]\label{theo:HS} 
 Assume that the collision kernel $B$ satisfies  \eqref{hypHS}.
Let us consider a family of $N$-particle initial conditions 
$
f_0 ^N \in P_{\mbox{{\scriptsize {\em sym}}}}((\R^d)^N)
$
and the associated $N$-particle system dynamics
$$
 f^N _t = S^N _t \left(f_0^N \right).
$$
Let us also consider a centered $1$-particle initial distribution $f_0 \in
  P(\R^d)$ with energy $\EE \in (0,+\infty)$
  $$
  \int_{\R^d} v \, {\rm d}f_0 = 0, \quad \EE := \int_{\R^d} |v|^2 \, {\rm d}f_0 \in (0,\infty),
  $$
  and the associated solution
  \[
  f_t = S^{N \! L} _t \left(f_0\right)
  \]
 of the limit mean-field equation.
  
  Let us finally fix some $\delta \in (0,1)$. Then we have the
  following results:  
\begin{itemize}
\item[(i)] Suppose that $f_0$ has compact support 
$$
\hbox{{\em Supp}} \, f_0 \subset \left\{ v \in \R^d, \,\, |v| \le A \right\}
$$
and that the $N$-particle initial data are tensorized
$$
\forall \, N \ge 1, \quad f^N _0 = f_0 ^{\otimes N}.
$$
 
Then for any $T \in (0,\infty)$ there are
  \begin{itemize}
  \item some constants $k_1 \ge 2$ depending on
    $\delta$ and $A$;
  \item some constant $C_{\delta,T} >0$ depending on
    $\delta$, $T$ and $A$, and blowing up as $\delta
    \to 1$;
  \item some constant $C_{b,T} >0$ depending on the collision kernel and $T$,
  \end{itemize}
such that for any $\ell \in \N^*$, and for any
  \[ \varphi = \varphi_1 \otimes \varphi_2 \otimes \dots \otimes \,
  \varphi_\ell \in W^{1,\infty}(\R^d)^{\otimes \ell}, \] 
  we have 
  \begin{multline*} 
    \forall \, N \ge 2 \, \ell, \quad 
    \sup_{t \in [0,T]}\left| \left \langle
        \left( S^N_t(f_0 ^N) - \left( S^{N \! L}_t(f_0)
          \right)^{\otimes N} \right), \varphi \right\rangle \right|
    \\ \le  \|\varphi\|_{ W^{1,\infty}(\R^d)^{\otimes
        \ell} } \, 
    \Bigg[ \frac{2 \, \ell^2 }{N} +
    \frac{C_{\delta,T} \, \ell^2 \|f_0\|_{M^1_{k_1}}}{N^{1-\delta}} + \ell \, e^{C_{b,T} \, A} \,
    \theta(N) \Bigg].
  \end{multline*}

  The last term of the right-hand side (which is also the dominant
  error term as $N$ goes to infinity in our estimate) is given by
  \[
  \theta(N) = \frac{C}{(1+ |\ln N|)^{\alpha}} 
  \]
  for some constants $C, \alpha >0$. 

\item[(ii)] Under the same setting but assuming instead for the
  initial datum of the mean-field limit \beqn\label{eq:HSdatumSphere}
  f_0 \in L^\infty\left(\R^d\right) \ \mbox{{\em s.t.}} \ \int_{\R^d}
  e^{z \, |v|} \, {\rm d}f_0(v) < + \infty \eeqn for some $z >0$, and
  taking for the $N$-particle initial data the sequence $(f_0^N)_{N
    \ge 1}$ constructed in Lemma~\ref{lem:Bproperty} and
  \ref{lem:Bpropertybis} by conditioning to the Boltzmann sphere
  $\SS^N(\EE)$, then the solution $f^N_t = S^N_t(f_0 ^N)$ has its
  support included in $\SS^N(\EE)$ for all times
  \begin{equation}\label{eq:supportfNtspheres-HS}
    \forall \, t \ge 0, \quad \hbox{{\em Supp}} \, f^N_t \subset \SS^N(\EE) 
  \end{equation}
and there are
  \begin{itemize}
  \item some constants $k_1 \ge 2$ depending on
    $\delta$ and $\mathcal{E}$;
  \item some constant $C_{\delta} >0$ depending on $\delta$ and
    $\mathcal{E}$, and blowing up as $\delta \to 1$;
  \item some constant $C_{b} >0$ depending on the collision kernel
    and the above exponential moment bound on $f$,
  \end{itemize}
such that for any $\ell \in \N^*$, and for any
  \[ \varphi = \varphi_1 \otimes \varphi_2 \otimes \dots \otimes \,
  \varphi_\ell \in W^{1,\infty}(\R^d)^{\otimes \ell}, \] 
  we have 
  \begin{multline*} 
    \forall \, N \ge 2 \, \ell, \quad 
    \sup_{t \ge 0}\left| \left \langle
        \left( S^N_t(f_0 ^N) - \left( S^{N \! L}_t(f_0)
          \right)^{\otimes N} \right), \varphi \right\rangle \right|
    \\ \le  \|\varphi\|_{ W^{1,\infty}(\R^d)^{\otimes
        \ell} } \, 
    \Bigg[ \frac{2 \, \ell^2 }{N} +
    \frac{C_{\delta} \, \ell^2 \|f_0\|_{M^1_{k_1}}}{N^{1-\delta}} + \ell \, e^{C_{b} \, A} \,
    \theta(N) \Bigg], 
  \end{multline*} 
  with the same estimate on the rate $\theta(N)$ as in (i). This
  proves the propagation of chaos, uniformly in time.
\end{itemize}
\end{theo}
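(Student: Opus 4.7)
The plan is to apply the abstract Theorem~\ref{theo:abstract} by verifying assumptions \textbf{(A1)}--\textbf{(A5)} in a functional framework adapted to the unbounded hard sphere collision rate $|v-v_*|$. The organization parallels the Maxwell molecules case of Section~\ref{sec:BddBoltzmann}, but each verification requires genuinely new estimates, and the logarithmic rate $\theta(N)$ will emerge from optimizing the tension between the loss of moments in the consistency estimate and the exponentially slow propagation rate of very high moments under the limit equation.

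First I would verify \textbf{(A1)} by invoking Lemma~\ref{lem:momentsN} with $\gamma=1$: in case (i) this yields uniform-in-$N$ propagation of polynomial moments of arbitrary finite order $k \ge 2$, and in case (ii) the support condition \eqref{eq:supportfNtspheres-HS} follows from \eqref{eq:MaxwellSupport=E}. In case (ii) I would also invoke Lemmas~\ref{lem:Bproperty}--\ref{lem:Bpropertybis} to construct the conditioned initial data $f_0^N$ with the required polynomial moment bounds, using the exponential moment assumption \eqref{eq:HSdatumSphere} to ensure that $\WW_{W_1}(\pi^N_P f_0^N, \delta_{f_0}) \to 0$ with a controlled rate. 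For \textbf{(A2)} I would take a weighted space $\mathcal P_{\GG_1} = \mathcal P_{k_1}(\R^d)$ of probabilities with finite moment of sufficiently high order $k_1$, endowed with a weighted total variation or $W_1$-type metric, and invoke the measure-solution Cauchy theory for hard spheres from \cite{Fo-Mo,EM,Lu-Mouhot} together with standard $|v-v_*|$-weighted estimates on $Q$.

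The consistency estimate \textbf{(A3)} follows the same Taylor expansion scheme as in Subsection~\ref{subsect:ProofA3tMM}: computing $G^N(\Phi \circ \mu_V^N) - (G^\infty \Phi)(\mu_V^N)$ produces a second-order remainder $I_2(V)$ controlled by
\[
\frac{C}{N} \, [\Phi]_{C^{1,\eta}_{\Lambda_1}} \cdot \frac{1}{N} \sum_{i,j} |v_i - v_j| \, \bigl( M_{m'_{\GG_1}}(\mu^N_V) \bigr) \, \bigl|\mu^N_{V^*_{ij}} - \mu^N_V\bigr|^{1+\eta}_{\GG_1},
\]
where the extra factor $|v_i-v_j|$ (absent in the Maxwell case) forces the support weight $m_{\GG_1}$ to dominate a moment of order $k_1 \sim$ (order of $m'_{\GG_1}$) $+2+\eta$. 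This propagates to an $N^{-1}$ consistency rate with a loss of polynomial moments, absorbed by \textbf{(A1)-(ii)}. The crucial and genuinely hardest ingredient is \textbf{(A4)}: I would linearize the Boltzmann equation around $f_t$ and decompose $S^{N\!L}_t(g_0) - S^{N\!L}_t(f_0) = \mathcal D^{N\!L}_t[f_0](g_0-f_0) + \omega_t$, then prove the integral bound on $[S^{N\!L}_t]_{C^{1,\eta}_{\Lambda_2}}$ by combining (a) stability estimates for the nonlinear equation in a weak metric of dual-Hölder type, (b) propagation and creation of polynomial and, in case (ii), exponential moments on both $f_t$ and the linearized flow, and (c) a bilinear bound on $Q$ that trades the loss $|v-v_*|$ against a decay estimate on $|f_t - g_t|$ in a weighted norm. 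In case (ii) uniformity in time will be obtained by using exponential relaxation to equilibrium in weighted $L^1$ spaces.

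Finally \textbf{(A5)} reduces to classical $W_1$-stability of the hard spheres semigroup from \cite{Fo-Mo}, giving a time-dependent modulus $\Theta_{a,T}$ of exponential type in $T$ on bounded-moment sets. The main obstacle will be \textbf{(A4)}: controlling the second-order term $\omega_t$ in the expansion of $S^{N\!L}_t$ requires a uniform-in-$t$ estimate in a norm compatible with the consistency loss of moments, and the only available rate for high-order polynomial moments of the hard sphere flow degrades as the order $k$ grows. Optimizing the moment order $k_1$ in terms of $\delta$ and balancing against the law-of-large-numbers rate in $\WW_{W_1}^N(f_0)$ supplied by Lemma~\ref{lem:Rachev&W1} and Lemma~\ref{lem:Bpropertybis}, then feeding into the abstract estimate \eqref{eq:cvgabstract1}, produces the logarithmic rate $\theta(N) = C(1+|\ln N|)^{-\alpha}$ announced in the statement, whereas the first two terms $\ell^2/N$ and $\ell^2 N^{-(1-\delta)}$ fall out directly from Steps~1--3 of the proof of Theorem~\ref{theo:abstract}.
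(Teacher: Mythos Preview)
Your overall architecture is correct and matches the paper's: verify \textbf{(A1)}--\textbf{(A5)} for Theorem~\ref{theo:abstract}, with $\mathcal P_{\GG_1}=\PP_{k_1}(\R^d)$ endowed with the total variation metric $\|\cdot\|_{M^1}$, and $\FF_3=\mathrm{Lip}(\R^d)$. Your treatment of \textbf{(A1)} and \textbf{(A3)} is essentially what the paper does. There are, however, two genuine gaps.

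\medskip
\textbf{The mechanism for (A4).} Your sketch for \textbf{(A4)} misses the decisive technical point. The DiBlasio-type Gronwall argument on $h_t$, $d_t=f_t-g_t$, and $\omega_t$ produces a factor
\[
\exp\Bigl(C\int_0^t\|f_s+g_s\|_{M^1_3}\,{\rm d}s\Bigr).
\]
A naive bound on $\|f_s\|_{M^1_3}$ in terms of a polynomial moment of $f_0$ would make this exponential in $M_k(f_0)$, and then the weight $\Lambda_1(f_0)=M_{k_1}(f_0)$ required by \textbf{(A3)}--\textbf{(A4)} would have to be exponential --- but \textbf{(A1)-(ii)} only propagates \emph{polynomial} moments on the $N$-particle system, so the scheme would not close. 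The paper's resolution (Lemma~\ref{lem:expansionHS}, Steps~2--3) is a sharp Povzner inequality giving
\[
\int_0^t\|f_s\|_{M^1_3}\,{\rm d}s \le C(\EE_0)\,t + C\,\|f_0\|_{M^1_{2,1}},
\]
i.e.\ only a \emph{logarithmic} moment loss, followed by the interpolation $\|f_0\|_{M^1_{2,1}}\le 2(1+\EE_0)(1+\ln\|f_0\|_{M^1_5})$. This turns the exponential into a \emph{power} of $M_{k_1}(f_0)$ for $k_1$ large enough, which is exactly what makes $\Lambda_1$ polynomial. This Povzner/logarithmic-loss step is the heart of Lemma~\ref{lem:expansionHS} and is absent from your outline.

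\medskip
\textbf{Origin of the logarithmic rate $\theta(N)$.} You attribute the rate $(1+|\ln N|)^{-\alpha}$ to ``optimizing the moment order $k_1$''. This is incorrect: $k_1$ is fixed once and for all (depending on $\delta$ and $\EE_0$) and only affects the polynomial term $N^{-(1-\delta)}$. The logarithmic rate comes entirely from \textbf{(A5)}. The $W_1$-stability of the hard sphere flow from \cite{Fo-Mo} is not Lipschitz but only satisfies the Osgood-type inequality
\[
W_t \le W_0 + K\int_0^t W_s\bigl(1+(\ln W_s)_-\bigr)\,{\rm d}s,
\]
whose super-solution behaves like $W_0^{e^{-Kt}}$. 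Combining this with exponential relaxation to equilibrium (Theorem~\ref{theo:CMP}) produces the modulus $\Theta(w)\sim(1+|\ln w|)^{-\alpha}$ of \eqref{estim:W1dt}. Feeding the polynomial law-of-large-numbers rate $w=\WW^N_{W_1}(f_0)\sim N^{-c}$ into this $\Theta$ is what yields $\theta(N)\sim(\ln N)^{-\alpha}$ in the third term of \eqref{eq:cvgabstract1}.
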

\medskip

We now state again another version of the propagation of chaos
estimate, in Wasserstein distance, but most importantly which is valid
\emph{for any number of marginals}, at the price of a possibly worse
(but still constructive) rate. Combined with previous results on the
relaxation of the $N$-particle system we also deduce some estimate of
relaxation to equilibrium \emph{independent of $N$} and, again, for
any number of marginals.

\begin{theo}[Hard spheres Wasserstein chaos]
  \label{theo:hs-wasserstein}
We consider the same setting as in Theorem~\ref{theo:HS}, where the
  initial data are chosen as follows:
  \begin{itemize}
  \item[(a)] either $f_0$ is compactly supported and $f_0 ^N = f_0 ^{\otimes
      N}$,
  \item[(b)] or $ f_0$ satisfying \eqref{eq:HSdatumSphere}and $f_0 ^N$
    is constructed by Lemma~\ref{lem:Bproperty} by conditioning to the
    Boltzmann sphere $\SS^N(\EE)$.
  \end{itemize}

  Then in the case (a) we have for any $T>0$
\begin{equation}\label{eq:hs-wass-a}
  \forall \, N \ge 1, \ \forall \, 1 \le \ell  \le N, \quad \sup_{t
    \in [0,T]} {W_1 \left( \Pi_{\ell} f^N_t, f_t ^{\otimes \ell}   \right) \over \ell} \le \alpha_T(N)
\end{equation}
for some $\alpha_T(N) \to 0$ as $N \to \infty$ like a power of a
logarithm, and possibly depending on $T$. 

In the case (b) the solution $f^N_t = S^N_t(f_0 ^N)$ has its support
included in $\SS^N(\EE)$ for all times and this estimate can be made
uniform in time:
\begin{equation}\label{eq:hs-wass-b} 
\forall \, N \ge 1, \ \forall \, 1
\le \ell \le N, \quad \sup_{t \ge 0} {W_1 \left( \Pi_{\ell} f^N
    _t,   f_t ^{\otimes \ell}   \right) \over \ell} \le
\alpha(N)
\end{equation}
for some $\alpha(N) \to 0$ as $N \to \infty$ like a power of a
logarithm.  

Moreover, still in the case (b), we have 
\begin{equation}\label{eq:hs-wass-relax}
  \forall \, N \ge 1, \ \forall \, 1 \le \ell  \le N, \  \forall \, t
  \ge 0, \quad  
  {W_1 \left( \Pi_{\ell} f^N
      _t, \Pi_\ell \left( \gamma^N \right) \right) \over \ell} \le
  \beta(t) 
\end{equation}
for some rate $\beta(t) \to 0$ as $t \to \infty$ like a power of
logarithm, where $\gamma$ is the centered Gaussian equilibrium with
energy $\EE$ and $\gamma^N$ is the uniform probability measure on
$\mathcal S^N (\EE)$.
\end{theo}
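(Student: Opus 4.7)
\medskip

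\textbf{Proof proposal for Theorem~\ref{theo:hs-wasserstein}.} The plan is to follow the same three-step strategy used for Maxwell molecules in Subsection~\ref{sec:infchaos}, namely: (Step 1) upgrade the chaoticity from the weak metrics of Theorem~\ref{theo:HS} to $W_1$ on a finite number of marginals; (Step 2) pass from $\ell$ fixed to $\ell$ arbitrary (up to $N$) via a marginal-comparison inequality of \cite{hm}; (Step 3) combine the resulting chaos estimate with the long-time behavior of $f^N_t$ and $f_t$ to deduce uniform-in-$N$ relaxation rates. The rate function $\alpha(N)$ will inherit the logarithmic decay $\theta(N)\sim (1+\ln N)^{-\alpha}$ of Theorem~\ref{theo:HS} and the rate $\beta(t)$ will be a power of a logarithm.

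\emph{Step 1 (finite-$\ell$ Wasserstein chaos).} Theorem~\ref{theo:HS} controls $\langle \Pi_\ell f^N_t - f_t^{\otimes\ell},\varphi\rangle$ for $\varphi\in W^{1,\infty}(\R^d)^{\otimes\ell}$, which is precisely a duality estimate for $W_1$ on products. For case~(a) this gives $\sup_{t\le T}W_1(\Pi_\ell f^N_t,f_t^{\otimes\ell})\le C_{T,\ell}\,\theta(N)$ on each finite interval. For case~(b) the support property \eqref{eq:supportfNtspheres-HS} combined with the uniform-in-time statement of Theorem~\ref{theo:HS}~(ii) yields the same estimate with $\sup_{t\ge 0}$. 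The moment bounds required to identify $W_1$ with the dual Lipschitz norm are propagated by Lemma~\ref{lem:momentsN} on the particle side, and are standard for hard spheres on the limit side.

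\emph{Step 2 (from $\ell$ fixed to $\ell$ arbitrary).} I would apply the inequality of \cite{hm} (already used in Section~\ref{sec:infchaos}),
\[
\frac{W_1(\Pi_\ell f^N,f^{\otimes\ell})}{\ell}\le C\bigl(W_1(\Pi_2 f^N,f^{\otimes 2})^{\alpha_1}+N^{-\alpha_2}\bigr),
\]
valid for any symmetric probability $f^N\in P_{\mathrm{sym}}(E^N)$ and any $1\le\ell\le N$. Injecting the Step~1 bound for $\ell=2$ converts the logarithmic rate $\theta(N)$ into another (slightly worse) logarithmic rate, uniformly in $\ell\le N$, giving \eqref{eq:hs-wass-a} and \eqref{eq:hs-wass-b}.

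\emph{Step 3 (relaxation \eqref{eq:hs-wass-relax}, case (b)).} I would decompose, as in the Maxwell case,
\[
\frac{W_1(f^N_t,\gamma^N)}{N}\le \frac{W_1(f^N_t,f_t^{\otimes N})}{N}+W_1(f_t,\gamma)+\frac{W_1(\gamma^{\otimes N},\gamma^N)}{N}.
\]
The first term is $\le\alpha(N)$ by \eqref{eq:hs-wass-b}. The second decays (exponentially under the exponential-moment assumption \eqref{eq:HSdatumSphere} by classical hard spheres relaxation theory, e.g. Mouhot--Villani type estimates). The third term is treated by passing $t\to\infty$ in the first two (using that $f^N_t\to\gamma^N$ strongly and $f_t\to\gamma$ in $W_1$), which yields $W_1(\gamma^{\otimes N},\gamma^N)/N\le\alpha(N)$. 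This produces a long-time bound $\beta_1(t)\lesssim\alpha(N)+e^{-\lambda t}$. To cover small and intermediate times, I would combine this with the spectral gap on $\SS^N(\EE)$ for hard spheres (Carlen--Carvalho--Loss and subsequent works), which is bounded below independently of $N$. Proceeding as in \eqref{estim:Kac2} via Cauchy--Schwarz and the pointwise bound $|v_i|\le\sqrt{N\EE}$ on the sphere gives $W_1(f^N_t,\gamma^N)/N\le A^N e^{-\lambda_2 t}$, where $A=A(f_0)$ comes from an $L^2(\gamma^N)$ estimate on the initial conditioned datum; the exponential-moment assumption on $f_0$ ensures this is finite. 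Taking the minimum of the two bounds on the ranges $N\lessgtr \lambda_2 t/(2\ln A)$ yields a rate $\beta(t)$ that tends to $0$ like a power of a logarithm.

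\emph{Main obstacle.} The delicate point is the balance in Step~3: the logarithmic chaos rate $\alpha(N)$ is much slower than the Maxwell polynomial rate, while the small-time bound $A^N e^{-\lambda_2 t}$ degrades exponentially in $N$. Choosing the crossover $N(t)$ where the two bounds match therefore forces $\beta(t)$ to be only a power of $\ln t$, and one must verify that $A=A(f_0)$ is indeed finite under \eqref{eq:HSdatumSphere} (this is where the exponential moment is used, as in \cite{CCLLV}). A secondary technical point is to make sure every step in the triangle inequality is compatible with the support constraint on $\SS^N(\EE)$, so that the $W_1$ computations and the passage $t\to\infty$ are legitimate; this uses \eqref{eq:supportfNtspheres-HS} and the invariance of $\gamma^N$.
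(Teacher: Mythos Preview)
Your proposal is correct and follows essentially the same route as the paper (Subsection~\ref{subsec:wass-hs}): Theorem~\ref{theo:HS} for a fixed number of marginals, then the \cite{hm} inequality to make the bound uniform in $\ell$, and finally the triangle decomposition combined with the hard spheres $N$-particle spectral gap of Carlen--Carvalho--Loss (Theorem~\ref{lem:estim-hs-max}) and the nonlinear relaxation of Theorem~\ref{theo:CMP}. The only point to tighten is your Step~1 claim that duality against $W^{1,\infty}(\R^d)^{\otimes\ell}$ ``is precisely'' $W_1$ on $\R^{d\ell}$: tensor products of Lipschitz functions do not exhaust Lipschitz functions on the product, so one should pass through an intermediate norm (e.g.\ $H^{-s}$ via the embedding $H^s\subset W^{1,\infty}$ for $s>d/2+1$) and then use Lemma~\ref{lem:ComparDistances}, exactly as in the Maxwell case of Subsection~\ref{sec:infchaos}.
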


\bigskip

In order to prove Theorem~\ref{theo:HS}, we shall prove assumptions
{\bf (A1)-(A2)-(A3)-(A4)-(A5)} of Theorem~\ref{theo:abstract} with
$T<\infty$ or $T=\infty$, and with suitable functional spaces. The
application of the latter theorem then exactly yields
Theorem~\ref{theo:HS} by following carefully each constant computed
below.  We fix
\[
\FF_1=\FF_2=C_b(\R^d) \ \mbox{ and } \ \FF_3 = \mbox{Lip}(\R^d).
\]

Then the proof of Theorem~\ref{theo:hs-wasserstein} is deduced from
Theorem~\ref{theo:HS} in a similar way as
Theorem~\ref{theo:max-wasserstein} was deduced from
Theorem~\ref{theo:tMM}, see Subsection~\ref{subsec:wass-hs}.

\subsection{Proof of condition (A1)} 
From the discussion made in section~\ref{sec:MaxA1} we easily
see that for the hard spheres model the operator $G^N$ is bounded from
$C_{-k+1,0}(\R^{dN})$ onto $C_{-k,0}(\R^{dN})$ for any $k \in \R$.
Since $G^N$ is close, dissipative and $C_{-k+1,0}(\R^{dN})$ is dense
in $C_{-k,0}(\R^{dN})$, the Hille-Yosida theory implies that $G^N$
generates a Markov type semigroup $T^N_t$ on $C_{-k,0} (\R^{dN})$ and
we may also define $S^N_t$ by duality as a semigroup on
$\PP_k(\R^{dN})$.  The nonlinear semigroup $S^{N\! L}_t$ is also well
defined on $\PP_k(\R^d)$, $k \ge 2$, see for instance
\cite{Fo-Mo,EM,Lu-Mouhot}.

Lemma~\ref{lem:momentsN} was proved both for Maxwell molecules and
hard spheres. 
It first shows that
$$
\forall \, t \ge 0, \quad \hbox{Supp} \, f^N_t \subset  \mathbb{E}_N := 
\{ÊV \in (\R^d)^N; \, \langle \mu^N_V, {\bf m}_{\GG_1} \rangle \in \RR_{\GG_1} \},
$$
where ${\bf m}_{\GG_1} : \R^d \to \R_+ \times \R^d$, ${\bf m}_{\GG_1}(v) = (|v|^2,v)$ and
\begin{equation}\label{eq:RG1cas1}
\RR_{\GG_1} :=\left\{ {\bf r} = (r_0,r') \in \R_+ \times \R^d, \,\, |r'|^2 \le r_0 \le \EE_0\right\}
\ \mbox{with } \EE_0 = A^2 \ \mbox{ in case (i)},
\end{equation}
and
\begin{equation}\label{eq:RG1cas2}
\RR_{\GG_1} := \{ (\EE_0,0) \} \subset \R_+ \times \R^d, \quad \EE_0 := \EE,  
\ \mbox{ in case (ii)}. 
\end{equation}


It also proves that for any $k \ge 2$,
\[
\sup_{t \ge 0} \left \langle f^N _t, M_k ^N \right\rangle \le C^N_k 
\]
where $C^N_k$ depends on $k$, $\EE_0$, on the collision kernel and on
the initial value 
\[
\langle f^N _0, M_k ^N \rangle
\]
which is uniformly bounded in $N$ in terms of $k$ and $\EE_0$. This
shows that {\bf (A1)-(ii)} holds with $m_1(v) := |v|^{k_1}$ for any
$k_1 \ge 2$. The precise value of $k_1$ shall be chosen in
Section~\ref{sec:HSA3bis}.

As for \textbf{(A1)-(iii)}, we remark that for a given $N$-particle
velocity 
\[
V = (v_1, \dots, v_N) \in \R^{dN},
\]
we have
 $$
 V \in \hbox{Supp} \, f_0^{\otimes N} \ \Longleftrightarrow \ \forall
 \, i=1, \dots, N, \ v_i \in \hbox{Supp} \, f_0
 $$ 
which implies 
$$
\forall \, i=1,\dots, N, \quad m_{\GG_3}(v_i) \le m_{\GG_3}(A) \ \mbox{
  with } \ m_{\GG_3}(v) := e^{a \, |v|}
$$ 
for any constant $a>0$, which shall chosen later on. 

We conclude that
$$
\hbox{Supp} \, f_0^{\otimes N} \subset \left\{ V \in \R^{dN}; \
  M^N_{m_{\GG_3}}(V) \le m_{\GG_3}(A) \right\},
$$
and {\bf (A1)-(iii)} holds for the exponential growing weight
$m_{\GG_3}$. 

\subsection{Proof of condition (A2)} \label{sec:HSA2} For a given $k_1 \ge 2$,
let us define the space of probability measures
\[
\mathcal P_{\GG_1} := \left\{ f \in P(\R^d) \, ; \ 
  M_{k_1}(f)   <+\infty \right\}, 
\]
 the sets of constraints $\RR_{\GG_1}$ given by \eqref{eq:RG1cas1} or \eqref{eq:RG1cas2}, 
the constrained space (for ${\bf r} \in \RR_{\GG_1}$)
\begin{equation*}
  \mathcal P_{\GG_1,{\bf r}} := \left\{ f \in \PP_{k_1}(\R^d) \, ; 
  \ \langle f, |v|^2
  \rangle = r_0, 
   \quad \forall \, i=1, \dots, d, \ \langle f, v_i \rangle = r_i  \right\}, 
\end{equation*}
and the vector space
$$
\GG_1 := \left\{ h \in M^1_{k_1}(\R^d) \, ; \quad \forall \, i=1,
  \dots, d, \ \langle h, v_i \rangle = \langle h, 1 \rangle = \langle
  h, |v|^2 \rangle = 0 \right\}
$$
endowed with the total variation norm $\| \cdot \|_{\GG_1} := \|
\cdot \|_{M^1}$. 
We also define
\[
\BB \mathcal P_{\GG_1,a} := \left\{ f \in \mathcal P_{\GG_1} \, ; \ M_{k_1}(f) \le a \right\}
\] 
as well as for any ${\bf r} \in \RR_{\GG_1}$ the (possibly empty)
bounded constrained space
\[
\BB \mathcal P_{\GG_1,a,{\bf r}} := \Bigl\{ f \in \BB \mathcal
P_{\GG_1,a} \, ; \  \ \langle f, |v|^2 \rangle = r_0, \  \forall \, i=1, \dots, d, \ \langle f, v_i \rangle =
r_i \Bigr\}
\] 
endowed with the distance $d_{\GG_1}$ associated to the norm
$\| \cdot \|_{\GG_1}$.  
\medskip
 
The proof of the assertion {\bf (A2)-(i)} 
is postponed to section~\ref{sec:HSA4}, where we prove in
\eqref{estim:dt} a H\"older continuity of the flow in $\BB
\mathcal P_{\GG_1,a,{\bf r}}$, ${\bf r} \in \RR_{\GG_1}$. 
 \medskip

 Let us prove the assertion {\bf (A2)-(ii)}, that is the fact that the
 operator $Q$ is bounded and H\"older continuous from $\mathcal \BB
 \mathcal P_{\GG_1,a,{\bf r}}$ to $\GG_1$.  For any $f,g \in \mathcal
 \BB \mathcal P_{\GG_1,a,{\bf r}}$ we have
\begin{eqnarray*} \nonumber 
\left\| Q(g,g) - Q(f,f) \right\|_{M^1} &=& 
\left\| Q(g-f,g+f) \right\|_{M^1} \\ \nonumber &\le& 2 \int_{\R^d} \int_{\R^d}
  \int_{\mathbb{S}^{d-1}} b(\cos \theta) \, |v-v_*| \, |f - g | \,
  |f_*+g_*| \, {\rm d}\sigma \, {\rm d}v_* \, {\rm d}v \\ 
  &\le& 2 \,
  (1+a) \, \| b \|_{L^1} \, \| (f - g) \, \langle v \rangle \|_{M^1}.
\end{eqnarray*}
We deduce that 
$$
\| Q(g,g) - Q(f,f) \|_{M^1} \le  2 \, (1+a)^{3/2} \, \| b \|_{L^1} \,
\| f - g \|_{M^1}^{1/2}
$$
which yields 
\[
Q \in C^{0,1/2}(\BB \mathcal P_{\GG_1,a,{\bf r}}; \GG_1)
\]
and also implies that $Q$ is bounded on $\BB \mathcal P_{\GG_1,a,{\bf
    r}}$ since we can choose $g$ to be a Maxwellian distribution, for
which $Q(g,g)=0$.

\subsection{Proof of condition (A3)} \label{sec:HSA3} 

Let us define the weight
\[
\Lambda_1(f) := M_{k_1}(f) = \big\langle f,  \, \langle
v\rangle^{k_1} \big\rangle
\]
(this means that we choose $m'_{\GG_1} (v) = \langle v\rangle^{k_1}$
in assumption {\bf (A3)}). 

We claim that there exists a constant $C_{k_1} >0$ (depending on
$k_1$) such that for any $\eta \in (0,1)$ and any function
\begin{equation}\label{eq:PhiCapC1HS} 
  \Phi \in \bigcap_{{\bf r } \in \RR_{\GG_1}  }
  C_{\Lambda_1}^{1,\eta}(\mathcal P_{\GG_1,{\bf r}};\R),
\end{equation}
we have
\begin{equation}\label{eq:HSA3} 
\forall \, V \in  \mathbb{E}_N, \quad
\left| G^N (\Phi \circ \mu^N_V) - (G^\infty \Phi) (\mu^N_V) \right| 
\le C_{k_1} \, \EE_0 \, 
\left( \sup_{ {\bf r} \in \RR_{\GG_1}  } [ \Phi
  ]_{C^{1,\eta}_{\Lambda_1} (\mathcal P_{\GG_1,{\bf r}})} \right) \, {M^N_{k_1}(V)\over N^\eta}
\end{equation}
where we recall that
\[
M^N_{k_1}(V) := \frac{1}{N} \, \sum_{i=1} ^N \langle v_i \rangle^{k_1} .
\]
This would prove assumption {\bf (A3)} with the rate 
\[
\eps(N) = \frac{C_{k_1}}{N^{\eta}}, \qquad \eta := 1 - \delta.
\]

\medskip

For a given $\Phi$ satisfying \eqref{eq:PhiCapC1HS}, for any $V \in \Ee_N$  let us set
\[
\phi := D\Phi[ \mu_V ^N] 
\]
and remark that 
$$
{\bf r}_V := 
\Big(\left\langle \mu^N_V, |z|^2
\right\rangle, \left\langle \mu^N_V,z_1 \right\rangle, \dots, \left\langle
  \mu^N_V,z_d \right\rangle \Big) \in \RR_{\GG_1} 
$$
where $z=(z_1,\dots,z_d) \in \R^d$ has to understood as the blind
integration variable in the duality bracket. 

We then compute
\begin{eqnarray*}
&&  G^N \left(\Phi \circ  \mu ^N _V\right) 
=  \frac{1}{2N} \, \sum_{i,j=1} ^N |v_i - v_j| \, 
  \int_{\mathbb{S}^{d-1}} \left[ \Phi\left(\mu^N_{V^*_{ij}}\right) 
    - \Phi\left(\mu^N _V\right) \right] \, b(\theta_{ij}) \, {\rm d}\sigma 
\\
&&= \frac{1}{2N} \, \sum_{i,j=1} ^N |v_i - v_j| \,
  \int_{\mathbb{S}^{d-1}} \left\langle \mu^N _{V^*_{ij}}
    - \mu^N _V , \phi \right\rangle \, b(\theta_{ij}) \, {\rm d}\sigma 
\\
&&+  \frac{[\Phi]_{ C_{\Lambda_1} ^{1,\eta}(\mathcal P_{\GG_1,{\bf r}_V}) }}{2N} \, 
  \sum_{i,j=1} ^N |v_i - v_j|  \times \\ 
  && \qquad \qquad \int_{\mathbb{S}^{d-1}} 
  \!\!\! \ \max\left\{ M_{m'_{\GG_1}}\left(\mu^N _{V^*_{ij}}\right) ;\, 
    M_{m'_{\GG_1}}\left( \mu^N _{V}\right) \! \right\} \, 
  \OO \! \left( \left\| \mu^N _{V^*_{ij}} 
      - \mu^N _V \right\|_{M^1}^{1+\eta} \right)  \, {\rm d}\sigma \\
&& =:  I_1(V) + I _2(V).
\end{eqnarray*}

For the first term $I_1(V)$ we argue similarly than in the proof of
{\bf (A3)} for the Maxwell molecules case in
section~\ref{subsect:ProofA3tMM}, and we get
\[
I_1(V) = \left\langle Q\left(\mu^N _V, \mu^N _V\right), \phi
\right\rangle = \left(G^\infty\Phi\right)\left(\mu^N _V\right).
\]

As for the second term $I_2(V)$, using \eqref{eq:MkmuV*} and
$\|\mu^N_V - \mu^N_{V^*_{ij}} \|_{M^1} \le 4/N$,
we deduce 
\begin{eqnarray*}
  |I_2(V)|  
  &\le& C_{k_1} \, {  M^N _{k_1}(V)} \, 
  [\Phi]_{C^{1,\eta}_\Lambda(\mathcal P_{\GG_1,{\bf r}_V})}  \, 
  \left( \frac{1}{2N} \, \sum_{i,j=1} ^N |v_i - v_j| \,
  \left( \frac4N \right)^{1+\eta} \right) \\
  & \le & C_{k_1} \, {  M^N _{k_1}(V)} \, 
 [\Phi]_{C^{1,\eta}_\Lambda(\mathcal P_{\GG_1,{\bf r}_V})}  \,  \left( {1 \over N^\eta} \, 
 \frac{1}{N^2} \, \sum_{i,j=1} ^N \Big( \left\langle v_i \right\rangle + 
    \left\langle v_j \right\rangle \Big) \right) \\
  & \le & \frac{C_{k_1}}{N^\eta} \, M^N _{k_1}(V) \, M^N _{2}(V)  \,
  [\Phi]_{C^{1,\eta}_\Lambda(\mathcal P_{\GG_1,{\bf r}_V})}.
 \end{eqnarray*}
We then use the elementary inequality the energy bound to deduce
\[
|I_2(V)|  \le \frac{C_{k_1} \, \EE_0}{N^\eta} \, M^N _{k_1}(V) \,
  [\Phi]_{C^{1,\eta}_\Lambda(\mathcal P_{\GG_1,{\bf r}_V})}.
\]

We conclude that \eqref{eq:HSA3} holds by combining the two last
estimates on $I_1$ and $I_2$.

\subsection{Proof of condition (A4) with time growing bounds} 
\label{sec:HSA4}

Let us consider some $1$-particle initial data 
\[
f_0, g_0 \in \mathcal P_{\GG_1}.
\]

In a similar way as in the previous section, we then define (under the
assumption \eqref{hypHS} on the collision kernel) the associated
solutions $f_t$ and $g_t$ to the nonlinear Boltzmann equation
\eqref{el}, as well as
\[
h_t := \DD^{N\!  L}_t\left[f_0\right](g_0-f_0)
\]
the solution to the linearized Boltzmann equation around $f_t$. These
solutions are given by
\[
\left\{
\begin{array}{l}
\partial_t f_t = Q(f_t,f_t), \qquad f_{|t=0} = f_0 \vspace{0.3cm} \\
\partial_t g_t = Q(g_t,g_t), \qquad g_{|t=0} = g_0 \vspace{0.3cm} \\
\partial_t h_t = 2 \, Q(f_t,h_t), \qquad h_{|t=0} = h_0 := g_0 - f_0.
\end{array}
\right.
\]
We also define as before
\[
\omega_t := g_t - f_t - h_t.
\]

We shall now again \emph{expand the limit nonlinear semigroup} in terms
of the initial data, around $f_0$. 
The goal is to prove assumption {\bf
  (A4)}.
This imposes the choice of weight 
\[
\Lambda_2 (f) = \Lambda_1(f)^{\frac12} = \sqrt{M_{k_1}(f)}. 
\]

\begin{lem}\label{lem:expansionHS} 
  For any given energy $\EE_0 > 0$ and any $\eta \in (0,1)$ there exists
\begin{itemize}
\item some constant $\bar k_1 \ge 2$ (depending on $\EE_0$ and $\eta$),
\item some constant $C>0$ (depending on $\EE_0$),
\end{itemize}
such that for $k_1 \ge \bar k_1$, for any
\[
{\bf r} \in \RR_{\GG_1}   \  \mbox{ and } \ f_0, g_0 \in \mathcal P_{\GG_1,{\bf r}} 
\]
and for any $t \ge 0$ we have
\begin{eqnarray} \label{estim:dt} && \left\| g_t - f_t \right\|_{M^1 _2}
  \le e^{C \, (1+t)} \, \sqrt{  M_{k_1}(f_0+g_0) } \, \left\| f_0 - g_0 \right\|_{M^1}^{\eta},
  \\ \nonumber \\ \label{estim:ht} && \left\| h_t \right\|_{M^1 _2}
  \le e^{C \, (1+t)}
  \, \sqrt{ M_{k_1}(f_0+g_0) } \, \left\| f_0 - g_0  \right\|_{M^1}^{\eta}, 
  \\ 
  \nonumber \\ \label{estim:dt-ht} && \left\|\omega_t
  \right\|_{M^1 _2} \le e^{C \, (1+t)} \, \sqrt{  
      M_{k_1}(f_0+g_0)  } \, \left\| f_0 - g_0
  \right\|_{M^1}^{1+\eta}.
  \end{eqnarray}
\end{lem}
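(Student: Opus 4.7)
The strategy consists of four coupled steps: (a) propagate uniformly-in-time polynomial moments for the nonlinear and linearized flows; (b) prove a stability estimate in the total variation norm $M^1$ with at most exponential-in-time growth and a loss of one moment compensated by interpolation; (c) interpolate between the weak $M^1$ control and the strong $M^1_{k_1}$ moment bound to recover $M^1_2$ control; (d) bootstrap the same machinery to the linearized flow and to the second-order remainder.

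First, I would invoke the classical Povzner-type propagation of moments for the nonlinear hard spheres Boltzmann equation (see Lemma~\ref{lem:momentsN} for the particle analogue and \cite{MW99,Lu-Mouhot}): for every $k \ge 2$ there exists $C_k = C_k(\mathcal{E}_0)$, depending only on $k$ and the energy bound, such that $\sup_{t \ge 0} M_k(f_t) \le C_k\,(1+M_k(f_0))$ uniformly over centered data with energy at most $\mathcal{E}_0$, and similarly for $g_t$; by linearity the same bound holds for $h_t$ in terms of $M_k(h_0) \le M_k(f_0+g_0)$ via Duhamel against the nonnegative flow. This provides the overall moment control $\sup_{t \ge 0} M_{k_1}(f_t+g_t+|h_t|) \le C\, M_{k_1}(f_0+g_0)$ used throughout the rest of the proof.

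Second, for \eqref{estim:dt} I would write $d_t = g_t-f_t$ as a solution of $\partial_t d_t = Q(d_t, f_t+g_t)$ (symmetric bilinear form), and derive a TV differential inequality by multiplying by $\mathrm{sign}(d_t)$ and integrating. Using the standard change of variables $(v,v_*)\mapsto(v',v'_*)$ on the gain part together with the elementary inequality $|v-v_*| \le \langle v\rangle + \langle v_*\rangle$, the cancellations between gain and loss yield
\begin{equation*}
\frac{d}{dt} \|d_t\|_{M^1} \le C\,\bigl(1 + M_1(f_t)+M_1(g_t)\bigr)\,\|d_t\|_{M^1_1}.
\end{equation*}
The one-moment loss is then closed by Hölder interpolation
\begin{equation*}
\|d_t\|_{M^1_1} \le \|d_t\|_{M^1}^{1-1/k_1}\,\|d_t\|_{M^1_{k_1}}^{1/k_1} \le \|d_t\|_{M^1}^{1-1/k_1}\,\bigl(C\,M_{k_1}(f_0+g_0)\bigr)^{1/k_1},
\end{equation*}
turning the inequality into a sub-linear ODE $y' \le K\, y^{1-1/k_1}$ with $y = \|d_t\|_{M^1}$. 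Integrating and using that $\|d_0\|_{M^1} \le 2$, a careful bookkeeping (combined with the elementary convexity $(a+b)^{k_1} \le e^{Ct}(a^{k_1}+b^{k_1})$ after rescaling time) yields an estimate of the form $\|d_t\|_{M^1} \le e^{C(1+t)}\,M_{k_1}(f_0+g_0)^{1/k_1}\,\|d_0\|_{M^1}^{\alpha}$ for some $\alpha = \alpha(k_1) < 1$ that can be made arbitrarily close to $1$ by taking $k_1$ large. The final interpolation
\begin{equation*}
\|d_t\|_{M^1_2} \le \|d_t\|_{M^1}^{1-2/k_1}\,\|d_t\|_{M^1_{k_1}}^{2/k_1}
\end{equation*}
then delivers \eqref{estim:dt} with exponent $\eta = (1-2/k_1)\,\alpha(k_1)$, which is $\ge \eta$ provided $k_1 \ge \bar k_1(\eta)$; the factor $M_{k_1}(f_0+g_0)^{2/k_1}$ is absorbed into $\sqrt{M_{k_1}(f_0+g_0)}$ since $k_1 \ge 4$.

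Third, the estimate \eqref{estim:ht} for the linearized flow $\partial_t h_t = 2\,Q(f_t, h_t)$ follows by the same two-step scheme: the $\mathrm{sign}(h_t)$ trick and the $|v-v_*| \le \langle v\rangle + \langle v_*\rangle$ splitting produce a TV inequality of the same form, linear in $h$, which is handled by Hölder interpolation against the a priori high-moment control $\|h_t\|_{M^1_{k_1}} \le C\,M_{k_1}(f_0+g_0)$ from step one. The initial datum $h_0 = g_0 - f_0$ gives the required $\|f_0-g_0\|_{M^1}^\eta$ factor. Finally, \eqref{estim:dt-ht} for the remainder $\omega_t$ follows from the identity
\begin{equation*}
\partial_t \omega_t = 2\,Q(f_t, \omega_t) + Q(d_t, d_t),\qquad \omega_0 = 0,
\end{equation*}
treated by Duhamel against the linearized semigroup. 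Bounding the forcing by $\|Q(d_s,d_s)\|_{M^1} \le C\,\|d_s\|_{M^1_1}^2$ and applying the interpolated version of \eqref{estim:dt} (i.e., $\|d_s\|_{M^1_1} \le e^{C(1+s)}\,M_{k_1}^{1/2}\,\|d_0\|_{M^1}^{(1+\eta)/2}$) produces the quadratic-in-$\|d_0\|_{M^1}$ bound that yields the $\|f_0-g_0\|_{M^1}^{1+\eta}$ power after one last interpolation from $M^1$ to $M^1_2$.

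The main obstacle is unambiguously the moment loss inherent to the hard-sphere cross-section: every stability inequality in $M^1_s$ has a right-hand side in $M^1_{s+1}$, so the three estimates must be coupled with the quantitative Povzner bounds through carefully tuned interpolation exponents. This is precisely what forces $k_1 \ge \bar k_1(\mathcal{E}_0,\eta)$ and what produces the time growth $e^{C(1+t)}$ in place of the uniform-in-time bounds available for the true Maxwell molecules in Lemmas~\ref{lem:a4maxwellfourier}--\ref{lem:a4maxwellfourier2}.
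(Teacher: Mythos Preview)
Your strategy has a genuine gap at the sub-linear ODE step. The inequality $y' \le K\, y^{1-1/k_1}$ with $y(0)=y_0$ integrates to
\[
y(t) \le \bigl(y_0^{1/k_1} + Kt/k_1\bigr)^{k_1},
\]
which for fixed $t>0$ does \emph{not} vanish as $y_0 \to 0$: the ODE $y'=K\,y^{1-1/k_1}$ is non-Lipschitz at the origin and admits the nontrivial solution $(Kt/k_1)^{k_1}$ from zero data. Splitting $(a+b)^{k_1}\le 2^{k_1-1}(a^{k_1}+b^{k_1})$ only yields $y(t)\le C_{k_1}\,y_0 + C_{k_1}(Kt)^{k_1}$, whose second term is independent of $y_0$. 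Hence no bookkeeping can extract a bound of the form $y(t)\le C\,y_0^{\alpha}$ with $\alpha>0$, and the factor $\|f_0-g_0\|_{M^1}^{\eta}$ in \eqref{estim:dt} is lost. Closing the one-moment loss by H\"older interpolation against $\|d_t\|_{M^1_{k_1}}$, which is harmless for Maxwell molecules, is exactly what fails for hard spheres.

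The paper proceeds quite differently. Working directly with the weight $\langle v\rangle^2$, which is a collisional invariant, the DiBlasio-type computation \eqref{eq:htM12} produces a \emph{linear} inequality
\[
\frac{d}{dt}\|h_t\|_{M^1_2} \le C\,\|f_t\|_{M^1_3}\,\|h_t\|_{M^1_2},
\]
whose Gronwall solution preserves the full power of $\|h_0\|_{M^1_2}$. The entire difficulty is transferred to the exponential factor $\exp\bigl(C\int_0^t \|f_s\|_{M^1_3}\,ds\bigr)$, which must be bounded by a \emph{polynomial} moment of $f_0$ (this factor becomes the weight $\Lambda_1$, and only polynomial moments can be propagated uniformly along the $N$-particle flow). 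The decisive technical ingredient is a Povzner inequality with logarithmic weights, giving
\[
\int_0^t \|f_s\|_{M^1_3}\,ds \le C\,t + C\,\|f_0\|_{M^1_{2,1}},
\]
i.e.\ only a \emph{logarithmic} moment loss relative to $M^1_2$; combined with the interpolation $\|f_0\|_{M^1_{2,1}}\le C\,(1+\ln\|f_0\|_{M^1_5})$, exponentiation yields $M_{k_1}(f_0)^{1/6}$ for $k_1$ large. It is this logarithmic-weight Povzner argument, not H\"older interpolation on $d_t$, that closes the estimate; the paper singles out this point explicitly as ``absolutely crucial''.
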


\begin{proof}[Proof of Lemma~\ref{lem:expansionHS}]
  We proceed in several steps and number the constants for clarity.

Let us define 
$$
\forall \, h \in M^1(\R^d), \quad \| h \|_{M^1_k} := \int_{\R^d}
\langle v \rangle^k \, {\rm d}|h|(v), \quad \| h \|_{M^1_{k,\ell}} :=
\int_{\R^d} \langle v \rangle^k \, (1 + \ln \langle v \rangle)^\ell
\, {\rm d}|h|(v).
$$

\smallskip\noindent{\sl Step 1. The strategy.}
Existence and uniqueness for $f_t$, $g_t$ and $h_t$ is a consequence
of the following important stability argument that we use several
times. This estimate is due to DiBlasio~\cite{DiB74} in a $L^1$
framework, and it has been recently extended to a measure framework in
\cite[Lemma 3.2]{EM} (see also \cite{Fo-Mo} and \cite{Lu-Mouhot} for
other argument of uniqueness for measure solutions of the spatially
homogeneous Boltzmann equation).

Let us sketch the argument for $h$. We first write
\begin{multline}\label{eq:htM12}
\frac{{\rm d}}{{\rm d}t} \int \langle v \rangle^2 \, {\rm d} |h_t|(v) 
\le \int\!\! \! \int\!\! \! \int {\rm d}|h_t|(v) \, {\rm d}f_{t} (v_*)\, |u| \,
b(\theta) \, \Big[ \langle v' \rangle^2 \!+\! \langle v'_* \rangle^2\!
-\! \langle v \rangle^2 \!-\!  \langle v_* \rangle^2 \Big] \, {\rm d}\sigma
\\ + 2 \int\!\!\! \int\!\! \! \int {\rm d} |h_t|(v) \, {\rm d} f_{t} (v_*)\, |u| \,
b(\theta) \, \langle v_* \rangle^2 \, {\rm d}\sigma 
\end{multline}
(this formal computation can be justified by a regularization
proceedure, we refer to \cite{EM} for instance). Since the first
term vanishes, we deduce that
\begin{equation}\label{ineq:htHS}
\frac{{\rm d}}{{\rm d}t} \| h_t \|_{M^1_2} \le C_1 \, \| f \|_{M^1_3} \, \| h_t \|_{M^1_2}
\end{equation}
for some constant $C_1 >0$ only depending on $b$. 

Then in the case when
\begin{equation}\label{bornemomentintegral3}
\|f_s \|_{M^1_3} \in L^1(0,t) \ \mbox{ on some time interval } \ s\in [0,t]
\end{equation}
we may integrate this differential inequality and we deduce that the
solution $h$ to the linear equation $\partial_t h = 2 Q(f_t,h)$ is
unique in $M^1_2$.

More precisely, we have established
\begin{equation}\label{eq:unifh}
\sup_{s \in [0,t]} \left\| h_s \right\|_{M^1_2} \le  \left\| g_0 - f_0 \right\|_{M^1 _2} \, 
\exp \left( C_1 \, \int_0 ^t \left\| f_s \right\|_{M^1 _3} \, {\rm d}s \right), 
\end{equation}
and similar arguments imply
\begin{equation}\label{eq:unifd}
\sup_{s \in [0,t]} \left\| f_s - g_s \right\|_{M^1_2} \le  \left\| g_0 - f_0 \right\|_{M^1 _2} \, 
\exp \left( C_1 \, \int_0 ^t \left\| f_s +g_s \right\|_{M^1 _3} \, {\rm d}s \right).
\end{equation}

It is worth mentioning that one cannot prove
\eqref{bornemomentintegral3} under the sole assumption 
\[
\left\| f_0 \right\|_{M^1_{2}} < \infty
\]
on the initial data since it would contradict the non-uniqueness
result of~\cite{LuW02}. However, as we prove in \eqref{fM13L1t} below,
one may show (thanks to the Povzner inequality, as developped in
\cite{MW99,Lu1999}) that \eqref{bornemomentintegral3} holds as soon as
\[
\| f_0 \|_{M^1 _{2,1}} < \infty.
\]
This will be a key step for establishing \eqref{estim:dt} and
\eqref{estim:ht}.

\medskip Now, our goal is to estimate the $M^1 _2$ norm
of  
\[ 
\omega_t := g_t -f_t - h_t
\]
in terms of $\| g_0 - f_0 \|_{M^1 _2}$.  The measure $\omega_t$ satisfies
the evolution equation:
\[
\partial_t \omega_t = Q(g_t,g_t) - Q(f_t,f_t) - 
Q(h_t,f_t) - Q(f_t,h_t), \quad \omega_0 = 0,
\]
which can be rewritten as
\[
\partial_t \omega_t = Q(\omega_t, f_t+g_t) + Q(h_t,g_t-f_t).
\]
The same arguments as in \eqref{eq:htM12}-\eqref{ineq:htHS} yield the
following differential inequality
\[ 
\frac{{\rm d}}{{\rm d}t} \left\|\omega_t\right\|_{M^1 _2} \le 
C_2 \, \left\|\omega_t\right\|_{M^1 _2} \, 
 \left\|f_t+g_t\right\|_{M^1_3}  + \left\| Q(h_t,g_t-f_t)
\right\|_{M^1_2}, \quad  \left\|\omega_0\right\|_{M^1 _2} = 0,
\]
for some constant $C_2 >0$ depending on $b$. 

We deduce 
\begin{equation*}
 \sup_{s \in [0,t]} \left\|\omega_s\right\|_{M^1 _2} \le  
  \left( \int_0 ^t \left\| Q(h_s,f_s-g_s)
    \right\|_{M^1_2} \, {\rm d}s \right) \, 
  \exp\left(C_2 \, \int_0 ^t \left\|f_s+g_s\right\|_{M^1_3} \, {\rm d}s\right).
\end{equation*}

Since
\begin{multline*}
  \int_0^t  \left\| Q(h_s,f_s-g_s)
  \right\|_{M^1_2} \, {\rm d}s \le
  C_2 \, \left( \sup_{s \in [0,t]} \left\| h_s \right\|_{M^1_2} \right) \,
  \left( \int_0^t \left\| g_s -f_s \right\|_{M^1_3} \, {\rm d}s \right) \\
 + \, C_2 \, \left( \sup_{s \in [0,t]} \left\| g_s - f_s
  \right\|_{M^1_2} \right) \, \left( \int_0^t \| h_s \|_{M^1_3} \, {\rm d}s \right),
\end{multline*}
we deduce from~\eqref{eq:unifh} and \eqref{eq:unifd}
\begin{multline}\label{eq:wcomplete}
 \sup_{s \in [0,t]} \left\| \omega_s \right\|_{M^1_2} \le C_2 
  \, \left\| g_0 - f_0 \right\|_{M^1 _2} \, \exp \left( C_2 \, \int_0 ^t
    \left( \left\| f_s \right\|_{M^1 _3} + \left\| g_s \right\|_{M^1
        _3} \right) \, {\rm d}s  \right)  
  \\
   \times \Bigg[ \left( \int_0 ^t \left\| g_s - f_s \right\|_{M^1
        _3} \, {\rm d}s \right) \, \exp \left( C_1 \, \int_0 ^t \left\| f_s \right\|_{M^1
        _3} \, {\rm d}s \right) \\ + \left( \int_0 ^t \left\| h_s
      \right\|_{M^1 _3} \, {\rm d}s 
    \right) \, \exp \left( C_1 \, \int_0 ^t \left( \left\| f_s \right\|_{M^1 _3} +
      \left\| g_s \right\|_{M^1 _3} \right) \, {\rm d}s \right) \Bigg].
\end{multline}
Hence the problem now reduces to the obtaining of sharp enough time
integral controls over
\[
\left\|f_s\right\|_{M^1_3}, \quad \left\|g_s\right\|_{M^1_3}, \quad 
\left\|f_s - g_s \right\|_{M^1_3} \ \mbox{ and } \ \left\| h_s \right\|_{M^1_3}.
\]

\bigskip\noindent{\sl Step 2. Time integral control of $f$ and $g$
  in $M^1 _3$.}  In this step we prove
\begin{equation}\label{fM13L1t} 
  \int_0^t \left\| f_s
  \right\|_{M^1_{3,\ell-1}}  \, {\rm d}t \le C_3(\EE_0) \, t + C_4 \, \left\| f_0
  \right\|_{M^1_{2,\ell}} \quad \ell = 1, 2, 
\end{equation} 
for the solution $f_t$, where $C_3(\EE_0) >0$ is a constant depending on
the energy, and $C_4 >0$ is a numerical constant. The same estimate
obviously holds for the solution $g_t$. 

The estimates \eqref{fM13L1t} are a consequence of the accurate
version of the Povzner inequality which has been proved in
\cite{MW99,Lu1999}. Indeed it was shown in \cite[Lemma 2.2]{MW99}
that for any function
\[
\Psi : \R^d \to \R, \quad \Psi (v) = \psi (|v|^2) \ \mbox{ with } \
\psi \mbox{ convex,}
\]
the solution $f_t$ to the hard spheres Boltzmann equation satisfies
$$
\frac{{\rm d}}{{\rm d}t} \int_{\R^d} \Psi(v) \, {\rm d}f_t(v) = \int_{\R^d}\!
\int_{\R^d} {\rm d}f_t(v) \, {\rm d}f_t (v_*) \, |v-v_*| \, K_\Psi(v,v_*)
$$
with $K_\Psi = G_\Psi - H_\Psi$, where the term $G_\Psi$ ``behaves
mildly'' (see below) and the term $H_\Psi$ is given by
(see~\cite[formula (2.7)]{MW99})
$$
H_\Psi (v,v_*) = 2\pi \, 
\int_0^{\pi/2} \Big[ \psi \left(|v|^2 \cos^2 \theta + |v_*|^2\sin^2 \theta
  \right) - \cos^2 \theta \, \psi \left(|v|^2\right) - \sin^2 \theta  \, \psi \left(|v_*|^2
  \right) \Big] \, {\rm d}\theta.
$$
Note that $H_\Psi \ge 0$ since its integrand is nonnegative because of
the convexity of $\psi$.

More precisely, in the cases that we are interested with, namely
\[
\Psi(v) = \psi_{2,\ell}(|v|^2) \ \mbox{ with } \ \psi_{k,\ell}(r) = r^{k/2} \,
(\ln \, r)^\ell \ \mbox{ and } \ \ell = 1, 2,
\]
it is established in \cite{MW99} that (with obvious notation)
\begin{equation*}
  \forall \, v,v_* \in \R^d, \quad 
  \left|G_{\psi_{2,\ell}}(v,v_*)\right| \le 
  C_5(\ell) \, \langle v \rangle \, \left(\ln \left\langle v
      \right\rangle\right)^\ell \, 
  \left\langle v_* \right\rangle \, 
  \left(\ln \left\langle v_* \right\rangle \right)^\ell
\end{equation*}
for some constant $C_5(\ell) >0$ depending on $\ell$. 

On the other hand, in the case $\ell = 1$, we compute, with the
help of the the notation $x := \cos^2 \theta$ and $u = |v_*|/|v|$,
\begin{multline*} 
\forall \, x \in [1/4,3/4], \ \forall \,  u \in [0,1/2], \\
\psi_{2,1}
\Big(|v|^2 \cos^2 \theta + |v_*|^2\sin^2 \theta \Big) - \cos^2 \theta \,
\psi_{2,1} \left(|v|^2\right) - \sin^2 \theta \, \psi_{2,1} 
\left(|v_*|^2 \right)=
\\
 = |v|^2 \, \Big[ (1-x) \, \psi_{2,1} \left(u^2\right) 
   + x \, \psi_{2,1} (1) - \psi_{2,1} \left((1-x) \, u^2 
     + x\right) \Big] \ge C_6 \, |v|^2,
\end{multline*} 
for some numerical constant $C_6 > 0$, which only depends on the
strict convexity of the real function $\psi_{2,1}$. We deduce that
there exists a constant $C_7 > 0$ such that
$$
H_{\psi_{2,1}} (v,v_*) \ge C_7 \, |v|^2 \, {\bf 1}_{|v| \ge 2 \, |v_*|}.
$$

Similarly, in the case $\ell = 2$, we have
\begin{multline*}
\forall \, x \in [1/4,3/4], \ \forall \,  u \in [0,1/2], \\
\psi_{2,2} \Big(|v|^2 \cos^2 \theta + |v_*|^2\sin^2 \theta \Big) - \cos^2
\theta \, \psi_{2,2} \left(|v|^2\right) - \sin^2 \theta  \, \psi_{2,2} 
\left(|v_*|^2\right)= \\
= 2 \, |v|^2  \, \ln |v|^2 \,  \left\{ (1-x) \, \psi_{2,1} \left(u^2\right) 
+ x \, \psi_{2,1} (1) - \psi_{2,1} \left((1-x) \, u^2 + x\right)
\right\} \\
+  |v|^2 \,  \Big[ (1-x) \, \psi_{2,2} \left(u^2\right) 
+ x \, \psi_{2,2} (1) - \psi_{2,2} \left((1-x) \, u^2 + x\right) \Big] 
\ge C_8 \,  |v|^2 \, \ln |v|^2,
\end{multline*}
for some constant $C_8 >0$ depending on the strict convexity of
$\psi_{2,1}$ and $\psi_{2,2}$. Hence we obtain for some constant
$C_9 >0$
$$
H_{\psi_{2,2}} (v,v_*) \ge C_9 \, |v|^2 \, \ln |v|^2 \, {\bf
  1}_{|v| \ge 2 \, |v_*|}.
$$

Putting together the estimates obtained on $G_{2,\ell}$ and
$H_{2,\ell}$ we deduce 
\[
|v-v_*| \, K_{2,\ell} \le C_{10} \, |v-v_*| \, \langle v \rangle \, \left\langle v_*
\right\rangle \, \left( \ln \left \langle v \right\rangle \right)^\ell
\, \left( \ln \left \langle v_* \right\rangle\right)^\ell  - C_{11} \,
\left| v-v_* \right| \, |v|^2
\, (\ln |v|)^{\ell-1} \,  {\bf
  1}_{|v| \ge 2 \, |v_*|}
\]
for some constants $C_{10}, C_{11} >0$. Since 
\begin{multline*}
\left| v-v_* \right| \, |v|^2 \, (\ln |v|)^{\ell-1} \, {\bf
  1}_{|v| \ge 2 \, |v_*|} \ge \mbox{Cst.} \, \langle v \rangle^3 \, \left(
  \ln \langle v \rangle \right)^{\ell-1} \, {\bf
  1}_{|v| \ge 2 \, |v_*|} - \mbox{Cst.} \\ 
\ge \mbox{Cst.} \, \langle v \rangle^3 \, \left(
  \ln \langle v \rangle \right)^{\ell-1} - \mbox{Cst.} \, \langle v \rangle^2 \, \left\langle v_*
\right\rangle^2 
\end{multline*}
and 
\begin{multline*}
  |v-v_*| \, \langle v \rangle \, \left\langle v_* \right\rangle \,
  \left( \ln \left \langle v \right\rangle \right)^\ell \, \left( \ln
    \left \langle v_* \right\rangle\right)^\ell \le \langle v
  \rangle^2 \, \left\langle v_* \right\rangle \, \left( \ln \left
      \langle v \right\rangle \right)^\ell \, \left( \ln \left \langle
      v_* \right\rangle\right)^\ell + \langle v \rangle \,
  \left\langle v_* \right\rangle^2 \, \left( \ln \left \langle v
    \right\rangle \right)^\ell
  \, \left( \ln \left \langle v_* \right\rangle\right)^\ell  \\
  \le \mbox{Cst.} \, \langle v \rangle^2 \, \left\langle v_*
  \right\rangle^2 \, \left( \ln \left \langle v \right\rangle
  \right)^\ell + \langle v \rangle^2 \, \left\langle v_*
  \right\rangle^2 \, \left( \ln \left \langle v_*
    \right\rangle\right)^\ell
\end{multline*}
we easily deduce 
\begin{equation}\label{estim:Povnzer} 
|v-v_*| \, K_{2,\ell} \le C_{12}(R) \, \langle v
\rangle^2 \, \langle v_* \rangle^2  +C_{13}(R) \langle v\rangle^3 \, (\ln \langle
v \rangle)^{\ell-1} \, \langle v_* \rangle^2  - C_{14} \, \langle v\rangle^3 \, (\ln \langle
v \rangle)^{\ell-1}
\end{equation} 
for some free cutoff parameter $R>0$, some constant $C_{12}(R) \to
+\infty$ as $R \to +\infty$, $C_{13}(R) \to 0$ as $R \to +\infty$, and
$C_{14} >0$, and we finally obtain the differential inequality
\[
\frac{{\rm d}}{{\rm d}t} \|f_t\|_{M^1_{2,\ell}} \le C_{12}(R) \, (1 +
\EE_0)^2  + C_{13}(R) \, (1+\EE_0) \, M_{3,\ell-1} - C_{14} \, M_{3,\ell-1},
\]
from which \eqref{fM13L1t} follows by choosing $R>0$ large enough. 


\bigskip\noindent{\sl Step 3. Exponential time integral control of
  $f$ and $g$ in $M^1 _3$.}
This step yields a proof of \eqref{estim:dt} and
  \eqref{estim:ht}.

  Let us first prove that
\begin{equation}\label{estim:expfg} 
\forall \, t \ge 0, \quad  
e^{\left(1+ 2C_1+C_2 \right) \, \int_0 ^t \left( 
\left\| f_s \right\|_{M^1 _3} + \left\| g_s \right\|_{M^1 _3} \right) \, {\rm d}s} \le
  C_{15}(\EE_0) \, e^{C_{16} (\EE_0) \, t} \, \left(\max\left\{ M_k(f_0), M_k(g_0)
    \right\} \right)^{\frac16}, 
\end{equation}
for some constants $C_{15}(\EE_0), C_{16}(\EE_0) >0$ depending on the energy $\EE_0$, for
any $k \ge k_{\EE_0}$, with $k_{\EE_0}$ big enough depending on the energy
$\EE_0$.

We shall use the previous step and an interpolation argument. For any
given probability measure
\[
f \in \PP_k(\R^d) \ \mbox{ with } \ \int_{\R^d} |v|^2 \, {\rm d}f(v)  \le \EE_0,
\]
we have for any $a > 2$
\begin{eqnarray*}
  \| f \|_{M^1_{2,1}} &=& \int_{\R^d} \langle v \rangle^2 \, \left(1+
    \frac{\ln(\langle v \rangle^2)}{2} \right) \, \left({\bf 1}_{\langle v \rangle^2 \le a} +
    {\bf 1}_{\langle v \rangle^2 \ge a} \right) \, {\rm d} f(v)
  \\
  &\le& (1+\EE_0) \, \left(1+ \frac{\ln a}{2}\right) + \frac{1}{a} \, \int_{\R^d} \langle v
  \rangle^4 \, \left(1+ \ln\langle v \rangle\right) \, {\rm d} f(v)
  \\
  &\le& (1+\EE_0) \, \left(1+ \frac{\ln a}{2}\right) + \frac{1}{a} \, \| f \|_{M^1_5}
\end{eqnarray*}
where we have used inequality $\ln x \le x-1$ for $x \ge 1$ in the
last step. 

By choosing 
\[
a:= \| f \|_{M^1_5} ^2,
\]
we get
\begin{equation}\label{estim:M121M15} 
\|f\|_{M^1_{2,1}} \le 2 \, (1+\EE_0) \,
\left(1+ \ln \| f \|_{M^1_5}\right).
\end{equation}

\begin{rem}
  Observe here that it was absolutely crucial to be able to control
  the right-hand side of \eqref{fM13L1t} in terms of the $M^1_{2,1}$
  moment, that is only a \emph{logarithmic loss} of moment as compared
  to $M^1 _2$. This is what allows us to control this right-hand side
  in terms of the \emph{logarithm} of a higher moment of $f$, so that
  the exponential in \eqref{estim:expfg} can be controlled in terms of
  some \emph{polynomial} moment of $f$, hence fulfilling the
  requirement on the loss of weight in the stability estimates on the
  semigroup. Recall indeed that the moment associated with the weight
  $\Lambda_1$ has to be \emph{controlled} along the flow of the
  $N$-particle system. And we have not been unable
  to show the propagation of exponential moment bounds for such a
  high-dimension evolution.
\end{rem}

On the other hand, the following elementary H\"older inequality holds
\begin{equation}\label{eq:Holderkell} 
\forall \, k,k' \in \N, \ k' \le k, \ \forall \, f \in M^1_k, \quad 
\| f \|_{M^1_{k'}} \le
\| f \|_{M^1} ^{1-k'/k} \, \| f\|_{M^1_k} ^{k'/k} \le \| f\|_{M^1_k} ^{k'/k}.
\end{equation} 
Then estimate~\eqref{estim:expfg} follows from \eqref{fM13L1t},
\eqref{estim:M121M15} and \eqref{eq:Holderkell} with $k' = 5$ and $k =k_1
\ge 5$ large enough in such a way that
\[
\left( 1+ 2C_1 + C_2 \right) \, C_4 \, 2 \, (1+\EE_0) \, \frac{5}{k} \le \frac16.
\] 
We then deduce \eqref{estim:dt} from \eqref{eq:unifd}, and (similarly)
\eqref{estim:ht} from \eqref{eq:unifh}.


\bigskip\noindent{\sl Step 4. Time integral control on $d$ and $h$.} 
Let us write as before 
\[
d_t := f_t - g_t.
\]
Let us prove
\begin{multline}\label{estim:h&dM131L1T}
\left( \int_0^t 
\left\| d_s \right\|_{M^1_{3}} \, {\rm d}s \right) \ \mbox{ and } \ \left(
\int_0 ^t \left\| h_s \right\|_{M^1_{3}} \, {\rm d}s \right) 
\\ \le 
C_{20}\,  \left\| d_0 \right\|_{M^1 _2} \, e^{C_1 \,
  \int_0 ^t \left( \left\| f_s \right\|_{M^1 _3} + \left\| g_s
    \right\|_{M^1 _3} \right) \, {\rm d}s} \, 
\left( C_3(\EE_0) \, t + C_4 \, \left\| f_0 \right\|_{M^1 _{2,2}} \right)
+ C_{21} \, \left\| d_0 \right\|_{M^1_{2,1}}.
\end{multline}
for some constants $C_{20}, C_{21} >0$ defined later. Performing similar
computations to those leading to \eqref{eq:htM12}, we obtain
\begin{eqnarray*} 
  \frac{{\rm d}}{{\rm d}t} \| h_t \|_{M^1_{2,1}} &\le& \int\!\!
  \int\!\! {\rm d}|h_t|(v) \, {\rm d} f_{t} (v_*)\, \left| v - v_*\right| \, K_{2,1}(v,v_*) \\
  \nonumber && + C_{17} \, \int\!\! \int\!\! \int {\rm d} |h_t|(v) \, {\rm d} f_{t}
  (v_*)\, \left| v-v_* \right| \, \langle v_* \rangle^2 \, \left(1 + \ln \langle v_*
  \rangle \right)
\end{eqnarray*}
for some constant $C_{17} >0$ depending on $b$.  Thanks to the Povzner
inequality \eqref{estim:Povnzer} (with $\ell =1$), we deduce for some
constants $C_{18}, C_{19} > 0$
\[ 
\frac{{\rm d}}{{\rm d}t} \left\| h_t \right\|_{M^1_{2,1}} \le 
C_{18} \, \left\| h_t \right\|_{M^1_{2}} \,
\left\| f_t \right\|_{M^1_{3,1}} - C_{19} \, \left\| h_t \right\|_{M^1_{3}} .
\]
Integrating this differential inequality yields 
\[
\left\| h_t \right\|_{M^1_{2,1}} + C_{19} \, 
\int_0^t \left\| h_s \right\|_{M^1_{3}} \, {\rm d}s \le 
C_{18} \, \left( \sup_{s\in [0,t]} \left\| h_s \right\|_{M^1_{2}} \right) \, 
\left( \int_0^t \left\| f_s \right\|_{M^1_{3,1}} \, {\rm d}s \right)
+ \left\| h_0 \right\|_{M^1_{2,1}}.
\]
Using the previous pointwise control on $\| h_t \|_{M^1_{2}}$
and~\eqref{fM13L1t} (with $\ell = 2$) we get 
\[
\int_0^t \left\| h_s \right\|_{M^1_{3}} \, {\rm d}s \le 
\frac{C_{18}}{C_{19}} \,  \left\| d_0 \right\|_{M^1 _2} \, e^{C_1 \,
  \int_0 ^t \left\| f_s \right\|_{M^1 _3} \, {\rm d}s} \, 
\left( C_3(\EE_0) \, t + C_4 \, \left\| f_0 \right\|_{M^1 _{2,2}} \right)
+ \frac{1}{C_{19}} \, \left\| d_0 \right\|_{M^1_{2,1}}.
\]
Arguing similarly for $d_t$, we deduce~\eqref{estim:h&dM131L1T}.


\bigskip\noindent{\sl Step 5. Conclusion. }  
We first rewrite \eqref{eq:wcomplete} as 
\begin{equation*}
\sup_{s \in [0,t]} \| \omega_s \|_{M^1_2} 
\le C_2 \, \left\| d_0
\right\|_{M^1 _2} \, e^{\left( C_1 + C_2 \right) \, \int_0 ^t \left(
    \left\| f_s \right\|_{M^1 _3} + \left\| g_s \right\|_{M^1 _3}
  \right) \, {\rm d}s} \, \left( \int_0 ^t \left( \left\| d_s \right\|_{M^1
      _3} + \left\| h_s \right\|_{M^1 _3} \right) \, {\rm d}s \right).
\end{equation*}
Then we use the estimate~\eqref{estim:h&dM131L1T} for the last term
and thus obtain
\begin{equation*}
  \sup_{s \in [0,t]} \| \omega_s \|_{M^1_2} 
  \le C_{22} \, \left\| d_0 \right\|_{M^1 _2} \, \left\| d_0
  \right\|_{M^1_{2,1}} \, e^{\left( 2 C_1 + C_2 \right) \, \int_0 ^t
    \left( \left\| f_s \right\|_{M^1 _3} + \left\| g_s \right\|_{M^1
        _3} \right) \, {\rm d}s} \, \left( 1+ C_3(\EE_0) \, t + C_4 \, \left\| f_0 \right\|_{M^1
      _{2,2}} \right)
\end{equation*}
for some constant $C_{22}>0$.  Finally we use
estimate~\eqref{estim:expfg} for the exponential term with $k=k_1$ and
we obtain
\begin{multline*}
\sup_{s \in [0,t]} \| \omega_s \|_{M^1_2} 
  \le C_{22} \, C_{15}(\EE_0) \, \left\| d_0 \right\|_{M^1 _2} \, \left\| d_0
  \right\|_{M^1_{2,1}} \times \\
\times  e^{C_{16}(\EE_0) \, t} \, \left( \max\left\{ M_{k_1}(f_0), M_{k_1}(g_0)
    \right\} \right)^{\frac16} \, \left( 1+ C_3(\EE_0) \, t + C_4 \, \left\| f_0 \right\|_{M^1
      _{2,2}} \right). 
\end{multline*}
Then arguing as in the end of Step 3, for any $\eta \in (0,1)$, using
\eqref{eq:Holderkell} with $k_1$ large enough, we have
\[
\left\| d_0 \right\|_{M^1 _2} \, \left\| d_0 \right\|_{M^1_{2,1}} \le
\left( \max\left\{ M_{k_1}(f_0), M_{k_1}(g_0) \right\}
\right)^{\frac16} \, \left\| d_0 \right\|_{M^1} ^{1+\eta}
\]
and 
\[
\left\| f_0 \right\|_{M^1 _{2,2}} \le \left( \max\left\{ M_{k_1}(f_0), M_{k_1}(g_0)
    \right\} \right)^{\frac16}.
\]

We therefore obtain the desired estimate \eqref{estim:dt-ht} 
$$
\sup_{s \in [0,t]} \left\| \omega_s \right\|_{M^1_2} \le  e^{C\, (1+t)} \, \sqrt{ \max \left\{
    M_{k_1}(f_0),M_{k_1}(g_0) \right\} } \, \left\| f_0 - g_0 \right\|_{M^1}^{1+\eta}
$$
which concludes the proof. 
\end{proof}


\subsection{Proof of condition (A4) uniformly in time} 
\label{sec:HSA3bis}

Let us start from an auxiliary result from \cite{MouhotCMP}. Let us
define the linearized Boltzmann collision operator at $\gamma$
\[
\mathcal L_\gamma(f) := 2 \, Q(\gamma,f)
\]
where 
\begin{equation}\label{eq:maxf}
\gamma =  \frac{e^{-\frac{|v|^2}{2 (\EE/d)}}}{( 2 \pi (\EE/d))^d} 
\end{equation}
is the Maxwellian distribution with zero momentum and energy $\EE>0$.

\begin{theo}[Theorem~1.2 in \cite{MouhotCMP}]\label{theo:CMP} 
  First the linearized Boltzmann semigroup $e^{\mathcal L_\gamma\, t}$
  for hard spheres satisfies
\begin{equation}
  \label{eq:HSdecay}
   \left\| e^{\mathcal L_\gamma \, t} \right\|_{L^1(m_z )} \le C_z \,
  e^{-\lambda \, t}
\end{equation}
where 
\[
m_z(v) := e^{z \, |v|}, \quad z > 0, 
\]
and $\lambda=\lambda(\EE)$ is the optimal rate, given by the first
non-zero eigenvalue of the linearized operator $\mathcal L_\gamma$ in the
smaller space $L^2(\gamma^{-1})$, and $C_z>0$ is an explicit constant
depending on $z$. 

Second the nonlinear Boltzmann semigroup $S^{N \! L} _t$ satisfies
\begin{equation}
  \label{eq:HSdecaybis}
  \left\| S^{N \! L} _t (f_0) - \gamma\right\|_{L^1(m_z )} \le C_{f_0}  \,
  e^{-\lambda \, t}  
\end{equation}
for any $f_0 \in L^1(m_z)$ with zero momentum and energy $\EE>0$, where
$C_{f_0}$ is some constant possibly depending on $z$ and $\| f_0
\|_{L^1(m_z)}$, and $\lambda=\lambda(\EE)$ is the same rate function
as for the linearized operator above.
\end{theo}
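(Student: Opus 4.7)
The plan is to prove the linear decay \eqref{eq:HSdecay} first and then deduce the nonlinear decay \eqref{eq:HSdecaybis} by a perturbative argument around the Maxwellian equilibrium $\gamma$. The starting point is the classical spectral gap for hard spheres due to Grad (and later Carleman, Wang Chang--Uhlenbeck): in the small Hilbert space $L^2(\gamma^{-1})$, the self-adjoint operator $\mathcal L_\gamma$ has a spectral gap $\lambda > 0$ above its $(d+2)$-dimensional kernel spanned by the collision invariants, and the semigroup $e^{\mathcal L_\gamma t}$ restricted to the orthogonal of these invariants decays like $e^{-\lambda t}$. Since $f_0$ in \eqref{eq:HSdecay} is intended to be orthogonal to these invariants (or, for \eqref{eq:HSdecaybis}, shifted to become so), we may focus on this orthogonal subspace.

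The linear half then amounts to an \emph{enlargement of the functional space} from $L^2(\gamma^{-1})$ to the much larger $L^1(m_z)$. The idea is to decompose the linearized operator as $\mathcal L_\gamma = \mathcal A + \mathcal B$, where $\mathcal B = \mathcal L_\gamma - \mathcal A$ is obtained by truncating the gain term with a smooth cutoff in $(v,v_*,\sigma)$ supported away from grazing and away from $\{|v| \geq R\}$ for some large $R$; the bounded remainder $\mathcal A$ is smoothing and compactly supported in velocity, hence maps $L^1(m_z)$ continuously into $L^2(\gamma^{-1})$, while $\mathcal B$ is strongly dissipative in $L^1(m_z)$ thanks to the loss term $-\nu(v) \sim -\langle v \rangle$ dominating the truncated gain once $R$ is large. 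Iterating the Duhamel formula
\[
e^{\mathcal L_\gamma t} = e^{\mathcal B t} + \int_0^t e^{\mathcal L_\gamma (t-s)} \mathcal A e^{\mathcal B s} \, {\rm d}s
\]
sufficiently many times produces a convolution of operators that maps $L^1(m_z)$ into $L^2(\gamma^{-1})$ with an exponentially decaying norm. Combining this with the small-space decay already known in $L^2(\gamma^{-1})$ and the dissipativity of $\mathcal B$ in the large space yields \eqref{eq:HSdecay} with \emph{the same} rate $\lambda$, only degrading the multiplicative constant $C_z$.

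For \eqref{eq:HSdecaybis}, write $f_t = \gamma + h_t$ so that $h_t$ has zero mass, momentum and energy, and satisfies
\[
\partial_t h_t = \mathcal L_\gamma h_t + Q(h_t,h_t).
\]
By Duhamel,
\[
h_t = e^{\mathcal L_\gamma t} h_0 + \int_0^t e^{\mathcal L_\gamma (t-s)} Q(h_s,h_s) \, {\rm d}s.
\]
The quadratic term $Q(h,h)$ is bilinear and continuous from $L^1(m_z) \times L^1(m_z)$ into $L^1(m_{z'})$ for any $z' < z$, because the loss of weight due to the factor $|v-v_*|$ can be absorbed into the slightly smaller exponential weight. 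Thus, if $\|h_t\|_{L^1(m_z)}$ is small enough initially and a priori controlled globally in time, a bootstrap/Gronwall argument combined with \eqref{eq:HSdecay} gives \eqref{eq:HSdecaybis} for small perturbations of equilibrium.

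To remove the smallness assumption on $h_0$, I would combine this local (near-equilibrium) argument with a \emph{trap to equilibrium} step: first use the propagation of exponential moments (Bobylev's estimate, valid for hard spheres) together with entropy--entropy-production methods of Toscani--Villani to show that $f_t \to \gamma$ strongly in $L^1(m_{z'})$ with only a polynomial-in-$t$ rate, thus entering the perturbative regime after a finite (but $f_0$-dependent) time $t_0$; after $t_0$, the first (local) part of the argument yields exponential decay with sharp rate $\lambda$. The main obstacle is matching the rate $\lambda$ of the large-space decay with that of the small-space gap without losing anything, and keeping the constants explicit when enlarging: this requires choosing the splitting $\mathcal L_\gamma = \mathcal A + \mathcal B$ and the number of Duhamel iterations carefully so that the exponential decay of the iterated kernel strictly dominates any loss in the constants. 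The nonlinear part has the secondary difficulty of controlling exponential moments uniformly in time, which is classical for hard spheres but must be combined with the above splitting at the same weight $m_z$.
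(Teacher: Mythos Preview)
The paper does not prove this statement: Theorem~\ref{theo:CMP} is quoted verbatim from \cite{MouhotCMP} and used as a black box in the proof of Lemma~\ref{lem:hsunif}, so there is no ``paper's own proof'' to compare against. Your sketch is, however, an accurate outline of the argument in \cite{MouhotCMP}: the space-enlargement via a splitting $\mathcal L_\gamma = \mathcal A + \mathcal B$ with $\mathcal A$ regularizing (bounded from the large space $L^1(m_z)$ into the small space $L^2(\gamma^{-1})$) and $\mathcal B$ dissipative in $L^1(m_z)$, iterated Duhamel to transfer the sharp spectral gap $\lambda$ from $L^2(\gamma^{-1})$ to $L^1(m_z)$, and then for the nonlinear part a two-stage strategy combining entropy methods and moment propagation to enter a neighborhood of $\gamma$ followed by a perturbative bootstrap. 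One minor inaccuracy: in \cite{MouhotCMP} the bilinear estimate on $Q(h,h)$ is handled in $L^1(m_z)$ itself (no loss of weight to $m_{z'}$ with $z'<z$), since the factor $|v-v_*|$ is controlled by the collision frequency already present in the dissipative part of $\mathcal B$; but this does not affect the overall structure of your argument.
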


Let us now prove \emph{uniform in time} estimate for the expansion of
the limit semigroup in terms of the initial data. 
 
\begin{lem}\label{lem:hsunif}
  For any given energy $\EE > 0$ and $\eta \in (0,1)$, there
  exists
  \begin{itemize}
 \item some constant $\bar k_1 \ge 2$ (depending on $\EE$ and $\eta$),
  \item some constant $C$ (depending on $\EE$),
 \end{itemize}
  such that for any $k_1 \ge \bar k_1$, for any 
\[
f_0, g_0 \in \mathcal P_{\GG_1}  
\]
 satisfying
\[ 
\left\langle
f_0, |v|^2 \right\rangle = \left\langle g_0, |v|^2 \right\rangle = \EE
\quad \mbox{ and } \quad
\forall \, i=1, \dots, d, \quad 
\left\langle f_0, v_i \right\rangle = \left\langle g_0, v_i \right\rangle = 0,
\]
and for any $t \ge 0$, we have
\begin{eqnarray}\label{estim:dt-infty}
&&  \left\| g_t - f_t \right\|_{M^1_2} \le 
  C \, e^{ - {\lambda \over 2} \, t }  \,  
    \sqrt{   M_{k_1}(f_0+g_0)  }\, 
  \left\| g_0 - f_0 \right\|_{M^1 }^{\eta},
\\ \nonumber \\ \label{estim:ht-infty}
&& \left\| h_t \right\|_{M^1_2} \le 
  C \, e^{- {\lambda \over 2} \, t } \,    \sqrt{  M_{k_1}(f_0+g_0) } \, 
  \left\| g_0 - f_0 \right\|_{M^1 }^{\eta}, 
\\ \nonumber \\ \label{estim:dt-ht-infty}
&&  \left\| \omega_t \right\|_{M^1_2} 
    \le C \, e^{- {\lambda \over 2} \, t }  \,
   \sqrt{   M_{k_1}(f_0+g_0)  } \, 
   \| g_0 - f_0 \|_{M^1 } ^{1+\eta}.
\end{eqnarray}
\end{lem}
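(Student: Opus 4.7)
The plan is to upgrade the finite-time, time-growing bounds of Lemma~\ref{lem:expansionHS} into uniform-in-time exponentially decaying bounds by exploiting the spectral gap of the linearized Boltzmann operator $\mathcal L_\gamma := 2\,Q(\gamma,\cdot)$ at the common Maxwellian equilibrium $\gamma$ defined in \eqref{eq:maxf}, together with the exponential relaxation of $f_t$ and $g_t$ to $\gamma$ provided by Theorem~\ref{theo:CMP}. The universal idea is: use Theorem~\ref{theo:CMP} to obtain $e^{-\lambda t}$ decay in a \emph{weak norm} adapted to the spectral theory, then interpolate with the \emph{strong} but time-growing bounds from Lemma~\ref{lem:expansionHS} to recover the quantitative exponent $\eta$ (resp.~$1+\eta$) on $\|f_0-g_0\|_{M^1}$, at the price of worsening the rate $\lambda$ to $\lambda/2$ and enlarging $\bar k_1$.

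For \eqref{estim:dt-infty}: since $f_0$ and $g_0$ share the same momentum and energy, both $f_t$ and $g_t$ converge to the same $\gamma$, so the triangle inequality and \eqref{eq:HSdecaybis} (interpolated from $L^1(m_z)$ to $M^1_2$ against the polynomial moment $M_{k_1}$) give $\|g_t-f_t\|_{M^1_2}\le C\,e^{-\lambda t}\sqrt{M_{k_1}(f_0+g_0)}$. Writing $X = X^\theta\,X^{1-\theta}$ with $X:=\|g_t-f_t\|_{M^1_2}$, bounding the first factor by \eqref{estim:dt} with exponent $\eta_0\in(\eta,1)$ and the second by this decay, we obtain a bound of the form $\exp(\theta C(1+t)-(1-\theta)\lambda t)$ times moment and initial-difference factors; choosing $\theta$ so small that $\theta(C+\lambda)\le \lambda/2$, and then picking $\eta_0$ close enough to $1$ in Lemma~\ref{lem:expansionHS} so that $\theta\eta_0=\eta$ (which forces a larger $\bar k_1$), yields \eqref{estim:dt-infty}.

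For \eqref{estim:ht-infty}: rewrite the linearized equation as a perturbation of $\mathcal L_\gamma$,
\[
\partial_t h_t = \mathcal L_\gamma h_t + 2\,Q(f_t-\gamma,h_t), \qquad h_0 = g_0 - f_0,
\]
and apply Duhamel's formula. Using the decay \eqref{eq:HSdecay} of $e^{\mathcal L_\gamma t}$ in $L^1(m_z)$, the nonlinear decay $\|f_s-\gamma\|_{L^1(m_z)}\le C\,e^{-\lambda s}$ from \eqref{eq:HSdecaybis}, and boundedness of $Q$ (with a mild loss on the exponent $z$ absorbed by slightly enlarging the weight), we get
\[
\|h_t\|_{L^1(m_z)} \le C\,e^{-\lambda t}\,\|h_0\|_{L^1(m_z)} + C\int_0^t e^{-\lambda(t-s)}\,e^{-\lambda s}\,\|h_s\|_{L^1(m_z)}\,ds.
\]
Setting $u(t):=e^{\lambda t}\|h_t\|_{L^1(m_z)}$ transforms this into $u(t)\le C\|h_0\|_{L^1(m_z)} + C\int_0^t e^{-\lambda s} u(s)\,ds$, so Gronwall yields $\|h_t\|_{L^1(m_z)}\le C\,e^{-\lambda t}\|h_0\|_{L^1(m_z)}$. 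Interpolating back from $L^1(m_z)$ to $M^1_2$ against $M_{k_1}$, and geometrically averaging with \eqref{estim:ht} exactly as in the previous step, yields \eqref{estim:ht-infty}.

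For \eqref{estim:dt-ht-infty}: the remainder satisfies
\[
\partial_t \omega_t = \mathcal L_\gamma \omega_t + 2\,Q(f_t-\gamma,\omega_t) + Q(g_t-f_t,\omega_t) + Q(h_t, g_t-f_t),
\]
with $\omega_0=0$. Duhamel against $e^{\mathcal L_\gamma t}$, combined with the decay estimates on $h_t$ and $g_t-f_t$ just established (each of order $e^{-\lambda t/2}\,\|f_0-g_0\|^\eta$), produces a source in the integral equation of order $e^{-\lambda t}\,\|f_0-g_0\|^{2\eta}$ coming from $Q(h_t,g_t-f_t)$, together with lower-order self-contributions that are closed by Gronwall as in Step~2; picking $\eta$ close enough to $1$ so that $2\eta\ge 1+\eta$ (again enlarging $\bar k_1$ to absorb all moment interpolations) gives \eqref{estim:dt-ht-infty}. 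The main obstacle throughout is the mismatch between the exponentially weighted space $L^1(m_z)$ --- where the spectral decay of $\mathcal L_\gamma$ is sharp and Theorem~\ref{theo:CMP} applies --- and the polynomial space $M^1_2$ in which the condition {\bf (A4)} of Theorem~\ref{theo:abstract} is formulated; bridging the two requires interpolating between polynomial and exponential moment bounds, absorbing the one-weight loss of the bilinear $Q$, and repeatedly enlarging $\bar k_1$, but no new analytical ingredient beyond Lemma~\ref{lem:expansionHS} and Theorem~\ref{theo:CMP} is needed.
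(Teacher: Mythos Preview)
Your interpolation argument for \eqref{estim:dt-infty} does not deliver the full range $\eta\in(0,1)$ that the lemma claims. The triangle-inequality decay bound $\|g_t-f_t\|_{M^1_2}\le C\,e^{-\lambda t}$ carries no dependence on $\|f_0-g_0\|$, so the only place where $\|d_0\|_{M^1}$ enters the product $X^\theta X^{1-\theta}$ is through the factor $X^\theta\le\bigl[e^{C(1+t)}\sqrt{M_{k_1}}\,\|d_0\|^{\eta_0}\bigr]^\theta$ coming from \eqref{estim:dt}; hence the final exponent on $\|d_0\|$ is $\theta\eta_0<\theta$. But your constraint $\theta(C+\lambda)\le\lambda/2$ forces $\theta\le\lambda/(2(C+\lambda))$, where $C$ is the fixed (energy-dependent) growth rate of Lemma~\ref{lem:expansionHS}. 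You can therefore only reach $\eta<\lambda/(2(C+\lambda))$, strictly less than $1$ and generically small; the statement, and its use in {\bf (A4)} with $\eta=1-\delta$ close to $1$, requires $\eta$ arbitrarily close to $1$.

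The paper avoids this by treating $d_t:=g_t-f_t$ via Duhamel around $\mathcal L_\gamma$ as well,
\[
\partial_t d_t=\mathcal L_\gamma d_t+Q(d_t,f_t-\gamma)+Q(d_t,g_t-\gamma),
\]
in the exponentially weighted space $M^1_{m_z}$ (available for $t\ge1$ by appearance of exponential moments, with bounds depending only on the energy). The hard-sphere weight loss $|v-v_*|$ in $Q$ is not ``absorbed by slightly enlarging $z$'' --- your linear Gronwall for $h_t$ fails to close for precisely this reason. Instead, with $u(s):=\|d_s\|_{M^1_{m_z}}$ and the uniform bound $\|d_s\|_{M^1_{m_{2z}}}\le 2Z$, splitting at $|v|=a$ gives
\[
\|d_s\|_{M^1_{\langle v\rangle m_z}}\le a\,u(s)+e^{-za}\,Z,
\]
and optimising $a=-z^{-1}\ln u(s)$ yields the log-Lipschitz bound $\|d_s\|_{M^1_{\langle v\rangle m_z}}\le K\,u(s)\,(1+(\ln u(s))_-)$. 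Feeding this into Duhamel produces a mildly nonlinear integral inequality whose solution satisfies $u(t)\le C\,e^{-\lambda t/4}\,u(T_0)^{1-\delta}$ with $\delta$ as small as desired (by taking $T_0$ large). The crucial point is that this retains near-linear dependence on $u(T_0)$; combined with moment interpolation at $T_0$ and the finite-time bound \eqref{estim:dt} on $[0,T_0]$, one recovers any $\eta\in(0,1)$. The same log-Lipschitz closure is what is actually needed to make your Duhamel arguments for $h_t$ and $\omega_t$ go through.
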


Note that under the assumption (ii) in Theorem~\ref{theo:HS}, these
estimates imply {\bf (A4)} with $T=+\infty$, $\mathcal P_{\GG_2} =
\mathcal P_{\GG_1}$, since the momentum and energy conditions are
implied by ${\bf r} \in \RR_{\GG_1}$, with $\RR_{\GG_1}$ defined by
\eqref{eq:RG1cas2}.

\begin{rem}
  In the following proof we shall use \emph{moment production bounds}
  on the limit equation. Indeed once stability estimates for small
  times have been secured (as in Lemma~\ref{lem:expansionHS}), one can
  use, for $t \ge T_0 >0$, moments production estimates whose bounds
  only depend on the energy of the solution. This, together with the
  linearized theory in $L^1$ setting with exponential moment bounds of
  Theorem~\ref{theo:CMP}, will be the key to the following proof. 
\end{rem}

\begin{proof}[Proof of Lemma~\ref{lem:hsunif}] 
  In the proof below, we restrict ourself to an initial datum $f_0 \in
  \mathcal P_{\GG_1} \cap L^1(\R^d)$ for the sake of simplicity of the
  presentation, but the proof straightforwardly applies to measures.
  From the result of appearance of exponential moments for measure
  solutions \cite[Theorem~1.2-(b)]{Lu-Mouhot} (see also
  \cite{AlCaGaMo} for another simpler argument in $L^1$, and
  \cite{MM:opus1} for earlier results of appearance of exponential
  moments), there exists some constants $z$, $Z$ (only depending on
  the collision kernel and the energy of the solutions) such that
\begin{equation}\label{control-exp}
\sup_{t \ge 1} \left( \| f_t \|_{M^1_{m_{2z}}}+ \|g_t
  \|_{M^1_{m_{2z}}}+ \| h_t \|_{M^1_{m_{2z}}} \right) \le Z, \qquad
m_{2z}(v) := e^{2 \, z \, |v|}
\end{equation}
(note that the proof in \cite{Lu-Mouhot} applies to the solutions
$f_t$ and $g_t$, however it is straightforward to apply exactly the
same proof to the linearized solution $h_t$ around $f_t$, once
exponential moment is known on $f_t$).

We also know from~\eqref{eq:HSdecaybis} that (maybe by choosing a larger $Z$)
\begin{equation}\label{dec-exp-exp}
\forall \, t \ge 1, \quad \| f_t - \gamma \|_{M^1_{m_{2z}}} + \| g_t - \gamma
\|_{M^1_{m_{2z}}} \le 2 \, Z \, e^{-\lambda \, t}.
\end{equation}

We  write 
\begin{equation*}
  \partial_t (f_t-g_t) = Q(f_t-g_t,f_t+g_t) 
  = \LL_\gamma(f_t-g_t) + Q(f_t-g_t,f_t-\gamma) + Q(f_t-g_t,g_t-\gamma)
\end{equation*}
and, using also \eqref{eq:HSdecay} on the linearized semigroup, we deduce for
\[
u(t) := \left\| f_t-g_t \right\|_{M^1_{m_z}}
\]
the following differential inequality for $t \ge T_0 \ge 1$ and some
constant $C \ge 1$:
\begin{multline*}
u(t) \le C \, e^{-\lambda \, (t-T_0)} \, u(T_0) \\ + C \, \int_{T_0} ^t e^{-\lambda \,
  (t-s)} \, \left( \left\| Q(f_s-g_s,f_s-\gamma) \right\|_{M^1(m_z)} + \left\| Q(f_s-g_s,g_s-\gamma)
\right\|_{M^1(m_z)} \right) \, {\rm d}s
\end{multline*}
(this formal inequality and next ones can easily be justified
rigorously by a regularizing proceedure and using a uniqueness result
for measure solutions such as \cite{Fo-Mo,EM,Lu-Mouhot}). Therefore we
obtain
\begin{multline*} 
  u(t) \le C \, e^{-\lambda \, (t-T_0)} \, u(T_0) \\ + C \, \int_{T_0} ^t
  e^{-\lambda \, (t-s)} \, \left( \left\| f_s-\gamma
    \right\|_{M^1(\langle v \rangle \, m_z)} + \left\| g_s-\gamma
    \right\|_{M^1(\langle v \rangle \, m_z)} \right) \, \left\| f_s -
    g_s \right\|_{M^1(\langle v \rangle \, m_z)}\, {\rm d}s.
\end{multline*}

We then use the control of $M^1(\langle v \rangle \, m_z)$ by
$M^1(m_{2z})$ together with the
controls~\eqref{control-exp}-\eqref{dec-exp-exp}, the decay
control~\eqref{eq:HSdecay} and the estimate
\[
e^{-\lambda \, s -\lambda \, (t-s)} \le e^{-\frac{\lambda}2 \, t -
  \frac{\lambda}2 \, s}.
\]
We get
\[
u(t) \le C \, e^{-\lambda \, (t-T_0)} \, u(T_0) + C \, e^{-\frac{\lambda}2 \, t} \,
\int_{T_0} ^t e^{- \frac{\lambda}2 \, s} \, \left\| f_s - g_s
\right\|_{M^1(\langle v \rangle \, m_z)}\, {\rm d}s.
\]

We then use the following control for any $a>0$:
\begin{eqnarray*} 
\forall \, s \ge T_0, \quad &&\left\| f_s-g_s \right\|_{M^1_{\langle v \rangle \, m_z}}
= \int_{\R^d} \left|f_s-g_s\right| \, \langle v \rangle \, e^{z \, |v|} \, {\rm d}v \\
&&\qquad\le a \int_{|v| \le a} \left|f_s-g_s\right| \, e^{z \, |v|} \, {\rm d}v + e^{-
  z \, a} \int_{|v| \ge a} \left(f_s+g_s\right) \, e^{2 \, z \, |v|} \, {\rm d}v
\\
&&\qquad\le a \, u(s) + e^{- z \, a} \, Z.
\end{eqnarray*}

Hence we get for any $s \ge T_0$: 
\[
\left\| f_s-g_s \right\|_{M^1_{\langle v \rangle \, m_z}} \le 
\left\{ 
\begin{array}{ll}
  u(s) + e^{- z } \, Z \le (1 + Z) \, u(s) \quad \hbox{when}
\quad u(s) \ge 1, \quad \hbox{(choosing } a := 1 \hbox{)}
\vspace{0.3cm} \\
{1 \over z} \, |\ln u(s) | \, u(s) + u(s) \, Z
\quad \hbox{when} \quad u(s) \le 1\quad \hbox{(choosing } - z \, a := \ln u(s) \hbox{)}
\end{array}
\right.
\]
and we deduce 
\[
\forall \, s \ge T_0, \quad \left\| f_s-g_s \right\|_{M^1_{\langle v \rangle \, m_z}} \le K \, u(s) \, \left(1 + (\ln
u(s))_-\right), \qquad K := 1 + {1 \over z} + Z.
\]

Then for any $\delta \in (0,1)$, we have, by choosing $T_0$ large
enough, 
\[
\forall \, t \ge T_0, \quad e^{-\frac{\lambda}2 \, t} \le \delta \, e^{-\frac{\lambda}4 \, t}
\]
and we conclude with the following integral inequality
\begin{equation}\label{eq:integralu}
u(t) \le C \, e^{-\lambda \, (t-T_0)} \, u(T_0) + \delta \, e^{-\frac{\lambda}4 \, t} \,
\int_{T_0} ^t e^{- \frac{\lambda}2 \, s} \,  u_s \, 
\left(1 + \left(\ln u_s\right)_-\right) \, {\rm d}s.
\end{equation}

Let us prove that this integral inequality implies 
\begin{equation}\label{eq:stab-tps-long-HS}
\forall \, t \ge T_0, \quad 
u(t) \le C \, e^{-\frac{\lambda}4 \, t} \, u(T_0)^{1-\delta}.
\end{equation}

Consider the case of equality in~\eqref{eq:integralu}. Then we have
\[
u(t) \ge C \, e^{-\lambda \, (t-T_0)} \, u\left( T_0 \right)  \ge e^{-\lambda \, (t-T_0)} \, u\left( T_0 \right)
\]
and therefore
\[
\left(1 + \left(\ln u_t\right)_-\right) \le \left(1 + 
\left(\ln u(T_0)\right)_- + \lambda \, (t-T_0) \right).
\]
We then have
\begin{multline*}
U(t) := \int_{T_0} ^t e^{- \frac{\lambda}2 \, s} \, u_s \,
\left(1 + \left(\ln u_s\right)_-\right) \, {\rm d}s \\ \le \int_{T_0} ^t e^{-
  \frac{\lambda}2 \, s} \, u_s \, \left(1 + \left(\ln
    u(T_0)\right)_- + \lambda \, (s-T_0) \right) \, {\rm d}s \\ \le \left(3 +
  \left(\ln u(T_0)\right)_- \right) \,  \int_{T_0} ^t e^{-
  \frac{\lambda}4 \, s} \, u_s \, {\rm d}s.
\end{multline*}
By a Gronwall-like argument we can therefore obtain 
\[
u(t) \le C \, e^{-\lambda \, (t-T_0)} \, u(T_0) + C \, \delta \,
e^{-\frac{\lambda}{4} \, t} \, \left(3 + \left( \ln u(T_0)\right)_-
\right) \, u(T_0).
\]
Then thanks to the inequality 
\[
\forall \, x \in (0,1], \quad - (\ln x) \, x \le
\frac{x^{1-\delta}}{\delta} 
\]
we can prove \eqref{eq:stab-tps-long-HS} when $u(T_0) \le 1$, and in
the case when $u(T_0) \ge 1$, we can use \eqref{control-exp} again to
get 
\[
u(T_0) \le ( 2 \, Z)^{\delta} \, u(T_0)^{1-\delta}.
\]
This concludes the proof of the claimed inequality
\eqref{eq:stab-tps-long-HS}.

Then estimate \eqref{estim:dt-infty} follows by choosing $\delta$
small enough (in relation to $\eta$) and then connecting the last
estimate \eqref{eq:stab-tps-long-HS} from time $T_0$ on together with
the previous finite time estimate \eqref{estim:dt} from time $0$ until
time $T_0$. 

Then the estimate \eqref{estim:ht-infty} is proved exactly in the same
way by using the equation
\[
\partial_t h_t = \LL_\gamma(h_t) + Q(h_t,f_t-\gamma)
\]
(which is even simpler than the equation for $f_t-g_t$).

Concerning the estimate \eqref{estim:dt-ht-infty} we start from the equation
\[
\partial_t \omega_t = 2 \, \LL_\gamma(\omega_t) +
Q(\omega_t,f_t-\gamma)+Q(\omega_t,g_t-\gamma) + Q(h_t,d_t).
\]
Then we establish on 
\[
y(t) := \left\| \omega_t \right\|_{M^1_{m_z}}
\]
the following differential inequality
\begin{equation*}
y(t) \le C \, e^{-\lambda \, (t-T_0)} \, y(T_0) + C \, \delta \,
e^{-\frac{\lambda}{4} \, t} \, \left( 1+ \left( \ln y(T_0)\right)_-
\right) \, y(T_0)  + C \, e^{- \frac{\lambda}{2} \, t} \, \left\|
  d_{T_0} \right\|_{M^1_{m_z}} ^{1-\delta} \, \left\| h_{T_0}
\right\|_{M^1_{m_z}} ^{1-\delta}
\end{equation*}
which implies 
\[
y(t) \le C \, e^{- \frac{\lambda}{4} \, t} \, \left(
  y(T_0)^{1-\delta} + \left\|
  d_{T_0} \right\|_{M^1_{m_z}} ^{1-\delta} \, \left\| h_{T_0}
\right\|_{M^1_{m_z}} ^{1-\delta} \right).
\]
Then estimate \eqref{estim:dt-ht-infty} follows by choosing $\delta$
small enough (in relation to $\eta$) and then connecting the last
estimate from time $T_0$ on together with the previous finite time
estimate \eqref{estim:dt-ht} from time $0$ until time $T_0$.
\end{proof}


\subsection{Proof of condition (A5) uniformly in time}  
Let us prove that for any $\bar z, \MM_{\bar z} \in (0,\infty)$ there
exists some continuous function 
\[
\Theta : \R_+ \to \R_+, \quad \Theta(0) = 0,
\]
such that for any $f_0, g_0 \in \PP_{m_{\bar z}}(\R^d)$, $m_{\bar z}(v)
:= e^{\bar z \, |v|}$, with same momentum and energy, and such that
\[
\left\| f_0 \right\|_{M^1_{m_{\bar z}}} \le \MM_{\bar z}, \quad 
\left\| g_0 \right\|_{M^1_{m_{\bar z}}} \le \MM_{\bar z},
\]
 there holds
\begin{equation*} 
\sup_{t \ge 0} W_1\left( S^{N \! L}_t(f_0), \, S^{N \! L}_t(g_0)
\right)  \le \Theta \left( W_1\left( f_0, \, g_0 \right) \right).
\end{equation*}
where $W_1$ stands for the Kantorovich-Rubinstein distance. Let us
write 
\[
W_t := W_1\left( S^{N \! L}_t(f_0), \, S^{N \! L}_t(g_0) \right).
\]

As we shall see, we may choose
\begin{equation}\label{estim:W1dt}
  \Theta (w) := \min \left\{ \bar\Theta , \, \bar \Theta \, e^{1-\left( 1+ |\ln
        w|)\right)^{1/2}}, \, 
   \frac{C_1}{(1+|\ln w|)^{\frac{\lambda}{2K}}} 
  \right\}, \quad \Theta(0)=0,
\end{equation}
for some constants $\bar\Theta$, $C>0$ 
(only depending on $\bar z$ and $\MM_{\bar z}$).

We start off with the inequality 
\begin{equation*}
\forall \, t \ge 0 \quad W_t \le \left\| (f_t  - g_t) |v|
\right\|_{M^1} 
\le  {1\over2} \left\| (f_t  + g_t) \langle v \rangle^2 \right\|_{M^1} 
= 1 + \EE =: \bar \Theta.
\end{equation*}

Let us now improve this inequality for small value of $W_0$. Therefore
we assume without restriction that 
\[
W_0 \le \frac{1}{2}
\]
 in the sequel. 

On the one hand, it has been proved in \cite[Theorem~2.2 and
Corollary~2.3]{Fo-Mo} that
\begin{equation}\label{ineq:dwlogw-} 
W_t \le W_0 + K \,
\int_0 ^t W_s \, \left(1+(\ln W_s)_-\right) \, {\rm d}s,
\end{equation} 
for some constant $K$ (depending on $\bar z$ in the exponential. (To
be more precise, \eqref{ineq:dwlogw-} is proved in the more
complicated case of hard potentials with angular cutoff in
\cite[Theorem~2.2]{Fo-Mo}, but the proof applies to the simpler case
of hard spheres). 

One can then check that the function 
\[
\bar W_t := e^{1-e^{-Kt}} \, \left(W_0 \right)^{e^{-Kt}}
\]
satisfies 
\[
\frac{{\rm d}}{{\rm d}t} \bar W_t = K \, \left( 1- \ln \bar W_t \right) \, \bar W_t, \quad \bar
W_0 = W_0.
\]
Therefore it is a super-solution of the differential inequality
\eqref{ineq:dwlogw-} as long as $W_t \le 1$. It is an easy computation
that this super-solution satisfies
\[
\bar W_t \le 1 \ \mbox{ as long as } \ t \le t_0 := \frac{\ln \left(
    1 + | \ln W_0 | \right)}{K}.
\]
Observe also that $\bar W_t$ is increasing on $t \in [0,t_0]$. 

We then define 
\begin{equation}\label{def:t1}
t_1 := \frac{t_0}{2} = \frac{\ln \left( 1 + | \ln W_0 | \right)}{2K}
\end{equation}
and we deduce the following bound on the solution of
\eqref{ineq:dwlogw-}: 
\begin{equation}\label{controltpspetit}
\forall \, t \in \left[ 0, t_1 \right], \quad 
W_t \le \bar W_t \le \bar W_{t_1} = e^{1-\left( 1+ |\ln
    W_0|)\right)^{1/2}}.
\end{equation}

On the other hand, from~\eqref{eq:HSdecaybis}, there
are constants $\lambda, Z > 0$, $z \in (0,\bar z)$ such that
\begin{equation}\label{decayencore}
\forall \, t \ge 0, \quad \| f_t  - \gamma \|_{L^1_{m_{z}}} +  \| g_t  - \gamma
\|_{L^1_{m_{z}}}   \le  Z \, e^{-\lambda \, t},
\end{equation}
where $\gamma$ stands again for the normalized Maxwellian associated
to $f_0$ and $g_0$.

We deduce from \eqref{decayencore} 
\begin{equation}\label{Wttpslong}
\forall \, t \ge 0, \quad W_t \le C \, e^{-\lambda \, t} 
\end{equation}
for a constant $C>0$.

We then consider times $t \ge t_1$ and we deduce from
\eqref{def:t1} and \eqref{Wttpslong} the following bound from above
\begin{equation}\label{controltpslong}
\forall \, t \ge t_1, \quad W_t \le C \, e^{-\lambda \, t_1} 
= \frac{C}{(1+|\ln W_0|)^{\frac{\lambda}{2K}}}. 
\end{equation}
It is then straightforward to conclude the proof of {\bf (A5)}
uniformly in time for the function \eqref{estim:W1dt} by combining
\eqref{controltpspetit} and \eqref{controltpslong}.
\bigskip

We have proved all the assumptions of
Theorem~\ref{theo:abstract}. 
\begin{itemize}
\item Together with the estimate on $\WW_{W_1}^N (f)$ from
  Lemma~\ref{lem:Rachev&W1}, this concludes the proof of point (i) in
  Theorem~\ref{theo:HS} by using the non-uniform estimates for {\bf
    (A4)}.
\item Then we can conclude the proof of point (ii) in
  Theorem~\ref{theo:HS} by using
\begin{itemize}
\item Lemma~\ref{lem:Bproperty} for the construction of the sequence
  initial data $f^N _0$ which satisfies the required integral and
  support moment bounds, 
\item The previous steps in order to apply Theorem~\ref{theo:abstract},
\item Lemma~\ref{lem:Bpropertybis} in order to estimate
$$ 
\WW_{W_1} \left( \pi^N _P \left(f_0 ^N\right), f_0 \right)
\xrightarrow[]{N \to \infty} 0.
$$ 
\end{itemize}
\end{itemize}

\subsection{Proof of infinite-dimensional Wasserstein chaos}
\label{subsec:wass-hs}

Let us now prove Theorem~\ref{theo:hs-wasserstein}. Its proof is
similar to Theorem~\ref{theo:max-wasserstein}. 

First the proof of \eqref{eq:hs-wass-a} follows from the point (i) in
Theorem~\ref{theo:HS} and \cite[Theorem~1.1]{hm} exactly in a similar
way as we proved that \eqref{eq:max-wass} follows from the point (i)
in Theorem~\ref{theo:tMM} and \cite[Theorem~1.1]{hm}. The proof of
\eqref{eq:hs-wass-b} follows similarly from the point (ii) in
Theorem~\ref{theo:HS} and \cite[Theorem~1.1]{hm}.

Then the proof of \eqref{eq:hs-wass-relax} is also similar to the one
of \eqref{eq:max-wass-relax}, the only difference being that one needs
the following result of lower bound (independent of $N$) on the
spectral gap of the $N$-particle system.

\begin{theo}[\cite{CCL-preprint}]\label{lem:estim-hs-max}
  Consider the operator $L_{HS}$ for the hard spheres $N$-particle
  model with collision kernel $B (v-w) = |v-w|$.  Then for any $\EE>0$
  there is a constant $\lambda>0$ (independent of $N$ but depending on
  $\EE$) such that for any probability $f^N$ on $\SS^N(\EE)$ one has
  \begin{equation*}
    \left\langle L_{HS} f^N, f^N \right\rangle_{L^2\left( \SS^N(\EE)
      \right)} \le - \lambda
    \, \left\| f^N \right\|_{L^2\left( \SS^N \right)}.
  \end{equation*}
where $\SS^N(\EE)$ was defined in \eqref{def:SSN-EM}. 
\end{theo}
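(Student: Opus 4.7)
The result is the Carlen--Carvalho--Loss uniform spectral gap for the hard-spheres $N$-particle generator on the Boltzmann sphere $\SS^N(\EE)$; I would follow the induction strategy of Janvresse, refined by Carlen--Carvalho--Loss to handle the velocity-dependent collision rate $|v-w|$. First, I would write the Dirichlet form as an average of two-body Dirichlet forms:
\[
  -\langle L_{HS} f, f \rangle_{L^2(\gamma^N)} = \frac{1}{N}\sum_{1\le i<j\le N} \langle L_{ij} f, f\rangle_{L^2(\gamma^N)},
\]
where $L_{ij}$ is the two-body jump operator acting on the pair $(v_i,v_j)$ with rate $|v_i-v_j|$ and uniform post-collisional angle in $\mathbb S^{d-1}$. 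Since $L_{ij}$ only involves $v_i$ and $v_j$, I would integrate out the remaining $N-2$ variables to obtain a conditional two-body problem on the ``slice'' $\{|v_i|^2+|v_j|^2 = r,\ v_i+v_j = u\}$ with $r,u$ determined by the frozen variables.

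The core analytic estimate is then a uniform lower bound $\bar\lambda(\EE) > 0$ on the spectral gap of this two-body operator, \emph{uniformly in the parameters} $(r,u)$ compatible with the global energy constraint $\EE$. This is a purely finite-dimensional problem on a $(d{-}1)$-sphere with the weight $|v-w|$; one checks that the rate $|v-w|$ vanishes only on a set of measure zero and that the corresponding weighted Poincar\'e inequality degenerates in a controlled way as $r\downarrow 0$. Since the parameter space is compact and one shows continuity of the two-body gap in $(r,u)$, one gets a uniform bound $\bar\lambda(\EE) > 0$ provided the total energy $\EE$ stays bounded away from zero. This is the first substantial step.

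Next, using the two-body gap, I would bound
\[
  -\langle L_{HS} f,f \rangle_{L^2(\gamma^N)} \ge \frac{\bar\lambda(\EE)}{N}\sum_{i<j} \bigl\| f - \mathbb E_{ij}[f] \bigr\|_{L^2(\gamma^N)}^2,
\]
where $\mathbb E_{ij}$ denotes conditional expectation given all velocities except $v_i,v_j$. The Janvresse/Carlen--Carvalho--Loss induction then consists in comparing the average fluctuation $\binom{N}{2}^{-1}\sum \|f-\mathbb E_{ij} f\|^2$ to $\|f - \mathbb E f\|^2$ with a constant tending to $1$ (not just $\ge 1$). Concretely, one establishes a two-sided decomposition
\[
  \frac{1}{\binom{N}{2}} \sum_{i<j} \bigl\|\mathbb E_{ij} f - \mathbb E f\bigr\|^2 \le \bigl(1 - \varepsilon_N\bigr) \bigl\|f - \mathbb E f\bigr\|^2,
\]
with $\varepsilon_N \ge \varepsilon_\infty > 0$ uniformly; combining with the previous lower bound and the Pythagorean identity $\|f-\mathbb E f\|^2 = \|f - \mathbb E_{ij} f\|^2 + \|\mathbb E_{ij} f - \mathbb E f\|^2$ closes the induction.

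\textbf{Main obstacle.} The hard part is the comparison step in the third paragraph: for the original Kac model (constant rate), the identity is exact and the constant can be computed explicitly via spherical harmonics decomposition; for hard spheres, the velocity-dependent rate $|v-w|$ destroys the spherical symmetry, and one must instead argue via a weighted reversibility identity and a careful projection analysis on the Boltzmann sphere. The dependence of $\lambda$ on $\EE$ enters precisely through the two-body gap $\bar\lambda(\EE)$, which degenerates as $\EE\to 0$ because all relative velocities vanish; this is why the statement requires $\EE > 0$. The whole scheme is what \cite{CCL-preprint} carries out, and one inherits from there the constant $\lambda = \lambda(\EE) > 0$ independent of $N$.
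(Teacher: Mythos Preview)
The paper does not prove this theorem at all: it is stated with a citation to \cite{CCL-preprint} and then immediately used in the proof of \eqref{eq:hs-wass-relax}. Your sketch is a reasonable outline of the Carlen--Carvalho--Loss argument from that reference (the two-body gap plus the Janvresse-type induction on the projection constants), so there is nothing to compare against in the present paper; you are essentially reconstructing the cited external result.
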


Then using Theorem~\ref{lem:estim-hs-max} we deduce that 
$$
\forall \, N \ge 1, \,\, \forall \, t \ge 0, \quad \left\| h^N - 1
\right\|_{L^2\left(\SS^N(\EE),\gamma^N\right)} \le e^{-\lambda \, t} \,
\left\| h^N_0 - 1 \right\|_{L^2\left(\SS^N(\EE),\gamma^N\right)},
$$
where $h^N = {\rm d}f^N/{\rm d}\gamma^N$ is the Radon-Nikodym
derivative of $f^N$ with respect to the measure $\gamma^N$ and the end
of proof of \eqref{eq:hs-wass-relax} is then exactly similar to the
one of \eqref{eq:max-wass-relax} in the previous section.

\section{$H$-theorem and entropic chaos}
\label{sec:h-theorem-entropic}
\setcounter{equation}{0}
\setcounter{theo}{0}

This section is concerned with the $H$-theorem. We answer a question
raised by Kac~\cite{Kac1956} about the derivation of the $H$-theorem.

\subsection{Statement of the results}
\label{sec:results-entropy}

Our main results of this section state as follows:
\begin{theo}\label{theo:entropy} 
  Consider the Boltzmann collision process for Maxwell molecules (with
  or without cutoff) or hard spheres, and some initial data with zero
  momentum and energy $\EE$ satisfying
\[
f_0 \in L^\infty\left(\R^d\right) \ \mbox{ s. t. } \ \int_{\R^d} e^{z
  \, |v|} \, {\rm d}f_0(v) < + \infty
\]
for some $z >0$, and the sequence of $N$-particle initial data
$(f_0^N)_{N \ge 1}$ on $\SS^N(\EE)$ constructed in
Lemma~\ref{lem:Bproperty} and \ref{lem:Bpropertybis}.

Then we have:
\begin{itemize}
\item[(i)] In the case of Maxwell molecules with cut-off and hard
  spheres, if the initial data is entropically chaotic in the
  sense
$$
\frac1N \, H\left( f^N _0 | \gamma^N \right) \xrightarrow[]{N \to
  +\infty} H\left(f_0 | \gamma \right) ,
$$
with 
$$
H\left( f^N _0 | \gamma^N \right) := \int_{\SS^N(\EE)}  h^N_0  \, 
\ln h^N_0 \, {\rm d}\gamma^N(v),
\quad h^N_0 := \frac{{\rm d}f^N_0}{{\rm d}\gamma^N},
$$ 
then the solution is also entropically chaotic for any later time: 
$$
\forall \, t \ge 0, \quad \frac1N \, H\left( f^N _t \Big| \gamma^N \right)
\xrightarrow[]{N \to +\infty} H\left(f_t \big| \gamma \right).
$$
This proves the derivation of the $H$-theorem this context, i.e. the
monotonic decay in time of $H(f_t|\gamma)$, since for any $N
\ge 2$, the functional $H(f^N_t | \gamma^N)$ is monotone decreasing
in time for the Markov process.

\item[(ii)] In the case of Maxwell molecules, and assuming moreover
  that the Fisher information of the initial data $f_0$ is finite: 
  $$
  \int_{\R^d} \frac{\left| \nabla_v f_0 \right|^2}{f_0} \, {\rm d}v <
  +\infty,
  $$
  the following estimate on the relaxation induced by the $H$-theorem
  \emph{uniformly in the number of particles} also holds:
$$ 
\forall \, N \ge 1, \quad \frac{H\left( f^N _t | \gamma^N \right)}{N}
\le \beta(t) 
$$ 
for some polynomial function $\beta(t) \to 0$ as $t \to \infty$. 
\end{itemize}
\end{theo}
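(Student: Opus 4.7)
The strategy rests on combining two complementary bounds on the normalized relative entropy $\tfrac{1}{N} H(f^N_t | \gamma^N)$: a lower bound coming from lower semi-continuity along chaotic sequences, and a matching upper bound extracted from the $H$-theorem at the many-particle level together with convergence of the entropy production.

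For part (i), the lower bound proceeds as follows. By Theorems~\ref{theo:tMM}-(iii) and~\ref{theo:HS}-(ii) we already know that in this setting chaos is propagated (in Wasserstein distance, uniformly in time), so for every $t \ge 0$ the family $(f^N_t)$ is $f_t$-chaotic. A classical Robinson--Ruelle-type subadditivity argument (exploiting Gibbs' variational formula $\tfrac{1}{N}H(f^N|\gamma^N) = \sup_\phi [\tfrac{1}{N}\langle f^N, \phi\rangle - \tfrac{1}{N}\log \langle \gamma^N, e^\phi\rangle]$ applied to bounded cylinder test functions depending on $\ell$ variables, and then taking $\ell \to \infty$ by a diagonal extraction, as in \cite{CCLLV} and \cite{kleber}) yields
\[
\liminf_{N \to \infty} \frac{H(f^N_t | \gamma^N)}{N} \ge H(f_t | \gamma).
\]

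For the upper bound I would write the Kac process $H$-theorem in integral form,
\[
\frac{H(f^N_t | \gamma^N)}{N} = \frac{H(f^N_0 | \gamma^N)}{N} - \int_0^t \frac{D^N(f^N_s)}{N} \, ds,
\]
with $D^N$ the Kac entropy production, and the analogous identity $H(f_t|\gamma) = H(f_0|\gamma) - \int_0^t D(f_s) \, ds$ for the Boltzmann equation. By assumption $\tfrac{1}{N}H(f^N_0|\gamma^N) \to H(f_0|\gamma)$, so it suffices to establish
\[
\liminf_{N \to \infty}\int_0^t \frac{D^N(f^N_s)}{N}\, ds \ge \int_0^t D(f_s) \, ds.
\]
Observe that $\tfrac{1}{N}D^N(f^N)$ is a convex, lower semi-continuous functional of the two-point marginal $\Pi_2 f^N$ (this is Jensen's inequality applied to the Boltzmann quadratic form, using convexity of $x\mapsto x\log x$); since $\Pi_2 f^N_s \to f_s^{\otimes 2}$ weakly by propagation of chaos, a pointwise-in-time liminf inequality follows, and Fatou's lemma then yields the time-integrated bound. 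Combining the two estimates,
\[
H(f_t|\gamma) \le \liminf_{N}\frac{H(f^N_t|\gamma^N)}{N} \le \limsup_{N}\frac{H(f^N_t|\gamma^N)}{N} \le H(f_0|\gamma) - \int_0^t D(f_s)\, ds = H(f_t|\gamma),
\]
proving (i). The derivation of the $H$-theorem for the limit equation is then immediate, since $\tfrac{1}{N}H(f^N_t|\gamma^N)$ is monotone non-increasing along the Markov process.

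For part (ii), the key additional ingredient is Fisher information. For Maxwell molecules the limit equation propagates Fisher information (classical McKean/Toscani), and Stam--Toscani--Villani style arguments then yield polynomial-in-time decay $H(f_t|\gamma) \le C(1+t)^{-\alpha}$ under the finite Fisher information hypothesis on $f_0$. At the $N$-particle level, Lemma~\ref{lem:Bproperty}-(vi) provides a uniform-in-$N$ bound $\tfrac{1}{N} I(f^N_0|\gamma^N) \le C$, and an analogous Stam-type inequality on $\mathcal{S}^N(\mathcal{E})$ shows that this normalized Fisher information is non-increasing along the Kac flow; together with the logarithmic Sobolev inequality on the Boltzmann sphere with constant uniform in $N$, this transfers into quantitative decay of $\tfrac{1}{N}H(f^N_t|\gamma^N)$. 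One then splits $[0,t]$ into $[0,T]\cup[T,t]$: on $[T,t]$ the $N$-particle $H$-theorem gives $\tfrac{1}{N}H(f^N_t|\gamma^N)\le \tfrac{1}{N}H(f^N_T|\gamma^N)$, while at time $T$ one couples a quantitative version of the entropic chaos of part (i) (made quantitative thanks to uniform Fisher information bounds) with the polynomial decay of $H(f_T|\gamma)$, and optimization of $T=T(t)$ produces the rate $\beta(t)$.

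The principal obstacle is the lower semi-continuity of the entropy production $\tfrac{1}{N}D^N$ along chaotic sequences in part (i): although the functional is formally convex on the 2-marginal, one must carefully rule out loss of mass in velocity, which is where the polynomial moment bounds on the $N$-particle system (established in Sections~\ref{sec:BddBoltzmann} and~\ref{sec:hardspheres}) enter crucially. A second delicate point, specific to part (ii), is the quantitative transfer from limit decay to $N$-particle decay: the argument of part (i) is non-quantitative, so one must genuinely propagate Fisher information on the sphere with tangential gradients and marry it to a log-Sobolev inequality uniform in $N$ for the reference measure $\gamma^N$ on $\mathcal{S}^N(\mathcal{E})$.
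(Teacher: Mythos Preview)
Your argument for part~(i) is essentially the paper's own: the entropy balance identity for the Kac process and for the limit equation, combined with the two lower semi-continuity statements (for $H(\cdot|\gamma^N)/N$ via the Gibbs variational principle, and for $D^N/N$ via Jensen on the $2$-marginal), yields the squeeze that forces convergence. This is precisely Lemma~\ref{lem:cvxentropy} and the short argument surrounding equation~\eqref{eq:Npart-entropy}.

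Your argument for part~(ii), however, has a genuine gap. The logarithmic Sobolev inequality on $\mathcal{S}^N(\mathcal{E})$ with uniform constant, combined with a uniform bound on $\tfrac{1}{N}I(f^N_t|\gamma^N)$, only gives that $\tfrac{1}{N}H(f^N_t|\gamma^N)$ is \emph{bounded}, not that it \emph{decays}: there is no Bakry--\'Emery mechanism here because the Kac process is a jump process, and in fact a linear entropy--entropy-production inequality along the Kac flow is precisely the open problem that Villani conjectures to \emph{fail} for physical kernels. Your fallback time-splitting argument then requires a \emph{quantitative} version of the entropic chaos from part~(i), but part~(i) was proved by a liminf/limsup squeeze and is intrinsically non-quantitative; you have not explained how the Fisher information bound would make it quantitative.

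The paper's route for part~(ii) is different and avoids this issue entirely. It uses the HWI inequality on the manifold $\mathcal{S}^N(\mathcal{E})$ (which has non-negative Ricci curvature):
\[
\frac{1}{N}H\left(f^N_t\big|\gamma^N\right) \;\le\; \frac{W_2\left(f^N_t,\gamma^N\right)}{\sqrt{N}}\,\sqrt{\frac{I\left(f^N_t\big|\gamma^N\right)}{N}}.
\]
The Fisher factor is bounded uniformly in $t$ and $N$ by propagation of Fisher information along the Maxwellian Kac flow (Lemma~\ref{lem:fisherN}, an $N$-particle extension of the McKean--Toscani argument) together with the bound on the initial data from Lemma~\ref{lem:Bproperty}-(vi). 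The Wasserstein factor is controlled via $W_2/\sqrt{N} \lesssim (W_1/N)^\alpha$ (Lemma~\ref{lem:ComparDistances} plus moment bounds) and the \emph{already established} relaxation estimate $W_1(f^N_t,\gamma^N)/N \le \beta(t)$ from Theorem~\ref{theo:max-wasserstein}. Thus the decay comes not from an entropy--entropy-production inequality but from the Wasserstein relaxation proved earlier, interpolated against Fisher information through HWI.
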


\begin{rems}
\begin{enumerate}
\item The assumptions on the initial data could be relaxed to just
  $\PP_6 \cap L^\infty$ as in point (iii) of Theorem~\ref{theo:tMM} in
  the case of Maxwell molecules. However our assumptions allow for a
  unified statement for hard spheres and Maxwell molecules. We do not
  search for optimal statement here, but rather emphasize the
  strategy. 
\item A stronger notion of entropic chaoticity could be 
$$
\frac1N \, H\left( f^N \big| \left[ f^{\otimes N} \right]_{\SS^N(\EE)} \right)
\xrightarrow[]{N \to +\infty} 0.
$$
The propagation of such property is an interesting open question. 
A partial answer is given in \cite[Theorem~25]{kleber}. 
\item The point (ii) holds for the hard spheres conditionally to a
  bound on the Fisher information uniformly in time and in the number
  of particle. However at present, it is an open problem to known
  whether such a bound holds for the many-particle hard spheres jump
  process.
\item In point (ii) we conjecture the better decay rate
$$
\forall \, N \in \N^*, \,\, \forall \, t \ge 0,
\quad 
\frac{H \left( f^N _t | \gamma^N \right)}{N} \le C \, e^{-\lambda \,t}
$$
for some constant $\lambda >0$. 
\end{enumerate}
\end{rems}

\subsection{Propagation of entropic chaos and derivation of the
  $H$-theorem}
\label{sec:prop-entr-chaos}

In this subsection we shall prove the point (i) of
Theorem~\ref{theo:entropy}. Its proof relies on a convexity argument. 

Let us define $h^N := {\rm d}f^N/{\rm d}\gamma^N$ and then  compute 
\begin{multline*}
\frac{{\rm d}}{{\rm d}t} \frac{H \left( f^N _t | \gamma^N \right)}{N} = 
- D^N \left( f^N _t  \right) \\
 := - \frac{1}{2N^2} \,
\int_{\SS^N(\EE)} \sum_{i \not = j} \int_{\mathbb S^{d-1}} \left( h^N _t
  (V_{ij}^*) - h^N _t (V) \right) \, \ln \frac{h^N _t
  (V_{ij} ^*)}{h^N _t (V)} \, B(v_i-v_j,\sigma) \, 
{\rm d}\sigma \, {\rm d} \gamma^N(v)
\end{multline*}
where we recall that $V_{ij} ^*$ was defined in
\eqref{eq:def-vit-coll}, which implies
\begin{equation}\label{eq:Npart-entropy}
  \forall \, t \ge 0, \quad \frac{H \left( f^N _t | \gamma^N
  \right)}{N} + \int_0 ^t  D^N \left( f^N _s \right) \, {\rm d}s =
  \frac{H \left( f^N _0 | \gamma^N \right)}{N}.
\end{equation}
We also note that the same kind of equality is true at the limit (see
e.g. \cite{Lu1999})
$$
\forall \, t \ge 0, \quad H \left( f _t | \gamma
\right) + \int_0 ^t D^\infty \left( f_s \right) \, {\rm d}s =
H \left( f _0 | \gamma \right)
$$
with 
$$
D^\infty \left( f \right) := \frac12 \, \int_{\R^d \times \R^d \times \mathbb
  S^{d-1}} \left( f' f'_* - f f_* \right) \, \ln
\frac{f' f'_*}{f f_*} \, B(v-v_*, \sigma) \, {\rm d}v \, {\rm d}v_* \,
{\rm d}\sigma
$$
(be careful to the factor $1/2$ in our definition of the collision
operator \eqref{eq:collop} when computing the entropy production
functional). 

We then have the following lower semi-continuity property on these
functionals, as a consequence of their convexity property. 

\begin{lem}\label{lem:cvxentropy}
  The many-particle relative entropy and entropy production
  functionals defined above are lower semi-continuous: if the sequence
  $(f^N)_{N \ge 1}$ is $f$-chaotic then
$$
\liminf_{N \to \infty} \frac{H\left( f^N | \gamma^N \right)}{N} \ge H(f
| \gamma)
$$
and 
$$
\liminf_{N \to \infty} \frac{D^N\left( f^N  \right)}{N} \ge
D^{\infty} (f).
$$
\end{lem}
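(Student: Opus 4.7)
The plan is to prove both estimates by Donsker--Varadhan variational duality, which expresses each $N$-particle functional as a supremum over test functions depending on only one or two velocities; chaoticity then reduces everything to finite-marginal convergence, modulo equivalence-of-ensembles estimates for the reference measure $\gamma^N$ on the Boltzmann sphere $\SS^N(\EE)$.

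For the relative entropy, the starting point is
\[
H(f^N|\gamma^N) \;=\; \sup_{\varphi \in C_b(E^N)} \Bigl\{ \langle f^N, \varphi\rangle - \ln \langle \gamma^N, e^\varphi\rangle \Bigr\}.
\]
Restricting to additive symmetric test functions $\varphi(V) = \sum_{i=1}^N \psi(v_i)$ with $\psi \in C_b(\R^d)$, the symmetry of $f^N$ yields
\[
\frac{H(f^N|\gamma^N)}{N} \;\ge\; \langle \Pi_1 f^N, \psi\rangle \;-\; \frac{1}{N}\ln \int_{\SS^N(\EE)} e^{\sum_i \psi(v_i)} \, d\gamma^N(V).
\]
The first term converges to $\langle f,\psi\rangle$ by chaoticity. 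The convergence of the second term to $\ln \langle \gamma, e^\psi\rangle$ is the equivalence of microcanonical and canonical ensembles on $\SS^N(\EE)$, a consequence of the local central limit theorem for sums of i.i.d.\ centered Gaussians constrained by mass, momentum, and energy; the relevant sharp estimates are carried out on the Kac sphere in \cite{CCLLV} and their adaptation to the Boltzmann sphere (adding the vanishing-momentum constraint) is established in \cite{kleber}. Taking $\liminf_{N\to\infty}$ and then $\sup_\psi$ reconstructs the Donsker--Varadhan representation of $H(f|\gamma)$.

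For the entropy production, I would start from the identity $(a-b)(\ln a - \ln b) = H(a|b) + H(b|a)$ combined with the Donsker--Varadhan formula $H(a|b) = \sup_\phi\{a\phi - b(e^\phi-1)\}$, which expresses the integrand pointwise as a supremum of affine functions of $(h^N(V^*_{ij}), h^N(V))$. Substituting back rewrites $D^N(f^N)$ as a supremum of linear functionals of $f^N$ integrated against $B\, d\sigma \, d\gamma^N$; permutation-invariance of $f^N$ allows the supremum to be restricted to test functions depending only on $(v_i, v_j, \sigma)$, so that each such restricted functional depends on $f^N$ only through its $2$-marginal $\Pi_2 f^N$ and on the $2$-marginal of $\gamma^N$. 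Chaoticity gives $\Pi_2 f^N \rightharpoonup f\otimes f$, and equivalence of ensembles the analogous convergence for $\gamma^N$; hence each restricted functional converges to its limiting value, and taking the supremum over the restricted test functions recovers the variational representation of $D^\infty(f)$, which yields the second inequality.

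The main technical obstacle is, in both parts, the equivalence-of-ensembles estimates on the Boltzmann sphere: precise quantitative control of the finite marginals of $\gamma^N$ and of the normalizing log-partition functions, via a local central limit theorem for centered i.i.d.\ Gaussians conditioned on their empirical mean and energy. These are the $\SS^N(\EE)$ analogue of the sharp estimates of \cite{CCLLV} and are carried out in \cite{kleber}; once these asymptotics are in hand, the variational reduction makes the liminf arguments routine.
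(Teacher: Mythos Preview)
Your approach for the relative entropy part is essentially what the paper invokes: it cites \cite[Theorem~12]{CCLLV} (and \cite{kleber} for the extension to $\SS^N(\EE)$), whose proof is precisely the Donsker--Varadhan duality reduction to additive one-particle test functions plus equivalence of ensembles.

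For the entropy production, the paper takes a genuinely different and somewhat more direct route. Rather than dualizing, it exploits the convexity of $J(z)=(z-1)\ln z$ via Jensen's inequality in the variables $v_3,\dots,v_N$ against the conditional probability $df^N/f^N_2$. Because the collision $(v_1,v_2)\mapsto(v_1^*,v_2^*)$ preserves both $|v_1|^2+|v_2|^2$ and $v_1+v_2$, the conditional law of $\gamma^N$ given $(v_1,v_2)$ coincides with that given $(v_1^*,v_2^*)$, and the conditional average of $h^N(V^*_{12})/h^N(V)$ collapses exactly to the two-particle ratio $(f^N_2)^*/f^N_2$. This yields
\[
D^N(f^N)\;\ge\;\frac{N(N-1)}{2N^2}\int_{\R^{2d}}\int_{\Sp^{d-1}}\bigl((f^N_2)^*-f^N_2\bigr)\ln\frac{(f^N_2)^*}{f^N_2}\,B\,d\sigma\,dv_1\,dv_2,
\]
a functional of the two-marginal alone, and then the joint convexity of $(x,y)\mapsto(x-y)\ln(x/y)$ gives weak lower semicontinuity under $f^N_2\rightharpoonup f\otimes f$ (the paper cites \cite{MR1088276}). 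One consequence worth noting: in \emph{both} approaches the reference measure $\gamma^N$ disappears from the entropy-production bound, so equivalence of ensembles is not needed for $D^N$---only for $H$. Your invocation of it for the second inequality is harmless but superfluous. Your duality argument is correct and yields the same reduction to $\Pi_2 f^N$ after the collision change of variables on $\SS^N\times\Sp^{d-1}$; the paper's Jensen step simply packages that change of variables more compactly.
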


Let us first explain how to conclude the proof of point (i) of
Theorem~\ref{theo:entropy} with this lemma at hand. We first deduce
from \eqref{eq:Npart-entropy} and the entropic chaoticity of the
initial data that 
\[
\forall \, t \ge 0, \quad \frac{H \left( f^N _t | \gamma^N
\right)}{N} + \int_0 ^t D^N \left( f^N _s \right) \, {\rm d}s 
\xrightarrow[]{N \to \infty} H\left( f_0 | \gamma \right) 
= H\left( f_t | \gamma\right) + \int_0 ^t D^\infty \left( f_s \right) \,
{\rm d}s.
\]
Second we use Lemma~\ref{lem:cvxentropy} on the LHS to deduce that 
$$
\forall \, t \ge 0, \quad \liminf_{N \to \infty} \left( \frac{H
  \left( f^N _t | \gamma^N \right)}{N} + \int_0 ^t D^N \left( f^N _s
  \right) \, {\rm d}s \right) \ge H\left( f_t | \gamma\right) + \int_0
^t D^\infty \left( f_s \right) \, {\rm d}s
$$
where each of the limit of the two non-negative terms on the LHS is
greater that the corresponding non-negative term in the RHS. We deduce
from the two last equations that necessarily
$$
\forall \, t \ge 0, \quad \frac{H \left( f^N _t | \gamma^N
\right)}{N} \xrightarrow[]{N \to \infty} H\left( f_t | \gamma \right)
$$
and 
$$
\forall \, t \ge 0, \quad \int_0 ^t  D^N \left( f^N _s \right)
\, {\rm d}s \xrightarrow[]{N \to \infty} \int_0 ^t D^\infty \left( f_s
\right) \, {\rm d}s
$$
which concludes the proof of point (i) of Theorem~\ref{theo:entropy}. 

\begin{proof}[Proof of Lemma~\ref{lem:cvxentropy}]
  These inequalities are consequences of convexity properties.  The
  lower continuity property on the relative entropy on the spheres was
  proved in \cite[Theorem~12]{CCLLV} (actually the proof in this
  reference is performed on the sphere $\Sp^{N-1}$, but extending it
  to the invariant subspaces of our jump processes $\mathcal S^N(\EE)$ is
  straightforward). We refer to \cite{kleber} for a detailed proof of
  the latter.

Let us now prove the inequality for the entropy production
functional $D^N$. Denoting $Z = h^N  (V^*_{12} ) / h^N $, we
 first rewrite thanks to the symmetry of $f^N$ as 
\[
D^N \left( f^N\right) 
=   \frac{N(N-1)}{2N^2} \, \int_{\SS^N(\EE)}
\int_{\mathbb S^{d-1}} J(Z) \,
B(v_1-v_2,\sigma) \, f^N _2 (v_1,v_2) \, 
{\rm d}\sigma \, \frac{{\rm d}f^N (v)}{f^N _2(v_1,v_2)} ,
\]
where $J(z) := (z-1) \, \ln z$ and $f^N _2$ denotes the
$2$-marginal. Since the function $z \mapsto J(z)$ is convex, we can
apply a Jensen inequality according to the variables $v_3, \dots, v_N$
with reference probability measure $f^N / f^N_2$, which yields
$$
D^N \left( f^N\right) \ge \frac{N(N-1)}{2N^2} \, \int_{v_1, v_2 \in
  \R^d} \int_{\mathbb S^{d-1}} J( \bar Z )\,
B(v_1-v_2,\sigma) \, f^N _2 (v_1,v_2) \, {\rm d}\sigma \, {\rm d}v_1 \, {\rm d}v_2
$$
with 
$$
\bar Z(v_1, v_2) := \int_{v_3,\dots, v_N \in \mathcal S^N(v_1,v_2)} Z
\, \frac{{\rm d}f^N (V)}{f^N _2(v_1,v_2)} =
\frac{f^N_2\left((V_{12}^*)_1,(V^*_{12})_2\right)}{f^N _2(v_1,v_2)},
$$ 
where $\mathcal S^N(v_1,v_2) := \{ v_3,\dots, v_N \in E^{N-2}, \,\,
(v_1, \dots, v_N) \in \mathcal S^N\}$.  We therefore deduce a control
from below of the $N$-particle entropy production functional in terms
of the $2$-particle entropy production functional, denoting $(f_2
^N)^*=f^N_2\left((V_{12}^*)_1,(V^*_{12})_2\right)$):
$$
D^N \left( f^N\right) \ge \frac{N(N-1)}{2N^2} \, \int_{v_1, v_2 \in
  \R^d} \int_{\mathbb S^{d-1}} \left( (f^N_2)^*-f^N_2 \right) \, \ln
\frac{(f^N_2)^*}{f^N_2} \, B(v_1-v_2,\sigma) \,
{\rm d}\sigma \, {\rm d}v_1 \, {\rm d}v_2
$$

Finally we take advantage of the convexity of the functional
$$
h(x,y) = (x-y) \, \ln \frac{x}{y} 
$$
which implies that the function 
$$
\left(f_2,g_2\right) \to \int_{v_1,v_2 \in \R^d} \int_{\sigma \in
  \Sp^{d-1}} \left( f_2 - g_2 \right) \, \ln \frac{f_2}{g_2} \,
B(v_1-v_2, \sigma)
$$
is lower semi-continuous for the weak convergence of the $2$-particle
distributions $f_2$ and $g_2$ as proved in \cite[Step~2 of the
proof]{MR1088276}. 

Hence we obtain thanks to the chaoticity of the second marginal
\begin{multline*}
  \liminf_{N \to \infty} D^N \left( f^N \right)  \\ \ge \frac12 \,
  \int_{v, v_* \in \R^d} \int_{\mathbb S^{d-1}} \left(
    f(v')f(v'_*)-f(v)f(v_*) \right) \, \ln
  \frac{f(v')f(v'_*)}{f(v)f(v_*)} \, B(v-v_*,\sigma) \, {\rm d}\sigma \,
  {\rm d}v \, {\rm d}v_* \\ = D^\infty(f)
\end{multline*}
which concludes the proof. 
\end{proof}

\subsection{Many-particle relaxation rate in the $H$-theorem}
\label{sec:relaxation-h-theorem}

In this subsection we shall prove point (ii) in
Theorem~\ref{theo:entropy}. Its proof goes in two steps. First we
shall prove that it follows from an estimate on the Fisher information
thanks to the so-called ``HWI'' interpolation
inequality~\cite{VillaniTOT}. Second we shall prove such a uniform
bound on the Fisher information in the case of Maxwell molecules. Let
us take the opportunity to thank Maxime Hauray who kindly communicated
to us a proof for the latter step. 

Let us define the Fisher informations for the $N$-particle
distribution: 
$$
I\left( f^N \right) := \int_{\R^{dN}} \frac{\left| \nabla f^N
  \right|^2}{f^N} \, {\rm d}v
$$
and
$$
  I\left( f^N |\gamma^N\right) := \int_{\SS^N(\EE)} \frac{\left| \nabla_{\SS^N(\EE)} h^N
  \right|^2}{h^N} \, {\rm d}\gamma^N(v), \,\,\, h^N := {{\rm d}f^N
  \over {\rm d}\gamma^N}
$$
for a probability $f^N$ having a density with respect to the Lebesgue
measure in $\R^{dN}$ and with respect to the measure $\gamma^N$
respectively.  The gradient in that last formula has to be understood as
the usual Riemannian geometry gradient in the manifold $\SS^N(\EE)$.
The tangent space $T\SS^N(\EE)_V$ (of dimension $Nd-2$) at some given point
$V \in \SS^N$ is given by
$$
T \SS^N(\EE) _V = \left\{ W \in \R^{dN} \ \mbox{ s. t. }  \ 
\sum_{i=1} ^N w_i =0 \ \mbox{ and } \  W \, \bot \, V \right\}.
$$

For more informations and other results on the Fisher informations on
$\SS^N(\EE)$
we refer to \cite{MR2247452}. 
We shall prove the following lemma whose proof is inspired from
\cite{MR1646804}). 
\begin{lem}
  \label{lem:fisherN}
  Consider the $N$-particle jump process $(\VV_t ^N)$ for Maxwell
  molecules as defined in Subsection~\ref{sec:modelEBbounded} for $N
  \ge 1$, and some initial law $f^N _0$ with support included in
  $\SS^N(\EE)$ and whose Fisher information is finite $I(f^N
  _0|\gamma^N) <+\infty$ on $\SS^N(\EE)$. Then $f^N_t$ has support
  included in $\SS^N(\EE)$ for later times, and one has the following
  uniform in time bound on the Fisher information of the associated
  law
  $$ \forall \, t \ge
  0, \quad I\left( f^N _t |\gamma^N \right) \le I \left( f^N _0 | \gamma^N \right).
  $$
\end{lem}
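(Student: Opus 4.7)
The plan is to extend the Toscani--Villani monotonicity of Fisher information for the one-particle Boltzmann equation with Maxwell molecules (see \cite{MR1646804}) to the $N$-particle jump process on $\mathcal{S}^N(\mathcal{E})$. The invariance of the support is already supplied by Lemma~\ref{lem:momentsN}, so only the monotonicity needs proof. Since the Maxwell rate $b(\cos\theta_{ij})$ is symmetric under the involution $(v_i,v_j)\leftrightarrow(v_i^*,v_j^*)$ and $\gamma^N$ is uniform on a collision-invariant sphere, $\gamma^N$ is reversible for the process and $G^N$ is self-adjoint on $L^2(\gamma^N)$. Setting $h^N_t := df^N_t/d\gamma^N$ and (after a standard regularisation if needed) $u^N_t := \sqrt{h^N_t}$, one has $\partial_t h^N_t = G^N h^N_t$ and $I(f^N_t|\gamma^N) = 4\int_{\mathcal{S}^N(\mathcal{E})}|\nabla u^N_t|^2 d\gamma^N$, with $\nabla$ the intrinsic Riemannian gradient.

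First, I would differentiate in time to obtain
\begin{equation*}
\frac{d}{dt}I(f^N_t|\gamma^N) = 8\int_{\mathcal{S}^N(\mathcal{E})} \nabla u^N_t\cdot\nabla(\partial_t u^N_t)\, d\gamma^N
\end{equation*}
and use the pair decomposition $G^N=\tfrac{1}{N}\sum_{i<j}K_{ij}$, where $K_{ij}$ acts nontrivially only on the pair $(v_i,v_j)$, to reduce the problem to showing that each pair contribution is non-positive.

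Second, I would freeze $(v_k)_{k\neq i,j}$ and observe that the conditional law of $(v_i,v_j)$ under $\gamma^N$ is uniform on the two-body constraint sphere $\{v_i+v_j=P_{ij},\ |v_i|^2+|v_j|^2=2\mathcal{E}_{ij}\}$, which is isometric to $\mathbb{S}^{d-1}$ via reduction to the relative-velocity direction; on this reduced sphere $K_{ij}$ acts as a symmetric bounded Markov operator with angular kernel $b(\cos\theta)$. The tangent space $T_V\mathcal{S}^N(\mathcal{E})$ splits orthogonally into the two-body tangent directions and their complement, so $|\nabla u^N_t|^2$ decomposes accordingly, and the pair contribution reduces cleanly to a two-body Fisher information quantity integrated against the law of the frozen coordinates.

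The core estimate---and the main obstacle---is then the two-body analogue of Toscani's Fisher information monotonicity on the reduced sphere. I would prove it by adapting the Bobylev/Fourier convexity argument of \cite{MR1646804}: because each collision $(v_i,v_j)\mapsto(v_i^*,v_j^*)$ acts as a rigid rotation of the relative-velocity direction on $\mathbb{S}^{d-1}$, one can recast the two-body Fisher information so that the angular-only Maxwell kernel produces a pointwise convex inequality at the level of the relevant quadratic form. Summing the resulting non-positive pair contributions yields $\frac{d}{dt}I(f^N_t|\gamma^N)\le 0$, and integrating in time gives the stated bound. The delicate point is thus the transposition of the one-particle Toscani argument to the spherical setting: while the underlying convexity is the same, one must verify that the Bobylev identity and the Plancherel-type computation on the relative-velocity sphere reproduce the correct sign structure; once this is secured, the rest is essentially geometric bookkeeping.
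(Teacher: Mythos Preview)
Your outline differs from the paper's proof in two substantive ways, and each hides a gap.

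First, your ``reduction to two-body'' is not clean. When you differentiate $I(f^N_t|\gamma^N)$ in time and isolate the $K_{ij}$ contribution, the full gradient $\nabla_{\mathcal S^N(\EE)}$ still acts, and derivatives in directions $v_\ell$ with $\ell\neq i,j$ do not drop out: $K_{ij}h$ depends on all variables through $h$, so $\nabla_\ell (K_{ij}h/\sqrt h)$ is nonzero. These off-pair terms are not a two-body Fisher quantity at all; they require a separate pointwise convexity argument after symmetrising via the involution $(V,\sigma)\mapsto(V^*_{ij},\sigma')$. You do not mention this, and without it the ``reduces cleanly'' claim fails. The paper sidesteps the issue by a different route: it flattens the problem to $\R^{dN}$ (exploiting that the dynamics preserves each energy shell), then uses a Duhamel representation together with convexity of $I$ to reduce to the single inequality $I(Q^{+,N}_{ij}(f^N))\le C_B\,I(f^N)$. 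This inequality is then proved by computing each $\nabla_{v_\ell}Q^{+,N}_{ij}(f^N)$ explicitly: for $\ell\neq i,j$ the derivative commutes through and Cauchy--Schwarz in $\sigma$ gives $I_\ell(Q^{+,N}_{ij})\le C_B\,I_\ell(f^N)$; for $\ell\in\{i,j\}$ one uses Villani's formula with the endomorphism $P_{\sigma k}$ (odd in $\sigma$, operator norm $\le 1$) and integration by parts on $\mathbb S^{d-1}$.

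Second, the core tool you invoke is not the right one. The argument of \cite{MR1646804} is not a Bobylev/Fourier identity but precisely the explicit gradient computation just described, carried out on $\R^d$; there is no Plancherel step. Transplanting it to the relative-velocity sphere $\mathbb S^{d-1}$ as you suggest is a different statement (Fisher-information decay for an angular jump kernel on a compact manifold), and you give no mechanism for it beyond the heuristic that ``the underlying convexity is the same''. The paper avoids this difficulty entirely by working on $\R^{dN}$ and recovering the spherical statement afterward via an orthogonal radial/ortho-radial decomposition of the gradient.
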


\begin{proof}[Proof of Lemma~\ref{lem:fisherN}]
  We shall first consider the case of cutoff Maxwel molecules whose
  collision kernel $b$ is integrable, and for a positive and smooth
  solution $f^N$ on $\SS^N(\EE)$.  These assumptions can be relaxed by a
  mollification argument.  

  It is possible to study directly the estimate to be proved on the
  Boltzmann sphere $\mathcal S^N(\EE)$, however it means that one has to
  consider some local coordinates and a local basis for the tangent
  space. Another simpler method is to take advantage of the fact that
  the dynamics leaves the energy unchanged.

Starting from an initial data $f_0 ^N$ on $\mathcal S^N(\EE_0)$ we consider
the flatened initial data 
$$
\tilde f_0 ^N := \alpha(\EE(V)) \, 
f^N_0 \left( \frac{V \sqrt{N \EE_0}}{\sqrt{N \EE(V)}}\right) \quad \mbox{ with }
\quad \EE(V) = \frac{\sum_{i=1} ^N
  |v_i|^2}{N}. 
$$

Observe that from the conservation of energy and momentum and the
uniqueness of the solutions to the linear master $N$-particle equation
$$
\forall \, t \ge 0, \quad \tilde f_t ^N := \alpha(\EE(V)) 
\, f^N_t \left( \frac{V \sqrt{N \EE_0}}{\sqrt{N\EE(V)}}\right)
$$
where $\tilde f_t ^N$ denotes the solution in $\R^{dN}$ starting from
$\tilde f_0 ^N$. If the function $\alpha$ is regular and compactly
supported, as well as $f_0 ^N$, this produces a smooth solution on
$\R^{dN}$.

Assume that the result on the Fisher information is true in $\R^{dN}$:
$$
I\left( \tilde f^N _t \right) := \int_{\R^{dN}} \frac{\left| \nabla
    \tilde f^N _t \right|^2}{\tilde f^N _t} \, {\rm d}v \le 
\int_{\R^{dN}} \frac{\left| \nabla
    \tilde f^N _0 \right|^2}{\tilde f^N _0} \, {\rm d}v = I\left( \tilde f^N
  _0 \right).
$$
Then we have the orthogonal decomposition of the gradient
locally in terms of radial and ortho-radial directions
$$
\nabla_{\R^{dN}} \tilde f^N _t = \nabla_{\EE} \tilde f^N _t 
+ \nabla_{\mathcal S^N(\EE)} \tilde f^N _t 
= \left( \nabla_{\EE} \ln \alpha 
\right) \, \tilde f^N _t
+ \nabla_{\mathcal S^N(\EE)} \tilde f^N _t 
$$
that we can plug into the Fisher information inequality: 
\begin{multline*}
  I\left( \tilde f^N _t \right) := \left| \nabla_{\EE} \ln \alpha
  \right|^2 + \left( \int_{\R_+} \alpha(\EE) \, (N \EE)^{(dN-1)/2} \,
    {\rm d} \sqrt \EE \right) \, \int_{\mathcal S^N(\EE)} \frac{\left|
      \nabla_{\mathcal S^N(\EE)} h^N _t
    \right|^2}{h^N _t} \, {\rm d}\gamma^N(v) \\
  \le \left| \nabla_{\EE} \ln \alpha \right|^2 + \left( \int_{\R_+}
    \alpha(\EE) \, (N \EE)^{(dN-1)/2} \, {\rm d} \sqrt \EE \right) \,
  \int_{\mathcal S^N(\EE)} \frac{\left| \nabla_{\mathcal S^N(\EE)} h^N
      _0 \right|^2}{h^N _0} \, {\rm d}\gamma^N(v) = I\left( \tilde f^N
    _0 \right).
\end{multline*}
Dropping the terms which do not depend on time we obtain the desired
inequality on $\mathcal S^N(\EE)$. 

Let us now prove the inequality on $\R^{dN}$. Let us first fix some
notation: the $N$-particle solution $f^N _t$ satisfies
\begin{equation*}
  \partial_t f^N   =  \frac1N \, \sum_{i, j=1, i\not= j}^N 
  \int_{\mathbb S^{d-1}} \left(  f^N\left(r_{ij,\sigma}(V)\right) \, 
  b\left(\cos \theta_{ij} \right) \, {\rm d}\sigma   -  f^N(V) \right) \, {\rm d}\sigma
   =:  N B \left( Q^{+,N}(f^N)  - f^N \right) 
\end{equation*}
where we use the following notations. We define
\[
Q^{+,N}(f^N)  := {1 \over N^2}\sum_{i, j=1, i\not= j}^N  Q_{ij}^{+,N} \left(f^N\right),
\qquad
Q_{ij}^{+,N} \left(f^N\right) := \int_{\Sp^{d-1}}
f^N_{ij} \, b\left(\cos \theta_{ij}\right)
\,{\rm d}\sigma,
\] 
\[
\cos \theta_{ij} := \sigma \cdot k_{ij} \ \mbox{ with } \
k_{ij}=(v_i-v_j)/|v_i-v_j| 
\] 
and where we assume that $b$ is even and that
$$
\int_{\Sp^{d-1}} b(\sigma \cdot k) \, {\rm d}\sigma =C_B \ \mbox{ for any } \ k,
\ |k|=1.  $$

For any function $g^N$ on $\R^{dN}$ shall use the shorthand notation
$g_{ij} ^N$ to denote the function $V \mapsto g(r_{ij,\sigma}(V))$, which
depends also implicitly on $\sigma$.  We shall make use of the measure
preserving involution
$$
\Theta_{ij} : \begin{cases}
     \R^n \times \R^n \times \mathbb S^{d-1} & \rightarrow   \R^n \times \R^n \times
\mathbb S^{d-1} \vspace{0.2cm} \\
     (v_i,v_j,\sigma) & \mapsto  (v_i', v_j', \sigma') 
    \end{cases}
$$
where $\sigma'= (v_i-v_j)/|v_i-v_j| =k_{ij}$. 

Finally as in \cite{MR1646804}, we shall use the following endomorphism of
$\R^d$
\begin{eqnarray*}
 M_{\sigma k}(x) & = & (k \cdot \sigma) x - (k\cdot x) \sigma \\
 P_{\sigma k}(x) & = & (\sigma \cdot x) k + M_{\sigma k}
\end{eqnarray*}
and we recall that $\| P_{\sigma k} (x) \| \leq \|x\|$ with equality
only if $x,\sigma,k$ are coplanar.

\medskip

We claim that it is enough to prove that
\begin{equation}\label{IQ+}
I \left( Q^{+,N}(f^N) \right) = I \left( \frac1{N^2} \sum_{i,j=1}^N
  \int_{\Sp^{d-1}} f\left(r_{ij,\sigma}(V)\right) \, b\left(\cos
    \theta_{ij}\right) \,{\rm d}\sigma \right) \leq C_B \, I(f).
\end{equation}
Indeed with this result at hand, we can write for $\varepsilon >0$: 
$$
f^N_{t+\varepsilon} = e^{-N C_B \varepsilon} \, f^N _t + N \, C_B \, \int_0
^\varepsilon e^{N C_B (s-\varepsilon)} \, Q^{+,N} \left(f^N_{t+s}\right)
\, {\rm d}s
$$
and therefore from the convexity of $I$
$$
I\left( f^N_{t+\varepsilon} \right) \le e^{-N C_B \varepsilon} \, I\left(
  f^N _t \right) + \left(1-e^{-N C_B \varepsilon}\right) \, I \left( \int_0
^\varepsilon Q^{+,N} \left(f^N_{t+s}\right)
\, \frac{N \, C_B \, e^{N C_B (s-\varepsilon)}}{\left( 1- e^{-N C_B 
      \varepsilon}\right)}  \, {\rm d}s \right).
$$
Observe that 
$$
\int_0 ^\varepsilon \frac{N \, C_B \, e^{N C_B (s-\varepsilon)}}{\left( 1-
    e^{-N C_B \varepsilon}\right)} \, {\rm d}s =1
$$
and then we can use the convexity of $I$ again to get 
$$
I\left( f^N_{t+\varepsilon} \right) \le e^{-N C_B \varepsilon} \, I\left(
  f^N _t \right) + \int_0
^\varepsilon I\left( Q^{+,N} \left(f^N_{t+s}\right) \right) 
\, N \, C_B \, e^{N C_B (s-\varepsilon)}  \, {\rm d}s.
$$
Finally using the claimed result \eqref{IQ+} we obtain
$$
\frac{I\left(f^N _{t+\varepsilon} \right) - I\left( f^N
    _{t}\right)}{\varepsilon} 
\le - \frac{\left( 1- e^{-N C_B \varepsilon} \right)}{\varepsilon} \,  I\left(
  f^N _t \right) + \frac1\varepsilon \, \int_0 ^\varepsilon I\left( f^N
    _{t+s}\right) \, N \, C_B^2 \, e^{N C_B (s-\varepsilon)} \, {\rm d}s.
$$
Then taking $\varepsilon \to 0$ and using Lebesgue's theorem we deduce
$$
\frac{{\rm d}}{{\rm d}t} I\left( f^N _{t}\right) \le - N \, C_B \, I\left( f^N
  _{t}\right) + N \, C_B  \, I\left( f^N _{t}\right) \le 0
$$
which concludes the proof. 

Let us now focus on the proof of the claim \eqref{IQ+}. Taking
advantage of the convexity of $I$, it is enough to prove
$$
\forall \, i \not = j \in [|1, N|], \quad I\left(Q_{ij} ^{+,N} \left(f^N\right)
\right) \leq C_B \, I\left(f^N\right).
$$

Let us compute each partial derivative of $Q_{ij} ^{+,N} \left( f^N
\right)$.  If $\ell \notin \{ i,j\}$ then the derivative does not act
on the kernel $b$ and we obtain:
\begin{eqnarray*}
  \nabla_{v_\ell} \left( Q_{ij} ^{+,N} \left( f^N \right) \right)  & = & 
  \int_{\Sp^{d-1}} \nabla_{v_\ell} \left(f_{ij} ^N\right)
  \, b\left(\cos \theta_{ij}\right) \, {\rm d}\sigma =
  \int_{\Sp^{d-1}} \left(\nabla_{v_\ell} f^N \right)_{ij} \, 
  b\left(\cos \theta_{ij}\right) \, {\rm d}\sigma \\
  & = & 2 \int_{\Sp^{d-1}} \left(\sqrt{f^N}\right)_{ij} \, \left(\nabla_{v_\ell}
    \sqrt{f^N} \right)_{ij} \, b\left(\cos \theta_{ij}\right) \, {\rm d}\sigma.
\end{eqnarray*}

If $\ell \in \{i,j\}$, then it is slightly more complicated. Without
restriction we perform calculations in the case $\ell=i$.  Let us
first prove the formula
\begin{multline} \label{eq:expgrad} \nabla_{v_i} \left(Q_{ij} ^{+,N} \left(
    f^N \right) \right) \\ = 
  \int_{\Sp^{d-1}} \left[ \left(\nabla_{v_i}
      f^N  \right)_{ij} +  \left( \nabla_{v_j}
      f^N \right)_{ij} + P_{\sigma k} \left( \left( \nabla_{v_i}
      f^N \right)_{ij} - \left(\nabla_{v_j}
      f^N \right)_{ij} \right) \right] \, b\left(\cos \theta_{ij}\right)
  \,{\rm d}\sigma \\
\end{multline}
(the same equality obviously holds where $i$ is replaced by $j$).

Simple computations (see for instance \cite{MR1646804}) yield
\begin{eqnarray*}
  \nabla_{v_i} \left(f_{ij}^N \right) & = & \frac12 \, \left( \left(\nabla_{v_i}
      f^N \right)_{ij} 
    + \left(\nabla_{v_j} f^N \right)_{ij}
  \right) + \frac12 \, \left[ \left( \left(\nabla_{v_i} f^N \right)_{ij} 
      - \left(\nabla_{v_j} f^N \right)_{ij}\right)
    \cdot \sigma \right]\, k_{ij} \,,\\
  \nabla_\sigma \left(f^N _{ij}\right) & = & 
  \frac{|v_i-v_j|}2 \, \left( \left(\nabla_{v_i} f^N \right)_{ij} -
    \left(\nabla_{v_j} f^N \right)_{ij} \right), \\
  \nabla_{v_i} \left[ b\left( \cos \theta_{ij} \right)\right]  &
  = & \frac1{|v_i-v_j|} \, b' \left(\sigma \cdot k_{ij}\right) \, \Pi_{k^\perp} \sigma,
\end{eqnarray*}
where $\Pi_{k^\perp}$ is the projection on the hyperplane
$k^\perp$. Using the first and third equality above, we get
\begin{multline} \label{eq:nablaQ} \nabla_{v_i} \left(Q_{ij} ^{+,N}
    \left( f^N \right) \right) \\ = \frac12 \, \int_{\Sp^{d-1}}
  b\left(\cos \theta_{ij}\right) \, \left(
    \left(\nabla_{v_i} f\right)_{ij} + \left(\nabla_{v_j}
      f\right)_{ij} + \left[ \left( \left(\nabla_{v_i} f \right)_{ij}
        - \left(\nabla_{v_j} f\right)_{ij}\right)
      \cdot \sigma \right]\, k \right) \, {\rm d}\sigma \\
  + \left( \int_{\Sp^{d-1}} b'\left(\cos \theta_{ij} \right)
    \frac{f_{ij}}{|v_i-v_j|} \, \Pi_{k^\perp} \sigma \, {\rm d}\sigma
  \right)
\end{multline}
and we use the following formula of integration by part on the sphere
$\Sp^{d-1}$ (see \cite[Lemma 2]{MR1646804})
\begin{equation*} 
\int_{\Sp^{d-1}} b'\left(\cos
    \theta_{ij}\right) \, F(\sigma) \, \Pi_{k^\perp} \sigma \, {\rm d}\sigma
  = \int_{\Sp^{d-1}} b\left(\cos \theta_{ij}\right) \, M_{\sigma k}
  \left( \nabla_\sigma F(\sigma) \right) \, {\rm d}\sigma
\end{equation*}
and the second equality above to rewrite the term involving $b'$ in
\eqref{eq:nablaQ} into
$$
\frac12 \, \int_{\Sp^{d-1}} b\left(\cos \theta_{ij}\right) \,
M_{\sigma k} \left( \left(\nabla_{v_i} f\right)_{ij} - \left(\nabla_{v_j}
    f\right)_{ij} \right) \, {\rm d}\sigma
$$
Putting all together, we get formula \eqref{eq:expgrad}.

We deduce that for $\ell \not = i,j$ we have by Cauchy-Schwarz
\begin{equation*}
  \left|\nabla_{v_\ell} \left( Q_{ij} ^{+,N}
      \left( f^N \right) \right) \right|^2 \leq 4 \, \left(
    \int_{\Sp^{d-1}} f_{ij} ^N \, b\left(\cos \theta_{ji}\right) \,
    {\rm d}\sigma \right) \, \left( \int_{\Sp^{d-1}} 
    \left|\left(\nabla_{v_\ell} \sqrt{f^N} \right)_{ij}\right|^2 \,
    b\left(\cos \theta_{ij}\right) \, {\rm d}\sigma\right)
\end{equation*}
and therefore
$$
\frac{\left|\nabla_{v_\ell} \left( Q_{ij} ^{+,N}
      \left( f^N \right)\right) \right|^2}{Q_{ij} ^{+,N} \left( f^N
  \right)} \leq 4 \, \int_{\Sp^{d-1}} 
\left|\left(\nabla_{v_\ell} \sqrt{f^N} \right)_{ij}\right|^2 \,
b\left(\cos \theta_{ij}\right) \, {\rm d}\sigma.
$$
Now integrating in $V$ we obtain
\begin{eqnarray*}
  I_\ell \left(Q_{ij} ^{+,N} \left( f^N \right) \right)  & \leq & 4
  \int_{\R^{dN}} \int_{\Sp^{d-1}} 
  \left|\left(\nabla_{v_\ell}
      \sqrt{f^N} \right)_{ij}\right|^2 \, b\left(\cos
    \theta_{ij}\right) \, 
  {\rm d}\sigma \, {\rm d}v \\
  & \leq & 4 \int_{\R^{dN}} \int_{\Sp^{d-1}} 
  \left|\left(\nabla_{v_\ell}
      \sqrt{f^N} \right)\right|^2 \, b\left(\cos \theta_{ij}\right) \, 
  {\rm d}\sigma \, {\rm d}v \\
  & \leq & 4 \, C_B \, \int_{\R^{dN}}   
  \left|\left(\nabla_{v_\ell}
      \sqrt{f^N} \right)\right|^2 \, {\rm d}v  =: I_\ell \left(f^N \right)
\end{eqnarray*}
where we have used Cauchy-Schwartz's inequality and the change of
variable $\Theta_{ij}$, and where $I_\ell$ is defined from the last
line (Fisher information restricted to the $\ell$-th derivative). 

When $\ell=i, j$, we use \eqref{eq:expgrad} to get 
\begin{multline*}
  \left|\nabla_{v_\ell} \left( Q_{ij} ^{+,N}
      \left( f^N \right) \right) \right|^2 \leq \left(
    \int_{\Sp^{d-1}} f ^N \, b\left(\cos \theta_{ji}\right) \,
    {\rm d}\sigma \right) \\ \times \Bigg( \int_{\Sp^{d-1}} 
     \Bigg| \left(\nabla_{v_i}
       \sqrt{ f^N } \right) + \left(\nabla_{v_j}
       \sqrt{ f^N } \right) \\ +  P_{\sigma k}  \,
     \left(  \left( \nabla_{v_j}
       \sqrt{ f^N }\right) - \left( \nabla_{v_j}
       \sqrt{ f^N }\right) \right) \Bigg|^2 \, b\left(\cos \theta_{ij}\right)
   \,{\rm d}\sigma \Bigg)
\end{multline*}
where we have used Cauchy-Schwartz's inequality and the change of
variable $\Theta_{ij}$.

Since for fixed $V$, $P_{\sigma k}$ is odd in $\sigma$ and $b(\cos
\theta_{ij})$ is even in $\sigma$, we have
$$
\int_{\Sp^{d-1}} A \cdot P_{\sigma k}(B) \, {\rm d}\sigma =0
$$
for any functions $A$, $B$ independent of $\sigma$. Using finally that
$P_{\sigma k}$ has norm less than $1$ (for the subordinated norm to
the euclidean norm) we get 
\begin{multline*}
  \left|\nabla_{v_\ell} \left( Q_{ij} ^{+,N}
      \left( f^N \right) \right) \right|^2 \leq 2 \, \left(
    \int_{\Sp^{d-1}} f ^N \, b\left(\cos \theta_{ji}\right) \,
    {\rm d}\sigma \right) \\ \times \Bigg( \int_{\Sp^{d-1}} 
     \left|\nabla_{v_i}
       \sqrt{ f^N } \right|^2 + \left|\nabla_{v_j}
       \sqrt{ f^N } \right|^2 \, b\left(\cos \theta_{ij}\right)
   \,{\rm d}\sigma \Bigg)
\end{multline*}
and therefore 
$$
I_\ell \left(Q_{ij} ^{+,N} \left( f^N \right) \right) \le C_B \, \frac{I_i
  \left(f^N \right)+I_j \left(f^N \right)}{2}.
$$

Finally we end up with 
$$
I\left(Q_{ij} ^{+,N} \left( f^N \right) \right) = C_B \, \sum_{\ell
  =1} ^N I_\ell \left(Q_{ij} ^{+,N} \left( f^N \right) \right) \le C_B
\, \sum_{\ell =1} ^N I_\ell \left(f^N \right) = C_B \, I\left( f^N
\right)
$$
which concludes the proof. 
\end{proof}

Let us now conclude the proof of point (ii) in
Theorem~\ref{theo:entropy}. We make use of the so-called ``HWI''
interpolation inequality on the manifold $\SS^N(\EE)$.  Observe that
$\SS^N(\EE)$ has positive Ricci curvature since it has positive curvature.
Then \cite[Theorem~30.21]{Villani:OTON} implies that
\[
  \frac1N \, H\left( f^N | \gamma^N \right)
    \le \frac{W_2\left( f^N, \gamma^N
    \right)}{\sqrt{N}} \, \sqrt{ \frac{I \left( f^N | \gamma^N
      \right)}{N}}.
\]
We can then use the uniform bound on the Fisher information provided
by Lemma~\ref{lem:fisherN}~(vi) and the bound on the initial data 
to get: 
$$
\frac{I \left( f^N | \gamma^N \right)}{N} \le \frac{I \left( f^N _0 |
    \gamma^N \right)}{N} \le C
$$
for some constant $C >0$ independent of $N$. Moreover
Lemma~\ref{lem:ComparDistances} and the propagation of moments on the
$N$-particle system in Lemma~\ref{lem:momentsN} imply that 
$$
\frac{W_2\left( f^N, \gamma^N
    \right)}{\sqrt{N}} \le C \, \left( \frac{W_1\left( f^N, \gamma^N
    \right)}{N} \right)^\alpha
$$
for some constant $C>0$ and exponent $\alpha >0$ independent of
$N$. Then using Theorem~\ref{theo:max-wasserstein} (case (b)) we
deduce that 
$$
\frac{W_2\left( f^N, \gamma^N
    \right)}{\sqrt{N}} \le C \, \left( \frac{W_1\left( f^N, \gamma^N
    \right)}{N} \right)^\alpha \le \beta(t)
$$
with a polynomial rate $\beta(t) \to 0$ as $t \to +\infty$, which
implies that
$$
\frac1N \, H\left( f^N | \gamma^N \right) \le C \, \beta (t) 
$$
and concludes the proof.

\section{The BBGKY hierarchy method revisited} 
\label{sec:bbgky}
\setcounter{equation}{0}
\setcounter{theo}{0}

The so-called BBGKY hierarchy method (Bogoliubov, Born, Green,
Kirkwood and Yvon) is very popular in physics and mathematics for
studying many-particle systems: see for instance
\cite{ArkerydCI99} where this approach is used for Kac's master
equation for hard spheres, or see, among many other works, the recent
series of papers \cite{MR1869286,MR2257859,MR2276262,MR2680421} where
this approach is used for the derivation of nonlinear mean-field
Schr\"odinger equations in quantum physics. The basic ideas underlying
this approach to mean-field limit could be summarized as:
\begin{itemize}
\item[(i)] Write a BBGKY hierarchy on marginals of the $N$-particle system
  and prove that the $N$-particle system solutions converge to the
  solutions of an ``infinite hierarchy'' when $N$ goes to
  infinity. The proof of this convergence often relies on a
  compactness argument.
\item[(ii)] Prove that solutions to this infinite hierarchy are
  unique, which is the hardest part of this program.
\item[(iii)] Then deduce the propagation of chaos by exhibiting, for any
  chaotic initial data to the infinite hierarchy, a solution to the
  infinite hierarchy obtained by the infinite tensorization of the
  $1$-particle solution to the limit nonlinear mean-field equation.
\end{itemize}

This section revisits this BBGKY hierarchy method, under some
appropriate regularity assumptions on the limit semigroup. We build
a rigorous connection with \emph{statistical solutions} and our
pullback semigroup $T^\infty _t$, by showing (1) how these notions are
included in our functional framework (cf. the abstract semigroup
$T^\infty _t$ defined in Section~\ref{sec:abstract-setting}), and (2)
how to give a proof of uniqueness and propagation of chaos based on
them by using the functional tools we have introduced. We would like
to take the opportunity to mention here the interesting paper
\cite{MR657065} (pointed out to us by Golse) where some key ideas
about the connection between the BBGKY hierarchy and the pullback
semigroup were already presented. 

\subsection{The BBGKY hierarchy} 

Let us recall the master equation of the $N$-particle system
undergoing a Boltzmann collision process (the notion of BBGKY
hierarchy has wider application range, but we shall stick to this
concrete case for clarity), see
\eqref{eq:BoltzBddKolmo}-\eqref{defBoltzBddGN}:
\begin{equation}\label{eq:MasterN}
 \partial_t \left\langle f^N_t,\varphi \right\rangle = 
  \left\langle f^N_t, G^N \varphi \right\rangle 
\end{equation}
with 
\[
  \left(G^N\varphi\right) (V) =  \frac{1}{N} \, 
  \sum_{1 \le i < j \le N}  \Gamma\left(|v_i-v_j|\right)
  \, \int_{\mathbb{S}^{d-1}} b(\cos\theta_{ij}) \, \left[\varphi^*_{ij} -
    \varphi\right] \, {\rm d}\sigma
\]
\[
\mbox{where } \ \varphi^*_{ij}= \varphi \left(V^*_{ij} \right) \ \mbox{ and } \
\varphi = \varphi(V) \in C_b\left(\R^{Nd}\right).
\]

Then the BBGKY hierarchy writes as folows. Let us recall the notation 
\[
f^N_\ell = \Pi_\ell[f^N] = \int_{v_{\ell+1}, \dots , v_N}
{\rm d}f^N\left(v_{\ell+1},\dots, v_N\right)
\]
for the marginals. Then integrating the master equation
\eqref{eq:MasterN} against some test function $\varphi =
\varphi(v_1,\dots,v_\ell)$ depending only on the first $\ell$
variables leads to
\[
\frac{{\rm d}}{{\rm d}t} \left\langle f^N_\ell ,\varphi \right\rangle = 
\left\langle f^N_{\ell +1}, G^N _{\ell+1} (\varphi) \right\rangle
\]
where 
\begin{equation*}
G^N _{\ell+1} (\varphi) := \frac{1}{N} \, 
  \sum_{1 \le i \le \ell, \ 1 \le j \le N, \ i \not= j}  \Gamma\left(|v_i-v_j|\right)
  \, \int_{\mathbb{S}^{d-1}} b(\cos\theta_{ij}) \, \left[\varphi^*_{ij} -
    \varphi\right] \, {\rm d}\sigma.
\end{equation*}

Then with the notation 
$$
\ZZ^N_{ij} := \left\langle f^N, \Gamma\left(|v_i-v_j|\right)
  \, \int_{\mathbb{S}^{d-1}} b(\cos\theta_{ij}) \, \left[\varphi^*_{ij} -
    \varphi\right] \, {\rm d}\sigma \right\rangle
$$
we can futher decompose this sum as
\[
\frac{{\rm d}}{{\rm d}t} \left\langle f^N_\ell ,\varphi \right\rangle
= {1 \over N} \sum_{i,j \le \ell} \ZZ^N_{ij}  + {1 \over N} \sum_{i
  \le \ell < j}  \ZZ^N_{ij} =  {1 \over N} \sum_{i,j \le \ell} \ZZ^N_{ij}  +\OO
\left(\frac{\ell^2}N \right) 
\]
by observing that $\ZZ^N_{ij} = 0$ for $i, j > \ell$. Using the
symmetry of $f^N$ we finally deduce
\begin{equation}\label{eq:hierarchy}
\frac{{\rm d}}{{\rm d}t} \left\langle f^N_\ell ,\varphi \right\rangle
 = \frac{(N-\ell)}{N} \, \left( \sum_{i=1} ^\ell \ZZ^N _{i (\ell +1)}
 \right) + \OO \left(\frac{\ell^2}N \right). 
\end{equation}

We thus end with a series of $N$ coupled equations on the marginals $f^N
_\ell$, where the $\ell$-equation ($\ell \le N-1$) depends on the $f^N
_{\ell+1}$ marginal. 

\subsection{The infinite hierarchy and statistical solutions} 

Assume now that 
\[
\forall \, t \ge 0, \,\, \forall \, \ell \ge 1, \quad f^N _{t\ell} \rightharpoonup \pi_{t\ell} \
\mbox{ in } \ P\left( \R^{d \ell} \right).
\]

Starting from \eqref{eq:hierarchy} we obtain, for
$\varphi=\varphi(v_1,\dots,v_\ell) \in C_b(\R^{d\ell})$ depending only
on the first $\ell$ variables,
\[
\frac{{\rm d}}{{\rm d}t} \left\langle \pi_\ell, \varphi \right\rangle = 
\left\langle \pi_{\ell +1}, G^\infty _{\ell+1}(\varphi) \right\rangle
\]
where $G _{\ell+1}(\varphi) \in C_b(\R^{d(\ell+1)})$ is defined by 
\[
G ^\infty _{\ell+1}(\varphi) := \Gamma\left(|v_i-v_j|\right)
  \, \int_{\mathbb{S}^{d-1}} b(\cos\theta_{ij}) \, \left[\varphi^*_{ij} -
    \varphi\right] \, {\rm d}\sigma.
\]

In a more compact form, we have the following set of linear coupled
evolution equations 
\begin{equation}\label{evolbbgky}
\forall \, \ell \ge 1, \quad \partial_t \pi_\ell = A^\infty _{\ell+1}
\left( \pi_{\ell+1}\right) 
\ \mbox{ with } \  A^\infty _{\ell+1} := \left( G_{\ell+1} ^\infty
\right)^*.
\end{equation}
Since the family of $\ell$-particle probabilities $\pi_{\ell}$ is
symmetric and compatible in the sense that
\[
\forall \, \ell \ge 1, \quad \Pi_\ell \left[ \pi_{\ell+1} \right] = \pi_\ell
\]
(this follows from the construction), we can associate by
Hewitt-Savage's Theorem~\cite{Hewitt-Savage} a unique $\pi \in
P(P(\R^d))$ such that, for any $\ell \ge 1$ and
$\varphi=\varphi(v_1,\dots,v_\ell) \in C_b(\R^{d\ell})$ depending only
on the first $\ell$ variables,
\[
\langle \pi, R_\varphi \rangle = \langle \pi_\ell, \varphi \rangle
\]
and these evolution equations for the $\pi_\ell$ translate into an
evolution equation
\[
\partial_t \pi = A^\infty (\pi) \ \mbox{ on } \ P\left( P\left( \R^d
  \right) \right) 
\]
of \emph{statistical solutions} and the corresponding dual evolution
\begin{equation} \label{dualstat}
\partial_t \Phi = \bar G^\infty \Phi \ \mbox{ on } \ C_b\left( P\left( \R^d
  \right) \right). 
\end{equation}

In order to make this heuristic rigorous at an abstract level, one
needs at least some tightness on the sequence $(f^N _\ell)_{N \ge
  \ell}$ for any $\ell$, and some convergence
\[
G^N_{\ell+1} (\varphi) \to G^\infty_{\ell+1} (\varphi)
\]
on compact subset of $\R^{d(\ell+1)}$. Both are satisfied for
Boltzmann collision processes considered in this paper (note that the
tightness follows from the moment estimates in
Lemma~\ref{lem:momentsN} for instance).

\subsection{Uniqueness of statistical solutions and chaos}
\label{sec:tensor}

We now want, under appropriate abstract assumptions, to identify the
limit evolution \eqref{dualstat} in $C_b(P(\R^d))$ obtained from the
hierarchy, and show that it coincides with the the pullback evolution
semigroup $T^\infty _t$ introduced in
Subsection~\ref{sec:mean-field-limiting}. Meanwhile we shall prove
that the statistical solutions to the infinite hierarchy are unique,
and hence prove the propagation of chaos, without any rate, but also
under weaker assumptions than previously. For the sake of clarity we
do not include weights nor constraints in the following theorem, but
it can easily be extended in this direction in a similar way as in
Theorem~\ref{theo:abstract}.  Our aim here is rather the conceptual
presentation of the method.  As a consequence, our result only applies
(straightforwardly) to the {\bf (GMM)} model. The {\bf (HS)} model and
the {\bf (tMM)} model could be handled in a similar way by using an
extended version of the theorem including weights and constraints.
\medskip

We make the following assumptions:
\medskip

\begin{quote}
\begin{center}{\bf (A1') Assumptions on the $N$-particle system.} 
\end{center}
\smallskip

$G^N$ and $T^N_t$ are well defined on $C_b(E^N)$ and invariant under
permutation, and the associated solutions $f^N _t$ satisfy:
$$ 
\forall \, t \ge 0,\, \forall \, \ell \ge 1, \quad \hbox{the sequence}
\ (\Pi_\ell f^N _{t})_{N \ge
  \ell} \ \hbox{ is tight in } \  \mathcal P_{\GG_1}(E) ^{\otimes \ell}
$$
where $\GG_1$ is a Banach space and $\mathcal P_{\GG_1}(E)$ is defined
in Definitions~\ref{defGG1}-\ref{defGG2}, and is associated to a
weight function
$m_{\GG_1}$ 
and endowed with the metric induced from $\GG_1$.
\end{quote}
\bigskip

\begin{quote}
  \begin{center} {\bf (A2') Assumptions for the existence of the
      statistical and pullback semigroups.}
\end{center}
\smallskip

For some $\delta \in (0,1]$ and some $\bar a\in
(0,\infty)$ we assume that for any $a\in (\bar a,\infty)$:
  \begin{itemize}
  \item[(i)] The equation \eqref{eq:limit} generates a semigroup 
    \[
    S^{N\! L}_t : \BB
    \mathcal P_{\GG_1,a} \to \BB \mathcal P_{\GG_1,a}
    \]
    which is $\delta$-H\"oder continuous locally uniformly in time, in
    the sense that for any $\tau \in (0,\infty)$ there exists $ C_\tau
    \in (0,\infty)$ such that
    \begin{equation*}
      \forall \, f,g
      \in \BB \mathcal P_{\GG_1,a}, \quad \sup_{t \in [0,\tau]}
      \left\| S^{N\! L}_t f -
      S^{N\! L}_t g \right\|_{\GG_1} \le C_\tau \, \, \| f - g \|^\delta_{\GG_1}.
     \end{equation*}
   \item[(ii)] The application $Q$ is bounded and $\delta$-H\"older continuous from
    $\BB \mathcal P_{\GG_1,a}$ into $\GG_1$.
 \item[(iii)] $E$ is a locally compact Polish space 
   and there is $\FF_1$ in duality with $\GG_1$ such that $\FF_1$ is dense in
    $C_b(E)$ in the sense of uniform convergence on any compact set.
%
\end{itemize}
\end{quote}
\bigskip

\begin{quote}
\begin{center}
{\bf (A3') Convergence of the generators.} 
\end{center}
\smallskip

For any fixed $\ell \in \N^*$ and any $\varphi \in
C_b(E^\ell)$, the sequence 
\[
G^N_{\ell+1} (\varphi) \in C_b\left(E^{\ell+1}\right)
\ \mbox{ satisfies } \ G^N_{\ell+1} \varphi \xrightarrow[]{N \to
  \infty} G^\infty_{\ell+1} \varphi
\]
uniformly on compact sets, where $G^\infty_{\ell+1} \varphi$ satisfies
the following {\it compatibility binary derivation structure}: for any
$ \varphi = \varphi_1 \otimes \, \dots \, \otimes \varphi_\ell \in
C_b(E)^{\otimes\ell}$ and any $V = (v_1, \dots , v_{\ell+1}) \in
E^{\ell+1}$
\begin{equation}\label{eq:compatibiliteGinftyell2} 
G^\infty_{\ell+1} (\varphi) (V) = \sum_{i=1}^\ell \left( \prod_{j \not=i} \varphi_j(v_j) \right) \,
Q^*(\varphi_i)\left(v_i,v_{\ell+1}\right)
\end{equation}
where $Q^*$ is related to $Q$ through the duality relation 
$$
 \forall \, f \in \mathcal P_{\GG_1}, \,\, \forall \, \psi \in C_b(E),
\quad \left\langle Q(f,f),\psi\right\rangle = \left\langle f \otimes f, Q^*(\psi) \right\rangle.
$$
\end{quote}
\bigskip

\begin{rem}
  The identity \eqref{eq:compatibiliteGinftyell2} is called {\it
    compatibility binary derivation structure} for the following
  reasons: {\it compatibility} since it is a
  natural condition in order that any solution $f_t$ to the nonlinear
  Boltzmann provides a tensorized solution to the BBGKY hierarchy
  \eqref{evolbbgky}. Indeed, considering such a solution 
$$ 
f_t \in C\left(\R_+;\mathcal P_{\GG_1}(E)\right)  \ \hbox{ and } \ 
 \varphi = \varphi_1 \otimes \, \dots \, \otimes \varphi_\ell \in
 C_b(E)^{\otimes\ell}
$$ 
we compute
\begin{eqnarray*}
  \frac{{\rm d}}{{\rm d}t} \left\langle f_t^{\otimes \ell}, \varphi \right\rangle 
  &=& \sum_{i=1}^\ell \left( \prod_{j\not=i} \left\langle f_t, \varphi_j
  \right\rangle \right) 
\frac{{\rm d}}{{\rm d}t} \langle f_t, \varphi_i \rangle 
  = \sum_{i=1}^\ell \left( \prod_{j\not=i} \left\langle f_t, \varphi_j
  \right\rangle \right) 
\left\langle Q\left(f_t,f_t\right), \varphi_i \right\rangle 
  \\
  &=& \sum_{i=1}^\ell \left( \prod_{j\not=i} \left\langle f_t, \varphi_j
  \right\rangle \right) 
  \left\langle f_t \otimes f_t, Q^*\left(\varphi_i\right)\right\rangle 
  = \left\langle f_t^{\otimes \ell+1}, G^\infty_{\ell+1} \varphi \right\rangle.
\end{eqnarray*}
The word {\it binary} refers to the fact that $G^\infty _{\ell+1}$
decomposes in function acting on {\it one} variable and adding {\it
  one} variable, which corresponds to the binary nature of the
collisions. Finally the word {\it derivation} refers to the fact that
the following distributivity property holds
$$
G^\infty_{\ell+1} \left( \varphi \otimes \psi \right) = 
G^\infty_{\ell+1} \left( \varphi \right) \otimes \psi + 
\varphi \otimes G^\infty_{\ell+1} \left( \psi \right).
$$
Let us mention that this distributivity property is at the basis of
the original combinatorial proof of Kac \cite{Kac1956} of propagation
of chaos for the simplified Boltzmann-Kac equation. This structure
assumption is also partly inspired from \cite{McKean1967}.
\end{rem}

\bigskip

\begin{quote}
\begin{center}
{\bf (A4') Differential stability of the limit semigroup.} 
\end{center}  
\smallskip

We consider some Banach space $\GG_2 \supset \GG_1$ (where $\GG_1$ was
defined in {\bf (A2)}) and the corresponding space of probability
measures $\mathcal P_{\GG_2}(E)$ (see
Definitions~\ref{defGG1}-\ref{defGG2}) with the weight function
$m_{\GG_2}$ 
and endowed with the metric induced from $\GG_2$.

We assume that the flow $S^{N \! L}_t$ is $UC^1(\mathcal
P_{\GG_1},\mathcal P_{\GG_2}) \cap UC^{0,1/2}(\mathcal
P_{\GG_1},\mathcal P_{\GG_2})$ for any $t \ge 0$, where  $UC^{0,1/2}$ is  the space
of functions $\SS$ satisfying 
\eqref{eq:devdist1}Êwith $\Omega_c$ such that $\Omega_c(s)/s^{1/2} \to 0$ as
$s\to0$. 
\end{quote}
\bigskip

Thanks to {\bf (A2')}, we know from Lemma~\ref{lem:H0} that
for any $\Phi \in UC_b(\mathcal P_{\GG_1},\R)$ we may define the
$C_0$-semigroup $T^\infty _t [\Phi] \in UC_b(\mathcal P_{\GG_1})$ by
$$
T^\infty _t  [\Phi] (f) = \Phi\left(S^{N\!L}_t f\right),
$$
and so that $\Phi_t = T^\infty _t [\Phi]$ satisfies the equation
\begin{equation*}
\partial_t \Phi = G^\infty [\Phi]
\end{equation*}
with a generator $G^\infty$ which is a closed operator on
$UC_b(\mathcal P_{\GG_1})$ and has domain $\hbox{Dom}(G^\infty)$
which contains 
$UC^1(\mathcal P_{\GG_1})$, and is defined by
$$
G^\infty [\Phi](f) 
= \left\langle Q(f,f), D \Phi(f)
\right\rangle_{\mathcal P_{\GG_1},C_b(\mathcal P_{\GG_1})}.
$$
The evolution corresponds to the following dual evolution equation 
\begin{equation}\label{evolpush}
\frac{{\rm d}}{{\rm d}t} \left\langle \pi_t, \Phi \right\rangle = \left\langle
  \pi_t, G^\infty [\Phi] \right\rangle.
\end{equation}

Our goal is to prove first that the evolution equations
\eqref{evolbbgky} and \eqref{evolpush} are identical (or in other
words that the generator $\bar G^\infty$ introduced for the
hierarchy is well-defined and equal to $G^\infty$), and second and
most importantly that the solution to these equations is given by the
characteristics method
\begin{equation}\label{eq:defbarpit} 
\forall \, \Phi \in UC_b
(\mathcal P_{\GG_1};\R), \quad \left\langle \pi_t,\Phi \right\rangle = \left\langle
\pi_0 , T^\infty _t \Phi \right\rangle.
\end{equation}

Let us explain why the relation \eqref{eq:defbarpit} indeed defines
uniquely a probability evolution $\bar\pi_t \in P(\mathcal P_{\GG_1})$. 
For any $\ell \in \N^*$ we define 
$$
\varphi \in \FF^{\otimes\ell} \mapsto \left\langle \pi^\ell_t,\varphi
\right\rangle := \left\langle \pi_0 , T^\infty _t R^\ell_\varphi \right\rangle.
$$
That is a positive linear form on $\FF^{\otimes \ell}$. Thanks to {\bf
  (A2')-(iii)}, the Stone-Weierstrass density theorem and the
Markov-Riesz representation theorem, we conclude that $\pi^\ell_t$ is
well defined as a element of $\mathcal P_{\GG_1}(E) ^{\otimes \ell}$. Since now the sequence
$(\pi^\ell_t)$ is symmetric and compatible, the Hewitt-Savage
representation theorem implies that there exists a unique probability
measure $\bar\pi_t \in P(\mathcal P_{\GG_1})$ such that for any $\varphi \in
\FF_1 ^{\otimes\ell}$
 \begin{equation}\label{eq:defbbgkybis}
 \langle \bar\pi_t,R^\ell_\varphi \rangle := \langle \pi_0 ,  T^\infty _t R^\ell_\varphi \rangle.
\end{equation}

\begin{theo}\label{theo:BBGKYuniq} 
  Under the asumptions {\bf (A1')-(A2')-(A3')-(A4')}, for any initial
  datum $\pi_0 \in P(\mathcal P_{\GG_1})$, the flow $\bar\pi_t$ defined from
  \eqref{eq:defbbgkybis} is the unique solution in $C([0,\infty);
  P(\mathcal P_{\GG_1}))$ to the infinite hierarchy evolution \eqref{evolbbgky}
  starting from $\pi_0$.

  Moreover, if $\pi_0 = \delta_{f_0}$ with $f_0 \in P(E)$ then $\pi_t
  = \delta_{f_t}$ for any $t \ge 0$, with $f_t := S^{N \! L}_t
  f_0$. As a consequence we deduce that if $f^N_0$ is
  $f_0$-chaotic, then $f^N_t$ is $S^{N\!L}_t f_0$-chaotic. More
  generally, if $f^N_0$ converges to $\pi_0$ then $f^N _t$ converge to
  $\bar \pi_t$, which is the associated statistical solution.
\end{theo}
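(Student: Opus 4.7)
The plan is to define $\bar\pi_t \in P(\mathcal P_{\GG_1})$ via the pullback formula \eqref{eq:defbbgkybis} and prove three things: (i) $\bar\pi_t$ solves the hierarchy \eqref{evolbbgky}; (ii) any $C([0,\infty); P(\mathcal P_{\GG_1}))$ solution coincides with $\bar\pi_t$; (iii) this identification, combined with tightness and generator convergence, upgrades weak convergence of the $N$-particle system to chaos propagation. The main obstacle will be (ii); the technical linchpin throughout is the identity
$$G^\infty R^\ell_\varphi = R^{\ell+1}_{G^\infty_{\ell+1}(\varphi)},$$
which tightly couples the pullback generator to the hierarchy operator.

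For (i), I would first note that the duality assumption {\bf (A2')-(iii)} and the computations of Subsection \ref{sec:compatibility} put $R^\ell_\varphi$ with $\varphi \in \FF_1^{\otimes\ell}$ in $C^{1,1}(\mathcal P_{\GG_1};\R)$, while {\bf (A2')-(i,ii)} allow Lemma \ref{lem:H0} (with $\Lambda \equiv 1$) to be invoked, producing $T^\infty_t$ as a $C_0$-semigroup of contractions on $UC_b(\mathcal P_{\GG_1})$ with generator $G^\infty\Phi(f) = \langle Q(f,f), D\Phi(f)\rangle$ on $UC^1_b$. Expanding $DR^\ell_\varphi$ as in Subsection \ref{sec:compatibility}, using $\langle Q(f,f),\varphi_i\rangle = \langle f\otimes f, Q^*(\varphi_i)\rangle$, and matching with the compatibility binary derivation structure \eqref{eq:compatibiliteGinftyell2} of {\bf (A3')} yields the boxed identity above. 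Differentiating $t \mapsto \langle \bar\pi_t, R^\ell_\varphi\rangle = \langle \pi_0, T^\infty_t R^\ell_\varphi\rangle$ in $t$ then produces exactly the hierarchy tested against $R^\ell_\varphi$, which rereads as \eqref{evolbbgky} after taking marginals.

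For (ii), given a second solution $(\pi_s)$, I would fix $t>0$ and $\Phi = R^\ell_\varphi$, and consider $\Psi(s) := \langle \pi_s, T^\infty_{t-s}\Phi\rangle$ for $s\in[0,t]$. Assumption {\bf (A4')} together with Lemma \ref{lem:DL} guarantee $T^\infty_{t-s}\Phi \in UC^1(\mathcal P_{\GG_1})$. Formally differentiating gives $\Psi'(s) = \langle \pi_s, G^\infty T^\infty_{t-s}\Phi\rangle - \langle \pi_s, G^\infty T^\infty_{t-s}\Phi\rangle = 0$, so that $\langle \pi_t,\Phi\rangle = \Psi(t) = \Psi(0) = \langle\bar\pi_t,\Phi\rangle$; Stone--Weierstrass combined with {\bf (A2')-(iii)} then upgrades this to the identity of probability measures $\pi_t = \bar\pi_t$. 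The essential difficulty is to legitimize the first cancellation: the hierarchy is an ODE tested a priori against polynomials $R^\ell_\varphi$, whereas $T^\infty_{t-s}\Phi$ is genuinely nonpolynomial. I would handle this either by approximating $T^\infty_{t-s}\Phi$ by polynomial functions in a sense compatible with the $C^{1,1}$ bounds of Subsection \ref{sec:compatibility} (using that polynomials separate points and form an algebra, so Stone--Weierstrass-type density applies on bounded sets), or — more robustly in the bounded-generator setting where {\bf (A2')-(ii)} makes the $G^\infty_{\ell+1}$ uniformly bounded, which is essentially (GMM) as the authors remark — by iterating the Duhamel formula at the level of marginals to obtain an absolutely convergent Dyson-like series that directly identifies $\pi_{t,\ell}$ with $\bar\pi_{t,\ell}$.

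For (iii), if $\pi_0 = \delta_{f_0}$ then $\langle \delta_{f_0}, T^\infty_t R^\ell_\varphi\rangle = R^\ell_\varphi(S^{N\!L}_t f_0) = \langle (S^{N\!L}_t f_0)^{\otimes\ell}, \varphi\rangle$, so Hewitt--Savage reconstruction identifies $\bar\pi_t = \delta_{f_t}$ with $f_t := S^{N\!L}_t f_0$. For the chaos propagation statement, tightness {\bf (A1')} ensures $\pi^N_t := \pi^N_P(f^N_t)$ admits weak limits along subsequences; the uniform-on-compacts convergence $G^N_{\ell+1}\varphi \to G^\infty_{\ell+1}\varphi$ of {\bf (A3')} allows passage to the limit in the $N$-particle BBGKY identity \eqref{eq:hierarchy} (the $\OO(\ell^2/N)$ error being harmless) and shows that every such limit satisfies the infinite hierarchy with initial datum $\lim_N \pi^N_P(f^N_0)$. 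The uniqueness from (ii) identifies this limit with $\bar\pi_t$, so the full sequence converges; specializing to $f_0$-chaotic initial data gives $\pi^N_t \rightharpoonup \delta_{f_t}$, which is precisely $f_t$-chaoticity of $f^N_t$.
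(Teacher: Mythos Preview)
Your overall architecture matches the paper's: you correctly identify the key identity $G^\infty R^\ell_\varphi = R^{\ell+1}_{G^\infty_{\ell+1}(\varphi)}$, you use the characteristics function $s\mapsto \langle \pi_s, T^\infty_{t-s}\Phi\rangle$ for uniqueness (this is exactly the paper's Step~3), and your compactness-plus-uniqueness scheme in (iii) is the standard and correct way to conclude chaos propagation.

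The point where you diverge from the paper is precisely the ``essential difficulty'' you flag in (ii). Your two proposed fixes are not fully satisfactory under the abstract assumptions. The Stone--Weierstrass route gives only uniform approximation of $T^\infty_{t-s}\Phi$ by polynomials, but to pass to the limit in $\Psi'(s)$ you would need convergence of $G^\infty$ applied to the approximants as well, i.e.\ a $C^1$-type approximation, which you do not explain how to obtain. The Dyson series route, as you say yourself, only works when the $G^\infty_{\ell+1}$ are bounded; this covers {\bf (GMM)} but not the abstract statement.

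The paper handles this by inserting an intermediate step (its Step~2): it proves directly that any hierarchy solution $\pi_t$ satisfies the full dual equation \eqref{evolpush}, i.e.\ $\frac{{\rm d}}{{\rm d}t}\langle \pi_t,\Phi\rangle = \langle \pi_t, G^\infty\Phi\rangle$ for \emph{every} $\Phi\in UC^1(\mathcal P_{\GG_1})$, not just polynomials. The device is the explicit approximant $R^\ell_{\pi^\ell_C\Phi}$, where $\pi^\ell_C\Phi(V)=\Phi(\mu^\ell_V)$. One law-of-large-numbers limit gives $R^\ell_{\pi^\ell_C\Phi}(f)\to\Phi(f)$; a second, more delicate one (expanding $\Phi(\mu^\ell_V)$ around $\Phi(\mu^{\ell-1}_{V_i})$ via $D\Phi$ and using {\bf (A4')}) gives $G^\infty[R^\ell_{\pi^\ell_C\Phi}](f)\to\langle Q(f,f),D\Phi(f)\rangle = G^\infty\Phi(f)$. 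This simultaneous convergence of the function and its $G^\infty$-image is exactly the $C^1$-type approximation you were missing, and it comes for free from the empirical-measure structure rather than from any general density theorem. Once \eqref{evolpush} holds, your characteristics computation $\Psi'(s)=0$ is immediate with no further approximation needed on $T^\infty_{t-s}\Phi$.
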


\begin{proof}[Proof of Theorem~\ref{theo:BBGKYuniq}]
We shall proceed in several steps. 
\bigskip

\noindent {\sl Step 1: Propagation of Dirac mass structure.} Let us
recall that Hewitt-Savage's theorem~\cite{Hewitt-Savage} implies that
for any $\pi \in P(P(E))$ there exists a unique sequence $(\pi^\ell)
\in P(E^\ell)$ such that
$$
\forall \, \varphi \in \left( C_b\left( E \right) \right)^{\otimes \ell}, \quad 
\left\langle \pi^\ell , \varphi \right\rangle 
= \left\langle \pi, R^\ell_\varphi \right\rangle.
$$
As a consequence,  if $\pi_0= \delta_{f_0}$ and $\bar \pi$ satisfies 
\eqref{eq:defbbgkybis}, then  
\begin{eqnarray*}
\left\langle \bar\pi_{t\ell}, \varphi \right\rangle
&=& \left\langle \bar \pi_t, R^\ell_\varphi \right\rangle 
= \left\langle \pi_0, T^\infty_t R^\ell_\varphi \right\rangle =
T^\infty_t \left[R^\ell_\varphi \right] \left(f_0\right) \\
&=& R^\ell_\varphi \left(S^{N \! L}_t f_0\right)  
=  \left\langle S^{N \! L}_t \, f_0,\varphi_1 \right\rangle \, \dots
\,  \left\langle S^{N \! L}_t \, f_0,\varphi_\ell \right\rangle,
\end{eqnarray*}
which means that $\bar\pi_{t\ell} = f^{\otimes\ell}_t$, or
equivalently $\bar\pi_t = \delta_{f_t}$.

\bigskip\noindent {\sl Step 2: Equivalence between \eqref{evolbbgky}
  and \eqref{evolpush}.} 

First let us assume \eqref{evolpush} and prove
\eqref{evolbbgky}. We start with the following observation. Consider $f \in \mathcal P_{\GG_1}(E)$ and $\varphi \in
\FF^{\otimes \ell}$. Then we have $R^\ell_\varphi \in C^{1,1}
(\mathcal P_{\GG_1}(E))$ and we deduce from \eqref{eq:compatibiliteGinftyell2}
that
\begin{equation*}
  \left\langle f^{\otimes \ell+1},
    G^\infty_{\ell+1} \varphi \right\rangle = 
  \left\langle Q(f,f), DR^\ell_\varphi (f)\right\rangle = G^\infty
  \left[R^\ell _\varphi\right](f)
\end{equation*}
which means
\[
R ^{\ell+1} _{G^\infty_{\ell+1} \varphi} = G^\infty \left[R^\ell _\varphi\right].
\]

Then, using Hewitt-Savage's Theorem again, \eqref{evolpush} implies that
\begin{equation}\label{transition}
\frac{{\rm d}}{{\rm d}t} \left\langle \pi_{t\ell}, \varphi \right\rangle
= \frac{{\rm d}}{{\rm d}t} \left\langle  \pi_{t}, R^\ell _\varphi
\right\rangle = 
\left\langle  \pi_{t}, G^\infty \left[R^\ell _\varphi\right] \right\rangle
= \left\langle  \pi_{t}, R^{\ell+1}
  _{G^\infty_{\ell+1}[\varphi]} \right\rangle  
= \left\langle \pi_{t(\ell+1)}, G^\infty_{\ell+1}
  [\varphi] \right\rangle 
\end{equation}
which means that $\pi_t$ satisfies \eqref{evolbbgky}.

Assume conversely that $\pi_t$ satisfies \eqref{evolbbgky} and let us
prove \eqref{evolpush}. One needs to prove that one can recover any
$\Phi \in UC^{1} (\mathcal P_{\GG_1}(E))$ from the previous equation
\eqref{transition}.

Therefore consider $\Phi \in UC^{1} (\mathcal P_{\GG_1}(E))$ and let us define
$$
\varphi = \left(\pi^\ell_C \Phi\right) (V) = \Phi
\left(\mu^\ell_V\right), \quad V = \left(v_1,\dots,v_\ell\right)
$$ 
and let us write \eqref{transition} for this choice of $\varphi$:
$$
\frac{{\rm d}}{{\rm d}t} \left\langle \pi_{t}, R^\ell _{\pi^\ell_C \Phi}
\right\rangle = \left\langle \pi_{t}, G^\infty \left[R^\ell
    _{\pi^\ell_C \Phi}\right] \right\rangle = \left\langle \pi_{t},
  R^{\ell+1} _{G^\infty _{\ell+1} \left[\pi^\ell_C \Phi\right]}
\right\rangle.
$$

Then, on the one hand, for any $f \in \mathcal P_{\GG_1}(E)$
\begin{equation*}
R ^\ell _{\pi^\ell_C\Phi} (f) = \int_{E^\ell}
\Phi\left(\mu^\ell_V\right) \,  {\rm d}f^{\otimes \ell} (V) 
\xrightarrow[]{\ell \to \infty} \Phi(f)
\end{equation*}
by the law of large numbers. 

\smallskip
On the other hand, for any $f \in \mathcal P_{\GG_1}(E)$, we have 
\begin{eqnarray*}
R^{\ell+1}_{ G^\infty_{\ell+1}  \left[\pi^\ell _C \Phi\right] }  (f) 
&=&
\left\langle f^{\otimes \ell+1}, G^\infty_{\ell+1}  \left(\pi^\ell_C \Phi\right) \right\rangle
\\
&=&
\left\langle D R^\ell_{ \pi^\ell _C \Phi} (f) , Q(f,f) \right\rangle
\\
&=&
\sum_{i=1}^\ell \int_{E^\ell} \Phi\left(\mu^\ell _V\right) \, {\rm d} Q(f,f)(v_i)
\prod_{j \not=i}  {\rm d}f (v_j).
\end{eqnarray*}
For any given $i = 1, \dots, \ell$, we define 
\[
\phi^{\ell-1}_{V_i} = D\Phi\left(\mu^{\ell-1}_{V_i}\right) ,\quad V_i
:= \left(v_1, \dots, v_{i-1}, v_{i+1}, \dots, v_\ell\right)
\]
and we write
$$
\Phi\left(\mu^\ell _V\right) = \Phi \left(\mu^{\ell-1}_{V_i}\right) +
\left\langle \phi^{\ell-1}_{V_i}, \mu^\ell _V - \mu^{\ell-1}_{V_i}
\right\rangle + \OO \left( \Omega \left( \left\| \mu^{\ell-1}_{V_i} - \mu^\ell _V
  \right\| \right)  \right),
$$
where $\Omega$ is the modulus of differentiability of $\Phi$ as introduced in Definition~\ref{def:Holdercalculus}. 
Observing that 
$$
\mu^\ell _V - \mu^{\ell-1}_{V_i} = {1 \over \ell} \, \delta_{v_i} -
\sum_{j\not=i} {1 \over \ell \, (\ell-1)} \, \delta_{v_j}
$$
and that $\langle Q(f,f), 1 \rangle = 0$ from assumption {\bf
  (A2')-(ii)}, we find
\begin{eqnarray*}
  R^{\ell+1}_{ G^\infty_{\ell+1}  (\pi^\ell _C\Phi) }  (f) 
  &=&
  \sum_{i=1}^\ell \int_{E^\ell} \left(  {1 \over \ell}
    \,\phi^{\ell-1}_{V_i}(v_i) + \OO \left( \Omega(\ell^{-1})
    \right)\right) \, 
  {\rm d} Q(f,f)(v_i) \prod_{j\not=i}  {\rm d}f (v_j)
  \\
  &=&
  \sum_{i=1}^{\ell-1} \int_{E^{\ell-1}}  {1 \over \ell-1} \,
  \left\langle Q(f,f) , \phi^{\ell-1}_{V} \right\rangle \,
  {\rm d}f^{\otimes(\ell-1)}  (V) 
  + \OO \left( \ell \,  \Omega(\ell^{-1}) \right)
  \\
  &=&  \int_{E^{\ell-1}}   \left\langle Q(f,f), 
   D\Phi\left(\mu^{\ell-1}_V\right) \right\rangle \, 
 {\rm d}f^{\otimes(\ell-1)} (V) 
 + \OO \left( \ell \,  \Omega(\ell^{-1}) \right)
  \\
  &&\!\!\!\!\!\!\!\!\!\!\!\! \mathop{\longrightarrow}_{\ell \to\infty}
  \,  
  \left\langle D\Phi (f) ,   Q(f,f) \right\rangle
\end{eqnarray*}
by the law of large numbers again. This implies \eqref{evolpush}. 

\bigskip\noindent {\sl Step 3: Uniqueness.} Let us prove that any
solution of \eqref{evolbbgky}-\eqref{evolpush} satisfies the
characteristics equation \eqref{eq:defbbgkybis}, or in other words
that $\pi_t = \bar \pi_t$. This shall imply uniqueness since we have
already seen that the the solution to \eqref{eq:defbbgkybis} is
unique.

The main observation here is that for any $\Phi \in UC^1(\mathcal
P_{\GG_1}(E))$ if we define $\Phi_t := T^\infty_t \Phi$,  thanks
to assumption {\bf (A4') } and a straightforward extended version of
Lemma~\ref{lem:H0}, we have
\[
\forall \, t \ge 0, \quad \Phi_t \in UC^1(\mathcal P_{\GG_1}(E)) \subset
\hbox{Dom}(G^\infty).
\]

Then since 
\[
\tau \in [0,t] \mapsto \left\langle \pi_\tau, \Phi_{t-\tau}
\right\rangle
\]
is $C^1$ from the fact that $\Phi_{t-\tau} \in UC^1(\mathcal P_{\GG_1}(E))$
belongs to the domain of $G^\infty$ for any $\tau$, we compute
\[
{{\rm d} \over {\rm d}\tau} \left\langle \pi_\tau, \Phi_{t-\tau}
\right\rangle = \left\langle \pi_\tau, G^\infty\left[\Phi_{t-\tau}\right]
\right\rangle - \left\langle \pi_\tau, G^\infty\left[\Phi_{t-\tau}\right]
\right\rangle =0
\]
and we deduce that 
\[
\left\langle \pi_t, \Phi_{0}
\right\rangle = \left\langle \pi_0, \Phi_{t}
\right\rangle
\]
which proves that $\pi_t = \bar \pi_t$ satisfies
\eqref{eq:defbbgkybis}, and concludes the proof.
\end{proof}

\subsection{A remark on stationary statistical solutions} 

As we have seen:
\begin{itemize}
\item The chaoticity of a sequence of symmetric $N$-particle
  distributions $f^N \in P(E^N)$, $N \ge 1$ is equivalent to the fact
  that the associated $\pi \in P(P(E))$ is a Dirac at some $f_0 \in
  P(E)$: $\pi =\delta_{f_0}$. Hence, in view of Hewitt-Savage's
  theorem, non-chaoticity can be reframed as saying that $\pi$ is a
  superposition of \emph{several}, instead of one, chaotic states.
\item We have recalled the result in \cite{Kac1956,CCLLV} stating that
  a chaotic (tensorized) sequence is asymptotically concentrated on
  the energy sphere, which is an effect of the law of large numbers. 
\item The $N$-particle dynamics leaves the energy spheres
  invariant and relaxes on each energy spheres to the uniform
  measure. This is a consequence of the energy conservation laws: at
  the level of the particle system, the dynamics is layered
  according to the value of this conservation law.
\end{itemize}
One deduces from these considerations that there is room for
\emph{non-chaotic stationary states} of the $N$-particle system,
namely superposition of \emph{several} stationary states on different
energy spheres. Let us make this more precise. 

\begin{lem}\label{sec6:SolStat} 
  There exists a non-chaotic stationary solutions to the statistical
  Boltzmann equation. In other words, there exists $\pi \in
  P(P(\R^d))$ such that $\pi \not= \delta_p$ for some $p \in P(\R^d)$
  and $A_{\ell+1} ^\infty (\pi_{\ell+1}) = 0$ for any $\ell \in \N$.
\end{lem}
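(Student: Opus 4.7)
The guiding idea is to exploit the one-parameter family of Maxwellian equilibria of the limit Boltzmann equation: for every $\EE > 0$, let $\gamma_\EE \in P(\R^d)$ denote the centered Gaussian with energy $\EE$ (cf.~\eqref{eq:maxf}), so that $Q(\gamma_\EE, \gamma_\EE) = 0$ and $S^{N\!L}_t \gamma_\EE = \gamma_\EE$ for all $t \ge 0$. Each Dirac mass $\delta_{\gamma_\EE} \in P(P(\R^d))$ is then trivially a chaotic stationary statistical solution. The plan is to superpose these over a probability measure on the energy parameter and to check that the superposition is still stationary while being non-chaotic.

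Concretely, I would fix any $\mu \in P((0,\infty))$ that is not a Dirac mass (for instance $\mu = \tfrac12 \delta_{\EE_1} + \tfrac12 \delta_{\EE_2}$ with $0 < \EE_1 < \EE_2$) and define
\[
\pi := \int_0^\infty \delta_{\gamma_\EE} \, {\rm d}\mu(\EE) \in P\left(P\left(\R^d\right)\right),
\]
so that its $\ell$-th marginal reads $\pi_\ell = \int_0^\infty \gamma_\EE^{\otimes \ell}\, {\rm d}\mu(\EE)$. Stationarity with respect to the infinite hierarchy \eqref{evolbbgky} is then routine: for any $\ell \in \N$ and any $\varphi = \varphi_1 \otimes \cdots \otimes \varphi_\ell \in C_b(\R^d)^{\otimes \ell}$, formula~\eqref{eq:compatibiliteGinftyell2} yields, for each fixed $\EE$,
\[
\left\langle \gamma_\EE^{\otimes(\ell+1)}, G^\infty_{\ell+1}\varphi \right\rangle
= \sum_{i=1}^{\ell} \left(\prod_{j \neq i} \langle \gamma_\EE, \varphi_j \rangle\right)
\left\langle Q(\gamma_\EE,\gamma_\EE), \varphi_i \right\rangle = 0,
\]
and integrating against ${\rm d}\mu(\EE)$ gives $\langle \pi_{\ell+1}, G^\infty_{\ell+1} \varphi\rangle = 0$, i.e.~$A^\infty_{\ell+1}\pi_{\ell+1} = 0$. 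Equivalently, I could invoke the equivalence between \eqref{evolbbgky} and \eqref{evolpush} proved in the course of Theorem~\ref{theo:BBGKYuniq} together with $T^\infty_t[\Phi](\gamma_\EE) = \Phi(\gamma_\EE)$ to obtain $\langle \pi, T^\infty_t \Phi \rangle = \langle \pi, \Phi \rangle$ directly.

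It then remains to check that $\pi$ is not of the form $\delta_p$ for any $p \in P(\R^d)$. Assume by contradiction $\pi = \delta_p$. Testing $\pi$ against the bounded continuous functional on $P(\R^d)$ given by $\Phi_R(f) := \langle f, |v|^2 \wedge R \rangle$ (for $R>0$ large) and passing to the limit $R \to \infty$ by monotone convergence yields
\[
\int_0^\infty \EE \, {\rm d}\mu(\EE) = \langle p, |v|^2 \rangle
\quad \text{and} \quad
\int_0^\infty \EE^2 \, {\rm d}\mu(\EE) = \left( \langle p, |v|^2 \rangle\right)^2,
\]
the second identity coming from the bounded continuous functional $f \mapsto (\Phi_R(f))^2$ after the same limiting argument. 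The Cauchy--Schwarz equality case then forces $\mu$ to be a Dirac mass, contradicting the choice of $\mu$. Hence $\pi$ is a stationary statistical solution that is not chaotic.

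The only delicate point in this scheme is the interchange of the integral over $\mu$ with the action of $G^\infty_{\ell+1}$ (respectively $T^\infty_t$), but this is immediate from the linearity of these operators in the marginal (respectively from Fubini, together with the pointwise stationarity $S^{N\!L}_t \gamma_\EE = \gamma_\EE$), so no genuine obstacle is expected. The argument works identically for all three physical collision kernels considered in the paper, since only the existence of the Maxwellian family and the relation $Q(\gamma_\EE,\gamma_\EE)=0$ are used.
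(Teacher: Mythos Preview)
Your proof is correct and takes a genuinely different route from the paper's. You implement directly the heuristic stated just before the lemma: you build $\pi$ as a mixture $\int_0^\infty \delta_{\gamma_\EE}\,{\rm d}\mu(\EE)$ of Maxwellian equilibria and verify stationarity via $Q(\gamma_\EE,\gamma_\EE)=0$ together with the linearity of $A^\infty_{\ell+1}$. Your non-chaoticity argument via the equality case of Cauchy--Schwarz on the energy moments is clean and complete.

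The paper instead writes down the marginals explicitly: in dimension $d=1$ it sets $\pi_\ell(V) = c_\ell (1+|V|^2)^{-(m+\ell/2)}$, checks compatibility by an elementary scaling computation, and invokes stationarity from the fact that any radial function of $V$ (i.e.\ depending only on $|V|^2$) is killed by each $A^\infty_{\ell+1}$ because collisions preserve energy. The two approaches are in fact closely related: using the Gamma-integral identity $(1+r)^{-a}=\Gamma(a)^{-1}\int_0^\infty t^{a-1}e^{-t}e^{-tr}\,{\rm d}t$ one sees that the paper's Student-type marginals are precisely a specific scale mixture of Gaussians, i.e.\ your construction with a particular Gamma-type $\mu$. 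What your approach buys is conceptual transparency and a self-contained proof of non-chaoticity; what the paper's buys is an explicit closed formula for every marginal, at the cost of leaving non-chaoticity implicit.
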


\begin{proof}[Proof of Lemma~\ref{sec6:SolStat}] It is clear that any
  function on the form
$$
V \in \R^{d(\ell+1)} \, \mapsto \, \pi_{\ell+1} (V) = \phi (|V|^2)
$$
is a stationary solution for the equation \eqref{evolbbgky}, that is
$A_{\ell+1} (\pi_{\ell+1}) = 0$ for any $\ell \ge 1$. Now we define,
with $d=1$ for the sake of simplicity, the sequence
$$
\forall \, \ell \ge 1, \quad V \in \R^\ell \mapsto \pi_\ell (V) =
{c_\ell \over (1+ |V|^2)^{m+\ell/2}}
$$
where the sequence of positive constants $c_\ell$ is inductively
constructed in the following way. 
\begin{itemize}
\item First $c_1$ is chosen in a unique way so that $\pi_1$ is a probability measure. 
\item Then, once $c_1, \dots, c_\ell$ are constructed, $c_{\ell+1}$ is
  constructed so that $\Pi_\ell [\pi_{\ell+1}] = \pi_\ell$, which
  means
\[
\forall \, V \in \R^\ell, \quad \int_{v_* \in \R} {c_{\ell+1} \over \left( 1 + |V|^2
    +|v_*|^2\right)^{m + \ell/2 +1/2}} \, {\rm d}v_* =  {c_{\ell} \over
  \left( 1 + |V|^2 \right)^{m + \ell/2}}.
\]
This is always possible since 
\begin{multline*}
\int_{v_* \in \R} {c_{\ell+1} \over \left( 1 + |V|^2
    +|v_*|^2\right)^{m + \ell/2 +1/2}} \, {\rm d}v_*
\\ = {c_{\ell+1} \over
  \left( 1 + |V|^2 \right)^{m + \ell/2+1/2}} \, \int_{v_* \in \R} {1 \over \left( 1 
    +{|v_*|^2 \over \left( 1 + |V|^2 \right)} \right)^{m + \ell/2
    +1/2}} \, {\rm d}v_* 
\\ = {c_{\ell+1} \over
  \left( 1 + |V|^2 \right)^{m + \ell/2}} \, \int_{v_* \in \R} {1 \over \left( 1 
    +|v_*|^2 \right)^{m + \ell/2
    +1/2}} \, {\rm d}v_* 
\end{multline*}
which concludes the induction. 
\end{itemize}

We then deduce that the sequence $\pi_\ell$, $\ell \ge 1$, satisfies
\eqref{evolbbgky} since every terms only depends on the energy, and
also satisfies the compatibility condition $\Pi_\ell [\pi_{\ell+1}] =
\pi_\ell$. This concludes the proof.
\end{proof}

\appendix

\bibliographystyle{acm}
\bibliography{./meanfield}


\signsm \signcm 

\end{document}